\numberwithin{equation}{section}
\newcommand{\eps}{\varepsilon}
\newcommand{\mb}[1]{\mathbf{#1}}
\newcommand*{\ccdot}{\kern-.12em\cdot\kern-.12em}
\newcommand\restrict[1]{\raisebox{-.5ex}{$|$}_{#1}}
\newcommand{\supp}{\mathrm{supp}}
\newcommand{\warrow}{\overset{w}{\longrightarrow}}
\newcommand{\varrow}{\overset{v}{\longrightarrow}}
\newtheoremstyle{slplain}
  {0.4cm}
  {0.4cm}
  {\upshape}
  {}
  {\bfseries}
  {.}
  { }
  {}
\newtheoremstyle{itplain}
    {0.4cm}
    {0.4cm}
    {\itshape}
    {}
    {\bfseries}
    {.}
    { }
    {}
\declaretheorem[style=slplain,numberwithin=section]{definition}
\declaretheorem[style=slplain,sibling=definition]{example}
\declaretheorem[style=slplain,sibling=definition]{remark}
\declaretheorem[style=slplain,sibling=definition]{assumption}
\declaretheorem[style=itplain,sibling=definition]{theorem}
\declaretheorem[style=itplain,sibling=definition]{proposition}
\declaretheorem[style=itplain,sibling=definition]{lemma}
\declaretheorem[style=itplain,sibling=definition]{corollary}
\renewenvironment{abstract}{%
\noindent\hfill\begin{minipage}{0.92\textwidth}
\rule{\textwidth}{1pt}}
{\par\noindent\rule{\textwidth}{1pt}\end{minipage}\hfill}
\let\OLDthebibliography\thebibliography
\renewcommand\thebibliography[1]{
  \OLDthebibliography{#1}
  \setlength{\parskip}{0pt}
  \setlength{\itemsep}{3pt plus 0.3ex}
}
\title{\bfseries Sturm-Liouville hypergroups without the\linebreak compactness axiom\\[0.3cm]}
\author{Rúben Sousa
\thanks{Corresponding author. CMUP, Departamento de Matemática, Faculdade de Ciências, Universidade do Porto, Rua do Campo Alegre 687, 4169-007 Porto, Portugal. Email: \texttt{rubensousa@fc.up.pt}}
\and
Manuel Guerra \thanks{CEMAPRE and ISEG (School of Economics and Management), Universidade de Lisboa, Rua do Quelhas 6, 1200-781 Lisbon, Portugal. Email: \texttt{mguerra@iseg.ulisboa.pt}}
\and
Semyon Yakubovich \thanks{CMUP, Departamento de Matemática, Faculdade de Ciências, Universidade do Porto, Rua do Campo Alegre 687, 4169-007 Porto, Portugal. Email: \texttt{syakubov@fc.up.pt}}
\\[0.3cm]
}
\date{\today\\}
\begin{document}

\maketitle

\begin{abstract}
	 \small
	 \parbox{\linewidth}{\vspace{-2pt}
\begin{center} \bfseries Abstract \vspace{-8pt} \end{center}
	 
	 \-\ \quad We establish a positive product formula for the solutions of the Sturm-Liouville equation $\ell(u) = \lambda u$, where $\ell$ belongs to a general class which includes singular and degenerate Sturm-Liouville operators. Our technique relies on a positivity theorem for possibly degenerate hyperbolic Cauchy problems and on a regularization method which makes use of the properties of the diffusion semigroup generated by the Sturm-Liouville operator.
	 	 
	 \-\ \quad We show that the product formula gives rise to a convolution algebra structure on the space of finite measures, and we discuss whether this structure satisfies the basic axioms of the theory of hypergroups. We introduce the notion of a degenerate hypergroup of full support and improve the known existence theorems for Sturm-Liouville hypergroups. Convolution-type integral equations on weighted Lebesgue spaces are also introduced, and a solvability condition is established. \vspace{5pt}
	 
     \-\ \quad \textbf{Keywords:} Product formula, hypergroup, generalized convolution, Sturm-Liouville spectral theory, degenerate hyperbolic equation, convolution integral equations. \vspace{6.5pt}
     }
\end{abstract}

\vspace{8pt}

\begingroup
\let\clearforchapter\relax
\section{Introduction}
\endgroup

A hypergroup is a binary operation $*$ on the space $\mathcal{M}_\mathbb{C}(K)$ of finite complex measures on an underlying space $K$, which preserves the subset of probability measures on $K$ and gives rise to a structure of Banach algebra with unit on $\mathcal{M}_\mathbb{C}(K)$. In the axiomatic definition of hypergroup introduced by Jewett in \cite{jewett1975}, the operation $*$, known as generalized convolution, is required to satisfy axioms of continuity and compactness of support; the compactness axiom requires, in particular, that the convolution of Dirac measures is a measure of compact support. An extensive theory of (probabilistic) harmonic analysis has been developed in the context of hypergroups, see the monographs \cite{bloomheyer1994,berezansky1998} and references therein. See also \cite{bakryzribi2017,fruchtl2018,heyeretal2017,szekelyhidietal2019} for recent work on hypergroup structures.

In this paper we describe a class of hypergroups which does not satisfy the compactness axiom, and yet allows one to develop harmonic analysis in the sense indicated above.

Starting from the seminal works of Delsarte \cite{delsarte1938} and Levitan \cite{levitan1940} on generalized translation operators, the development of the theory of hypergroups was largely motivated by the study of Sturm-Liouville differential operators on an interval $(a,b)$ of the real line. The key idea here is the following: it is well known that the eigenfunction expansion of a Sturm-Liouville operator, say, of the form
\[
\ell = -{1 \over r} {d \over dx} \Bigl( p \, {d \over dx}\Bigr), \qquad a < x < b
\]
gives rise (under certain conditions) to an integral transform $(\mathcal{F}h)(\lambda) := \int_a^b h(x) \, w_\lambda(x) \, r(x) dx$ ($\lambda \in \mathbb{R}$) which is an isometry between weighted $L_2$-spaces; here $\{w_\lambda\}$ is a family of solutions of the Sturm-Liouville equation $\ell(u) = \lambda u$. Now, one may extend the transformation $\mathcal{F}$ to measures $\mu \in \mathcal{M}_\mathbb{C}[a,b)$ by defining
\begin{equation} \label{eq:intro_SLtransfmeas}
(\mathcal{F}\mu)(\lambda) \equiv \widehat{\mu}(\lambda) := \int_{[a,b)} w_\lambda(x) \, \mu(dx),
\end{equation}
and then it is natural to ask: \emph{does there exist a (generalized) convolution operator $*$ which is trivialized by the transformation \eqref{eq:intro_SLtransfmeas}, in the sense that the property $\widehat{\mu * \nu} = \widehat{\mu} \ccdot \widehat{\nu}$ holds for all $\mu,\nu \in \mathcal{M}_\mathbb{C}[a,b)$?} If $\mu$ and $\nu$ are taken to be Dirac measures at the points $x,y \in [a,b)$, then the trivialization property reads \begin{equation} \label{eq:intro_prodform}
w_\lambda(x) \, w_\lambda(y) = \int_{[a,b)} w_\lambda \, d\bm{\nu}_{x,y}
\end{equation}
where $\bm{\nu}_{x,y} = \delta_x * \delta_y$. The construction of generalized convolutions is therefore closely related to the problem of existence of a so-called \emph{product formula} for the solutions of the Sturm-Liouville equation $\ell(u) = \lambda u$; in this problem, the goal is to determine a family $\{\bm{\nu}_{x,y}\} \subset \mathcal{M}_\mathbb{C}[a,b)$ such that \eqref{eq:intro_prodform} holds. For the hypergroup axioms to hold we actually need the $\bm{\nu}_{x,y}$ to be probability measures; in this case we say that \eqref{eq:intro_prodform} is a \emph{hypergroup-like product formula}.

The Bessel operator $- {d \over dx^2} - {2\alpha + 1 \over x} {d \over dx}$ and the Jacobi operator $- {d \over dx^2} - [(2\alpha + 1) \coth x + (2\beta + 1) \tanh x] {d \over dx}$ are standard examples of Sturm-Liouville operators on the half-line $[0,\infty)$ for which the kernel of the associated Sturm-Liouville integral transform (the Hankel and Jacobi transform, respectively) admits a hypergroup-like product formula where the measures $\bm{\nu}_{x,y}$ have been computed in closed form (see \cite{hirschman1960} and \cite{koornwinder1984}, respectively). More generally, it was shown by Zeuner \cite{zeuner1992} that any Sturm-Liouville operator of the form $- {1 \over A} {d \over dx} \bigl(A {d \over dx}\bigr)$, where $A \in \mathrm{C}^1(0,\infty)$ satisfies suitable assumptions (see Subsection \ref{sub:SLhyp_nondegen} below) also admits a hypergroup-like product formula; in general, the measures $\bm{\nu}_{x,y}$ of the product formula \eqref{eq:intro_prodform} are not known in closed form, but their existence can be proved using classical results on positivity properties for hyperbolic partial differential equations, cf.\ \cite{weinberger1956}.

A general property of the Sturm-Liouville operators considered by Zeuner is that the support $\supp(\bm{\nu}_{x,y})$ of the measures in the product formula is contained in $[|x-y|,x+y]$; in particular, $\supp(\bm{\nu}_{x,y})$ is compact, as required by the usual hypergroup axioms. However, the situation is quite different for the Whittaker convolution, generated by the normalized Whittaker differential operator $x^2 {d^2 \over dx^2} + (1+2(1-\alpha) x){d \over dx}$ on the half-line $[0,\infty)$. In fact, in this case the measures in the product formula, whose closed form expression was recently determined by the authors in \cite{sousaetal2018a,sousaetal2018b}, satisfy $\supp(\bm{\nu}_{x,y}) = [0,\infty)$ for all $x,y > 0$. But it turns out that the probability-preserving and continuity axioms are satisfied by the Whittaker convolution, and therefore it is still possible to develop harmonic analysis on the measure algebra $\bigl( \mathcal{M}_\mathbb{C}[0,\infty), * \bigr)$ (see \cite{sousaetal2018a,sousaetal2018b}). Moreover, one can show that the Whittaker operator restricted to any interval $[c,\infty)$, $c > 0$, can be reduced by a change of variable to an operator belonging to the class introduced by Zeuner, and therefore determines a convolution satisfying the compact support axiom. Hence it is natural to interpret the measure algebra associated to the Whittaker convolution as a degenerate hypergroup and to wonder if it is possible to construct degenerate hypergroup structures for other Sturm-Liouville operators. The use of the term ``degenerate" is further justified by the fact that in the limit $c=0$ the hyperbolic Cauchy problem associated with $\ell$ (defined in Subsection \ref{sub:prodform_hypCP}) becomes parabolically degenerate at the initial line.

In this paper, our purpose is to introduce a new technique for proving the existence of a hypergroup-like product formula for Sturm-Liouville operators whose associated hyperbolic Cauchy problem is possibly parabolically degenerate at the initial line. Our technique is based on a regularization method which we now briefly sketch. The inversion formula for the integral transform $\mathcal{F}$ generated by $\ell$ provides a formal candidate for the measure $\bm{\nu}_{x,y}$, namely the inverse transform $\mathcal{F}^{-1}[w_{(\cdot)}(x) \, w_{(\cdot)}(y)]$. However, the inversion integral is, in general, divergent. To get around this, the idea is to consider instead the regularized inverse transform $\mathcal{F}^{-1}[e^{-t(\cdot)} w_{(\cdot)}(x) \, w_{(\cdot)}(y)]$, where $t > 0$, and to prove that the presence of the exponential term ensures the convergence of the inversion formula. This regularization effect is closely related with the smoothing properties of the one-parameter semigroup generated by the Sturm-Liouville operator, which are well-known from the theory of one-dimensional (Feller) diffusion semigroups \cite{feller1954,mckean1956}. It will then be seen that the measure $\bm{\nu}_{x,y}$ can be recovered from the measures $\bm{\nu}_{t,x,y}$ of the product formula for $e^{-t\lambda} w_{\lambda}(x) \, w_{\lambda}(y)$ as the weak limit as $t \downarrow 0$. The weak convergence argument relies on the nontrivial fact that the $\bm{\nu}_{t,x,y}$ (and therefore also $\bm{\nu}_{x,y}$) are probability measures; to justify this, we use a partial differential equation approach based on the maximum principle for hyperbolic equations.

Since we deal with hyperbolic Cauchy problems which may be parabolically degenerate, the classical theory of hyperbolic problems in two variables (cf.\ e.g.\ \cite[Chapter V]{courant1952}) is, in general, not applicable. To overcome this, we use the spectral theory of Sturm-Liouville operators (cf.\ \cite[Chapter 9]{teschl2014}) to deduce existence, uniqueness and positivity results for a general class of possibly degenerate Cauchy problems. This class also includes many (uniformly) hyperbolic equations with singularities which fall outside the scope of the classical methods. In such singular cases it will be seen that the uniform hyperbolicity yields a product formula where the $\bm{\nu}_{x,y}$ have compact support and the resulting generalized convolution operator satisfies all the hypergroup axioms, leading to an existence theorem for Sturm-Liouville hypergroups which generalizes previous results in the literature. On the other hand, as we will see, in the presence of parabolic degeneracy the product formula is such that the measures $\bm{\nu}_{x,y}$ are supported on the full interval $[a,b)$. This allows us to interpret the Whittaker convolution as a particular case of a general family of degenerate Sturm-Liouville hypergroups of full support; this is relevant because, to the best of our knowledge, no full support convolution structures generated by Sturm-Liouville operators other than the Whittaker operator were known to exist prior to this work.

As mentioned above, the main facts of harmonic analysis can be generalized to Sturm-Liouville hypergroups; in fact, most of the results of \cite{bloomheyer1994,berezansky1998} on standard Sturm-Liouville hypergroups extend (with some modifications) to degenerate hypergroups. However, the concept of convolution integral equations on hypergroups seems to remain unexplored in existing literature. Let us recall that the integral equation of the second kind on the real line with difference kernel, given by
\begin{equation} \label{eq:intro_convinteq}
u(x) + \int_{-\infty}^\infty u(y) \, K(x-y)\, dy = f(x),
\end{equation}
can be equivalently written as $u + u \star K = f$, where $\star$ is the ordinary convolution; therefore, \eqref{eq:intro_convinteq} is usually called a convolution integral equation. It was proved by Wiener (see \cite[p.\ 164]{krein1962}) that 
\[
1+(\mathfrak{F}K)(\lambda) \neq 0 \quad (\lambda \in \mathbb{R}), \qquad \text{where } (\mathfrak{F}K)(\lambda) = \int_{-\infty}^\infty e^{i\lambda x} K(x) dx,
\]
is a necessary and sufficient condition for the existence and uniqueness of solution for the convolution equation \eqref{eq:intro_convinteq}. In this paper, these ideas are developed in the context of a general Sturm-Liouville hypergroup: we define a Sturm-Liouville convolution equation as an integral equation which can be written as $u + u * K = f$, where $*$ is the generalized convolution, and we establish an analogous existence and uniqueness theorem, with the Fourier transform $(\mathfrak{F}K)(\lambda)$ being replaced by the Sturm-Liouville integral transform $(\mathcal{F}K)(\lambda)$. As we will see, this is a consequence of the fact that the generalized convolution of functions, seen as an operator on certain weighted $L_1$ spaces, gives rise to a Banach algebra structure which admits an analogue of the Wiener-Lévy theorem. As far as we are aware, similar results were only known to hold for very special cases, namely the Kontorovich-Lebedev convolution \cite[Chapter 17]{yakubovichluchko1994}, the Whittaker convolution \cite{sousaetal2018a} and some other cases which are reducible to the ordinary convolution (e.g.\ \cite{thaoetal2014,khoaetal2017}).

The paper is organized as follows. Section \ref{sec:prelim} collects some preliminary facts about the solutions of Sturm-Liouville boundary value problems and the related eigenfunction expansions. In Section \ref{sec:laplacerep} we prove that the kernel $w_\lambda(x)$ of the integral transform generated by $\ell$ can be written as the Fourier transform of a probability measure, thereby generalizing a result which is known to hold for Sturm-Liouville hypergroups; this so-called Laplace-type representation is later used in the proof of the product formula. The proof of the hypergroup-like product formula for the functions $w_\lambda(x)$ is given in Section \ref{sec:prodform}. In Section \ref{sec:genconv_hypergr} we show, using the properties of the transformation \eqref{eq:intro_SLtransfmeas}, that the convolution determined by the product formula is continuous in the weak topology and yields a positivity-preserving Banach algebra structure on $\mathcal{M}_\mathbb{C}[a,b)$; we then discuss the support of the convolution of Dirac measures in the nondegenerate and degenerate cases, and relate our results with the axioms of hypergroups. In Section \ref{sec:convalgL1} each Sturm-Liouville convolution is shown to lead to a Wiener-Lévy type theorem on a family of weighted Lebesgue spaces, and this theorem is applied to the study of convolution integral equations on hypergroups. The Appendix contains the proofs of some properties of the associated hyperbolic Cauchy problem. \vspace{7pt}

\section{Preliminaries} \label{sec:prelim}

The following notations will be used throughout the paper. For a subset $E \subset \mathbb{R}^d$, $\mathrm{C}(E)$ is the space of continuous complex-valued functions on $E$; $\mathrm{C}_\mathrm{b}(E)$, $\mathrm{C}_0(E)$ and $\mathrm{C}_\mathrm{c}(E)$ are, respectively, its subspaces of bounded continuous functions, of continuous functions vanishing at infinity and of continuous functions with compact support; $\mathrm{C}^k(E)$ stands for the subspace of $k$ times continuously differentiable functions. $\mathrm{B}_\mathrm{b}(E)$ is the space of complex-valued bounded and Borel measurable functions. The corresponding spaces of real-valued functions are denoted by $\mathrm{C}(E,\mathbb{R})$, $\mathrm{C}_\mathrm{b}(E,\mathbb{R})$, etc. For a given measure $\mu$ on $E$, $L_p(E;\mu)$ ($1 \leq p \leq \infty$) denotes the Lebesgue space of complex-valued $p$-integrable functions with respect to $\mu$. The restriction of a function $f:E \longrightarrow \mathbb{C}$ to a subset $B \subset E$ is denoted by $f\restrict{B}$. The space of probability (respectively, finite positive, finite complex) Borel measures on $E$ will be denoted by $\mathcal{P}(E)$ (respectively, $\mathcal{M}_+(E)$, $\mathcal{M}_\mathbb{C}(E)$). The total variation of $\mu \in \mathcal{M}_\mathbb{C}(E)$ is denoted by $\|\mu\|$, and $\delta_x$ denotes the Dirac measure at a point $x$.

In all that follows we consider a Sturm-Liouville differential expression of the form 
\begin{equation} \label{eq:shypPDE_elldiffexpr}
\ell = -{1 \over r} {d \over dx} \Bigl( p \, {d \over dx}\Bigr), \qquad x \in (a,b)
\end{equation}
($-\infty \leq a < b \leq \infty$), where $p$ and $r$ are (real-valued) coefficients such that $p(x), r(x) > 0$ for all $x \in (a,b)$ and $p, p', r$ and $r'$ are locally absolutely continuous on $(a,b)$. Concerning the behavior of the coefficients at the boundary $x=a$, we will always assume that the boundary condition
\begin{equation}
\label{eq:shypPDE_Lop_leftBC}
\int_a^c \int_y^c {dx \over p(x)} \, r(y) dy < \infty
\end{equation}
(where $c \in (a,b)$ is an arbitrary point) is satisfied.

Some important properties of the solutions of the Sturm-Liouville equation $\ell(u) = \lambda u$ ($\lambda \in \mathbb{C}$) are given in the following three lemmas. The notation $u^{[1]} := p u'$ is used in the sequel.

\begin{lemma} \label{lem:entrpf_ode_wsol}
For each $\lambda \in \mathbb{C}$, there exists a unique solution $w_\lambda(\cdot)$ of the boundary value problem
\begin{equation} \label{eq:entrpf_ode_wsol}
\ell(w) = \lambda w \quad (a < x < b), \qquad\;\; w(a) = 1, \qquad\;\; w^{[1]}(a) = 0.
\end{equation}
Moreover, $\lambda \mapsto w_\lambda(x)$ is, for each fixed $x$, an entire function of exponential type.
\end{lemma}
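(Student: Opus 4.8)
The plan is to recast the boundary value problem \eqref{eq:entrpf_ode_wsol} as a Volterra integral equation with an integrable kernel, and then to solve it by successive approximations. Writing $\ell(w)=\lambda w$ in the form $(pw')'=-\lambda r w$ and integrating twice from the endpoint $a$, using the initial data $w(a)=1$ and $w^{[1]}(a)=0$ (understood as limits at $a$ when $a$ is a singular endpoint), one is led --- after interchanging the order of integration via Fubini's theorem --- to the equation
\[
w(x) = 1 - \lambda \int_a^x G(x,y)\, r(y)\, w(y)\, dy, \qquad\text{where}\quad G(x,y) := \int_y^x {ds \over p(s)} \quad (a < y \le x < b).
\]
The role of the boundary hypothesis \eqref{eq:shypPDE_Lop_leftBC} is precisely to guarantee that the function $\phi(x) := \int_a^x G(x,y)\, r(y)\, dy$ is finite for every $x \in (a,b)$: indeed \eqref{eq:shypPDE_Lop_leftBC} asserts that $\phi(c) < \infty$, and finiteness for an arbitrary $x$ follows by splitting the integral at $c$ and using that $\int_a^c r(y)\, dy < \infty$, which is itself a consequence of \eqref{eq:shypPDE_Lop_leftBC} since $G(c,\cdot)$ stays bounded away from $0$ near $a$. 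By Fubini one checks that $\phi$ is absolutely continuous and nondecreasing on $[a,b)$ with $\phi(a)=0$.

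For existence I would run the Picard scheme $w^{(0)} \equiv 1$, $w^{(n+1)}(x) = 1 - \lambda \int_a^x G(x,y)\, r(y)\, w^{(n)}(y)\, dy$, which yields $w^{(n)}(x) = \sum_{k=0}^{n} (-\lambda)^k f_k(x)$ with $f_0 \equiv 1$ and $f_k(x) = \int_a^x G(x,y)\, r(y)\, f_{k-1}(y)\, dy$. The crucial estimate is $0 \le f_k(x) \le \phi(x)^k/k!$: fixing $X \in (a,b)$ and putting $\Psi(y):=\int_a^y G(X,z)\, r(z)\, dz$, one has $G(y,z)\le G(X,z)$ for $a<z\le y\le X$, so by induction $f_k(y) \le \int_a^y G(X,z)\, r(z)\, \tfrac{\Psi(z)^{k-1}}{(k-1)!}\, dz = \int_a^y \tfrac{\Psi(z)^{k-1}}{(k-1)!}\, d\Psi(z) = \tfrac{\Psi(y)^k}{k!}$ for $y\in[a,X]$, and taking $y=X$ (noting $\Psi(X)=\phi(X)$, with $X$ arbitrary) gives the claimed bound. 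Hence the series $w_\lambda(x):=\sum_{k\ge 0}(-\lambda)^k f_k(x)$ converges uniformly on compact subsets of $[a,b)$, so $w_\lambda \in \mathrm{C}[a,b)$, solves the above Volterra equation, and satisfies $|w_\lambda(x)| \le e^{|\lambda|\,\phi(x)}$. Undoing the Fubini step gives $w_\lambda'(x) = -{\lambda \over p(x)}\int_a^x r(y)\, w_\lambda(y)\, dy$ (so $w_\lambda \in \mathrm{C}^1(a,b)$) and $w_\lambda^{[1]}(x) = -\lambda\int_a^x r(y)\, w_\lambda(y)\, dy$ (so $w_\lambda^{[1]}$ is absolutely continuous on $[a,b)$), whence $-(w_\lambda^{[1]})' = \lambda r w_\lambda$, i.e.\ $\ell(w_\lambda)=\lambda w_\lambda$, while the boundary conditions $w_\lambda(a)=1$, $w_\lambda^{[1]}(a)=0$ are read off at $x=a$. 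Uniqueness follows by applying the same estimate to the difference $v$ of two solutions: $v$ has $v(a)=v^{[1]}(a)=0$, hence satisfies the homogeneous equation $v(x) = -\lambda\int_a^x G(x,y)\, r(y)\, v(y)\, dy$, and iterating gives $|v(x)| \le |\lambda|^k\,\phi(x)^k/k! \cdot \sup_{[a,X]}|v| \to 0$, so $v \equiv 0$.

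Finally, the assertion that $\lambda \mapsto w_\lambda(x)$ is an entire function of exponential type drops out of the representation $w_\lambda(x) = \sum_{k\ge 0}(-\lambda)^k f_k(x)$: since $0 \le f_k(x) \le \phi(x)^k/k!$, this is a power series in $\lambda$ with infinite radius of convergence --- so $w_{(\cdot)}(x)$ is entire --- and it obeys $|w_\lambda(x)| \le e^{|\lambda|\,\phi(x)}$, which is exactly the statement that its exponential type is at most $\phi(x)$.

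The part requiring the most care will be the analysis near the (possibly singular) endpoint $a$: one must justify that the double integration leading to the Volterra equation is legitimate there --- concretely, that $r w_\lambda$ and $p^{-1}\int_a^{\,\cdot} r w_\lambda$ are integrable in a neighbourhood of $a$ --- and this is exactly the content of \eqref{eq:shypPDE_Lop_leftBC}. Once this is in place, the argument is the standard Volterra/Picard iteration carried out with respect to the measure $G(x,y)\, r(y)\, dy$ in place of Lebesgue measure.
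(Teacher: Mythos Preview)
Your proof is correct and follows essentially the same route as the paper: both rewrite the boundary value problem as the Volterra equation $w(x)=1-\lambda\int_a^x G(x,y)\,r(y)\,w(y)\,dy$ (the paper writes $G(x,y)=\mathfrak{s}(x)-\mathfrak{s}(y)$), solve it by the Picard series $w_\lambda(x)=\sum_k(-\lambda)^k f_k(x)$, and bound the iterates by $\phi(x)^k/k!$ via the monotonicity $G(y,z)\le G(X,z)$, yielding the exponential-type estimate. Your write-up is in fact slightly more detailed than the paper's (you spell out why $\phi(x)<\infty$ for all $x$ and give an explicit uniqueness argument), but the underlying mechanism is identical.
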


\begin{proof}
(Adapted from \cite[Lemma 3]{kac1967}.) Let
\[
\eta_0(x) = 1, \qquad \eta_j(x) = \int_a^x \bigl(\mathfrak{s}(x) - \mathfrak{s}(\xi)\bigr) \eta_{j-1}(\xi) r(\xi) d\xi \quad (j=1,2,\ldots).
\]
Pick an arbitrary $\beta \in (a,b)$ and define $\mathcal{S}(x) = \int_a^x \bigl(\mathfrak{s}(\beta) - \mathfrak{s}(\xi)\bigr) r(\xi) d\xi$, where $\mathfrak{s}(x) := \int_c^x {d\xi \over p(\xi)}$. From the boundary assumption \eqref{eq:shypPDE_Lop_leftBC} it follows that $0 \leq \mathcal{S}(x) \leq \mathcal{S}(\beta) < \infty$ for $x \in (a,\beta]$. Furthermore, it is easy to show (using induction) that $|\eta_j(x)| \leq {1 \over j!} (\mathcal{S}(x))^j$ for all $j$. Therefore, the function
\[
w_\lambda(x) = \sum_{j=0}^\infty (-\lambda)^j \eta_j(x) \qquad (a < x \leq \beta, \; \lambda \in \mathbb{C})
\]
is well-defined as an absolutely convergent series. The estimate
\[
|w_\lambda(x)| \leq \sum_{j=0}^\infty |\lambda|^j {(\mathcal{S}(x))^j \over j!} = e^{|\lambda| \mathcal{S}(x)} \leq e^{|\lambda| \mathcal{S}(\beta)} \qquad (a < x \leq \beta)
\]
shows that $\lambda \mapsto w_\lambda(x)$ is entire and of exponential type. In addition, for $a < x \leq \beta$ we have
\begin{align*}
1 - \lambda \int_a^x {1 \over p(y)} \int_a^y w_\lambda(\xi) \, r(\xi) d\xi\, dy & = 1 - \lambda \int_a^x (\mathfrak{s}(x) - \mathfrak{s}(\xi)) w_\lambda(\xi)\, r(\xi) d\xi \\
& = 1-\lambda \int_a^x (\mathfrak{s}(x) - \mathfrak{s}(\xi)) \biggl( \sum_{j=0}^\infty (-\lambda)^j \eta_j(\xi) \biggr) r(\xi) d\xi \\
& = 1 + \sum_{j=0}^\infty (-\lambda)^{j+1} \int_a^x (\mathfrak{s}(x) - \mathfrak{s}(\xi)) \eta_j(\xi)\, r(\xi) d\xi \\
& = 1 + \sum_{j=0}^\infty (-\lambda)^{j+1} \eta_{j+1}(x) \, = \, w_\lambda(x),
\end{align*}
i.e., $w_\lambda(x)$ satisfies
\begin{equation} \label{eq:shypPDE_wsol_inteq}
w_\lambda(x) = 1 - \lambda \int_a^x {1 \over p(y)} \int_a^y w_\lambda(\xi) \, r(\xi)d\xi\, dy
\end{equation}
This integral equation is equivalent to \eqref{eq:entrpf_ode_wsol}, so the proof is complete.
\end{proof}

We note also that the following converse of Lemma \ref{lem:entrpf_ode_wsol} holds: \emph{if $\int_a^c \int_y^c {dx \over p(x)} \, r(y) dy = \infty$ (so that \eqref{eq:shypPDE_Lop_leftBC} fails to hold) then for $\lambda < 0$ there exists no solution of $\ell(w) = \lambda w$ satisfying the boundary conditions $w(a) = 1$ and $w^{[1]}(a) = 0$.}

Indeed, if the integral $\int_a^c \int_y^c {dx \over p(x)} \, r(y) dy$ diverges (which means, according to the Feller boundary classification given in \cite[Section 5.11]{ito2006}, that $a$ is an exit boundary or a natural boundary for the operator $\ell$) then it follows from \cite[Sections 5.13--5.14]{ito2006} that any solution $w$ of $\ell(w) = \lambda w$\, ($\lambda < 0$) either satisfies $w(a) = 0$ or $w^{[1]}(a) = +\infty$, so in particular \eqref{eq:entrpf_ode_wsol} cannot hold.

\begin{lemma} \label{lem:entrpf_ode_wepslimit}
Let $\{a_m\}_{m \in \mathbb{N}}$ be a sequence $b > a_1 > a_2 > \ldots$ with $\lim a_m = a$. For $m \in \mathbb{N}$ and $\lambda \in \mathbb{C}$, let $w_{\lambda,m}(x)$ be the unique solution of the boundary value problem
\begin{equation} \label{eq:entrpf_ode_wsoleps}
\ell(w) = \lambda w \quad (a_m < x < b), \qquad\;\; w(a_m) = 1, \qquad\;\; w^{[1]}(a_m) = 0.
\end{equation}
Then
\begin{equation} \label{eq:shypPDE_ode_wepslimit}
\lim_{m \to \infty} w_{\lambda,m}(x) = w_\lambda(x) \quad\;\: \text{and} \quad\;\; \lim_{m \to \infty} w_{\lambda,m}^{[1]}(x) = w_\lambda^{[1]}(x)
\end{equation}
pointwise for each $a < x < b$ and $\lambda \in \mathbb{C}$.
\end{lemma}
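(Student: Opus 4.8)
The plan is to realise $w_{\lambda,m}$ through the same convergent power series used in the proof of Lemma~\ref{lem:entrpf_ode_wsol} and then to pass to the limit $m\to\infty$ term by term. Fix $x\in(a,b)$, pick $\beta\in(x,b)$, and recall that $\mathfrak s(x)-\mathfrak s(\xi)=\int_\xi^x dt/p(t)$ does not depend on the reference point $c$. For each $m$ set $\eta_{0,m}\equiv 1$ and $\eta_{j,m}(x):=\int_{a_m}^x\bigl(\mathfrak s(x)-\mathfrak s(\xi)\bigr)\eta_{j-1,m}(\xi)\,r(\xi)\,d\xi$ for $j\ge1$. Running the argument of Lemma~\ref{lem:entrpf_ode_wsol} verbatim, but with $a$ replaced by $a_m$, one obtains for all $m$ with $a_m<x$ both the representation $w_{\lambda,m}(x)=\sum_{j\ge0}(-\lambda)^j\eta_{j,m}(x)$ and the estimate $|\eta_{j,m}(x)|\le\frac1{j!}\bigl(\int_{a_m}^x(\mathfrak s(\beta)-\mathfrak s(\xi))r(\xi)\,d\xi\bigr)^{j}\le\frac1{j!}(\mathcal S(\beta))^j$, where $\mathcal S(\beta)=\int_a^\beta(\mathfrak s(\beta)-\mathfrak s(\xi))r(\xi)\,d\xi<\infty$ by~\eqref{eq:shypPDE_Lop_leftBC}. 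The crucial feature is that this bound is \emph{uniform in $m$}, so each series $\sum_j(-\lambda)^j\eta_{j,m}(x)$ is dominated by $e^{|\lambda|\mathcal S(\beta)}$ independently of $m$.

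I would then prove, by induction on $j$, that $\eta_{j,m}(x)\to\eta_j(x)$ as $m\to\infty$ for each fixed $j$ (with $\eta_j$ as in the proof of Lemma~\ref{lem:entrpf_ode_wsol}). The case $j=0$ is trivial, and for the inductive step one writes
\[
\eta_{j,m}(x)-\eta_j(x)=\int_{a_m}^x\bigl(\mathfrak s(x)-\mathfrak s(\xi)\bigr)\bigl[\eta_{j-1,m}(\xi)-\eta_{j-1}(\xi)\bigr]r(\xi)\,d\xi-\int_a^{a_m}\bigl(\mathfrak s(x)-\mathfrak s(\xi)\bigr)\eta_{j-1}(\xi)\,r(\xi)\,d\xi.
\]
Both terms vanish as $m\to\infty$: the second because its integrand is dominated by $\frac{(\mathcal S(\beta))^{j-1}}{(j-1)!}(\mathfrak s(\beta)-\mathfrak s(\xi))r(\xi)\in L_1(a,\beta)$, whence $\int_a^{a_m}(\cdots)\to0$ as $a_m\downarrow a$; and the first by dominated convergence, using the induction hypothesis, the uniform bound from the previous step, and the same integrable majorant. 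To conclude the first limit in~\eqref{eq:shypPDE_ode_wepslimit}, given $\eps>0$ choose $N$ with $\sum_{j>N}2(|\lambda|\mathcal S(\beta))^j/j!<\eps$; then $|w_{\lambda,m}(x)-w_\lambda(x)|\le\sum_{j=0}^N|\lambda|^j|\eta_{j,m}(x)-\eta_j(x)|+\eps\to\eps$ as $m\to\infty$, so $w_{\lambda,m}(x)\to w_\lambda(x)$, and $x\in(a,b)$ was arbitrary.

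For the quasi-derivatives, I would differentiate~\eqref{eq:shypPDE_wsol_inteq} and its analogue at $a_m$ to get $w_\lambda^{[1]}(x)=-\lambda\int_a^x w_\lambda(\xi)r(\xi)\,d\xi$ and $w_{\lambda,m}^{[1]}(x)=-\lambda\int_{a_m}^x w_{\lambda,m}(\xi)r(\xi)\,d\xi$. These integrals converge absolutely: indeed~\eqref{eq:shypPDE_Lop_leftBC} forces $r$ to be integrable near $a$, because $\int_y^c dt/p(t)$ is bounded below by a positive constant as $y\downarrow a$ and hence $\int_a^c r(y)\,dy\le\mathrm{const}\cdot\int_a^c\bigl(\int_y^c dt/p(t)\bigr)r(y)\,dy<\infty$, while $w_\lambda$ and all the $w_{\lambda,m}$ are uniformly bounded on $(a,\beta]$ by the estimate above. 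Writing $w_{\lambda,m}^{[1]}(x)-w_\lambda^{[1]}(x)=-\lambda\int_{a_m}^x(w_{\lambda,m}-w_\lambda)r\,d\xi-\lambda\int_a^{a_m}w_\lambda\,r\,d\xi$, the first term tends to $0$ by dominated convergence (using the pointwise convergence just established, the uniform bound, and $r\in L_1$ near $a$) and the second because $w_\lambda\,r\in L_1(a,x)$; this gives the second limit in~\eqref{eq:shypPDE_ode_wepslimit}.

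The routine parts of this plan — the integral equation and the power-series bounds for $w_{\lambda,m}$ — are word-for-word copies of the proof of Lemma~\ref{lem:entrpf_ode_wsol}. The only genuine point is the interchange of $\lim_{m\to\infty}$ with the infinite sum $\sum_j$, that is, passing to the limit at the \emph{singular} left endpoint $a$, where the classical theorems on continuous dependence of ODE solutions on the initial point do not apply; this is precisely what the uniform-in-$m$ domination $|\eta_{j,m}(x)|\le(\mathcal S(\beta))^j/j!$ is designed to handle, so I do not expect a serious obstacle.
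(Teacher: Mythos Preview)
Your proof is correct and follows essentially the same approach as the paper's own proof: both construct $w_{\lambda,m}$ via the iterated-integral power series with the uniform bound $|\eta_{j,m}(x)|\le (\mathcal S(\beta))^j/j!$, prove $\eta_{j,m}\to\eta_j$ by induction, interchange limit and sum, and then handle the quasi-derivative via the integral formula $w_{\lambda,m}^{[1]}(x)=-\lambda\int_{a_m}^x w_{\lambda,m}\,r\,d\xi$ and dominated convergence. Your write-up is in fact more detailed than the paper's in two places --- you spell out the inductive step for $\eta_{j,m}\to\eta_j$ explicitly, and you justify why $r\in L_1$ near $a$ (which the paper uses implicitly when it invokes dominated convergence with the majorant $e^{|\lambda|\mathcal S(\beta)}r(\xi)$).
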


\begin{proof}
In the same way as in the proof of Lemma \ref{lem:entrpf_ode_wsol} we can check that the solution of \eqref{eq:entrpf_ode_wsoleps} is given by
\[
w_{\lambda,m}(x) = \sum_{j=0}^\infty (-\lambda)^j \eta_{j,m}(x) \qquad (a_m < x < b, \; \lambda \in \mathbb{C})
\]
where $\eta_{0,m}(x) = 1$ and $\eta_{j,m}(x) = \int_{a_m}^x \bigl(\mathfrak{s}(x) - \mathfrak{s}(\xi)\bigr) \eta_{j-1,m}(\xi) r(\xi) d\xi$. As before we have $|\eta_{j,m}(x)| \leq {1 \over j!} (\mathcal{S}(x))^j$ for $a_m < x \leq \beta$ (where $\mathcal{S}$ is the function from the proof of Lemma \ref{lem:entrpf_ode_wsol}). Using this estimate and induction on $j$, it is easy to see that $\eta_{j,m}(x) \to \eta_j(x)$ as $m \to \infty$\, ($a < x \leq \beta$, $j=0,1,\ldots$). Noting that the estimate on $|\eta_{j,m}(x)|$ allows us to take the limit under the summation sign, we conclude that $w_{\lambda,m}(x) \to w_\lambda(x)$ as $m \to \infty$ ($a < x \leq \beta$). Finally, by \eqref{eq:shypPDE_wsol_inteq} we have 
\[
\lim_{m \to \infty} w_{\lambda,m}^{[1]}(x) = - \lambda\lim_{m \to \infty} \int_{a_m}^x w_{\lambda,m}(\xi) \, r(\xi) d\xi = -\lambda \int_a^x w_\lambda(\xi) \, r(\xi) d\xi = w_\lambda^{[1]}(x) \qquad (a < x \leq \beta). \qedhere
\]
using dominated convergence and the estimates $|w_{\lambda,m}(x)| \leq e^{|\lambda| \mathcal{S}(\beta)}$,\, $|w_\lambda(x)| \leq e^{|\lambda| \mathcal{S}(\beta)}$.
\end{proof}

\begin{lemma} \label{lem:shypPDE_wsolbound}
If $x \mapsto p(x)r(x)$ is an increasing function, then the solution of \eqref{eq:entrpf_ode_wsol} is bounded:
\begin{equation} \label{eq:shypPDE_wsolbound}
|w_\lambda(x)| \leq 1 \qquad \text{for all } \, a < x < b, \; \lambda \geq 0.
\end{equation}
\end{lemma}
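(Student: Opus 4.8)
The plan is to exhibit an \emph{energy functional} that is monotone along the solution and whose value at the left endpoint controls $w_\lambda(x)^2$. For $\lambda=0$ the equation forces $w_0\equiv 1$, so there is nothing to prove; assume $\lambda>0$ from now on.

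To avoid the difficulty that $p$ or $r$ may vanish or blow up at $x=a$, I would first argue with the regularized solutions $w_{\lambda,m}$ of Lemma \ref{lem:entrpf_ode_wepslimit}, whose boundary data are prescribed at an interior point $a_m\in(a,b)$ where $0<p(a_m)r(a_m)<\infty$. Writing $\ell(w_{\lambda,m})=\lambda w_{\lambda,m}$ as $\bigl(w_{\lambda,m}^{[1]}\bigr)'=-\lambda r\,w_{\lambda,m}$ (together with $w_{\lambda,m}'=w_{\lambda,m}^{[1]}/p$), the integral representations in the proof of Lemma \ref{lem:entrpf_ode_wepslimit} show that $w_{\lambda,m}$ and $w_{\lambda,m}^{[1]}$ are continuous on $[a_m,b)$ and continuously differentiable on $(a_m,b)$. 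I would then set
\[
E_m(x):=\frac{\bigl(w_{\lambda,m}^{[1]}(x)\bigr)^2}{2\,p(x)r(x)}+\frac{\lambda}{2}\,w_{\lambda,m}(x)^2,\qquad a_m\le x<b,
\]
and compute $E_m'$ using the two identities above: the two terms proportional to $\lambda\,w_{\lambda,m}\,w_{\lambda,m}^{[1]}/p$ cancel, leaving
\[
E_m'(x)=-\frac{\bigl(w_{\lambda,m}^{[1]}(x)\bigr)^2\,(pr)'(x)}{2\,\bigl(p(x)r(x)\bigr)^2}.
\]
Since $p,p',r,r'$ are locally absolutely continuous, $pr\in\mathrm{C}^1$, and the hypothesis that $pr$ is increasing gives $(pr)'\ge 0$; hence $E_m'\le 0$, i.e.\ $E_m$ is non-increasing on $[a_m,b)$.

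From the boundary conditions $w_{\lambda,m}(a_m)=1$ and $w_{\lambda,m}^{[1]}(a_m)=0$ we get $E_m(a_m)=\lambda/2$, so monotonicity yields $\tfrac{\lambda}{2}\,w_{\lambda,m}(x)^2\le E_m(x)\le \tfrac{\lambda}{2}$, that is, $|w_{\lambda,m}(x)|\le 1$ for every $x\in[a_m,b)$. Finally, for fixed $x\in(a,b)$ one has $a_m<x$ for all large $m$, so the bound applies; letting $m\to\infty$ and invoking the pointwise convergence $w_{\lambda,m}(x)\to w_\lambda(x)$ from Lemma \ref{lem:entrpf_ode_wepslimit} gives $|w_\lambda(x)|\le 1$.

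The only genuinely delicate point is guessing the functional $E_m$ (and noticing that the cross terms cancel precisely because of the structure of $\ell$); the rest---the $\mathrm{C}^1$-regularity of $w_{\lambda,m}$ and $w_{\lambda,m}^{[1]}$, the differentiation of $E_m$, and the passage to the limit $m\to\infty$---is routine. The reason for passing to $w_{\lambda,m}$ rather than working with $w_\lambda$ directly is exactly to evaluate the energy at a point where the coefficients are finite and strictly positive, thereby sidestepping the behavior of $p$ and $r$ at the (possibly singular) endpoint $a$.
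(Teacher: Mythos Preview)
Your proof is correct and is essentially the same as the paper's. The paper multiplies $\ell(w_\lambda)=\lambda w_\lambda$ by $2w_\lambda^{[1]}$ and integrates by parts to obtain the identity
\[
\lambda\bigl(1-w_\lambda(x)^2\bigr)=\frac{w_\lambda^{[1]}(x)^2}{p(x)r(x)}+\int_a^x (pr)'(\xi)\Bigl(\tfrac{w_\lambda^{[1]}(\xi)}{p(\xi)r(\xi)}\Bigr)^{2}d\xi,
\]
which is exactly the integrated form of your differential identity $E_m'=-\tfrac{(w^{[1]})^2(pr)'}{2(pr)^2}$; the only cosmetic difference is that the paper first treats the case $p(a)r(a)>0$ directly with $w_\lambda$ and only invokes the regularization $w_{\lambda,m}$ when $p(a)r(a)=0$, whereas you uniformly work with $w_{\lambda,m}$ throughout.
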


\begin{proof}
(Adapted from \cite[Proposition 4.3]{zeuner1992}.) Let us start by assuming that $p(a)r(a) > 0$. For $\lambda = 0$ the result is trivial because $w_0(x) \equiv 1$. Fix $\lambda > 0$. Multiplying both sides of the differential equation $\ell(w_\lambda) = \lambda w_\lambda$ by $2w_\lambda^{[1]}$, we obtain $-{1 \over pr} [(w_\lambda^{[1]})^2]' = \lambda (w_\lambda^2)'$. Integrating the differential equation and then using integration by parts, we get
\begin{align*}
\lambda\bigl(1-w_\lambda(x)^2\bigr) & = \int_a^x {1 \over p(\xi) r(\xi)} \bigl(w_\lambda^{[1]}(\xi)^2\bigr)' d\xi \\
& = {w_\lambda^{[1]}(x)^2 \over p(x) r(x)} + \int_a^x \bigl(p(\xi)r(\xi)\bigr)' \biggl({w_\lambda^{[1]}(\xi) \over p(\xi)r(\xi)}\biggr)^{\!2} d\xi, \qquad a < x < b
\end{align*}
where we also used the fact that $w_\lambda^{[1]}(a) = 0$ and the assumption that $p(a)r(a) > 0$. The right hand side is nonnegative, because $x \mapsto p(x) r(x)$ is increasing and therefore $(p(\xi)r(\xi))' \geq 0$. Given that $\lambda > 0$, it follows that $1 - w_\lambda(x)^2 \geq 0$, so that $|w_\lambda(x)| \leq 1$.

If $p(a)r(a) = 0$, the above proof can be used to show that the solution of \eqref{eq:entrpf_ode_wsoleps} is such that $|w_{\lambda,m}(x)| \leq 1$ for all $a < x < b$, $\lambda \geq 0$ and $m \in \mathbb{N}$; then Lemma \ref{lem:entrpf_ode_wepslimit} yields the desired result.
\end{proof}

\begin{remark} \label{rmk:shypPDE_tildeell}
We shall make extensive use of the fact that the differential expression \eqref{eq:shypPDE_elldiffexpr} can be transformed into the standard form
\[
\widetilde{\ell} = - {1 \over A} {d \over d\xi} \Bigl(A {d \over d\xi} \Bigr) = -{d^2 \over d\xi^2} - {A' \over A} {d \over d\xi}.
\]
This is achieved by setting 
\begin{equation} \label{eq:shypPDE_tildeell_A}
A(\xi) := \sqrt{p(\gamma^{-1}(\xi)) \, r(\gamma^{-1}(\xi))},
\end{equation}
where $\gamma^{-1}$ is the inverse of the increasing function 
\[
\gamma(x) = \int_c^x\! \smash{\sqrt{r(y) \over p(y)}} dy,
\]
$c \in (a,b)$ being a fixed point (if $\smash{\sqrt{r(y) \over p(y)}}$ is integrable near $a$, we may also take $c=a$). Indeed, it is straightforward to check that a given function $\omega_\lambda: (a,b) \to \mathbb{C}$ satisfies $\ell(\omega_\lambda) = \lambda \omega_\lambda$ if and only if $\widetilde{\omega}_\lambda(\xi) := \omega_\lambda(\gamma^{-1}(\xi))$ satisfies $\widetilde{\ell}(\widetilde{\omega}_\lambda) = \lambda \widetilde{\omega}_\lambda$. It is interesting to note that the assumption of the previous lemma ($x \mapsto p(x) r(x)$ is increasing) is equivalent to requiring that the first-order coefficient $A' \over A$ of the transformed operator $\widetilde{\ell}$ is nonnegative.
\end{remark}

As it is well-known, the spectral expansion of self-adjoint realizations of the differential operator \eqref{eq:shypPDE_elldiffexpr} in the space $L_2\bigl((a,b); r(x)dx\bigr)$ give rise to a Sturm-Liouville type integral transform. The next proposition collects some basic facts from the theory of eigenfunction expansions of Sturm-Liouville operators. For brevity we write $L_p(r) := L_p\bigl((a,b); r(x)dx\bigr)$ and $\|\cdot\|_p := \|\cdot\|_{L_p(r)}$.

\begin{proposition} \label{prop:entrpf_Ffourier}
Suppose that $b$ is (according to Feller's boundary classification) a natural boundary for the differential expression $\ell$, that is, the coefficients of $\ell$ satisfy
\[
\int_c^b \int_y^b {dx \over p(x)} \, r(y) dy = \int_c^b \int_c^y {dx \over p(x)} \, r(y) dy = \infty.
\]
Then the operator
\[
\mathcal{L}: \mathcal{D}_\mathcal{L}^{(2)} \subset L_2(r) \longrightarrow L_2(r), \qquad\quad \mathcal{L} u = \ell(u)
\]
where
\begin{equation} \label{eq:shypPDE_Lop_L2domain}
\mathcal{D}_\mathcal{L}^{(2)} := \Bigl\{ u \in L_2(r) \Bigm| u \text{ and } u' \text{ locally abs.\ continuous on } (a,b), \; \ell(u) \in L_2(r), \; \lim_{x \downarrow a} u^{[1]}(x) = 0 \Bigr\}
\end{equation}
is self-adjoint. There exists a unique locally finite positive Borel measure $\bm{\rho}_\mathcal{L}$ on $\mathbb{R}$ such that the map $h \mapsto \mathcal{F} h$, where
\begin{equation} \label{eq:entrpf_Ffourierdef}
(\mathcal{F} h)(\lambda) := \int_a^b h(x) \, w_\lambda(x) \, r(x) dx \qquad \bigl(h \in \mathrm{C}_\mathrm{c}[a,b), \; \lambda \geq 0\bigr),
\end{equation}
induces an isometric isomorphism $\mathcal{F}: L_2(r) \longrightarrow L_2(\mathbb{R}; \bm{\rho}_\mathcal{L})$ whose inverse is given by
\begin{equation} \label{eq:entrpf_Ffourierinverse}
(\mathcal{F}^{-1} \varphi)(x) = \int_\mathbb{R} \varphi(\lambda) \, w_\lambda(x) \, \bm{\rho}_\mathcal{L}(d\lambda),
\end{equation}
the convergence of the latter integral being understood with respect to the norm of $L_2(r)$. The spectral measure $\bm{\rho}_\mathcal{L}$ is supported on $[0,\infty)$. Moreover, the differential operator $\mathcal{L}$ is connected with the integral transform \eqref{eq:entrpf_Ffourierdef} via the identity
\begin{equation} \label{eq:shypPDE_Ltransfidentity}
[\mathcal{F} (\mathcal{L} h)] (\lambda) = \lambda \ccdot (\mathcal{F} h)(\lambda), \qquad h \in \mathcal{D}_\mathcal{L}^{(2)}
\end{equation}
and the domain $\mathcal{D}_\mathcal{L}^{(2)}$ defined by \eqref{eq:shypPDE_Lop_L2domain} can be written as
\begin{equation} \label{eq:shypPDE_LtransfidentD2}
\mathcal{D}_\mathcal{L}^{(2)} = \Bigl\{ u \in L_2(r) \Bigm| \lambda \ccdot (\mathcal{F} f)(\lambda) \in L_2\bigl([0,\infty); \bm{\rho}_\mathcal{L}\bigr) \Bigr\}.
\end{equation}
\end{proposition}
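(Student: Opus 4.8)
The plan is to deduce everything from the general spectral theory of singular Sturm--Liouville operators (as developed, e.g., in \cite{teschl2014}), after verifying that our hypotheses on the coefficients place us within its scope. It is convenient to note first that, by Remark \ref{rmk:shypPDE_tildeell}, the substitution $x = \gamma^{-1}(\xi)$ induces a unitary $U : L_2(r) \to L_2\bigl((\gamma(a),\gamma(b)); A(\xi)\,d\xi\bigr)$, $Uh = h \circ \gamma^{-1}$, which intertwines $\mathcal{L}$ with the operator associated with the normal form $\widetilde{\ell}$; so there is no loss in performing the analysis either before or after this reduction. The heart of the argument is a boundary analysis. In Feller's classification, condition \eqref{eq:shypPDE_Lop_leftBC} holds precisely when $a$ is regular or entrance, and in either case $\int_a^c r(x)\,dx < \infty$ --- for an entrance endpoint this is because $\int_y^c \tfrac{dx}{p(x)}$ stays bounded below by a positive constant as $y \downarrow a$, forcing $r$ to be integrable near $a$ --- so that the constant function $w_0 \equiv 1$ belongs to $L_2(r)$ near $a$. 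On the other hand, the assumption that $b$ is natural forces $b$ to be in the limit-point case: by the divergence conditions defining a natural boundary, at least one of the two solutions $1$ and $\int_c^x \tfrac{dy}{p(y)}$ of $\ell(u) = 0$ fails to be square-integrable near $b$.

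Self-adjointness of $\mathcal{L}$ on $\mathcal{D}_\mathcal{L}^{(2)}$ then follows in the usual way. No boundary condition is needed at $b$. At $a$, if $a$ is in the limit-circle case then the generalized boundary value $u^{[1]}(a) := \lim_{x \downarrow a} u^{[1]}(x)$ exists for every $u$ in the maximal domain --- it equals the boundary bilinear form of $u$ against the solution $1 \in L_2(r)$ near $a$ --- and the separated condition $u^{[1]}(a) = 0$ defines a self-adjoint restriction; if $a$ is in the limit-point case, the maximal operator is already self-adjoint, and pairing any $u$ in its domain against $1$ in the (vanishing) boundary form at $a$ shows $\lim_{x \downarrow a} u^{[1]}(x) = 0$ automatically. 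In both cases $\mathcal{L}$ is self-adjoint with the domain \eqref{eq:shypPDE_Lop_L2domain}, and for each $\lambda \in \mathbb{C}$ the function $w_\lambda(\cdot)$ of Lemma \ref{lem:entrpf_ode_wsol} is --- up to normalization --- the unique solution of $\ell(w) = \lambda w$ realizing this boundary condition at $a$.

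The eigenfunction expansion theorem for self-adjoint Sturm--Liouville operators therefore applies, with $w_\lambda$ as the generating solution: it furnishes a unique locally finite positive Borel measure $\bm{\rho}_\mathcal{L}$ on $\mathbb{R}$ such that \eqref{eq:entrpf_Ffourierdef}, initially defined on $\mathrm{C}_\mathrm{c}[a,b)$, extends to a unitary $\mathcal{F} : L_2(r) \to L_2(\mathbb{R}; \bm{\rho}_\mathcal{L})$ with inverse given by \eqref{eq:entrpf_Ffourierinverse} (convergence in $L_2(r)$), and such that $\mathcal{F}$ is a spectral representation of $\mathcal{L}$, i.e.\ conjugates it into multiplication by the independent variable. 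The latter is precisely the identity \eqref{eq:shypPDE_Ltransfidentity}; it may also be verified directly on a core by integrating $\int_a^b \ell(h)\,w_\lambda\,r\,dx$ by parts twice, using $\ell(w_\lambda) = \lambda w_\lambda$ and the vanishing of the boundary terms (at $b$ since $h$ has compact support, at $a$ by the boundary condition there). The domain identity \eqref{eq:shypPDE_LtransfidentD2} is then the preimage under $\mathcal{F}$ of the domain of that multiplication operator.

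It remains to locate the spectrum. For $u \in \mathcal{D}_\mathcal{L}^{(2)}$,
\[
\langle \mathcal{L} u, u \rangle_{L_2(r)} = -\int_a^b (p\,u')'\,\overline{u}\,dx = \Bigl[ -p\,u'\,\overline{u} \Bigr]_a^b + \int_a^b p\,|u'|^2\,dx = \int_a^b p\,|u'|^2\,dx \ \geq\ 0,
\]
the boundary term vanishing at $a$ because $u^{[1]}(a) = 0$ and at $b$ because $b$ is limit-point (so the sesquilinear boundary form there is identically zero on the domain). Hence $\mathcal{L} \geq 0$, so $\sigma(\mathcal{L}) \subseteq [0,\infty)$ by the spectral theorem, and therefore $\supp \bm{\rho}_\mathcal{L} \subseteq [0,\infty)$; in particular the restriction $\lambda \geq 0$ in \eqref{eq:entrpf_Ffourierdef} is immaterial. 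The step I expect to require the most care is the boundary analysis of the first two paragraphs: pinning down the limit-point/limit-circle status of the endpoints under only the mild regularity assumed on $p$ and $r$, verifying that $\lim_{x \downarrow a} u^{[1]}(x)$ is indeed a well-defined, self-adjoint generalized boundary value, and confirming that the normalization $w_\lambda(a) = 1$, $w_\lambda^{[1]}(a) = 0$ singles out the solution entering the expansion. Once that is settled, the remaining assertions are a routine application of the general theory together with the integration-by-parts computations above.
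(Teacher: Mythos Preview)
Your proposal is correct and follows essentially the same route as the paper: both derive the result from the general Weyl--Titchmarsh--Kodaira spectral theory after a boundary analysis showing $b$ is limit-point and $a$ admits the Neumann-type condition $u^{[1]}(a)=0$. The paper's proof is in fact terser than yours---it simply cites \cite{mckean1956,linetsky2004} for self-adjointness and \cite{weidmann1987,sousayakubovich2018} for the expansion---while you supply the boundary discussion and the integration-by-parts argument for nonnegativity explicitly. The one point where the paper is more precise is the reduction from the general $2\times 2$ matrix spectral measure to a scalar one: it invokes \cite{eckhardt2013}, noting that this reduction is justified because $w_\lambda$ lies in $L_2(r)$ near $a$ (from \eqref{eq:shypPDE_Lop_leftBC}) and is entire in $\lambda$ (Lemma~\ref{lem:entrpf_ode_wsol}); this is exactly the step you flag in your final paragraph as requiring care.
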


\begin{proof}
The fact that $(\mathcal{L}, \mathcal{D}_\mathcal{L}^{(2)})$ is self-adjoint is well-known, see \cite{mckean1956,linetsky2004}. The existence of a generalized Fourier transform associated with the operator $\mathcal{L}$ is a consequence of the standard Weyl-Titchmarsh-Kodaira theory of eigenfunction expansions of Sturm-Liouville operators (cf.\ \cite[Section 3.1]{sousayakubovich2018} and \cite[Section 8]{weidmann1987}).

In the general case the eigenfunction expansion is written in terms of two linearly independent eigenfunctions and a $2 \times 2$ matrix measure. However, from the regular/entrance boundary assumption \eqref{eq:shypPDE_Lop_leftBC} it follows that the function $w_\lambda(x)$ is square-integrable near $x = 0$ with respect to the measure $r(x)dx$; moreover, by Lemma \ref{lem:entrpf_ode_wsol}, $w_\lambda(x)$ is (for fixed $x$) an entire function of $\lambda$. Therefore, the possibility of writing the expansion in terms only of the eigenfunction $w_\lambda(x)$ follows from the results of \cite[Sections 9 and 10]{eckhardt2013}.
\end{proof}

The integral transform $(\mathcal{F} h)(\lambda) = \int_a^b h(x) \, w_\lambda(x) \, r(x) dx$ is the so-called \emph{$\mathcal{L}$-transform}. It is often important to know whether the inversion integral for the $\mathcal{L}$-transform is absolutely convergent. A sufficient condition is provided by the following lemma:

\begin{lemma} \label{lem:shypPDE_Lfourier_D2prop}
\textbf{(a)} For each $\mu \in \mathbb{C} \setminus\mathbb{R}$, the integrals
\begin{equation} \label{eq:shypPDE_Lresolv_specunif}
\int_{[0,\infty)} {w_\lambda(x)\, w_\lambda(y) \over |\lambda - \mu|^2} \bm{\rho}_\mathcal{L}(d\lambda) \qquad\; \text{and} \qquad\; \int_{[0,\infty)} {w_\lambda^{[1]}(x)\, w_\lambda^{[1]}(y) \over |\lambda - \mu|^2} \bm{\rho}_\mathcal{L}(d\lambda)
\end{equation}
converge uniformly on compact squares in $(a,b)^2$. \\[-8pt]

\textbf{(b)} If $h \in \mathcal{D}_{\mathcal{L}}^{(2)}$, then 
\begin{align} 
\label{eq:shypPDE_Lfourier_D2prop}
h(x) & = \int_{[0,\infty)}\! (\mathcal{F}h)(\lambda) \, w_\lambda(x) \, \bm{\rho}_\mathcal{L}(d\lambda)\\
\label{eq:shypPDE_Lfourier_D2propderiv}
h^{[1]}(x) & = \int_{[0,\infty)}\! (\mathcal{F}h)(\lambda) \, w_\lambda^{[1]}(x) \, \bm{\rho}_\mathcal{L}(d\lambda)
\end{align}
where the right-hand side integrals converge absolutely and uniformly on compact subsets of $(a,b)$.
\end{lemma}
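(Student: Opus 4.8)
The plan is to prove part (a) first, via the resolvent of the self-adjoint operator $\mathcal{L}$, and then to deduce part (b) from it together with the $L_2$-inversion formula \eqref{eq:entrpf_Ffourierinverse}. Since $\sigma(\mathcal{L})\subset[0,\infty)$, every $\mu\in\mathbb{C}\setminus\mathbb{R}$ lies in the resolvent set, and the Weyl--Titchmarsh--Kodaira description of Sturm--Liouville resolvents (cf.\ the references in the proof of Proposition \ref{prop:entrpf_Ffourier}) exhibits $R_\mu:=(\mathcal{L}-\mu)^{-1}$ as the integral operator $(R_\mu h)(x)=\int_a^b G_\mu(x,y)\,h(y)\,r(y)\,dy$ with Green kernel $G_\mu(x,y)=W_\mu^{-1}\,w_\mu(x\wedge y)\,\psi_\mu(x\vee y)$. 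Here $w_\mu$ is the solution of Lemma \ref{lem:entrpf_ode_wsol} (with the complex parameter $\mu$), $\psi_\mu$ is a nontrivial solution of $\ell(w)=\mu w$ square-integrable near $b$ --- unique up to a constant, because the self-adjointness of $(\mathcal{L},\mathcal{D}_\mathcal{L}^{(2)})$ forces $\ell$ to be in the limit-point case at $b$ --- and $W_\mu=w_\mu\psi_\mu^{[1]}-w_\mu^{[1]}\psi_\mu\neq0$ is their (constant) modified Wronskian. Then $G_\mu$ is jointly continuous on $(a,b)^2$, and for each fixed $y_0$ one has $G_\mu(\cdot,y_0)\in L_2(r)$, since $w_\mu\in L_2(r)$ near $a$ by \eqref{eq:shypPDE_Lop_leftBC} (cf.\ the proof of Proposition \ref{prop:entrpf_Ffourier}) and $\psi_\mu\in L_2(r)$ near $b$.

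The first key step is the identity $\mathcal{F}[G_\mu(\cdot,y_0)](\lambda)=w_\lambda(y_0)/(\lambda-\mu)$ in $L_2(\mathbb{R};\bm{\rho}_\mathcal{L})$. To obtain it, take an approximate identity $\phi_n\in\mathrm{C}_\mathrm{c}(a,b)$ concentrating at $y_0$; dominated convergence, with the dominating function $\sup_{z\in V}|G_\mu(\cdot,z)|\in L_2(r)$ ($V$ a neighbourhood of $y_0$ with $\overline V\subset(a,b)$), gives $R_\mu\phi_n\to G_\mu(\cdot,y_0)$ in $L_2(r)$, so $\mathcal{F}(R_\mu\phi_n)=(\lambda-\mu)^{-1}\mathcal{F}\phi_n\to\mathcal{F}[G_\mu(\cdot,y_0)]$ in $L_2(\bm{\rho}_\mathcal{L})$, whereas $\mathcal{F}\phi_n(\lambda)=\int\phi_n w_\lambda\,r\to w_\lambda(y_0)$ pointwise; a subsequence converging a.e.\ identifies the two limits. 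Parseval's identity for $\mathcal{F}$, together with $w_\lambda(y)\in\mathbb{R}$ and $(\lambda-\mu)(\lambda-\overline\mu)=|\lambda-\mu|^2$ for $\lambda\ge0$, then yields
\[
\int_{[0,\infty)}\frac{w_\lambda(x)\,w_\lambda(y)}{|\lambda-\mu|^2}\,\bm{\rho}_\mathcal{L}(d\lambda)=\langle G_\mu(\cdot,x),\,G_\mu(\cdot,y)\rangle_{L_2(r)},
\]
so the first integral in \eqref{eq:shypPDE_Lresolv_specunif} converges and its value is jointly continuous in $(x,y)$ (on the diagonal it equals $\|G_\mu(\cdot,x)\|_{L_2(r)}^2$). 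For the second integral in \eqref{eq:shypPDE_Lresolv_specunif} I would argue more directly: integrating $\ell(w_\lambda)=\lambda w_\lambda$ and using $w_\lambda^{[1]}(a)=0$ gives $w_\lambda^{[1]}(y)=-\lambda\int_a^y w_\lambda\,r=-\lambda\,(\mathcal{F}\mathbf{1}_{(a,y)})(\lambda)$, where $\mathbf{1}_{(a,y)}\in L_2(r)$ because \eqref{eq:shypPDE_Lop_leftBC} forces $\int_a^y r<\infty$; since $C_\mu:=\sup_{\lambda\ge0}\lambda^2/|\lambda-\mu|^2<\infty$, Parseval gives $\int_{[0,\infty)}|w_\lambda^{[1]}(y)|^2|\lambda-\mu|^{-2}\,\bm{\rho}_\mathcal{L}(d\lambda)\le C_\mu\int_a^y r$.

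Uniform convergence on a compact square $[c,d]^2\subset(a,b)^2$ then follows from a Dini-type argument on the diagonal plus Cauchy--Schwarz off it. The partial integrals $x\mapsto\int_{[0,N]}|w_\lambda(x)|^2|\lambda-\mu|^{-2}\bm{\rho}_\mathcal{L}(d\lambda)$ are continuous ($\bm{\rho}_\mathcal{L}$ locally finite, $(\lambda,x)\mapsto w_\lambda(x)$ jointly continuous), increase with $N$, and converge pointwise to the continuous function $\|G_\mu(\cdot,x)\|_{L_2(r)}^2$; by Dini's theorem $\sup_{x\in[c,d]}\int_{[N,\infty)}|w_\lambda(x)|^2|\lambda-\mu|^{-2}\bm{\rho}_\mathcal{L}(d\lambda)\to0$, and the mixed tail $\int_{[N,\infty)}w_\lambda(x)w_\lambda(y)|\lambda-\mu|^{-2}\bm{\rho}_\mathcal{L}(d\lambda)$ is controlled by the product of the two diagonal tails. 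The $w^{[1]}$-integral is handled in the same way, using instead that $\{\mathbf{1}_{(a,y)}:y\in[c,d]\}$ is a norm-compact subset of $L_2(r)$ and that the spectral projections $E_{[0,N]}$ converge strongly (hence uniformly on compacta) to the identity, so $\sup_{y\in[c,d]}\int_{[N,\infty)}|(\mathcal{F}\mathbf{1}_{(a,y)})(\lambda)|^2\bm{\rho}_\mathcal{L}(d\lambda)\to0$. I expect this to be the crux of the proof: the mere finiteness of $\int_{[0,\infty)}|w_\lambda(x)|^2|\lambda-\mu|^{-2}\bm{\rho}_\mathcal{L}(d\lambda)$ is not elementary --- because $\bm{\rho}_\mathcal{L}$ is an infinite measure and $w_\lambda(x)$ is in general unbounded in $\lambda$ --- and it genuinely requires recognizing this integral as the squared $L_2(r)$-norm of the resolvent kernel $G_\mu(\cdot,x)$.

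For part (b), fix $h\in\mathcal{D}_\mathcal{L}^{(2)}$; by \eqref{eq:shypPDE_LtransfidentD2} both $\mathcal{F}h$ and $\lambda\mathcal{F}h$, hence $(\lambda-\mu)\mathcal{F}h$, belong to $L_2([0,\infty);\bm{\rho}_\mathcal{L})$. Cauchy--Schwarz combined with part (a) bounds $\int_{[N,\infty)}|(\mathcal{F}h)(\lambda)|\,|w_\lambda(x)|\,\bm{\rho}_\mathcal{L}(d\lambda)$ by the product of $(\int_{[N,\infty)}|(\lambda-\mu)(\mathcal{F}h)(\lambda)|^2\bm{\rho}_\mathcal{L}(d\lambda))^{1/2}$, which tends to $0$, and $(\int_{[N,\infty)}|w_\lambda(x)|^2|\lambda-\mu|^{-2}\bm{\rho}_\mathcal{L}(d\lambda))^{1/2}$, which is bounded uniformly for $x$ in a compact set; the same estimate holds with $w_\lambda^{[1]}$ in place of $w_\lambda$. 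Hence the integrals in \eqref{eq:shypPDE_Lfourier_D2prop}--\eqref{eq:shypPDE_Lfourier_D2propderiv} converge absolutely and uniformly on compact subsets of $(a,b)$; denote their values by $\widetilde h(x)$, $\widetilde h^{[1]}(x)$ and the corresponding integrals over $[0,N]$ by $\widetilde h_N$, $\widetilde h_N^{[1]}$, which are continuous in $x$. By \eqref{eq:entrpf_Ffourierinverse}, $\widetilde h_N=\mathcal{F}^{-1}(\mathbf{1}_{[0,N]}\mathcal{F}h)\to h$ in $L_2(r)$, while $\widetilde h_N\to\widetilde h$ locally uniformly; since $h$ has a continuous representative, passing to an a.e.-convergent subsequence gives $\widetilde h=h$ on $(a,b)$, which is \eqref{eq:shypPDE_Lfourier_D2prop}. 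Finally, differentiating $\widetilde h_N$ under the (finite-measure) integral gives $p\,\widetilde h_N'=\widetilde h_N^{[1]}\to\widetilde h^{[1]}$ locally uniformly, so $\widetilde h_N'\to\widetilde h^{[1]}/p$ locally uniformly; combined with $\widetilde h_N\to h$ locally uniformly, the classical theorem on differentiation of locally uniform limits gives $h^{[1]}=p\,h'=\widetilde h^{[1]}$, which is \eqref{eq:shypPDE_Lfourier_D2propderiv}.
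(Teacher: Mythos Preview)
Your proof is correct and follows essentially the same strategy as the paper: for part (a) you identify the first integral in \eqref{eq:shypPDE_Lresolv_specunif} with $\langle G_\mu(\cdot,x),G_\mu(\cdot,y)\rangle_{L_2(r)}$ via Parseval (the paper obtains the equivalent formula ${1\over\mathrm{Im}(\mu)}\mathrm{Im}\,G(x,y,\mu)$ by citing \cite{eckhardt2013,teschl2014}), and for part (b) you use the same Cauchy--Schwarz estimate with the weight $(\lambda-\mu)^{-1}$. The one substantive difference is your treatment of the $w_\lambda^{[1]}$-integral: the paper relates it to $\partial_x^{[1]}\partial_y^{[1]}\mathrm{Im}\,G(x,y,\mu)$ and invokes an argument from Naimark \cite{naimark1968}, whereas your identity $w_\lambda^{[1]}(y)=-\lambda\,(\mathcal{F}\mathbf{1}_{(a,y)})(\lambda)$ together with Parseval for $\mathbf{1}_{(a,y)}\in L_2(r)$ is more self-contained and avoids the differentiated Green function altogether; your explicit Dini/compactness argument for uniform convergence likewise replaces the paper's appeal to \cite{naimark1968}.
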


\begin{proof}
\textbf{(a)} By \cite[Lemma 10.6]{eckhardt2013} and \cite[p.\ 229]{teschl2014},
\[
\int_{[0,\infty)} {w_\lambda(x) w_\lambda(y) \over |\lambda - \mu|^2} \bm{\rho}_\mathcal{L}(d\lambda) = \int_a^b G(x,\xi,\mu)G(y,\xi,\mu)\, r(\xi) d\xi = {1 \over \mathrm{Im}(\mu)}\, \mathrm{Im}\bigl(G(x,y,\mu)\bigr)
\]
where $G(x,y,\mu)$ is the resolvent kernel (or Green function) of the operator $(\mathcal{L}, \mathcal{D}_\mathcal{L}^{(2)})$. Moreover, according to \cite[Theorems 8.3 and 9.6]{eckhardt2013}, the resolvent kernel is given by
\[
G(x,y,\mu) = \begin{cases}
w_\mu(x) \bm{u}_\mu(y), & x < y \\
w_\mu(y) \bm{u}_\mu(x), & x \geq y
\end{cases}
\]
where $\bm{u}_\lambda(\cdot)$ is a solution of $\ell(u) = \lambda u$ which is square-integrable near $\infty$ with respect to the measure $r(x)dx$ and verifies the identity $w_\lambda(x) \bm{u}_\lambda^{[1]}(x) - w_\lambda^{[1]}(x) \bm{u}_\lambda(x) \equiv 1$. It is easily seen (cf.\ \cite[p.\ 125]{naimark1968}) that the functions $\mathrm{Im}\bigl(G(x,y,\mu)\bigr)$ and $\partial_x^{[1]} \partial_y^{[1]} \mathrm{Im}\bigl(G(x,y,\mu)\bigr)$ are continuous in $0 < x,y < \infty$. Essentially the same proof as that of \cite[Corollary 3]{naimark1968} now yields that
\[
\int_{[0,\infty)} {w_\lambda^{[1]}(x) \, w_\lambda^{[1]}(y) \over |\lambda - \mu|^2} \bm{\rho}_\mathcal{L}(d\lambda) = {1 \over \mathrm{Im}(\mu)}\, \partial_x^{[1]} \partial_y^{[1]} \mathrm{Im}\bigl(G(x,y,\mu)\bigr)
\]
and that the integrals \eqref{eq:shypPDE_Lresolv_specunif} converge uniformly for $x,y$ in compacts.
\\[-8pt]

\textbf{(b)} By Proposition \ref{prop:entrpf_Ffourier} and the classical theorem on differentiation under the integral sign for Riemann-Stieltjes integrals, to prove \eqref{eq:shypPDE_Lfourier_D2prop}--\eqref{eq:shypPDE_Lfourier_D2propderiv} it only remains to justify the absolute and uniform convergence of the integrals in the right-hand sides.

Recall from Proposition \ref{prop:entrpf_Ffourier} that the condition $h \in \mathcal{D}_\mathcal{L}^{(2)}$ implies that $\mathcal{F}h \in L_2\bigl([0,\infty); \bm{\rho}_\mathcal{L}\bigr)$ and also $\lambda \,(\mathcal{F}h)(\lambda) \in L_2\bigl([0,\infty); \bm{\rho}_\mathcal{L}\bigr)$. As a consequence, we obtain
\begin{align*}
& \int_{[0,\infty)} \bigl|(\mathcal{F}h)(\lambda) w_\lambda(x)\bigr| \bm{\rho}_\mathcal{L}(d\lambda) \\
& \qquad\qquad \leq \!\int_{[0,\infty)} \! \lambda \, \bigl|(\mathcal{F}h)(\lambda)\bigr| \biggl|{w_\lambda(x) \over \lambda + i}\biggr| \bm{\rho}_\mathcal{L}(d\lambda) + \! \int_{[0,\infty)} \bigl|(\mathcal{F}h)(\lambda)\bigr| \biggl| {w_\lambda(x) \over \lambda + i} \biggr| \bm{\rho}_\mathcal{L}(d\lambda) \\
& \qquad\qquad \leq \bigl(\|\lambda \, (\mathcal{F}h)(\lambda)\|_\rho + \|(\mathcal{F}h)(\lambda)\|_\rho\bigr) \biggl\| {w_\lambda(x) \over \lambda + i} \biggr\|_\rho \\
& \qquad\qquad < \infty
\end{align*}
where $\| \cdot \|_\rho$ denotes the norm of the space $L_2\bigl(\mathbb{R}; \bm{\rho}_\mathcal{L}\bigr)$, and similarly
\[
\int_{[0,\infty)}\! \bigl|(\mathcal{F}h)(\lambda) \, w_\lambda^{[1]}(x) \bigr| \bm{\rho}_\mathcal{L}(d\lambda) \leq \bigl(\|\lambda \, (\mathcal{F}h)(\lambda)\|_\rho + \|(\mathcal{F}h)(\lambda)\|_\rho\bigr) \biggl\| {w_\lambda^{[1]}(x) \over \lambda + i} \biggr\|_\rho < \infty.
\]
We know from part (a) that the integrals which define $\bigl\| {w_\lambda(x) \over \lambda + i} \bigr\|_\rho$ and $\bigl\| {w_\lambda^{[1]}(x) \over \lambda + i} \bigr\|_\rho$ converge uniformly, hence the integrals in \eqref{eq:shypPDE_Lfourier_D2prop}--\eqref{eq:shypPDE_Lfourier_D2propderiv} converge absolutely and uniformly for $x$ in compact subsets.
\end{proof}

It is also useful to know that, according to a standard result from the theory of diffusion processes and semigroups which we state below, Sturm-Liouville differential expressions of the form \eqref{eq:shypPDE_elldiffexpr} generate positivity-preserving contraction semigroups acting on the space of bounded continuous functions. We recall from \cite{bottcher2013} that, for a subset $E \subset \mathbb{R}^d$, a \emph{Feller semigroup} $\{T_t\}_{t \geq 0}$ on $\mathrm{C}_0(E,\mathbb{R})$ is a strongly continuous, positivity-preserving contraction semigroup on $\mathrm{C}_0(E,\mathbb{R})$, and that a Feller semigroup is \emph{conservative} if its extension to $\mathrm{B}_\mathrm{b}(E)$ satisfies $T_t \mathds{1} = \mathds{1}$ (here $\mathds{1}$ denotes the function identically equal to one).

\begin{proposition} \label{prop:shypPDE_L_fellergen_tpdf}
Suppose that $b$ is a natural boundary for the differential expression $\ell$. Then the operator
\[
\mathcal{L}^{(0)}: \mathcal{D}_{\mathcal{L}}^{(0)} \cap \mathrm{C}_0([a,b),\mathbb{R}) \longrightarrow \mathrm{C}_0([a,b),\mathbb{R}), \qquad\quad \mathcal{L}^{(0)} u = -\ell(u)
\]
where
\[
\mathcal{D}_\mathcal{L}^{(0)} = \bigl\{ u \in \mathrm{C}_0[a,b) \bigm| \ell(u) \in \mathrm{C}_0[a,b),\; \lim_{x \downarrow a} u^{[1]}(x) = 0 \bigr\}
\]
is the generator of a conservative Feller semigroup $\{T_t\}_{t \geq 0}$ on $\mathrm{C}_0([a,b),\mathbb{R})$. The semigroup admits the representation
\begin{equation} \label{eq:shypPDE_fellersgp_fundsolrep}
(T_t h)(x) = \int_a^b h(y)\, p(t,x,y)\, r(y)dy, \qquad \bigl(h \in \mathrm{B}_\mathrm{b}\bigl([a,b),\mathbb{R}\bigr), \; t > 0, \; x \in (a,b)\bigr)
\end{equation}
where the (nonnegative) transition kernel $p(t,x,y)$ is given by
\begin{equation} \label{eq:shypPDE_fellersgp_fundsol_spectr}
p(t,x,y) = \int_{[0,\infty)} e^{-t\lambda} \, w_\lambda(x) \, w_\lambda(y)\, \bm{\rho}_\mathcal{L}(d\lambda) \qquad \bigl(t > 0, \; x,y \in (a,b)\bigr)
\end{equation}
with the integral converging absolutely and uniformly on compact squares of $(a,b) \times (a,b)$ for each fixed $t > 0$. If $h \in L_2(r) \cap \mathrm{B}_\mathrm{b}\bigl([a,b),\mathbb{R}\bigr)$, then \eqref{eq:shypPDE_fellersgp_fundsolrep} can also be written as
\begin{equation} \label{eq:shypPDE_fellersgp_fundsolL2rep}
(T_t h)(x) = \int_{[0,\infty)} e^{-t\lambda} w_\lambda(x) \, (\mathcal{F} h)(\lambda)\, \bm{\rho}_\mathcal{L}(d\lambda) \qquad \bigl(t > 0, \; x \in (a,b)\bigr)
\end{equation}
where the integral converges with respect to the norm of $L_2(r)$. 
\end{proposition}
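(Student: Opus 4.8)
The plan is to obtain the three assertions by combining the classical theory of one-dimensional diffusions with the spectral facts recorded above. First I would settle the Feller-generation claim by appealing to the standard construction of the minimal diffusion with generator $-\ell$: under \eqref{eq:shypPDE_Lop_leftBC} the endpoint $a$ is regular or entrance, and the requirement $\lim_{x\downarrow a}u^{[1]}(x) = 0$ in the domain is precisely the reflecting (Neumann-type) boundary condition there, while $b$ is natural by hypothesis; by the results of Feller and McKean (see \cite{feller1954,mckean1956,ito2006,linetsky2004}), $(\mathcal{L}^{(0)},\mathcal{D}^{(0)}_\mathcal{L})$ is then the generator of a Feller semigroup $\{T_t\}$ on $\mathrm{C}_0([a,b),\mathbb{R})$. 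Conservativeness amounts to the a.s.\ infiniteness of the lifetime of this diffusion: a natural boundary is inaccessible in finite time and the reflecting condition at $a$ involves no killing, so $T_t\mathds{1} = \mathds{1}$ for the canonical extension to $\mathrm{B}_\mathrm{b}$. Since each $T_t$ is a positivity-preserving contraction, the Riesz representation theorem together with conservativeness yields, for each $t > 0$ and $x\in(a,b)$, a probability measure $P_t(x,\cdot)$ with $(T_t h)(x) = \int h\,dP_t(x,\cdot)$ (first for $h\in\mathrm{C}_0$, then for $h\in\mathrm{B}_\mathrm{b}$); the remaining task is to identify $P_t(x,dy)$ with $p(t,x,y)\,r(y)dy$ for $p$ given by \eqref{eq:shypPDE_fellersgp_fundsol_spectr}.

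The core is the spectral representation of the kernel. Fix $t > 0$; since the coefficients of $\ell$ and the data in \eqref{eq:entrpf_ode_wsol} are real, $w_\lambda(x)\in\mathbb{R}$ for $\lambda\geq 0$. Taking $x = y$ and $\mu = i$ in the first integral of \eqref{eq:shypPDE_Lresolv_specunif}, Lemma \ref{lem:shypPDE_Lfourier_D2prop}(a) shows that $\lambda\mapsto(1+\lambda^2)^{-1/2}w_\lambda(x)$ lies in $L_2([0,\infty);\bm{\rho}_\mathcal{L})$ with norm bounded uniformly for $x$ in compact subsets of $(a,b)$; since $e^{-t\lambda/2}(1+\lambda^2)^{1/2}$ is bounded on $[0,\infty)$, the same is true of $\lambda\mapsto e^{-t\lambda/2}w_\lambda(x)$. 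Cauchy--Schwarz in $L_2(\bm{\rho}_\mathcal{L})$ then gives
\[
\int_{[0,\infty)} e^{-t\lambda}\,\bigl|w_\lambda(x)\,w_\lambda(y)\bigr|\,\bm{\rho}_\mathcal{L}(d\lambda) \;\leq\; \bigl\|e^{-t\lambda/2}w_\lambda(x)\bigr\|_{L_2(\bm{\rho}_\mathcal{L})}\,\bigl\|e^{-t\lambda/2}w_\lambda(y)\bigr\|_{L_2(\bm{\rho}_\mathcal{L})},
\]
which is finite and uniformly bounded on compact squares; hence \eqref{eq:shypPDE_fellersgp_fundsol_spectr} converges absolutely and locally uniformly and defines $p(t,x,y)$, and applying the isometry $\mathcal{F}$ of Proposition \ref{prop:entrpf_Ffourier} shows that $p(t,x,\cdot)\in L_2(r)$ with $\mathcal{F}[p(t,x,\cdot)](\lambda) = e^{-t\lambda}w_\lambda(x)$.

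To close the argument I would identify the $\mathrm{C}_0$-semigroup $\{T_t\}$ with the $L_2(r)$-semigroup $\{e^{-t\mathcal{L}}\}$: both generators are restrictions of $-\ell$ subject to the same boundary behaviour at $a$, hence have the same resolvent --- the Green operator with kernel $G(x,y,\mu)$ of Lemma \ref{lem:shypPDE_Lfourier_D2prop}(a) --- so, since the resolvent determines the semigroup, $T_t h = e^{-t\mathcal{L}}h$ on $(a,b)$ for every $h\in\mathrm{C}_\mathrm{c}[a,b)\subset\mathrm{C}_0\cap L_2(r)$. By the spectral theorem $e^{-t\mathcal{L}}h = \mathcal{F}^{-1}[e^{-t\lambda}(\mathcal{F}h)(\lambda)]$, which is \eqref{eq:shypPDE_fellersgp_fundsolL2rep} with $L_2(r)$-convergence; for $h\in\mathrm{C}_\mathrm{c}[a,b)$ one has $e^{-t\mathcal{L}}h\in\mathcal{D}^{(2)}_\mathcal{L}$, so by Lemma \ref{lem:shypPDE_Lfourier_D2prop}(b) the right-hand side is an absolutely convergent pointwise integral, and Parseval's identity for $\mathcal{F}$ turns it into $(T_t h)(x) = \int_{[0,\infty)}(\mathcal{F}h)(\lambda)\,e^{-t\lambda}w_\lambda(x)\,\bm{\rho}_\mathcal{L}(d\lambda) = \int_{[a,b)}h(y)\,p(t,x,y)\,r(y)\,dy$. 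This establishes \eqref{eq:shypPDE_fellersgp_fundsolrep} on $\mathrm{C}_\mathrm{c}[a,b)$; a monotone class / bounded convergence argument (using $P_t(x,[a,b)) = 1$) extends it to $\mathrm{B}_\mathrm{b}$, and $p(t,x,y)\geq 0$ follows by testing against nonnegative $h\in\mathrm{C}_\mathrm{c}$ and invoking continuity of $p(t,x,\cdot)$. Lastly, for $h\in L_2(r)\cap\mathrm{B}_\mathrm{b}([a,b),\mathbb{R})$ the same Parseval computation, now with $\mathcal{F}h\in L_2(\bm{\rho}_\mathcal{L})$ and $\lambda\mapsto e^{-t\lambda}w_\lambda(x)\in L_2(\bm{\rho}_\mathcal{L})$, identifies \eqref{eq:shypPDE_fellersgp_fundsolrep} with \eqref{eq:shypPDE_fellersgp_fundsolL2rep}.

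I expect the principal difficulty to lie at the interface between the two functional-analytic frameworks --- making rigorous that the $\mathrm{C}_0$-Feller semigroup and the $L_2(r)$-self-adjoint semigroup are restrictions of one and the same family of operators, so that the spectral formula for $e^{-t\mathcal{L}}$, a priori only an $L_2(r)$-identity, transfers to the pointwise transition kernel and yields its nonnegativity. By contrast, the locally uniform absolute convergence of \eqref{eq:shypPDE_fellersgp_fundsol_spectr} is routine once Lemma \ref{lem:shypPDE_Lfourier_D2prop} and the exponential damping $e^{-t\lambda}$ are in hand.
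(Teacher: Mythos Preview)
Your proposal is correct and in fact provides substantially more detail than the paper does: the paper's own proof consists of two sentences citing \cite{fukushima2014} and \cite{mandl1968} for the Feller-generation and conservativeness claims, and \cite{linetsky2004} for the spectral representation of the transition kernel. Your sketch is a faithful reconstruction of the content of those references; in particular, the convergence argument you give for \eqref{eq:shypPDE_fellersgp_fundsol_spectr} via Lemma~\ref{lem:shypPDE_Lfourier_D2prop}(a) and Cauchy--Schwarz is exactly the kind of estimate underlying the cited results, and your identification of the $\mathrm{C}_0$- and $L_2$-semigroups through the common resolvent kernel is the standard bridge between the two frameworks.
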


\begin{proof}
The first assertion is proved in \cite[Sections 4 and 6]{fukushima2014} (see also \cite[Section II.5]{mandl1968}). The claimed representation for the transition semigroup and kernel follows from \cite[Sections 2--3]{linetsky2004}.
\end{proof}

\section{Laplace-type representation} \label{sec:laplacerep}

As mentioned in the introduction, the existence of a hypergroup-like product formula for the kernel of the $\mathcal{L}$-transform is strongly connected with the positivity of the associated Cauchy problem. We now introduce an assumption which, as we will see in Subsection \ref{sub:prodform_hypCP}, is sufficient for the Cauchy problem to be positivity preserving. Recall that the function $A$, defined in \eqref{eq:shypPDE_tildeell_A}, is the coefficient associated with the transformation of $\ell$ into the standard form (Remark \ref{rmk:shypPDE_tildeell}).

\renewcommand\theassumption{MP}
\begin{assumption} \label{asmp:shypPDE_SLhyperg}
We have $\gamma(b) = \int_c^b\! \smash{\sqrt{r(y) \over p(y)}} dy = \infty$, and there exists $\eta \in \mathrm{C}^1(\gamma(a),\infty)$ such that $\eta \geq 0$, the functions $\bm{\phi}_\eta := {A' \over A} - \eta$, $\:\bm{\psi}_\eta := {1 \over 2} \eta' - {1 \over 4} \eta^2 + {A' \over 2A} \ccdot \eta$ are both decreasing on $(\gamma(a),\infty)$ and $\bm{\phi}_\eta$ satisfies $\lim_{\xi \to \infty} \bm{\phi}_\eta(\xi) = 0$.
\end{assumption}

This assumption will be held throughout the remainder of the paper. 

Having in mind the product formula that we shall establish for a general Sturm-Liouville operator satisfying Assumption \ref{asmp:shypPDE_SLhyperg}, in this section we prove the related fact that the kernel $w_\lambda(x)$ of the $\mathcal{L}$-transform admits a representation as the Laplace transform of a subprobability measure. We start by stating a basic property which holds for all Sturm-Liouville operators \eqref{eq:shypPDE_elldiffexpr} satisfying this assumption:

\begin{lemma}
The function $A' \over A$ is nonnegative, and there exists a finite limit $\sigma := \lim_{\xi \to \infty} {A'(\xi) \over 2 A(\xi)} \in [0,\infty)$.
\end{lemma}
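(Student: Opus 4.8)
The plan is to work with the transformed operator $\widetilde{\ell}$ of Remark~\ref{rmk:shypPDE_tildeell} and to read off both assertions from Assumption~\ref{asmp:shypPDE_SLhyperg} together with the elementary identity that relates $\bm{\phi}_\eta$, $\bm{\psi}_\eta$ and the logarithmic derivative $A'/A$. First I would observe that, since $\bm{\phi}_\eta = A'/A - \eta$ is decreasing on $(\gamma(a),\infty)$ with $\lim_{\xi\to\infty}\bm{\phi}_\eta(\xi) = 0$, we have $\bm{\phi}_\eta \geq 0$ on the whole interval; combined with $\eta \geq 0$ this gives $A'/A = \bm{\phi}_\eta + \eta \geq 0$, which is the first claim. (This is exactly the statement, noted at the end of Remark~\ref{rmk:shypPDE_tildeell}, that $A'/A \geq 0$ corresponds to $x\mapsto p(x)r(x)$ being increasing.)

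For the second claim I would show that $A'/(2A)$ has a finite limit at $\infty$. The cleanest route is to note that $\bm{\phi}_\eta \to 0$, so it suffices to prove that $\eta$ itself has a finite limit. Here is where $\bm{\psi}_\eta$ enters: writing $\bm{\psi}_\eta = \tfrac12\eta' - \tfrac14\eta^2 + \tfrac12\,\eta\,(A'/A)$ and substituting $A'/A = \bm{\phi}_\eta + \eta$, the quadratic terms combine to give
\begin{equation*}
\bm{\psi}_\eta = \tfrac12\,\eta' + \tfrac14\,\eta^2 + \tfrac12\,\eta\,\bm{\phi}_\eta .
\end{equation*}
Since $\bm{\psi}_\eta$ is decreasing it is bounded above near $\infty$, say $\bm{\psi}_\eta \leq M$; and since $\eta \geq 0$ and $\bm{\phi}_\eta \geq 0$, the last two terms on the right are nonnegative, so $\eta'(\xi) \leq 2M$ for all large $\xi$. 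On the other hand $\eta$ is nonnegative, hence bounded below, so $\eta$ cannot tend to $+\infty$ with a bounded derivative unless it oscillates — but a function with $\eta' \leq 2M$ and $\eta \geq 0$ that is unbounded would force $\bm{\psi}_\eta = \tfrac12\eta' + \tfrac14\eta^2 + \tfrac12\eta\bm{\phi}_\eta$ to be unbounded above along a subsequence, contradicting $\bm{\psi}_\eta \leq M$. Thus $\eta$ is bounded; being bounded and nonnegative with $\bm{\psi}_\eta = \tfrac12\eta' + (\text{bounded nonneg.})$ decreasing, one deduces that $\tfrac12\eta'$ is bounded, and then a monotonicity/Cauchy argument on $\bm{\psi}_\eta$ (a bounded monotone function has a limit) forces $\eta^2$, and hence $\eta$, to converge. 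Therefore $A'/(2A) = \tfrac12(\bm{\phi}_\eta + \eta)$ converges to a finite limit $\sigma = \tfrac12\lim_{\xi\to\infty}\eta(\xi) \geq 0$.

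The main obstacle I anticipate is the last step: turning ``$\eta$ is bounded and $\bm{\psi}_\eta$ is decreasing'' into ``$\eta$ converges.'' A clean way to close this gap is to use that $\bm{\psi}_\eta$, being decreasing and (from the displayed identity, since all three summands are now seen to be bounded) bounded below, has a finite limit $\bm{\psi}_\eta(\infty)$; integrating $\eta' = 2\bm{\psi}_\eta - \tfrac12\eta^2 - \eta\bm{\phi}_\eta$ and using that the right-hand side converges as $\xi\to\infty$ to $2\bm{\psi}_\eta(\infty) - \tfrac12\ell^2$ for any subsequential limit $\ell$ of $\eta$, one sees that $\eta'$ has a limit; a nonzero limit of $\eta'$ is incompatible with $\eta$ bounded, so $\eta'\to 0$, whence $\tfrac12\eta^2 \to 2\bm{\psi}_\eta(\infty)$ and $\eta$ converges. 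I would present this as the core of the argument and keep the preliminary boundedness estimates brief.
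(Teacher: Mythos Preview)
The paper itself does not give a proof here; it simply cites \cite[Section 2]{zeuner1992}. So there is no ``paper's own proof'' to compare against in detail, and your self-contained argument is a genuine addition rather than a rediscovery.

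Your first part is clean and correct: $\bm{\phi}_\eta$ decreasing with limit $0$ gives $\bm{\phi}_\eta\ge 0$, and then $A'/A=\bm{\phi}_\eta+\eta\ge 0$. The key identity
\[
\bm{\psi}_\eta=\tfrac12\eta'+\tfrac14\eta^2+\tfrac12\eta\,\bm{\phi}_\eta
\]
is also correct and is exactly the right tool. From it and $\bm{\psi}_\eta\le M$ you get $\eta'\le 2M-\tfrac12\eta^2$, which (by the obvious comparison: whenever $\eta>2\sqrt{\max(M,0)}$ one has $\eta'<0$) forces $\eta$ to be bounded. Then $\bm{\psi}_\eta$ is decreasing and, since $\eta'\to -\infty$ would contradict $\eta\ge 0$, bounded below; hence $\bm{\psi}_\eta$ has a finite limit.

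The one genuine gap is in your last sentence. You assert that $\eta'$ has a limit because the right-hand side of $\eta'=2\bm{\psi}_\eta-\tfrac12\eta^2-\eta\bm{\phi}_\eta$ ``converges to $2\bm{\psi}_\eta(\infty)-\tfrac12\ell^2$ for any subsequential limit $\ell$''. But different subsequential limits $\ell$ of $\eta$ would give different values, so this does \emph{not} show that $\eta'$ converges; the argument is circular. The clean fix is a trapping argument directly from the displayed identity: since $\eta\bm{\phi}_\eta\to 0$ and $\bm{\psi}_\eta\to L:=\bm{\psi}_\eta(\infty)$, you have $\eta'+\tfrac12\eta^2\to 2L$, i.e.\ for every $\varepsilon>0$ eventually
\[
2L-\varepsilon-\tfrac12\eta^2\;<\;\eta'\;<\;2L+\varepsilon-\tfrac12\eta^2.
\]
This forces $L\ge 0$ (otherwise $\eta'$ is eventually below a negative constant, contradicting $\eta\ge 0$), and then these inequalities trap $\eta$ in the interval $[\sqrt{\max(4L-2\varepsilon,0)},\sqrt{4L+2\varepsilon}]$ for all large $\xi$. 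Letting $\varepsilon\downarrow 0$ gives $\eta\to 2\sqrt{L}$, hence $A'/(2A)=\tfrac12(\bm{\phi}_\eta+\eta)\to\sqrt{L}=:\sigma\in[0,\infty)$. With this correction your argument is complete.
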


\begin{proof}
See \cite[Section 2]{zeuner1992}.
\end{proof}

The existence of a Laplace-type representation for the kernel of the $\mathcal{L}$-transform is already known to hold for a Sturm-Liouville operator of the form $- {1 \over A} {d \over dx} \bigl(A {d \over dx}\bigr)$ where the coefficient $A$ satisfies the assumptions of the existence theorem of \cite{zeuner1992} for Sturm-Liouville hypergroups (see the discussion in Subsection \ref{sub:SLhyp_nondegen}). In particular, the following result is proved in \cite[Theorem 3.5.58]{bloomheyer1994}:

\begin{proposition} \label{prop:shypPDE_Wlaplacerep_hyperg}
Let $A \in \mathrm{C}^1[0,\infty)$ with $A(x) > 0$ for all $x \geq 0$. Suppose that there exists $\eta \in \mathrm{C}^1[0,\infty)$ such that $\eta \geq 0$, the functions $\bm{\phi}_\eta$, $\bm{\psi}_\eta$ are both decreasing on $(0,\infty)$ and $\lim_{x \to \infty} \bm{\phi}_\eta(x) = 0$ ($\bm{\phi}_\eta$, $\bm{\psi}_\eta$ are defined as in Assumption \ref{asmp:shypPDE_SLhyperg}). For $\lambda \in \mathbb{C}$, let $\bm{\omega}_\lambda(\cdot)$ be the unique solution of the boundary value problem
\[
-{1 \over A} (A \bm{\omega}')' = \lambda \bm{\omega} \quad (0 < x < \infty), \qquad\;\; \bm{\omega}(0) = 1, \qquad\;\; \bm{\omega}'(0) = 0.
\]
Then for each $x \geq 0$ there exists a subprobability measure $\pi_x$ on $\mathbb{R}$ such that
\[
\bm{\omega}_{\tau^2 + \sigma^2}(x) = \int_\mathbb{R} e^{i\tau s} \pi_x(ds) = \int_\mathbb{R} \cos(\tau s) \, \pi_x(ds) \qquad (\tau \in \mathbb{C})
\]
where $\sigma = \lim_{\xi \to \infty} {A'(\xi) \over 2 A(\xi)}$.
\end{proposition}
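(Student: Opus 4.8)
The plan is to obtain $\pi_x$ as the (measure-valued) kernel of the transmutation operator that intertwines $-\tfrac{d^2}{ds^2}$ with the shifted operator $\widetilde{\ell}-\sigma^2$ --- i.e.\ the operator $\mathcal{X}$ carrying $\cos(\tau\,\cdot)$ to $\bm{\omega}_{\tau^2+\sigma^2}(\cdot)$ --- and to read off the subprobability property from the positivity of that kernel. First I would record three easy facts about $\tau\mapsto\bm{\omega}_{\tau^2+\sigma^2}(x)$ (fixed $x\geq0$). It is even in $\tau$, since $\bm{\omega}_\mu(x)$ enters only through $\mu=\tau^2+\sigma^2$. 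It is entire of finite exponential type, because $\mu\mapsto\bm{\omega}_\mu(x)$ is entire of exponential type by Lemma~\ref{lem:entrpf_ode_wsol} (applied to the standard-form operator $\widetilde{\ell}$ of Remark~\ref{rmk:shypPDE_tildeell}) and $\tau\mapsto\tau^2+\sigma^2$ is a polynomial. It is bounded by $1$ on $\mathbb{R}$: the hypotheses force $A'/A\geq0$ (see \cite[Section~2]{zeuner1992}), hence $A^2$ is increasing, and Lemma~\ref{lem:shypPDE_wsolbound} then gives $|\bm{\omega}_\mu(x)|\leq1$ for all $\mu\geq0$, in particular for $\mu=\tau^2+\sigma^2$ with $\tau\in\mathbb{R}$; evaluating at $\tau=0$ gives $\bm{\omega}_{\sigma^2}(x)\in[-1,1]$.

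Next I would build the transmutation kernel. Transferring the eigenvalue from $\widetilde{\ell}$ to a second ``spatial'' variable via $\partial_{ss}\cos(\tau s)=-\tau^2\cos(\tau s)$ turns the relation $\widetilde{\ell}\,\bm{\omega}_{\tau^2+\sigma^2}=(\tau^2+\sigma^2)\bm{\omega}_{\tau^2+\sigma^2}$ into the assertion that $\bm{\omega}_{\tau^2+\sigma^2}(x)\cos(\tau s)$ solves the hyperbolic equation $\partial_{xx}u+\tfrac{A'(x)}{A(x)}\partial_x u+\sigma^2 u=\partial_{ss}u$. I would therefore seek $\mathcal{X}$ --- equivalently a kernel measure $\rho_x$ on $[0,x]$ with $\mathcal{X}g(x)=\int_{[0,x]}g\,d\rho_x$ --- by solving the associated characteristic (Goursat-type) Cauchy problem for this equation: the kernel satisfies the equation in the triangle $\{0<s<x\}$, the characteristic relation $\tfrac{d}{dx}k(x,x)=\tfrac{A'(x)}{2A(x)}$ along the diagonal, and homogeneous conditions at $s=0$ encoding $\bm{\omega}(0)=1$, $\bm{\omega}'(0)=0$ (the kernel may in addition carry a point mass at $s=x$ when $A$ is regular enough at the origin). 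Since $A\in\mathrm{C}^1[0,\infty)$ the equation is genuinely (non-degenerately) hyperbolic, so existence and uniqueness of $k$ follow from the classical method of successive approximations for hyperbolic problems in two variables applied to the Volterra form of the Goursat problem (cf.\ \cite[Chapter~V]{courant1952}). This yields a representation $\bm{\omega}_{\tau^2+\sigma^2}(x)=\int_{[0,x]}\cos(\tau s)\,\rho_x(ds)$, valid for all $\tau\in\mathbb{C}$ by analytic continuation; letting $\pi_x$ be the symmetrization of $\tfrac12\rho_x$ across the origin puts this in the form of the statement, with $\pi_x(\mathbb{R})=\rho_x([0,x])=\bm{\omega}_{\sigma^2}(x)\leq1$.

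The hard part --- and the only place where Assumption~\ref{asmp:shypPDE_SLhyperg} (here in its form on $(0,\infty)$) is genuinely used --- is the nonnegativity of $\rho_x$, i.e.\ of the transmutation kernel $k$; granting it, $\pi_x\geq0$, and since $\pi_x(\mathbb{R})=\bm{\omega}_{\sigma^2}(x)$ this also shows $\bm{\omega}_{\sigma^2}(x)\in[0,1]$, so $\pi_x$ is a subprobability measure and we are done. For this positivity I would follow Zeuner's scheme \cite{zeuner1992} as adapted in \cite{bloomheyer1994}: substitute $k(x,s)=\exp\!\big({-}\tfrac12\int_s^x\eta\big)\widehat{k}(x,s)$ and pass to characteristic coordinates, so that the Goursat problem for $k$ becomes one for $\widehat{k}$ whose first- and zeroth-order coefficients are controlled, respectively, by $\bm{\phi}_\eta=\tfrac{A'}{A}-\eta$ and $\bm{\psi}_\eta=\tfrac12\eta'-\tfrac14\eta^2+\tfrac{A'}{2A}\eta$; the assumptions that $\bm{\phi}_\eta$ and $\bm{\psi}_\eta$ be decreasing and that $\bm{\phi}_\eta(\xi)\to0$ are precisely what make every term of the successive-approximation series for $\widehat{k}$ nonnegative. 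This is a comparison/maximum-principle argument of the same flavour as the proof of Lemma~\ref{lem:shypPDE_wsolbound}, and is equivalent to showing that the Riemann function of the hyperbolic equation above is nonnegative; I expect it to be the main obstacle. (One may equally package the endgame through Bochner's theorem: boundedness, evenness, finite exponential type and positive-definiteness of $\tau\mapsto\bm{\omega}_{\tau^2+\sigma^2}(x)$ yield a finite positive $\pi_x$ realizing the Fourier representation, Paley--Wiener confines $\supp\pi_x$ to a compact interval, and the value at $\tau=0$ gives the mass --- but positive-definiteness is itself tantamount to the positivity of $k$, so the obstacle is the same.)
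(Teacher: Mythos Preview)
The paper does not give its own proof of this proposition: it is quoted verbatim from \cite[Theorem 3.5.58]{bloomheyer1994}, and the paper only uses it as an ingredient in the proof of Theorem~\ref{thm:shypPDE_Wlaplacerep}. Your sketch is essentially a correct outline of the argument in that reference (which in turn follows Zeuner \cite{zeuner1992}): one constructs the transmutation kernel by solving the characteristic Cauchy problem for the hyperbolic equation $\partial_{xx}u+\tfrac{A'}{A}\partial_x u+\sigma^2 u=\partial_{ss}u$, and the decisive step is the nonnegativity of that kernel, obtained from the maximum principle under the monotonicity hypotheses on $\bm{\phi}_\eta$ and $\bm{\psi}_\eta$ --- exactly the mechanism you identify.

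One small correction: your diagonal condition reads $\tfrac{d}{dx}k(x,x)=\tfrac{A'(x)}{2A(x)}$, but the kernel for the \emph{shifted} eigenvalue $\tau^2+\sigma^2$ (rather than $\tau^2$) satisfies $\tfrac{d}{dx}k(x,x)=\tfrac{A'(x)}{2A(x)}-\sigma^2$ (equivalently, the zeroth-order term $+\sigma^2 u$ enters the Goursat data); this does not affect the structure of the argument. Otherwise your identification of the obstacle --- that positive-definiteness, or equivalently positivity of the Riemann function, is where Assumption~\ref{asmp:shypPDE_SLhyperg} is actually used --- is exactly right, and your parenthetical Bochner/Paley--Wiener packaging is a legitimate alternative endgame.
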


The following theorem generalizes the proposition above to the class of operators $\ell$ of the form \eqref{eq:shypPDE_elldiffexpr} satisfying \eqref{eq:shypPDE_Lop_leftBC} and Assumption \ref{asmp:shypPDE_SLhyperg}:

\begin{theorem}[Laplace-type representation] \label{thm:shypPDE_Wlaplacerep}
For each $x \in [a,b)$ there exists a subprobability measure $\nu_x$ on $\mathbb{R}$ such that
\begin{equation} \label{eq:shypPDE_Wlaplacerep}
w_{\tau^2 + \sigma^2}(x) = \int_\mathbb{R} e^{i\tau s} \nu_x(ds) = \int_\mathbb{R} \cos(\tau s) \, \nu_x(ds) \qquad (\tau \in \mathbb{C})
\end{equation}
where $\sigma = \lim_{\xi \to \infty} {A'(\xi) \over 2 A(\xi)}$. In particular, the boundedness property \eqref{eq:shypPDE_wsolbound} extends to
\begin{equation} \label{eq:shypPDE_Wbound}
|w_{\tau^2 + \sigma^2}(x)| \leq 1 \quad\; \text{on the strip } \, |\mathrm{Im}(\tau)| \leq \sigma \quad\; (a \leq x < b).
\end{equation}
\end{theorem}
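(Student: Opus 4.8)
The plan is to derive \eqref{eq:shypPDE_Wlaplacerep} from the known hypergroup case (Proposition~\ref{prop:shypPDE_Wlaplacerep_hyperg}) by an approximation argument, using the substitution of Remark~\ref{rmk:shypPDE_tildeell} to pass to the standard form and Lemma~\ref{lem:entrpf_ode_wepslimit} to push the (possibly degenerate) left endpoint away. Fix $x \in (a,b)$; the case $x=a$ is trivial with $\nu_a = \delta_0$. With $\{a_m\}$ as in Lemma~\ref{lem:entrpf_ode_wepslimit}, apply the change of variable of Remark~\ref{rmk:shypPDE_tildeell} to $\ell$ on the interval $(a_m,b)$, mapping $a_m$ to $0$: the resulting operator is of standard form with coefficient $\widehat A_m(\zeta) := A(\zeta + \gamma(a_m))$, which belongs to $\mathrm C^1[0,\infty)$ and is strictly positive because $\gamma(a_m)$ lies in the interior of $(\gamma(a),\gamma(b))$, where $p,r>0$; moreover, since $\widehat A_m(0)>0$ and the flux $u^{[1]}=pu'$ corresponds to $\widehat A_m \widetilde u'$ under the substitution, the conditions $w_{\lambda,m}(a_m)=1,\ w_{\lambda,m}^{[1]}(a_m)=0$ become $\widetilde w(0)=1,\ \widetilde w'(0)=0$. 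Putting $\widehat\eta_m(\zeta):=\eta(\zeta+\gamma(a_m))$, the functions $\bm\phi_{\widehat\eta_m}$ and $\bm\psi_{\widehat\eta_m}$ are the translates $\bm\phi_\eta(\,\cdot\,+\gamma(a_m))$ and $\bm\psi_\eta(\,\cdot\,+\gamma(a_m))$, so Assumption~\ref{asmp:shypPDE_SLhyperg} ensures that $\widehat A_m$ satisfies the hypotheses of Proposition~\ref{prop:shypPDE_Wlaplacerep_hyperg}, with associated constant again equal to $\sigma$. Hence, for each $m$ large enough that $a_m<x$, there is a subprobability measure $\pi_{m,x}$ on $\mathbb R$ with
\begin{equation} \label{eq:shypPDE_Wlap_approx}
w_{\tau^2+\sigma^2,m}(x) = \int_\mathbb{R} e^{i\tau s} \, \pi_{m,x}(ds) = \int_\mathbb{R} \cos(\tau s)\, \pi_{m,x}(ds) \qquad (\tau \in \mathbb{C}).
\end{equation}

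I would then show that $\{\pi_{m,x}\}$ is tight. Choosing $\beta \in (x,b)$, the exponential-type estimates in the proofs of Lemmas~\ref{lem:entrpf_ode_wsol} and~\ref{lem:entrpf_ode_wepslimit} give $|w_{\lambda,m}(x)| \leq e^{|\lambda|\mathcal S(x)}$ for all large $m$, with $\mathcal S(x)$ not depending on $m$. Taking $\tau = i\rho$ in \eqref{eq:shypPDE_Wlap_approx}, this yields, for every $\rho \in \mathbb R$,
\[
\int_\mathbb{R} \cosh(\rho s)\, \pi_{m,x}(ds) = w_{\sigma^2-\rho^2,m}(x) \leq e^{|\sigma^2-\rho^2|\,\mathcal S(x)},
\]
so the $\pi_{m,x}$ have exponential moments of every order, bounded uniformly in $m$; in particular $\pi_{m,x}(\{|s|>R\}) \leq e^{|\sigma^2-1|\mathcal S(x)}/\cosh R$, which tends to $0$ as $R \to \infty$ uniformly in $m$. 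By Prokhorov's theorem, a subsequence $\pi_{m_k,x}$ converges weakly to a measure $\nu_x$ on $\mathbb R$; by tightness $\nu_x(\mathbb R) = \lim_k \pi_{m_k,x}(\mathbb R) \leq 1$, so $\nu_x$ is a subprobability measure, and $\nu_x$ is symmetric because each $\pi_{m_k,x}$ is (by the two expressions in \eqref{eq:shypPDE_Wlap_approx}). The uniform exponential-moment bounds also pass to the weak limit (Portmanteau), so $\nu_x$ has all exponential moments finite.

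It remains to pass to the limit in \eqref{eq:shypPDE_Wlap_approx}. For fixed $\tau \in \mathbb C$ the elementary bound $|\cos(\tau s)| \leq \cosh(|\mathrm{Im}\,\tau|\,|s|)$, together with the uniform moment bounds, makes $\{\cos(\tau s)\}$ uniformly integrable with respect to $\{\pi_{m_k,x}\}$; combined with $\pi_{m_k,x} \to \nu_x$ weakly and with $w_{\tau^2+\sigma^2,m_k}(x) \to w_{\tau^2+\sigma^2}(x)$ (Lemma~\ref{lem:entrpf_ode_wepslimit}), this gives $w_{\tau^2+\sigma^2}(x) = \int_\mathbb{R} \cos(\tau s)\, \nu_x(ds)$ for all $\tau\in\mathbb C$, and the symmetry of $\nu_x$ turns this into \eqref{eq:shypPDE_Wlaplacerep}. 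Finally, \eqref{eq:shypPDE_Wbound} follows at once: for $|\mathrm{Im}\,\tau| \leq \sigma$ we have $|\cos(\tau s)| \leq \cosh(\sigma|s|)$, whence $|w_{\tau^2+\sigma^2}(x)| \leq \int_\mathbb{R} \cosh(\sigma s)\, \nu_x(ds) = w_0(x) = 1$, the middle equality being \eqref{eq:shypPDE_Wlaplacerep} evaluated at $\tau = i\sigma$.

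The main obstacle is the possible degeneracy at $x=a$: when $\smash{\sqrt{r/p}}$ is not integrable near $a$ (equivalently $\gamma(a) = -\infty$), the supports of the approximating measures $\pi_{m,x}$ are not uniformly bounded, so one cannot simply take a weak limit of compactly supported measures as one could in the non-degenerate case. What rescues the argument is the uniform exponential-moment estimate, which stems from the uniform exponential-type bound for $w_{\lambda,m}(x)$ and simultaneously yields tightness and the validity of the limiting procedure in the complex parameter $\tau$.
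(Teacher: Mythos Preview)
Your proof is correct and follows the same overall strategy as the paper: shift the left endpoint to $a_m$, pass to standard form, apply Proposition~\ref{prop:shypPDE_Wlaplacerep_hyperg} to obtain representing measures $\pi_{m,x}$, and then let $m\to\infty$. The only technical difference lies in how the limit is taken. The paper invokes the L\'evy continuity theorem directly (the real-$\tau$ characteristic functions $\tau\mapsto w_{\tau^2+\sigma^2,m}(x)$ converge pointwise to the continuous limit $w_{\tau^2+\sigma^2}(x)$, hence $\pi_{m,x}\warrow\nu_x$), and then extends to complex $\tau$ by a Fatou-type bound $\int\cosh(\kappa s)\,\nu_x(ds)\le \widetilde w_{\sigma^2-\kappa^2}(\xi)$ followed by analytic continuation. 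You instead establish tightness via the uniform exponential-moment bound coming from $|w_{\lambda,m}(x)|\le e^{|\lambda|\mathcal S(x)}$, extract a subsequential limit by Prokhorov, and pass to the limit for all complex $\tau$ in one step using uniform integrability. Both routes are standard and equivalent here; yours is slightly more self-contained (it avoids L\'evy's theorem as a black box and handles the complex extension without a separate analytic-continuation step), while the paper's is a bit shorter.
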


\begin{proof}
For $m \in \mathbb{N}$ and $\lambda \in \mathbb{C}$, let $w_{\lambda,m}$ be the solution of \eqref{eq:entrpf_ode_wsoleps}. The function $\widetilde{w}_{\lambda,m}(\xi) = w_{\lambda,m}(\gamma^{-1}(\xi))$ is the solution of
\[
\widetilde{\ell}(u) = \lambda u \quad\; (\tilde{a}_m < \xi < \infty), \qquad\;\; u(\tilde{a}_m) = 1, \qquad\;\; u^{[1]}(\tilde{a}_m) = 0
\]
where $\tilde{a}_m = \gamma(a_m)$. By Assumption \ref{asmp:shypPDE_SLhyperg}, the function $\bm{A}(y) := A(y+\tilde{a}_m)$ satisfies the assumption of Proposition \ref{prop:shypPDE_Wlaplacerep_hyperg}. It follows that for each $\xi > \tilde{a}_m$ there exists a subprobability measure $\pi_{\xi,m}$ such that
\[
\widetilde{w}_{\tau^2 + \sigma^2,m}(\xi) = \int_\mathbb{R} e^{i \tau s} \pi_{\xi,m}(ds) = \int_\mathbb{R} \cos(\tau s) \, \pi_{\xi,m}(ds) \qquad (\tau \in \mathbb{C}).
\]
In particular, $\tau \mapsto \widetilde{w}_{\tau^2 + \sigma^2,m}(\xi)$ ($\tau \in \mathbb{R}$) is the Fourier transform of the measure $\pi_{\xi,m}$. We know (from Lemma \ref{lem:entrpf_ode_wepslimit}) that $\widetilde{w}_{\tau^2 + \sigma^2,m}(\xi) \longrightarrow \widetilde{w}_{\tau^2 + \sigma^2}(\xi) := w_{\tau^2 + \sigma^2}(\gamma^{-1}(\xi))$ pointwise as $m \to \infty$, the limit function being continuous in $\tau$ (cf.\ Lemma \ref{lem:entrpf_ode_wsol}). Applying the Lévy continuity theorem \cite[Theorem 23.8]{bauer1996}, we conclude that $\widetilde{w}_{\tau^2 + \sigma^2}(\xi)$ is the Fourier transform of a subprobability measure $\pi_\xi$ and, in addition, the measures $\pi_{\xi,m}$ converge weakly to $\pi_\xi$ as $m \to \infty$. Therefore, for $\xi > \gamma(a)$ we have
\begin{equation} \label{eq:shypPDE_Wlaplacerep_pf1}
\widetilde{w}_{\tau^2 + \sigma^2}(\xi) = \int_\mathbb{R} e^{i \tau s} \pi_{\xi}(ds) = \int_\mathbb{R} \cos(\tau s) \, \pi_{\xi}(ds) \qquad ( \tau \in \mathbb{R}).
\end{equation}

In order to extend \eqref{eq:shypPDE_Wlaplacerep_pf1} to $\tau \in \mathbb{C}$, we let $0 \leq \phi_1 \leq \phi_2 \leq \ldots$ be functions with compact support such that $\phi_n \uparrow 1$ pointwise, and for fixed $\xi > \gamma(a)$, $\kappa > 0$ we compute
\begin{align*}
\int_\mathbb{R} \cosh(\kappa s) \, \pi_\xi(ds) & = \lim_{n \to \infty} \int_\mathbb{R} \phi_n(s) \cosh(\kappa s) \, \pi_\xi(ds) \\
& = \lim_{n \to \infty} \lim_{m \to \infty} \int_\mathbb{R} \phi_n(s) \cosh(\kappa s) \, \pi_{\xi,m}(ds) \\
& \leq \lim_{m \to \infty} \int_\mathbb{R} \cosh(\kappa s) \, \pi_{\xi,m}(ds) \: = \: \lim_{m \to \infty} \widetilde{w}_{\sigma^2-\kappa^2,m}(\xi) \: = \: \widetilde{w}_{\sigma^2-\kappa^2}(\xi) < \infty
\end{align*}
From this estimate we easily see that the right-hand side of \eqref{eq:shypPDE_Wlaplacerep_pf1} is an entire function of $\tau$; therefore, by analytic continuation, \eqref{eq:shypPDE_Wlaplacerep_pf1} holds for all $\tau \in \mathbb{C}$. Setting $\nu_x = \pi_{\gamma(x)}$ gives \eqref{eq:shypPDE_Wlaplacerep}.

Finally, if $|\mathrm{Im}(\tau)| \leq \sigma$ then
\[
|w_{\tau^2 + \sigma^2}(x)| \leq \int_\mathbb{R} |\cos(\tau s)| \nu_{x}(ds) \leq \int_\mathbb{R} \cosh(\sigma s) \, \nu_{x}(ds) = w_0(x) = 1
\]
and therefore \eqref{eq:shypPDE_Wbound} is true.
\end{proof}

The rest of this section provides some additional properties of the solutions of $\ell(u) = \lambda u$ which will be needed later.

\begin{proposition}
If $\lambda > \sigma^2$, then the equation $\ell(u) = \lambda u$ is oscillatory at $b$, that is, all solutions of $\ell(u) = \lambda u$ have infinitely many zeros clustering at $b$. Consequently, $b$ is a natural boundary for $\ell$.
\end{proposition}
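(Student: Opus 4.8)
The plan is to transform the equation to Liouville normal form and then apply the Sturm comparison theorem. By Remark~\ref{rmk:shypPDE_tildeell}, under the increasing change of variable $\xi=\gamma(x)$ — which maps $(a,b)$ onto $(\gamma(a),\infty)$, since $\gamma(b)=\infty$ by Assumption~\ref{asmp:shypPDE_SLhyperg} — the equation $\ell(u)=\lambda u$ is equivalent to $\widetilde\ell(\widetilde u)=\lambda\widetilde u$ with $\widetilde u(\xi)=u(\gamma^{-1}(\xi))$ and $\widetilde\ell=-\frac{d^2}{d\xi^2}-\frac{A'}{A}\frac{d}{d\xi}$. The further substitution $v=\sqrt{A}\,\widetilde u$ turns this into the Schrödinger-type equation $v''+\bigl(\lambda-q(\xi)\bigr)v=0$, where $q=\frac12\bigl(\frac{A'}{A}\bigr)'+\frac14\bigl(\frac{A'}{A}\bigr)^2$. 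Since $A>0$, the zeros of $u$, $\widetilde u$ and $v$ correspond to each other via $\xi=\gamma(x)$, and $\gamma^{-1}$ carries a cluster of zeros at $+\infty$ to a cluster at $b$; hence $\ell(u)=\lambda u$ is oscillatory at $b$ if and only if $v''+(\lambda-q)v=0$ is oscillatory at $+\infty$.

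The crucial step is the estimate $\limsup_{\xi\to\infty}q(\xi)\le\sigma^2$. Writing $\frac{A'}{A}=\eta+\bm{\phi}_\eta$ (with $\eta,\bm{\phi}_\eta,\bm{\psi}_\eta$ as in Assumption~\ref{asmp:shypPDE_SLhyperg}), one computes the identities $\bm{\psi}_\eta=\frac12\eta'+\frac14\eta^2+\frac12\bm{\phi}_\eta\eta$ and $q=\bm{\psi}_\eta+\frac12\bm{\phi}_\eta'+\frac14\bm{\phi}_\eta^2$. Since $\bm{\phi}_\eta$ decreases to $0$, we have $\bm{\phi}_\eta\ge0$, $\bm{\phi}_\eta'\le0$ and $\bm{\phi}_\eta^2\to0$, so $q\le\bm{\psi}_\eta+\frac14\bm{\phi}_\eta^2$; and since $\bm{\psi}_\eta$ is decreasing it has a limit $L\in[-\infty,\infty)$. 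To identify $L=\sigma^2$, recall that $\frac{A'(\xi)}{2A(\xi)}\to\sigma$, hence $\eta=\frac{A'}{A}-\bm{\phi}_\eta\to2\sigma$; solving the first identity for $\eta'$ gives $\frac12\eta'(\xi)=\bm{\psi}_\eta(\xi)-\frac14\eta(\xi)^2-\frac12\bm{\phi}_\eta(\xi)\eta(\xi)\to L-\sigma^2$, and since a bounded $C^1$ function cannot have a nonzero limiting derivative, this forces $L=\sigma^2$. Thus $\limsup_{\xi\to\infty}q(\xi)\le\sigma^2$.

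Now fix $\lambda>\sigma^2$. By the previous step there exist $\xi_1>\gamma(a)$ and $\delta>0$ with $\lambda-q(\xi)\ge\delta$ for all $\xi\ge\xi_1$, and comparing $v''+(\lambda-q)v=0$ with the constant-coefficient equation $y''+\delta y=0$ through the Sturm comparison theorem shows that every nontrivial solution $v$ has a zero in each interval of length $\pi/\sqrt{\delta}$ contained in $[\xi_1,\infty)$, hence infinitely many zeros clustering at $+\infty$; transporting this back, every nontrivial solution of $\ell(u)=\lambda u$ has infinitely many zeros clustering at $b$. For the final assertion, oscillation at $b$ for this real $\lambda$ is incompatible with $b$ being a regular, exit, or entrance boundary: a regular $b$ would force every solution to extend continuously to $b$ and hence preclude a nontrivial solution vanishing along a sequence tending to $b$; an exit (resp.\ entrance) boundary makes, by Feller's classification, the integral $\int_c^b\!\int_y^b\frac{dx}{p(x)}r(y)dy$ (resp.\ $\int_c^b\!\int_c^y\frac{dx}{p(x)}r(y)dy$) finite, which after passage to the scale variable yields a Leighton--Wintner-type non-oscillation criterion valid for \emph{every} $\lambda$. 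Since the equation is oscillatory at $b$, none of these is possible, so $b$ is a natural boundary. (Alternatively, one checks directly that $\gamma(b)=\infty$ together with $\frac{A'}{A}\ge0$ bounded near $+\infty$ forces both of Feller's integrals to diverge.)

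I expect the estimate $\limsup_{\xi\to\infty}q(\xi)\le\sigma^2$ — specifically the identification $L=\sigma^2$ from the monotonicity of $\bm{\phi}_\eta$ and $\bm{\psi}_\eta$ — to be the only non-routine point; the remainder is the classical Liouville transform together with the Sturm oscillation and non-oscillation machinery.
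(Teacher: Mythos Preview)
Your argument is correct and considerably more self-contained than the paper's, which simply cites \cite[Lemma 3.7]{fruchtl2018} for the oscillation and \cite[p.~348]{linetsky2004} for the implication ``oscillatory $\Rightarrow$ natural boundary''. What you do explicitly---the Liouville transformation $v=\sqrt{A}\,\widetilde u$ and Sturm comparison with a constant-coefficient equation---is precisely the machinery behind the cited Fruchtl result. Your ``non-routine point'', the identification $L=\lim\bm{\psi}_\eta=\sigma^2$ via the boundedness of $\eta$, is exactly the computation the paper carries out in the proof of the \emph{next} proposition (Proposition~\ref{prop:shypPDE_spectralsupp}), there attributed to \cite[Lemma 2.9]{zeuner1992}; your direct argument for it is clean and avoids that external reference as well. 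One minor regularity remark: since only $\eta\in\mathrm{C}^1$ is assumed while $A'/A$ is merely locally absolutely continuous, $\bm{\phi}_\eta'$ exists only a.e., so the inequality $q\le\bm{\psi}_\eta+\tfrac14\bm{\phi}_\eta^2$ and the subsequent Sturm comparison should be read in the Carath\'eodory sense---this changes nothing.

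The only soft spot is the natural-boundary step. Your regular-endpoint argument is fine (zeros of $u$ and of $u^{[1]}$ both cluster at $b$, forcing $u(b)=u^{[1]}(b)=0$ and hence $u\equiv0$), but the exit/entrance cases via an unstated ``Leighton--Wintner-type criterion'' are hand-wavy; the paper sidesteps this by citing Linetsky. Your parenthetical alternative---checking Feller's two integrals directly from $\gamma(b)=\infty$ and $A$ nondecreasing with $A'/A\to2\sigma$---is the cleaner route and does go through, though it no longer uses the oscillation just proved.
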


\begin{proof}
The results of \cite[Lemma 3.7]{fruchtl2018} on the asymptotic behavior of the solutions of the standardized equation $\widetilde{\ell}(u) = (\tau^2 + \sigma^2) u$ show that for $\tau > 0$ this equation has a linearly independent pair of solutions with infinitely many zeros clustering at infinity; hence any solution of $\widetilde{\ell}(u) = (\tau^2 + \sigma^2) u$ has this property (cf.\ \cite[Section 8.1]{coddingtonlevinson1955}). It immediately follows that the same is true for any solution of $\ell(u) = (\tau^2 + \sigma^2) u$ ($\tau > 0$).

According to \cite[p.\ 348]{linetsky2004}, if $\widetilde{\ell}(u) = \lambda u$ is oscillatory at $b$ for some $\lambda > 0$ then $b$ is a natural (Feller) boundary for the operator $\ell$, so the final assertion holds.
\end{proof}

\begin{proposition} \label{prop:shypPDE_spectralsupp}
The spectral measure from Proposition \ref{prop:entrpf_Ffourier} is such that $\supp(\bm{\rho}_\mathcal{L}) = [\sigma^2,\infty)$. In addition, $\mathcal{L}$ has purely absolutely continuous spectrum in $(\sigma^2,\infty)$.
\end{proposition}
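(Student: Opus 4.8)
The plan is to prove the two inclusions $\supp(\bm{\rho}_\mathcal{L}) \subseteq [\sigma^2,\infty)$ and $[\sigma^2,\infty) \subseteq \supp(\bm{\rho}_\mathcal{L})$ separately, picking up the absolute continuity statement along the way. Throughout I use that, by the preceding proposition, $b$ is a natural boundary, so Proposition \ref{prop:entrpf_Ffourier} applies and $\bm{\rho}_\mathcal{L}$ is a locally finite positive measure on $[0,\infty)$ with $\sigma(\mathcal{L}) = \supp(\bm{\rho}_\mathcal{L})$.

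\emph{The inclusion $\supp(\bm{\rho}_\mathcal{L}) \subseteq [\sigma^2,\infty)$.} Since $\mathcal{L}$ is self-adjoint and bounded below, this amounts to the Poincaré-type inequality $\langle \mathcal{L}u, u\rangle_{L_2(r)} \geq \sigma^2\|u\|_2^2$ on (a form core of) $\mathcal{D}_\mathcal{L}^{(2)}$, and I would obtain it by manufacturing a positive $\sigma^2$-supersolution out of the function $\eta$ of Assumption \ref{asmp:shypPDE_SLhyperg}. Put $\Phi(x) := \exp\bigl(-\tfrac12\int_{\gamma(c)}^{\gamma(x)}\eta(s)\,ds\bigr)$; a one-line computation in the standard form of Remark \ref{rmk:shypPDE_tildeell} gives $\ell(\Phi) = (\bm{\psi}_\eta\circ\gamma)\,\Phi$. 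Now $\bm{\psi}_\eta$ is decreasing, and $\bm{\phi}_\eta\to 0$ forces $\eta\to 2\sigma$, whence $-\tfrac14\eta^2+\tfrac{A'}{2A}\eta\to\sigma^2$ and therefore (as $\eta$ is bounded and $\geq 0$) $\eta'\to 0$ and $\bm{\psi}_\eta\to\sigma^2$; hence $\bm{\psi}_\eta\geq\sigma^2$ on $(\gamma(a),\infty)$ and $\ell(\Phi)\geq\sigma^2\Phi$ on $(a,b)$. The ground-state substitution $u=\Phi v$ in the quadratic form then yields
\[
\langle \mathcal{L}u, u\rangle_{L_2(r)} - \sigma^2\|u\|_2^2 = \int_a^b p\,\Phi^2\,|v'|^2\,dx + \int_a^b \Bigl(\tfrac{\ell(\Phi)}{\Phi} - \sigma^2\Bigr)|u|^2\,r\,dx + \Bigl[\,\Phi^{[1]}\tfrac{|u|^2}{\Phi}\,\Bigr]_a^b ,
\]
and each term on the right is nonnegative on a form core of functions with compact support in $[a,b)$: the integrands because $\ell(\Phi)/\Phi\geq\sigma^2$, the boundary term because $\Phi^{[1]} = -\tfrac12(\eta\circ\gamma)\,\Phi\sqrt{pr}\leq 0$ (so the contribution at $a$ has the right sign) and the contribution at $b$ vanishes ($b$ natural). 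When $a$ is not a regular endpoint one first runs the argument on $(a_m,b)$ with Neumann condition at $a_m$ and lets $a_m\downarrow a$ as in Lemma \ref{lem:entrpf_ode_wepslimit}. This gives $\mathcal{L}\geq\sigma^2 I$.

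\emph{The inclusion $[\sigma^2,\infty) \subseteq \supp(\bm{\rho}_\mathcal{L})$ and absolute continuity.} Here I would work with the standardized equation $\widetilde{\ell}(u) = (\tau^2+\sigma^2)u$ and the asymptotic behaviour of its solutions at $b$ already invoked in the preceding proposition. For $\tau>0$, \cite[Lemma 3.7]{fruchtl2018} provides a fundamental system whose elements oscillate at $\infty$ with amplitudes bounded away from $0$, so that no nontrivial solution is subordinate at $b$. By the Gilbert–Pearson subordinacy criterion — equivalently, through the Herglotz property and boundary behaviour of the Weyl–Titchmarsh $m$-function, cf.\ \cite[Chapter 9]{teschl2014} — the restriction of $\bm{\rho}_\mathcal{L}$ to $(\sigma^2,\infty)$ is purely absolutely continuous with an a.e.\ strictly positive density; hence $(\sigma^2,\infty)\subseteq\supp(\bm{\rho}_\mathcal{L})$, and since the support is closed, $[\sigma^2,\infty)\subseteq\supp(\bm{\rho}_\mathcal{L})$. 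Combined with the first part this gives $\supp(\bm{\rho}_\mathcal{L}) = [\sigma^2,\infty)$ and purely absolutely continuous spectrum in $(\sigma^2,\infty)$. (An alternative route for this part: reduce $\ell$ restricted to $[c,b)$, $c>a$, to the class of Sturm-Liouville hypergroups and quote the known description of its spectral measure, noting that the support is governed by the coefficients near $b$ and is thus unaffected by the truncation.)

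The step I expect to be the main obstacle is passing from the qualitative oscillation statement to genuine absolute continuity: what is needed is not merely that solutions oscillate at $b$ but that their amplitudes stay comparable (so that none is subordinate), together with the subordinacy/$m$-function machinery. By contrast, the inclusion $\supp(\bm{\rho}_\mathcal{L})\subseteq[\sigma^2,\infty)$ is, once the supersolution $\Phi$ is spotted, essentially bookkeeping — Assumption \ref{asmp:shypPDE_SLhyperg} is precisely what makes $\Phi$ a $\sigma^2$-supersolution — the only delicate point being the sign and finiteness of the boundary term at $a$, handled via $\eta\geq 0$ and, if necessary, the truncation $a_m\downarrow a$.
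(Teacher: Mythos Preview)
Your approach is correct in outline but differs from the paper's in both halves. The paper passes via the Liouville transformation to the Schr\"odinger form $-d^2/d\xi^2 + \mathfrak{q}$ with $\mathfrak{q} = \tfrac14\bm{\phi}_\eta^2 + \bm{\psi}_\eta + \tfrac12\bm{\phi}_\eta'$, observes that $\mathfrak{q} - \sigma^2$ splits as a term tending to $0$ plus an $L_1$ term (namely $\tfrac12\bm{\phi}_\eta'$), and then quotes \cite[Theorem 15.3]{weidmann1987} to obtain in one stroke that the essential spectrum equals $[\sigma^2,\infty)$ with purely absolutely continuous spectrum on $(\sigma^2,\infty)$; the absence of eigenvalues in $[0,\sigma^2]$ is handled by a separate argument that relies on the Laplace-type representation (Theorem~\ref{thm:shypPDE_Wlaplacerep}). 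Your supersolution $\Phi = \exp\bigl(-\tfrac12\int\eta\bigr)$ together with the ground-state substitution is a more elementary route to the lower bound $\mathcal{L}\geq\sigma^2 I$ and, notably, makes the proposition independent of Theorem~\ref{thm:shypPDE_Wlaplacerep}; your subordinacy route to absolute continuity is close in spirit to the paper's Weidmann citation (that theorem is itself proved via exactly this kind of asymptotic comparison of solutions), the paper's version simply being more off-the-shelf after the Liouville transform. Two points to tighten: the inference ``$\eta$ bounded and $\geq 0 \Rightarrow \eta'\to 0$'' is not valid in general --- the paper obtains $\eta'\to 0$ from \cite[Lemma 2.9]{zeuner1992}, which uses the monotonicity of $\bm{\psi}_\eta$; and for Gilbert--Pearson you need a genuine amplitude comparison of the two solutions (not merely oscillation), which \cite[Lemma 3.7]{fruchtl2018} does supply but should be stated explicitly rather than inferred from ``oscillate with amplitudes bounded away from $0$''.
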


\begin{proof}
To show that the essential spectrum of $\mathcal{L}$ equals $[\sigma^2,\infty)$, we may assume that the differential expression $\ell$ is regular at the endpoint $a$: this is so because, by a well-known result \cite[Theorem 9.11]{teschl2014}, the essential spectrum of $\mathcal{L}$ is the union of the essential spectrums of self-adjoint realizations of $\ell$ restricted to the intervals $(a,c)$ and $(c,b)$ (where $c \in (a,b)$), and because it is known from \cite[Theorem 3.1]{mckean1956} that the spectrum is purely discrete whenever there are no natural boundaries.

The equation $\ell(u) = \lambda u$ is clearly non-oscillatory at $a$; it is oscillatory at $b$ for $\lambda > \sigma^2$ and (by the Laplace representation \eqref{eq:shypPDE_Wlaplacerep}) non-oscillatory at $b$ for $\lambda < \sigma^2$. Hence it follows from \cite[Theorem 2]{linetsky2004} that the essential spectrum of $\mathcal{L}$ is contained in $[\sigma^2, \infty)$. Now, the operator $\mathcal{L}$ is unitarily equivalent, via the Liouville transformation (see e.g.\ \cite[Section 4.3]{everitt1982} and \cite[Section 4]{linetsky2004}), to a self-adjoint realization of the differential expression $-{d^2 \over d\xi^2} + \mathfrak{q}$, where
\begin{equation} \label{eq:shypPDE_liouvtrans_frakq}
\mathfrak{q}(\xi) = \Bigl({A'(\xi) \over 2A(\xi)}\Bigr)^{\!2} + \Bigl({A'(\xi) \over 2A(\xi)}\Bigr)' = {1 \over 4} \bm{\phi}_\eta^2(\xi) + \bm{\psi}_\eta(\xi) + {1 \over 2} \bm{\phi}_\eta'(\xi), \qquad \xi \in (\gamma(a), \infty).
\end{equation}
We know from Assumption \ref{asmp:shypPDE_SLhyperg} and \cite[Lemma 2.9]{zeuner1992} that $\lim_{\xi \to \infty} \bm{\phi}_\eta(\xi) = 0$ and $\lim_{\xi \to \infty} \eta'(\xi) = 0$. Consequently, $\lim_{\xi \to \infty} {1 \over 4} \bm{\phi}_\eta^2(\xi) + \bm{\psi}_\eta(\xi) = \sigma^2$. In turn, the fact that $\bm{\phi}_\eta$ is positive and decreasing clearly implies that $\bm{\phi}_\eta' \in L_1([c,\infty),d\xi)$ for $c > \gamma(a)$. Using \cite[Theorem 15.3]{weidmann1987}, we conclude that the spectrum of $\mathcal{L}$ is purely absolutely continuous on $(\sigma^2,\infty)$ and the essential spectrum equals $[\sigma^2,\infty)$.

It remains to show that $\mathcal{L}$ has no eigenvalues on $[0,\sigma^2]$. Indeed, if we assume that $0 \leq \lambda_0 \leq \sigma^2$ is an eigenvalue of $\mathcal{L}$, then $w_{\lambda_0}$ belongs to $\mathcal{D}_{\mathcal{L}}^{(2)}$ and therefore, by the Laplace representation \eqref{eq:shypPDE_Wlaplacerep}, $w_{\lambda}$ belongs to $\mathcal{D}_{\mathcal{L}}^{(2)}$ for all $\lambda \geq \sigma^2$; since the eigenvalues are discrete, this is a contradiction.
\end{proof}

\begin{proposition} \label{prop:hypPDE_Wasym_infty}
We have
\[
\lim_{x \uparrow b} w_\lambda(x) = 0 \qquad \text{for all } \lambda > 0
\]
if and only if $\,\lim_{x \uparrow b} p(x)r(x) = \infty$.
\end{proposition}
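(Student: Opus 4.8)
The plan is to pass to the standard form of Remark~\ref{rmk:shypPDE_tildeell} and read the statement off from the behaviour of the solutions of $\widetilde\ell(u)=\lambda u$ at $\xi=\infty$. Writing $\xi=\gamma(x)$ and $\widetilde w_\lambda(\xi):=w_\lambda(\gamma^{-1}(\xi))$, we have $(A\widetilde w_\lambda')'=-\lambda A\widetilde w_\lambda$ on $(\gamma(a),\infty)$ with $\widetilde w_\lambda(\gamma(a))=1$, $(A\widetilde w_\lambda')(\gamma(a))=0$, and $p(x)r(x)=A(\gamma(x))^2$. Since $A'/A\ge 0$ (i.e.\ $pr$ is increasing), $A$ is nondecreasing, so $p(x)r(x)\to\infty$ as $x\uparrow b$ is equivalent to $A(\xi)\to\infty$, and $w_\lambda(x)\to 0$ is equivalent to $\widetilde w_\lambda(\xi)\to 0$. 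Two preliminary remarks: (i) if $\sigma>0$ then $A(\xi)\to\infty$, because $A'/A\to 2\sigma$ forces $A'/A\ge\sigma$ for large $\xi$, so $\log A$ grows at least linearly; (ii) multiplying $(A\widetilde w_\lambda')'=-\lambda A\widetilde w_\lambda$ by $2\widetilde w_\lambda'/A$ shows that $\widetilde E_\lambda(\xi):=\lambda\widetilde w_\lambda(\xi)^2+\widetilde w_\lambda'(\xi)^2$ satisfies $\widetilde E_\lambda'=-2\tfrac{A'}{A}\widetilde w_\lambda'^2\le 0$, so $\widetilde E_\lambda$ decreases to a limit $c_\lambda\in[0,\lambda]$; in particular $\widetilde w_\lambda$ and $\widetilde w_\lambda'$ are bounded and $\int_{\gamma(a)}^\infty\tfrac{A'}{A}\widetilde w_\lambda'^2<\infty$. (When $p(a)r(a)=0$, the normalisation $\widetilde E_\lambda(\gamma(a)^+)=\lambda$ and these bounds are obtained by first working with the approximants $w_{\lambda,m}$ of Lemma~\ref{lem:entrpf_ode_wepslimit} and letting $m\to\infty$, exactly as in the proof of Lemma~\ref{lem:shypPDE_wsolbound}.)

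Consider first $\lambda>\sigma^2$, so that $\widetilde\ell(u)=\lambda u$ is oscillatory at $\infty$; I claim $\widetilde w_\lambda(\xi)\to 0\iff A(\xi)\to\infty$. Introduce the Prüfer variables $\sqrt\lambda\,\widetilde w_\lambda=\widetilde R\cos\theta$, $\widetilde w_\lambda'=\widetilde R\sin\theta$, so $\widetilde R=\sqrt{\widetilde E_\lambda}$ and one computes $\theta'=-\sqrt\lambda-\tfrac{A'}{2A}\sin 2\theta$, $(\log\widetilde R)'=-\tfrac{A'}{A}\sin^2\theta$. Since $\tfrac{A'}{2A}\to\sigma<\sqrt\lambda$, for large $\xi$ the rate $|\theta'|$ is bounded above and bounded away from $0$, hence $\theta\downarrow-\infty$; thus $\cos\theta$ hits $1$ along a sequence $\xi_n\to\infty$ and $\limsup_\xi|\widetilde w_\lambda(\xi)|=\lambda^{-1/2}\limsup_\xi\widetilde R(\xi)$. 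Now $\widetilde R(\xi)=\widetilde R(\xi_0)\exp(-\int_{\xi_0}^\xi\tfrac{A'}{A}\sin^2\theta)$. If $A$ is bounded, $\int_{\xi_0}^\infty\tfrac{A'}{A}\sin^2\theta\le\int_{\xi_0}^\infty\tfrac{A'}{A}<\infty$, so $\widetilde R$ converges to a positive limit and $\widetilde w_\lambda\not\to 0$. If $A(\xi)\to\infty$, then $\int_{\xi_0}^\infty\tfrac{A'}{A}=\infty$ and an oscillatory-cancellation estimate gives $\int_{\xi_0}^\infty\tfrac{A'}{A}\sin^2\theta=\infty$ as well, so $\widetilde R(\xi)\to 0$ and $\widetilde w_\lambda(\xi)\to 0$. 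The cancellation estimate uses that $\theta$ rotates at the rate $|\theta'|\asymp\sqrt\lambda$ (bounded, bounded away from $0$) while $\tfrac{A'}{A}=\eta+\bm{\phi}_\eta$ has bounded variation over each rotation period — $\bm{\phi}_\eta$ being monotone and $\eta\in\mathrm{C}^1$ with $\eta'\to 0$ by \cite[Lemma 2.9]{zeuner1992} — so that $\int^\infty\tfrac{A'}{A}(\sin^2\theta-\tfrac12)$ converges; equivalently, $\widetilde R(\xi)^2A(\xi)$ has a finite positive limit. (This last fact, i.e.\ the WKB asymptotics $\widetilde w_\lambda(\xi)=A(\xi)^{-1/2}(\kappa\cos(\sqrt{\lambda-\sigma^2}\,\xi-\vartheta)+o(1))$ with $\kappa\ne 0$, is also available from \cite[Lemma 3.7]{fruchtl2018}.)

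It remains to handle $0<\lambda\le\sigma^2$, which by remark (i) occurs only when $A(\xi)\to\infty$, so here I only need $\widetilde w_\lambda(\xi)\to 0$. By the Liouville transformation (as in the proof of Proposition~\ref{prop:shypPDE_spectralsupp}), $v_\lambda:=A^{1/2}\widetilde w_\lambda$ solves $-v''+\mathfrak q v=\lambda v$ with $\mathfrak q(\xi)\to\sigma^2$; hence $|v_\lambda''|\le(\sigma^2-\lambda+\eps)|v_\lambda|$ for $\xi$ large, and a Gronwall-type estimate yields $|v_\lambda(\xi)|\le C_\eps\exp(\sqrt{\sigma^2-\lambda+\eps}\,\xi)$ when $\lambda<\sigma^2$ (resp.\ $|v_{\sigma^2}(\xi)|\le C_\eps e^{\eps\xi}$). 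On the other hand $A(\xi)^{1/2}\ge c_\eps\exp((\sigma-\eps)\xi)$ by remark (i). Since $\sqrt{\sigma^2-\lambda}<\sigma$ precisely because $\lambda>0$, taking $\eps$ small enough gives $\widetilde w_\lambda(\xi)=A(\xi)^{-1/2}v_\lambda(\xi)\to 0$.

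Putting things together: if $p(x)r(x)\to\infty$ (i.e.\ $A\to\infty$), the previous two paragraphs give $w_\lambda(x)\to 0$ for every $\lambda>0$; conversely, if $p(x)r(x)\not\to\infty$ (i.e.\ $A$ bounded), then by remark (i) $\sigma=0$, so every $\lambda>0$ lies in the oscillatory regime and the second paragraph produces $w_\lambda\not\to 0$. This proves the equivalence. The genuine difficulty is concentrated in the oscillatory case $\lambda>\sigma^2$ — namely, showing that the Prüfer amplitude decays exactly like $A^{-1/2}$ when $A\to\infty$ — and the role of Assumption~\ref{asmp:shypPDE_SLhyperg} is to make $\tfrac{A'}{A}$ vary slowly relative to the phase so that the required cancellation holds.
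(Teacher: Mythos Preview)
Your argument is essentially correct and, in fact, far more detailed than the paper's one-line proof, which simply passes to the standard form $\widetilde\ell$ and invokes \cite[Lemma~3.7]{fruchtl2018} for the asymptotics of $\widetilde w_\lambda$ at infinity. You are effectively reproving the relevant part of that lemma: the Pr\"ufer analysis in the oscillatory range $\lambda>\sigma^2$, and the Liouville-transform growth comparison in the range $0<\lambda\le\sigma^2$. Both pieces are standard and your outline is sound.

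There is, however, one imprecision worth flagging in the non-oscillatory case. You assert ``$|v_\lambda''|\le(\sigma^2-\lambda+\eps)|v_\lambda|$ for $\xi$ large'', which amounts to $\mathfrak q(\xi)\to\sigma^2$ pointwise. Under Assumption~\ref{asmp:shypPDE_SLhyperg} one only has $\mathfrak q=\bigl(\tfrac14\bm{\phi}_\eta^2+\bm{\psi}_\eta\bigr)+\tfrac12\bm{\phi}_\eta'$, where the first bracket decreases to $\sigma^2$ but the second term is merely in $L_1$ near infinity (since $\bm{\phi}_\eta$ is positive and decreasing) and need not vanish pointwise. So a naive Gronwall bound is not directly available; what one actually uses is a Levinson-type perturbation argument for $-v''+\mathfrak q v=\lambda v$ with potential $\mathfrak q=\sigma^2+o(1)+L_1$, which gives $|v_\lambda(\xi)|\le C\exp\bigl(\sqrt{\sigma^2-\lambda}\,\xi\bigr)$ for $0<\lambda<\sigma^2$ and at most polynomial growth for $\lambda=\sigma^2$. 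Combined with $A(\xi)^{-1/2}\le c_\eps e^{-(\sigma-\eps)\xi}$ this still yields $\widetilde w_\lambda\to 0$. This is precisely the content of \cite[Lemma~3.7]{fruchtl2018}, to which you yourself point; once you cite it, the paragraph reduces to the paper's proof.
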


\begin{proof}
After transforming $\ell$ into the standard form (Remark \ref{rmk:shypPDE_tildeell}), the result follows easily from \cite[Lemma 3.7]{fruchtl2018}.
\end{proof}

\section{Product formula} \label{sec:prodform}

The goal of this section is to prove one of the key results of the paper, which is stated as follows:

\begin{theorem}[Product formula for $w_\lambda$] \label{thm:shypPDE_prodform_weaklim}
For each $x, y \in [a,b)$ there exists a measure $\bm{\nu}_{x,y} \in \mathcal{P}[a,b)$ such that the product $w_\lambda(x) \, w_\lambda(y)$ admits the integral representation
\[
w_\lambda(x) \, w_\lambda(y) = \int_{[a,b)} w_\lambda(\xi)\, \bm{\nu}_{x,y}(d\xi), \qquad x, y \in [a,b), \; \lambda \in \mathbb{C}.
\]
\end{theorem}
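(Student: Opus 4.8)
The plan is to build $\bm{\nu}_{x,y}$ as a weak limit, as $t\downarrow 0$, of explicitly constructed regularized probability measures, following the scheme sketched in the introduction. For $t>0$ and $x,y\in(a,b)$ I set
\[
q_t(x,y,\xi) := \int_{[0,\infty)} e^{-t\lambda}\, w_\lambda(x)\, w_\lambda(y)\, w_\lambda(\xi)\, \bm{\rho}_\mathcal{L}(d\lambda), \qquad \xi\in(a,b).
\]
Since $\supp(\bm{\rho}_\mathcal{L})=[\sigma^2,\infty)$ and $|w_\lambda|\le 1$ there by \eqref{eq:shypPDE_Wbound}, the Cauchy--Schwarz inequality together with the absolute convergence of \eqref{eq:shypPDE_fellersgp_fundsol_spectr} shows that this integral converges absolutely and locally uniformly in $\xi$, so $q_t$ is continuous; moreover $q_t(x,y,\cdot)\in L_2(r)$ with $\mathcal{F}[q_t(x,y,\cdot)](\lambda)=e^{-t\lambda}w_\lambda(x)w_\lambda(y)$, and since $\lambda\mapsto\lambda e^{-t\lambda}w_\lambda(x)w_\lambda(y)$ also lies in $L_2([0,\infty);\bm{\rho}_\mathcal{L})$, identity \eqref{eq:shypPDE_LtransfidentD2} places $q_t(x,y,\cdot)$ in $\mathcal{D}_\mathcal{L}^{(2)}$. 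The candidate regularized measures are $\bm{\nu}_{t,x,y}(d\xi):=q_t(x,y,\xi)\,r(\xi)\,d\xi$; note that $w_\lambda(a)=1$ gives $q_t(x,a,\xi)=p(t,x,\xi)$, so $\int_{[a,b)}h(\xi)\,q_t(x,a,\xi)\,r(\xi)\,d\xi=(T_th)(x)$.

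The first task is to prove $\bm{\nu}_{t,x,y}\in\mathcal{P}[a,b)$. For $h\in\mathrm{C}_\mathrm{c}[a,b)$ the function $u^h(x,y):=\int_{[a,b)}h(\xi)\,\bm{\nu}_{t,x,y}(d\xi)=\int_{[0,\infty)}e^{-t\lambda}w_\lambda(x)w_\lambda(y)(\mathcal{F}h)(\lambda)\,\bm{\rho}_\mathcal{L}(d\lambda)$ solves the hyperbolic Cauchy problem $\ell_x u=\ell_y u$, $u(\cdot,a)=T_th$, $\lim_{y\downarrow a}\partial_y^{[1]}u(\cdot,y)=0$ associated with $\ell$, differentiation under the integral being legitimate by Lemma \ref{lem:shypPDE_Lfourier_D2prop}(a) and \eqref{eq:shypPDE_fellersgp_fundsolL2rep}. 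Here the regularization is the key point: the smoothed datum $T_th$ is regular enough for the positivity and uniqueness theorem for this (possibly parabolically degenerate) Cauchy problem, which is proved in the Appendix from the spectral decomposition of $\ell$ and a maximum principle — and this is exactly where Assumption \ref{asmp:shypPDE_SLhyperg} is used — whereas a general $h$, or the function $w_\lambda$ itself, need not be. Since $T_t$ is positivity preserving, $h\ge 0$ forces $u^h\ge 0$, and continuity of $q_t(x,y,\cdot)$ then forces $q_t(x,y,\cdot)\ge 0$; applying the Cauchy-problem uniqueness to the datum $T_t\mathds{1}=\mathds{1}$ (conservativity) together with an approximation $h_n\uparrow\mathds{1}$ gives $\bm{\nu}_{t,x,y}([a,b))=1$. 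Finally, inserting $q_t(x,y,\cdot)\in\mathcal{D}_\mathcal{L}^{(2)}$ into Lemma \ref{lem:shypPDE_Lfourier_D2prop}(b) and comparing with the definition of $q_t$ yields the regularized product formula
\[
\int_{[a,b)} w_\lambda(\xi)\,\bm{\nu}_{t,x,y}(d\xi) = e^{-t\lambda}\,w_\lambda(x)\,w_\lambda(y), \qquad \lambda\ge\sigma^2.
\]

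To pass to the limit, the crux is uniform tightness of $\{\bm{\nu}_{t,x,y}\}_{0<t\le 1}$ in $[a,b)$. I would get this from a Lyapunov function: a continuous $V\colon[a,b)\to[1,\infty)$ with $V(\xi)\to\infty$ as $\xi\uparrow b$ and $\sup_{0<t\le 1}\int V\,d\bm{\nu}_{t,x,y}<\infty$. A natural candidate is $V=w_{\lambda_1}$ with $\lambda_1<0$ fixed, which is positive, increasing, and blows up at $b$ precisely because $b$ is a natural boundary (so that $\int^b r(\xi)\int_\xi^b\frac{dx}{p(x)}\,d\xi=\infty$); extending the regularized product formula to $\lambda_1<0$ (using the decay of $q_t$ near $b$) gives $\int V\,d\bm{\nu}_{t,x,y}=e^{-t\lambda_1}w_{\lambda_1}(x)w_{\lambda_1}(y)\le e^{|\lambda_1|}w_{\lambda_1}(x)w_{\lambda_1}(y)$ uniformly in $t\in(0,1]$, whence tightness by Markov's inequality. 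Thus along some $t_k\downarrow 0$ we have $\bm{\nu}_{t_k,x,y}\warrow\bm{\nu}_{x,y}$ with $\bm{\nu}_{x,y}\in\mathcal{P}[a,b)$. Since $w_\lambda\in\mathrm{C}_\mathrm{b}[a,b)$ for $\lambda\ge\sigma^2$, taking limits gives $\int_{[a,b)}w_\lambda\,d\bm{\nu}_{x,y}=w_\lambda(x)w_\lambda(y)$ for all such $\lambda$; both sides extend to entire functions of $\lambda$ (the left side via the exponential-type bounds of Lemma \ref{lem:entrpf_ode_wsol} and dominated convergence, using the finite $V$-moment of $\bm{\nu}_{x,y}$), so the identity propagates to every $\lambda\in\mathbb{C}$. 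The limit measure does not depend on $(t_k)$ because a finite measure on $[a,b)$ is determined by its $\mathcal{L}$-transform $\lambda\mapsto\int w_\lambda\,d\mu$; hence the full family converges and the theorem follows.

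The two genuinely nonroutine ingredients are (i) the positivity and uniqueness theorem for the associated hyperbolic Cauchy problem in the degenerate regime — which lies outside the classical two-variable theory and must be extracted from the spectral theory of $\ell$, Assumption \ref{asmp:shypPDE_SLhyperg} being tailored precisely so that the maximum-principle argument runs — and (ii) the uniform tightness of the regularized measures as $t\downarrow 0$, which hinges on quantitative decay of $q_t(x,y,\cdot)$ near $b$ and is what prevents mass from escaping to the endpoint, so that the weak limit is a genuine probability measure supported in $[a,b)$.
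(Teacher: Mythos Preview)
Your overall strategy matches the paper's: build the regularized measures $\bm{\nu}_{t,x,y}(d\xi)=q_t(x,y,\xi)r(\xi)\,d\xi$, establish positivity and unit mass via the hyperbolic Cauchy problem, then pass to the limit $t\downarrow 0$. The genuine gap is in your tightness argument. You propose the Lyapunov function $V=w_{\lambda_1}$ with $\lambda_1<0$ and claim $\int V\,d\bm{\nu}_{t,x,y}=e^{-t\lambda_1}w_{\lambda_1}(x)w_{\lambda_1}(y)$, justifying this by ``extending the regularized product formula to $\lambda_1<0$ (using the decay of $q_t$ near $b$)''. But $w_{\lambda_1}$ is unbounded, so this integral is not a priori finite, and the parenthetical is precisely the hard point: you have no independent control on the decay of $q_t(x,y,\cdot)$ near $b$ at this stage ($q_t\in\mathcal{D}_\mathcal{L}^{(2)}\subset L_2(r)$ is not enough, and its $L_2$ norm blows up as $t\downarrow 0$). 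The paper obtains this extension only \emph{after} the main theorem, via the modification lemma (Lemma~\ref{lem:shypPDE_modification}): for $\kappa<0$ the operator $\ell^{\langle\kappa\rangle}$ still satisfies Assumption~\ref{asmp:shypPDE_SLhyperg}, and the identity $q_t^{\langle\kappa\rangle}(x,y,\xi)r^{\langle\kappa\rangle}(\xi)=\frac{e^{t\kappa}w_\kappa(\xi)}{w_\kappa(x)w_\kappa(y)}q_t(x,y,\xi)r(\xi)$ then gives exactly the moment bound you need. Without invoking this device your tightness step does not go through.

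The paper sidesteps the issue by not proving tightness directly. It uses \emph{vague} compactness (free for probability measures), shows uniqueness of the vague limit by testing against $g\in\mathcal{D}_\mathcal{L}^{(2,0)}$ (not by appealing to injectivity of $\mu\mapsto\widehat{\mu}$, which is proved only later and would be circular here), and then needs the product formula against test functions in $\mathrm{C}_0$ rather than $\mathrm{C}_\mathrm{b}$. When $p(x)r(x)\to\infty$ one has $w_\lambda\in\mathrm{C}_0$ for $\lambda>0$ and is done; otherwise one passes to $\ell^{\langle\kappa\rangle}$, whose coefficients do satisfy $p^{\langle\kappa\rangle}r^{\langle\kappa\rangle}\to\infty$, obtains the formula for $w_\lambda^{\langle\kappa\rangle}$, and translates back. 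Only after the identity is secured for $\lambda\in\mathbb{R}$ does one set $\lambda=0$ to see $\bm{\nu}_{x,y}\in\mathcal{P}[a,b)$ and upgrade vague to weak convergence. Your Lyapunov route is a legitimate alternative, but it needs Lemma~\ref{lem:shypPDE_modification} just as essentially as the paper's route does; the difference is that the paper uses the modification to push test functions into $\mathrm{C}_0$, whereas you would use it to get a uniform moment bound.
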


\subsection{The associated hyperbolic Cauchy problem} \label{sub:prodform_hypCP}

The proof of Theorem \ref{thm:shypPDE_prodform_weaklim} relies crucially on the basic properties of the hyperbolic Cauchy problem associated with $\ell$, i.e., of the boundary value problem defined by
\begin{equation} \label{eq:shypPDE_Lcauchy}
(\ell_x f)(x,y) = (\ell_y f)(x,y) \quad\; (x,y \in (a,b)), \qquad\quad
f(x,a) = h(x), \qquad\quad
(\partial_y^{[1]}\!f)(x,a) = 0
\end{equation}
where $h$ is a given sufficiently regular function, $\partial^{[1]} u = pu'$ and the subscripts indicate the variable in which the operators act. 

The assumptions on the coefficients of $\ell$ introduced in the previous sections allow for the higher order coefficient of $\ell$ to vanish at the endpoint $a$, in which case the hyperbolic Cauchy problem \eqref{eq:shypPDE_Lcauchy} is parabolically degenerate at the initial line. In general, such hyperbolic problems cannot be dealt with using the classical theory of hyperbolic equations in two variables. But, as we will show, the existence, uniqueness and positivity properties for the Cauchy problem \eqref{eq:shypPDE_Lcauchy} can be deduced by making use of the eigenfunction expansion of the Sturm-Liouville operator $\ell$.

\begin{theorem}[Existence and uniqueness of solution] \label{thm:shypPDE_Lexistuniq}
If $h \in \mathcal{D}_{\mathcal{L}}^{(2)\!}$ and\, $\ell(h) \in \mathcal{D}_{\mathcal{L}}^{(2)\!}$, then there exists a unique solution $f \in \mathrm{C}^2\bigl((a,b)^2\bigr)$ of the Cauchy problem \eqref{eq:shypPDE_Lcauchy} satisfying the conditions
\begin{enumerate}[itemsep=0pt,topsep=4pt]
\item[\textbf{(i)}] $f(\cdot, y) \in \mathcal{D}_\mathcal{L}^{(2)}$\, for all $a < y < b$;
\item[\textbf{(ii)}] There exists a zero $\bm{\rho}_\mathcal{L}$-measure set $\Lambda_0 \subset [\sigma^2,\infty)$ such that for each $\lambda \in [\sigma^2,\infty) \setminus \Lambda_0$ we have
\begin{gather}
\label{eq:shypPDE_uniq_v2_cond1} \mathcal{F}[\ell_y f(\cdot,y)](\lambda) = \ell_y [\mathcal{F}f(\cdot,y)](\lambda) \quad \text{for all } \, a < y < b, \\[2pt]
\label{eq:shypPDE_uniq_v2_cond2} \smash{\displaystyle \lim_{y \downarrow a} [\mathcal{F}f(\cdot,y)](\lambda) = (\mathcal{F}h)(\lambda), \qquad\; \lim_{y \downarrow a} \partial_y^{[1]\!} \mathcal{F}[f(\cdot,y)](\lambda) = 0. }
\end{gather}
\end{enumerate}
This unique solution is given by
\begin{equation} \label{eq:shypPDE_Lexistence}
f(x,y) = \int_{[\sigma^2,\infty)\!} w_\lambda(x) \, w_\lambda(y) \, (\mathcal{F} h)(\lambda) \, \bm{\rho}_{\mathcal{L}}(d\lambda).
\end{equation}
\end{theorem}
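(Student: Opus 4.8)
The statement combines an existence claim with a uniqueness claim, and I would treat these separately. For existence, the natural candidate is already written down: the function $f$ defined by \eqref{eq:shypPDE_Lexistence}. First I would verify that this integral is well-defined and that $f \in \mathrm{C}^2((a,b)^2)$. This reduces to controlling $(\mathcal{F}h)(\lambda)$ against $w_\lambda(x)w_\lambda(y)$ and their derivatives. Since $h \in \mathcal{D}_{\mathcal{L}}^{(2)}$ and $\ell(h) \in \mathcal{D}_{\mathcal{L}}^{(2)}$, Proposition \ref{prop:entrpf_Ffourier} gives $\lambda^2 (\mathcal{F}h)(\lambda) \in L_2([\sigma^2,\infty);\bm{\rho}_\mathcal{L})$; combining this with the bound $|w_\lambda(x)| \le 1$ on $[\sigma^2,\infty)$ from Theorem \ref{thm:shypPDE_Wlaplacerep} (taking $\tau$ real) and the uniform-convergence estimates of Lemma \ref{lem:shypPDE_Lfourier_D2prop}(a) for $w_\lambda^{[1]}$, a Cauchy–Schwarz argument (splitting off factors of $(\lambda+i)^{-1}$ as in the proof of Lemma \ref{lem:shypPDE_Lfourier_D2prop}(b)) shows the integral and the integrals obtained by formally differentiating once or twice in $x$ or $y$ all converge absolutely and uniformly on compact squares. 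Differentiation under the integral sign is then justified, so $f \in \mathrm{C}^2$ and moreover $\ell_x f(x,y) = \int \lambda\, w_\lambda(x) w_\lambda(y) (\mathcal{F}h)(\lambda)\,\bm{\rho}_\mathcal{L}(d\lambda) = \ell_y f(x,y)$ by the symmetry of the integrand, so the PDE in \eqref{eq:shypPDE_Lcauchy} holds. The boundary conditions $f(x,a) = h(x)$ and $\partial_y^{[1]} f(x,a) = 0$ follow from $w_\lambda(a) = 1$, $w_\lambda^{[1]}(a) = 0$ together with Lemma \ref{lem:shypPDE_Lfourier_D2prop}(b) applied to $h$ (to recognize $\int w_\lambda(x)(\mathcal{F}h)(\lambda)\bm{\rho}_\mathcal{L}(d\lambda) = h(x)$) and dominated convergence for the second condition. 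Properties (i) and (ii) are immediate from this representation: applying $\mathcal{F}$ in the $x$-variable to \eqref{eq:shypPDE_Lexistence} and using \eqref{eq:shypPDE_Ltransfidentity} (which requires $f(\cdot,y) \in \mathcal{D}_\mathcal{L}^{(2)}$, itself a consequence of \eqref{eq:shypPDE_LtransfidentD2} and the integrability of $\lambda^2 w_\lambda(y)(\mathcal{F}h)(\lambda)$) gives $(\mathcal{F}f(\cdot,y))(\lambda) = w_\lambda(y)(\mathcal{F}h)(\lambda)$ for $\bm{\rho}_\mathcal{L}$-a.e.\ $\lambda$, and then \eqref{eq:shypPDE_uniq_v2_cond1}–\eqref{eq:shypPDE_uniq_v2_cond2} reduce to the ODE $\ell(w_\lambda) = \lambda w_\lambda$ with its initial data.

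For uniqueness, suppose $f$ is any $\mathrm{C}^2$ solution satisfying (i) and (ii). Fix $\lambda \in [\sigma^2,\infty) \setminus \Lambda_0$ and set $g_\lambda(y) := (\mathcal{F}f(\cdot,y))(\lambda)$, which is well-defined and (using (i) and the regularity of $f$) sufficiently smooth in $y$. Applying $\mathcal{F}$ in $x$ to the PDE $\ell_x f = \ell_y f$ and using \eqref{eq:shypPDE_Ltransfidentity} on the left and \eqref{eq:shypPDE_uniq_v2_cond1} on the right, we get $\lambda\, g_\lambda(y) = \ell_y g_\lambda(y) = -\tfrac1r \tfrac{d}{dy}(p\, g_\lambda')(y)$; that is, $g_\lambda$ solves the second-order ODE $\ell(g_\lambda) = \lambda g_\lambda$ on $(a,b)$. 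The boundary conditions \eqref{eq:shypPDE_uniq_v2_cond2} say exactly that $g_\lambda(a) = (\mathcal{F}h)(\lambda)$ and $g_\lambda^{[1]}(a) = 0$. By uniqueness of the solution of this initial-value problem (the same argument as in Lemma \ref{lem:entrpf_ode_wsol}, via the integral equation \eqref{eq:shypPDE_wsol_inteq}), we conclude $g_\lambda(y) = (\mathcal{F}h)(\lambda)\, w_\lambda(y)$ for all $y$, for $\bm{\rho}_\mathcal{L}$-a.e.\ $\lambda$. Hence $(\mathcal{F}f(\cdot,y))(\lambda) = (\mathcal{F}h)(\lambda)\,w_\lambda(y)$ as elements of $L_2([\sigma^2,\infty);\bm{\rho}_\mathcal{L})$ for each fixed $y$, and applying the inverse transform $\mathcal{F}^{-1}$ (an isometric isomorphism, Proposition \ref{prop:entrpf_Ffourier}) recovers $f(x,y) = \int w_\lambda(x) w_\lambda(y) (\mathcal{F}h)(\lambda)\bm{\rho}_\mathcal{L}(d\lambda)$ in $L_2(r)$ for each $y$; since both sides are continuous in $x$, they agree pointwise, which is \eqref{eq:shypPDE_Lexistence}.

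\textbf{Main obstacle.} The routine parts are the convergence estimates; the genuinely delicate point is the interchange of the $\mathcal{F}$-transform with the $y$-differential operator in the uniqueness argument—i.e., justifying that $\mathcal{F}[\ell_y f(\cdot,y)](\lambda)$ equals $\ell_y[(\mathcal{F}f(\cdot,y))](\lambda)$ and that $\lim_{y\downarrow a}$ commutes with $\mathcal{F}$. This is precisely why conditions (i) and (ii) are built into the statement: they are the hypotheses that make the manipulation legitimate without any quantitative control on how $f$ behaves as $y \downarrow a$ (which could be badly degenerate). A secondary subtlety is that all the ODE identities for $g_\lambda$ hold only $\bm{\rho}_\mathcal{L}$-almost everywhere, so one must be careful to phrase the final step as an $L_2(\bm{\rho}_\mathcal{L})$-identity and invoke the isometry of $\mathcal{F}$ before passing back to a pointwise statement in $x$; the continuity of $f(\cdot,y)$ and of the candidate then upgrades this to genuine pointwise equality. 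I would also note in passing that the proof never uses Assumption \ref{asmp:shypPDE_SLhyperg} beyond what is needed for $b$ to be a natural boundary (so that Proposition \ref{prop:entrpf_Ffourier} applies); positivity of the solution, which does require the maximum-principle machinery, is a separate matter handled elsewhere.
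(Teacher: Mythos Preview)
Your proposal is correct and follows essentially the same approach as the paper: the uniqueness argument (apply $\mathcal{F}$ in $x$, reduce to the ODE $\ell_y g_\lambda = \lambda g_\lambda$ with initial data from (ii), invoke Lemma~\ref{lem:entrpf_ode_wsol}, then use the $\mathcal{F}$-isometry and continuity) is identical, and for existence both you and the paper justify differentiation under the integral sign via the $L_2$-decay of $(\mathcal{F}h)$ combined with Lemma~\ref{lem:shypPDE_Lfourier_D2prop}. The only cosmetic differences are that the paper treats uniqueness first and, for the derivative estimates, uses the explicit bound $|w_\lambda^{[1]}(x)| \le \lambda \int_a^x r(\xi)\,d\xi$ (from the integral equation \eqref{eq:shypPDE_wsol_inteq}) rather than appealing directly to Lemma~\ref{lem:shypPDE_Lfourier_D2prop}(a).
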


\begin{proof}
We start by proving that there exists at most one solution of \eqref{eq:shypPDE_Lcauchy} satisfying the given conditions. Let $f_1, f_2 \in \mathrm{C}^2\bigl((a,b)^2\bigr)$ be two solutions of $\ell_x f = \ell_y f$ such that \emph{(i)}--\emph{(ii)} hold for $f \in \{f_1, f_2\}$. Fix $\lambda \in [0,\infty) \setminus \Lambda_0$ and let $\Psi_{\!j}(y,\lambda) := [\mathcal{F}f_j(\cdot,y)](\lambda)$. We have
\[
\ell_y \Psi_{\!j}(y,\lambda) = \mathcal{F}[\ell_y f_j(\cdot,y)](\lambda) = \mathcal{F}[\ell_x f_j(\cdot,y)](\lambda) = \lambda \Psi_{\!j}(y,\lambda), \qquad a < y < b
\]
where the first equality is due to \eqref{eq:shypPDE_uniq_v2_cond1} and the last step follows from \eqref{eq:shypPDE_Ltransfidentity}. Moreover, 
\[
\lim_{y \downarrow a}\Psi_{\!j}(y,\lambda) = (\mathcal{F}h)(\lambda) \quad \text{ and } \quad \lim_{y \downarrow a} \partial_y^{[1]}\Psi_{\!j}(y,\lambda) = 0
\]
by \eqref{eq:shypPDE_uniq_v2_cond2}. It thus follows from Lemma \ref{lem:entrpf_ode_wsol} that
\[
[\mathcal{F}f_j(\cdot,y)](\lambda) = \Psi_{\!j}(y,\lambda) = (\mathcal{F}h)(\lambda)\, w_\lambda(y), \qquad a < y < b.
\]
This equality takes place for $\bm{\rho}_\mathcal{L}$-almost every $\lambda$, so the isometric property of $\mathcal{F}$ gives $f_1(\cdot,y) = f_2(\cdot,y)$ Lebesgue-almost everywhere; since the $f_j$ are continuous, we conclude that $f_1(x,y) \equiv f_2(x,y)$ for all $x,y \in (a,b)$.

In order to prove that \eqref{eq:shypPDE_Lexistence} is the (unique) solution, we need to justify that $\ell_x f$ can be computed via differentiation under the integral sign. It follows from \eqref{eq:entrpf_ode_wsol} that $w_\lambda^{[1]}(x) = - \lambda \int_a^x w_\lambda(\xi) \,r(\xi) d\xi$ and therefore (by Lemma \ref{lem:shypPDE_wsolbound}) $|w_\lambda^{[1]}(x)| \leq \lambda \int_a^x r(\xi) d\xi$. Hence
\begin{equation} \label{eq:shypPDE_Gsol_spectrep_pf1}
\int_{[\sigma^2,\infty)\!} \bigl| (\mathcal{F} h)(\lambda) \, w_\lambda^{[1]}(x) \, w_\lambda(y)\bigr| \bm{\rho}_{\mathcal{L}}(d\lambda) \leq \int_a^x r(\xi) d\xi \ccdot \int_{[\sigma^2,\infty)\!} \lambda \, \bigl| (\mathcal{F} h)(\lambda)\, w_\lambda(y) \bigr| \bm{\rho}_{\mathcal{L}}(d\lambda) < \infty,
\end{equation}
where the convergence (which is uniform on compacts) follows from \eqref{eq:shypPDE_Ltransfidentity} and Lemma \ref{lem:shypPDE_Lfourier_D2prop}(b). Due to the convergence of the differentiated integral, we have $\partial_x^{[1]}\!f(x,y) = \int_{[\sigma^2,\infty)\!} (\mathcal{F} h)(\lambda) \, w_\lambda^{[1]}(x) \, w_\lambda(y) \, \bm{\rho}_{\mathcal{L}}(d\lambda)$. Since $(\ell w_\lambda)(x) = \lambda w_\lambda(x)$, in the same way we check that $\int_{[\sigma^2,\infty)} (\mathcal{F} h)(\lambda) \, (\ell w_\lambda)(x)\, w_\lambda(y)\, \bm{\rho}_{\mathcal{L}}(d\lambda)$ converges absolutely and uniformly on compacts and is therefore equal to $(\ell_x f)(x,y)$. Consequently,
\[
(\ell_x f)(x,y) = (\ell_y f)(x,y) = \int_{[\sigma^2,\infty)\!} \lambda\, (\mathcal{F} h)(\lambda) \, w_\lambda(x) \, w_\lambda(y) \, \bm{\rho}_{\mathcal{L}}(d\lambda).
\]
Concerning the boundary conditions, Lemma \ref{lem:shypPDE_Lfourier_D2prop}(b) together with the fact that $w_\lambda(a) = 1$ imply that $f(x,a) = h(x)$, and from \eqref{eq:shypPDE_Gsol_spectrep_pf1} we easily see that $\lim_{y \downarrow a} \partial_y^{[1]}\!f(x,y) = 0$. This shows that $f$ is a solution of the Cauchy problem \eqref{eq:shypPDE_Lcauchy}.
\end{proof}

\begin{proposition}[Pointwise approximation by solutions of problems with shifted boundary] \label{prop:shypPDE_Gexistence_eps}
Let $\{a_m\}_{m \in \mathbb{N}}$ be a sequence $b > a_1 > a_2 > \ldots$ with $\lim a_m = a$. If $h \in \mathcal{D}_{\mathcal{L}}^{(2)}$ and\, $\ell(h) \in \mathcal{D}_{\mathcal{L}}^{(2)\!}$, then for each $m \in \mathbb{N}$ the function
\begin{equation} \label{eq:shypPDE_Lexistence_eps}
f_m(x,y) = \int_{[\sigma^2,\infty)\!} w_\lambda(x) \, w_{\lambda,m}(y) \, (\mathcal{F} h)(\lambda) \, \bm{\rho}_{\mathcal{L}}(d\lambda) \qquad \bigl(x \in (a,b), \: y \in (a_m,b)\bigr)
\end{equation}
is a solution of the Cauchy problem
\begin{equation} \label{eq:shypPDE_Leps_fepscauchy}
\begin{gathered}
(\ell_x f_m)(x,y) = (\ell_y f_m)(x,y), \qquad\; f_m(x,a_m) = h(x), \qquad\; (\partial_y^{[1]}\!f_m)(x,a_m) = 0.
\end{gathered}
\end{equation}
Moreover, we have 
\begin{equation} \label{eq:shypPDE_Leps_limit}
\lim_{m \to \infty} f_m(x,y) = f(x,y) \qquad \text{pointwise for each } x, y \in (a,b).
\end{equation}
where $f(x,y)$ is the solution \eqref{eq:shypPDE_Lexistence} of the Cauchy problem \eqref{eq:shypPDE_Lcauchy}.
\end{proposition}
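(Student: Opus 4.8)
The plan is to follow closely the argument of Theorem \ref{thm:shypPDE_Lexistuniq}, with $w_\lambda(y)$ replaced by $w_{\lambda,m}(y)$ throughout. First I would check that $f_m$ solves \eqref{eq:shypPDE_Leps_fepscauchy}. Assumption \ref{asmp:shypPDE_SLhyperg} forces $A'/A \geq 0$, equivalently $x \mapsto p(x) r(x)$ increasing, so Lemma \ref{lem:shypPDE_wsolbound} applied on each shifted interval $(a_m,b)$ yields the $m$-uniform bound $|w_{\lambda,m}(y)| \leq 1$ for $\lambda \geq 0$, $y \in (a_m,b)$; moreover, the integral equation \eqref{eq:shypPDE_wsol_inteq} (shifted to $a_m$) gives $w_{\lambda,m}^{[1]}(y) = -\lambda \int_{a_m}^y w_{\lambda,m}(\xi)\,r(\xi)d\xi$, hence $|w_{\lambda,m}^{[1]}(y)| \leq \lambda \int_{a_m}^y r(\xi)d\xi$. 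Since $\ell(h) \in \mathcal{D}_{\mathcal{L}}^{(2)}$ we have $\lambda\,(\mathcal{F}h)(\lambda) = \mathcal{F}[\ell(h)](\lambda)$ by \eqref{eq:shypPDE_Ltransfidentity}, and therefore, by Lemma \ref{lem:shypPDE_Lfourier_D2prop}(b), the integral $\int_{[\sigma^2,\infty)} \lambda\,|(\mathcal{F}h)(\lambda)|\,|w_\lambda(x)|\,\bm{\rho}_\mathcal{L}(d\lambda)$ converges, uniformly on compact $x$-sets. Combining this with the bounds above (and with $|w_\lambda^{[1]}(x)| \leq \lambda\int_a^x r$, as in \eqref{eq:shypPDE_Gsol_spectrep_pf1}) gives absolute and locally uniform convergence of the integrals obtained by formally applying $\partial_x^{[1]}$, $\ell_x$, $\partial_y^{[1]}$ and $\ell_y$ under the integral sign in \eqref{eq:shypPDE_Lexistence_eps}. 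Differentiation under the integral is thus legitimate, and since $\ell w_\lambda = \lambda w_\lambda$ on $(a,b)$ while $\ell w_{\lambda,m} = \lambda w_{\lambda,m}$ on $(a_m,b)$, both $\ell_x f_m$ and $\ell_y f_m$ equal $\int_{[\sigma^2,\infty)} \lambda\,(\mathcal{F}h)(\lambda)\,w_\lambda(x)\,w_{\lambda,m}(y)\,\bm{\rho}_\mathcal{L}(d\lambda)$, so $\ell_x f_m = \ell_y f_m$. The initial conditions follow exactly as in Theorem \ref{thm:shypPDE_Lexistuniq}: $f_m(x,a_m) = h(x)$ from the inversion formula of Lemma \ref{lem:shypPDE_Lfourier_D2prop}(b) together with $w_{\lambda,m}(a_m) = 1$, while $|w_{\lambda,m}^{[1]}(y)| \leq \lambda \int_{a_m}^y r$ forces $\partial_y^{[1]} f_m(x,a_m) = 0$.

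For the convergence \eqref{eq:shypPDE_Leps_limit}, fix $x,y \in (a,b)$ and take $m$ large enough that $a_m < y$; then
\[
f_m(x,y) - f(x,y) = \int_{[\sigma^2,\infty)} w_\lambda(x)\,\bigl( w_{\lambda,m}(y) - w_\lambda(y) \bigr)\,(\mathcal{F}h)(\lambda)\,\bm{\rho}_\mathcal{L}(d\lambda).
\]
By Lemma \ref{lem:entrpf_ode_wepslimit} the integrand tends to $0$ pointwise in $\lambda$ as $m \to \infty$, and it is dominated by $2\,|w_\lambda(x)|\,|(\mathcal{F}h)(\lambda)|$, which is $\bm{\rho}_\mathcal{L}$-integrable since $h \in \mathcal{D}_{\mathcal{L}}^{(2)}$ (Lemma \ref{lem:shypPDE_Lfourier_D2prop}(b)) and $|w_{\lambda,m}(y)|, |w_\lambda(y)| \leq 1$. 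Dominated convergence then gives $f_m(x,y) \to f(x,y)$.

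No genuine obstacle arises: the argument is a routine adaptation of Theorem \ref{thm:shypPDE_Lexistuniq}, the two essential inputs being the $m$-uniform boundedness $|w_{\lambda,m}| \leq 1$ provided by Lemma \ref{lem:shypPDE_wsolbound} and the pointwise convergence $w_{\lambda,m} \to w_\lambda$ (and $w_{\lambda,m}^{[1]} \to w_\lambda^{[1]}$) provided by Lemma \ref{lem:entrpf_ode_wepslimit}. The only point demanding a little care is the interchange of limit/derivative and integral near the shifted initial line $y = a_m$; this is handled, uniformly in $m$, by the explicit estimate $|w_{\lambda,m}^{[1]}(y)| \leq \lambda \int_{a_m}^y r(\xi)d\xi$, whose right-hand side vanishes as $y \downarrow a_m$.
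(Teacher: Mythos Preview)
Your proposal is correct and follows essentially the same route as the paper's proof: both use the bound $|w_{\lambda,m}|\leq 1$ from Lemma~\ref{lem:shypPDE_wsolbound} (applied on the shifted interval), the estimate $|w_{\lambda,m}^{[1]}(y)|\leq \lambda\int_{a_m}^y r$, and Lemma~\ref{lem:shypPDE_Lfourier_D2prop}(b) to justify differentiation under the integral sign, then verify the boundary conditions and invoke Lemma~\ref{lem:entrpf_ode_wepslimit} with dominated convergence for \eqref{eq:shypPDE_Leps_limit}. The only cosmetic difference is that you make the role of $\ell(h)\in\mathcal{D}_\mathcal{L}^{(2)}$ (via \eqref{eq:shypPDE_Ltransfidentity}) a bit more explicit when bounding the $\lambda$-weighted integral.
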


\begin{proof}
Let us begin by justifying that $\partial_x^{[1]}\!f_m(x,y)$ and $(\ell_x f_m)(x,y)$ can be computed via differentiation under the integral sign. The differentiated integrals are given by
\begin{gather}
\label{eq:shypPDE_Gepssol_spectrep_pf1} \int_{[\sigma^2,\infty)\!} w_\lambda^{[1]}(x) \, w_{\lambda,m}(y) \, (\mathcal{F} h)(\lambda) \, \bm{\rho}_{\mathcal{L}}(d\lambda) \\
\label{eq:shypPDE_Gepssol_spectrep_pf2} \int_{[\sigma^2,\infty)\!} w_\lambda(x) \, w_{\lambda,m}(y) \, [\mathcal{F} (\ell (h))](\lambda) \, \bm{\rho}_{\mathcal{L}}(d\lambda)
\end{gather}
(for the latter, we used the identities $(\ell w_\lambda)(x) = \lambda w_\lambda(x)$ and \eqref{eq:shypPDE_Ltransfidentity}), and their absolute and uniform convergence on compacts follows from the fact that $h, \ell (h) \in \mathcal{D}_\mathcal{L}^{(2)}$, together with Lemma \ref{lem:shypPDE_Lfourier_D2prop}(b) and the inequality $|w_{\lambda,m}(\cdot)|\leq 1$ (which follows from Lemma \ref{lem:shypPDE_wsolbound} if we replace $a$ by $a_m$). This justifies that $\partial_x^{[1]}\!f_m(x,y)$ and $(\ell_x f_m)(x,y)$ are given by \eqref{eq:shypPDE_Gepssol_spectrep_pf1}, \eqref{eq:shypPDE_Gepssol_spectrep_pf2} respectively.

We also need to ensure that $\partial_y^{[1]}\!f_m(x,y)$ and $(\ell_y f_m)(x,y)$ are given by the corresponding differentiated integrals, and to that end we must check that
\[
\int_{[\sigma^2,\infty)\!} w_\lambda(x) \, w_{\lambda,m}^{[1]}(y) \, (\mathcal{F} h)(\lambda) \, \bm{\rho}_{\mathcal{L}}(d\lambda)
\]
converges absolutely and uniformly. Indeed, it follows from \eqref{eq:entrpf_ode_wsoleps} that for $y \geq a_m$ we have $w_{\lambda,m}^{[1]}(y) = \lambda \int_{a_m}^y w_{\lambda,m}(\xi) \,r(\xi) d\xi$ and consequently $|w_{\lambda,m}^{[1]}(y)| \leq \lambda \int_{a_m}^y \,r(\xi) d\xi$; hence
\begin{equation} \label{eq:shypPDE_Gepssol_spectrep_pf3}
\int_{[\sigma^2,\infty)\!} \bigl| w_\lambda(x) \, w_{\lambda,m}^{[1]}(y)\, (\mathcal{F} h)(\lambda)\bigr| \bm{\rho}_{\mathcal{L}}(d\lambda) \leq \int_{a_m}^y r(\xi) d\xi \ccdot\! \int_{[\sigma^2,\infty)\!} \lambda \bigl| w_\lambda(x) (\mathcal{F} h)(\lambda)\bigr| \bm{\rho}_{\mathcal{L}}(d\lambda)
\end{equation}
and the uniform convergence in compacts follows from \eqref{eq:shypPDE_Ltransfidentity} and Lemma \ref{lem:shypPDE_Lfourier_D2prop}(b).

The verification of the boundary conditions is straightforward: Lemma \ref{lem:shypPDE_Lfourier_D2prop}(b) together with the fact that $w_{\lambda,m}(a_m) = 1$ imply that $f_m(x,a_m) = h(x)$, and from \eqref{eq:shypPDE_Gepssol_spectrep_pf3} we easily see that $\partial_y^{[1]}\!f_m (x,a_m) = 0$. This shows that the function $f_m$ defined by \eqref{eq:shypPDE_Lexistence_eps} is a solution of the Cauchy problem \eqref{eq:shypPDE_Leps_fepscauchy}.

Since $w_{\lambda,m}(y) \to w_\lambda(y)$ as $m \to \infty$ (Lemma \ref{lem:entrpf_ode_wepslimit}), the pointwise convergence $f_m(x,y) \to f(x,y)$ follows from the dominated convergence theorem (which is applicable due to Lemmas \ref{lem:shypPDE_wsolbound} and \ref{lem:shypPDE_Lfourier_D2prop}(b)).
\end{proof}

\begin{proposition}[Positivity of solution for the problem with shifted boundary] \label{prop:hypPDE_sol_positivity_reg}
Let $\{a_m\}_{m \in \mathbb{N}}$ as in the previous proposition and let $h \in \mathcal{D}_{\mathcal{L}}^{(2)}$ with $\ell(h) \in \mathcal{D}_{\mathcal{L}}^{(2)\!}$. If $h \geq 0$, then the function $f_m$ given by \eqref{eq:shypPDE_Lexistence_eps} is such that 
\begin{equation} \label{eq:shypPDE_feps_positivity}
f_m(x,y) \geq 0 \qquad \text{for } x \geq y > a_m.
\end{equation}
If, in addition, $h \leq C$ (where $C$ is a constant), then $f_m(x,y) \leq C$ for $x \geq y > a_m$.
\end{proposition}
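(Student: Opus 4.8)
The plan is to read off the positivity from the classical maximum principle for two‑variable hyperbolic Cauchy problems, after standardizing the equation and localizing to a characteristic triangle on which it is genuinely (uniformly) hyperbolic. First I would pass to the variables of Remark \ref{rmk:shypPDE_tildeell}: setting $\xi=\gamma(x)$, $\eta=\gamma(y)$ and $g(\xi,\eta):=f_m(\gamma^{-1}(\xi),\gamma^{-1}(\eta))$, the identity $\ell_x f_m=\ell_y f_m$ becomes $\widetilde{\ell}_\xi g=\widetilde{\ell}_\eta g$, i.e.\
\[
g_{\eta\eta}-g_{\xi\xi}+\frac{A'(\eta)}{A(\eta)}\,g_\eta-\frac{A'(\xi)}{A(\xi)}\,g_\xi=0
\]
on $(\gamma(a),\infty)\times(\gamma(a_m),\infty)$ (recall $\gamma(b)=\infty$ by Assumption \ref{asmp:shypPDE_SLhyperg}), subject to $g(\xi,\gamma(a_m))=h(\gamma^{-1}(\xi))=:\phi(\xi)\ge 0$ and $g_\eta(\xi,\gamma(a_m))=0$ (the latter because $\partial_y^{[1]}$ transforms into $A(\eta)\partial_\eta$ and $A>0$). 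By Proposition \ref{prop:shypPDE_Gexistence_eps}, $g$ is a $\mathrm{C}^2$ solution of this problem; moreover the first‑order coefficient $A'/A$ is continuous and, by the lemma following Assumption \ref{asmp:shypPDE_SLhyperg}, nonnegative (equivalently, $x\mapsto p(x)r(x)$ is increasing, cf.\ Remark \ref{rmk:shypPDE_tildeell}).

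Next I would check the domain‑of‑dependence geometry. The characteristics of the equation are the lines $\xi\pm\eta=\mathrm{const}$. Fix $x\ge y>a_m$ and put $\xi_0=\gamma(x)\ge\eta_0=\gamma(y)>\gamma(a_m)$; the backward characteristic triangle $T$ through $(\xi_0,\eta_0)$ with base on $\{\eta=\gamma(a_m)\}$ has base $[\xi_0-\eta_0+\gamma(a_m),\,\xi_0+\eta_0-\gamma(a_m)]$, which — since $\xi_0\ge\eta_0$ — is contained in $[\gamma(a_m),\infty)$, and then $T$ is a compact subset of $\{(\xi,\eta):\gamma(a)<\xi,\ \gamma(a_m)\le\eta\le\eta_0\}$ (here $\gamma(b)=\infty$ removes any obstruction at the right end). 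Hence on $T$ the coefficients are continuous, the equation is uniformly hyperbolic, and $\phi\ge 0$ on the whole base. It is precisely the hypothesis $x\ge y$ that keeps $T$ away from the endpoint $\xi=\gamma(a)$, where $A$ may degenerate — which is why the assertion is limited to $x\ge y>a_m$.

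On $T$ I would then apply the maximum principle for hyperbolic Cauchy problems: in the characteristic coordinates $\xi\pm\eta$ the equation is in Riemann normal form, with continuous coefficients and no zeroth‑order term, and — the drift coefficient $A'/A$ being nonnegative — Weinberger's maximum property \cite{weinberger1956} gives that a $\mathrm{C}^2$ solution with $g\ge 0$ and $g_\eta=0$ on the base is $\ge 0$ at the apex (equivalently, one argues via the nonnegativity of the associated Riemann–Green function). This yields $g(\xi_0,\eta_0)\ge 0$, i.e.\ $f_m(x,y)\ge 0$ for $x\ge y>a_m$, which is \eqref{eq:shypPDE_feps_positivity}. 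For the upper bound, note that $(x,y)\mapsto C$ is itself a $\mathrm{C}^2$ solution of $\ell_x u=\ell_y u$ with data $C$ and $\partial_y^{[1]}u\equiv 0$ on $\{y=a_m\}$; hence $C-f_m$ is a $\mathrm{C}^2$ solution with initial data $C-h\ge 0$ and vanishing flux, and applying the same maximum principle to $C-f_m$ (which uses only its $\mathrm{C}^2$ regularity, not any spectral representation) gives $C-f_m\ge 0$, that is $f_m\le C$, on $\{x\ge y>a_m\}$.

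The only genuinely delicate point is the maximum‑principle step together with the bookkeeping that confines the whole construction to the triangle lying over $[\gamma(a_m),\infty)$. In contrast with the limiting Cauchy problem of Theorem \ref{thm:shypPDE_prodform_weaklim}, here the equation is truly uniformly hyperbolic on $T$ because $a_m>a$, so no parabolic degeneracy intervenes; what still requires care is that the coefficients are only locally absolutely continuous, so the classical Riemann‑function/energy arguments must be run at that regularity and up to the initial line. These details are carried out in the Appendix.
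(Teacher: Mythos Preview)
Your overall architecture matches the paper's: transform via $\gamma$ to the standard form $\widetilde{\ell}_\xi g=\widetilde{\ell}_\eta g$, restrict to the backward characteristic triangle with base on $\{\eta=\gamma(a_m)\}$ (which stays inside $[\gamma(a_m),\infty)$ precisely because $x\ge y$), apply a maximum principle there, and deduce the upper bound by subtracting from the constant $C$. That part is fine.

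The gap is in the maximum-principle step. You assert that ``the drift coefficient $A'/A$ being nonnegative'' is enough to invoke Weinberger \cite{weinberger1956} and conclude positivity. But nonnegativity of $A'/A$ alone does \emph{not} yield a positive Riemann function for the equation $g_{\eta\eta}-g_{\xi\xi}+\tfrac{A'(\eta)}{A(\eta)}g_\eta-\tfrac{A'(\xi)}{A(\xi)}g_\xi=0$; the mixed dependence on $\xi$ and $\eta$ in the first-order terms means the sign structure required by Weinberger's theorem is not automatic. This is exactly why the paper (following Zeuner) does not cite Weinberger directly but instead proves a bespoke weak maximum principle, Corollary \ref{cor:shypPDE_tildeell_maxprinc}, via the Riemann-type integral identity of Lemma \ref{lem:shypPDE_inteqtriangle}. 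That argument first conjugates by $B(x)B(y)$ with $B=\exp(\tfrac12\int\eta)$ to pass to the operator $\bm{\ell}^B=-\partial^2-\bm{\phi}_\eta\partial+\bm{\psi}_\eta$, and then the nonnegativity of each term $I_1,I_2,I_3$ in \eqref{eq:shypPDE_inteqtriangle_I1}--\eqref{eq:shypPDE_inteqtriangle_I3} hinges on $\bm{\phi}_\eta$ being nonnegative \emph{and decreasing} and on $\bm{\psi}_\eta$ being decreasing --- the full content of Assumption \ref{asmp:shypPDE_SLhyperg}, not merely $A'/A\ge 0$. Your sketch uses only the lemma ``$A'/A\ge 0$'' and never invokes the monotonicity of $\bm{\phi}_\eta,\bm{\psi}_\eta$, so as written it does not close; you need either to supply a direct verification that Weinberger's hypotheses hold here (they do not, in general, without the monotonicity), or to run the $B$-conjugation and integral-identity argument as the paper does.
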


The proof of this positivity result, which is an adaptation of that of \cite[Proposition 3.7]{zeuner1992}, relies on a weak maximum principle which, in turn, is a consequence of the integral identity stated in the next lemma. We recall that $A$ is the function defined in Remark \ref{rmk:shypPDE_tildeell} and that $\eta$, $\bm{\phi}_\eta$, $\bm{\psi}_\eta$ have been defined in Assumption \ref{asmp:shypPDE_SLhyperg}.

\begin{lemma} \label{lem:shypPDE_inteqtriangle}
Let $\bm{\ell}^B$ be the differential expression $\bm{\ell}^B v := - v'' - \bm{\phi}_\eta v' + \bm{\psi}_\eta v$. For $\gamma(a) < c \leq y \leq x$, consider the triangle $\Delta_{c,x,y} := \{(\xi,\zeta) \in \mathbb{R}^2 \mid \zeta \geq c, \, \xi + \zeta \leq x+y, \, \xi - \zeta \geq x-y \}$, and let $v \in \mathrm{C}^2(\Delta_{c,x,y})$. Write $B(x):=\exp({1 \over 2} \int_\beta^x \eta(\xi)d\xi)$ (with $\beta > \gamma(a)$ arbitrary) and $A_{\mathsmaller{B}}(x) = {A(x) \over B(x)^2}$. Then the following integral equation holds:
\begin{equation} \label{eq:shypPDE_inteqtriangle}
A_{\mathsmaller{B}}(x) A_{\mathsmaller{B}}(y) \, v(x,y) = H + I_0 + I_1 + I_2 + I_3 - I_4
\end{equation}
where
\begin{align}
\label{eq:shypPDE_inteqtriangle_H} H & := \tfrac{1}{2} A_{\mathsmaller{B}}(c) \bigl[A_{\mathsmaller{B}}(x-y+c) \, v(x-y+c,c) + A_{\mathsmaller{B}}(x+y-c) \, v(x+y-c,c)] \\
\label{eq:shypPDE_inteqtriangle_I0} I_0 & := \tfrac{1}{2} A_{\mathsmaller{B}}(c) \int_{x-y+c}^{x+y-c} A_{\mathsmaller{B}}(s) (\partial_y v)(s,c)\, ds \\
\label{eq:shypPDE_inteqtriangle_I1}I_1 & := \tfrac{1}{2} \int_c^y A_{\mathsmaller{B}}(s) A_{\mathsmaller{B}}(x-y+s) \bigl[ \bm{\phi}_\eta(s) + \bm{\phi}_\eta(x-y+s) \bigr] v(x-y+s,s)\, ds \\
\label{eq:shypPDE_inteqtriangle_I2}I_2 & := \tfrac{1}{2} \int_c^y A_{\mathsmaller{B}}(s) A_{\mathsmaller{B}}(x+y-s) \bigl[ \bm{\phi}_\eta(s) - \bm{\phi}_\eta(x+y-s) \bigr] v(x+y-s,s)\, ds \\
\label{eq:shypPDE_inteqtriangle_I3}I_3 & := \tfrac{1}{2} \int_{\Delta_{c,x,y}\!} A_{\mathsmaller{B}}(\xi) A_{\mathsmaller{B}}(\zeta) \bigl[\bm{\psi}_\eta(\zeta) - \bm{\psi}_\eta(\xi)\bigr] v(\xi,\zeta)\, d\xi d\zeta \\
\label{eq:shypPDE_inteqtriangle_I4}I_4 & := \tfrac{1}{2} \int_{\Delta_{c,x,y}\!} A_{\mathsmaller{B}}(\xi) A_{\mathsmaller{B}}(\zeta) \, (\bm{\ell}_\zeta^B v - \bm{\ell}_\xi^B v)(\xi,\zeta)\, d\xi d\zeta.
\end{align}
\end{lemma}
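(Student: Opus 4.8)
\emph{Proof proposal.} Identity~\eqref{eq:shypPDE_inteqtriangle} is a Green-type integral identity for the two-dimensional hyperbolic operator $\bm{\ell}^B_\xi - \bm{\ell}^B_\zeta$ on the characteristic triangle $\Delta_{c,x,y}$, whose sides are the characteristics $\xi+\zeta=x+y$ and $\xi-\zeta=x-y$ through the apex $(x,y)$ together with the base $\zeta=c$. The plan is to integrate $\tfrac12 A_{\mathsmaller{B}}(\xi)A_{\mathsmaller{B}}(\zeta)\,(\bm{\ell}^B_\zeta v-\bm{\ell}^B_\xi v)$ over $\Delta_{c,x,y}$ — this is the integrand defining $I_4$ — to peel off $I_3$ from the zeroth-order part, and to convert the principal (second-order) part into contour integrals via Green's theorem, reading $H$, $I_0$, $I_1$, $I_2$ and the left-hand side $A_{\mathsmaller{B}}(x)A_{\mathsmaller{B}}(y)v(x,y)$ off the boundary contributions.

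\emph{Step 1 (divergence form).} From $B'/B=\tfrac12\eta$ and $\bm{\phi}_\eta=\tfrac{A'}{A}-\eta$ one obtains $A_{\mathsmaller{B}}'/A_{\mathsmaller{B}}=\bm{\phi}_\eta$, hence $\bm{\ell}^B v=-\tfrac{1}{A_{\mathsmaller{B}}}(A_{\mathsmaller{B}}v')'+\bm{\psi}_\eta v$; since $A_{\mathsmaller{B}}(\xi)$ and $A_{\mathsmaller{B}}(\zeta)$ are constant in the other variable,
\[
\tfrac12 A_{\mathsmaller{B}}(\xi)A_{\mathsmaller{B}}(\zeta)\,(\bm{\ell}^B_\zeta v-\bm{\ell}^B_\xi v)=\tfrac12\Bigl[\partial_\xi\bigl(A_{\mathsmaller{B}}(\xi)A_{\mathsmaller{B}}(\zeta)\,\partial_\xi v\bigr)-\partial_\zeta\bigl(A_{\mathsmaller{B}}(\xi)A_{\mathsmaller{B}}(\zeta)\,\partial_\zeta v\bigr)\Bigr]+\tfrac12 A_{\mathsmaller{B}}(\xi)A_{\mathsmaller{B}}(\zeta)\bigl(\bm{\psi}_\eta(\zeta)-\bm{\psi}_\eta(\xi)\bigr)v.
\]
Integrating over $\Delta_{c,x,y}$, the last term is exactly $I_3$, so $I_4-I_3=\tfrac12\int_{\Delta_{c,x,y}}\bigl[\partial_\xi(A_{\mathsmaller{B}}A_{\mathsmaller{B}}\,\partial_\xi v)-\partial_\zeta(A_{\mathsmaller{B}}A_{\mathsmaller{B}}\,\partial_\zeta v)\bigr]$.

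\emph{Step 2 (Green's theorem and the characteristic sides).} By Green's theorem this equals $\tfrac12\oint_{\partial\Delta_{c,x,y}}A_{\mathsmaller{B}}(\xi)A_{\mathsmaller{B}}(\zeta)\bigl(\partial_\xi v\,d\zeta+\partial_\zeta v\,d\xi\bigr)$ over the positively oriented boundary, which splits into the base and the two characteristic sides. On the base $\zeta=c$ (so $d\zeta=0$, with $\xi$ running from $x-y+c$ to $x+y-c$) one gets exactly $2I_0$. On the side $\xi+\zeta=x+y$, parametrized by $\zeta=s\in[c,y]$ so that $d\xi=-ds$, $d\zeta=ds$, the integrand becomes $A_{\mathsmaller{B}}(x+y-s)A_{\mathsmaller{B}}(s)(\partial_\xi v-\partial_\zeta v)(x+y-s,s)\,ds=-A_{\mathsmaller{B}}(x+y-s)A_{\mathsmaller{B}}(s)\tfrac{d}{ds}\bigl[v(x+y-s,s)\bigr]ds$; integrating by parts and using $A_{\mathsmaller{B}}'=\bm{\phi}_\eta A_{\mathsmaller{B}}$ to differentiate the product $A_{\mathsmaller{B}}(x+y-s)A_{\mathsmaller{B}}(s)$ produces the endpoint terms $-A_{\mathsmaller{B}}(x)A_{\mathsmaller{B}}(y)v(x,y)+A_{\mathsmaller{B}}(c)A_{\mathsmaller{B}}(x+y-c)v(x+y-c,c)$ together with $2I_2$. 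The side $\xi-\zeta=x-y$ is handled identically — now $\tfrac{d}{ds}[v(x-y+s,s)]=(\partial_\xi v+\partial_\zeta v)(x-y+s,s)$, and the product rule yields the sum $\bm{\phi}_\eta(x-y+s)+\bm{\phi}_\eta(s)$ — contributing $-A_{\mathsmaller{B}}(x)A_{\mathsmaller{B}}(y)v(x,y)+A_{\mathsmaller{B}}(c)A_{\mathsmaller{B}}(x-y+c)v(x-y+c,c)$ and $2I_1$. Summing the three sides gives $\oint_{\partial\Delta_{c,x,y}}=2H+2I_0+2I_1+2I_2-2A_{\mathsmaller{B}}(x)A_{\mathsmaller{B}}(y)v(x,y)$, whence $I_4-I_3=H+I_0+I_1+I_2-A_{\mathsmaller{B}}(x)A_{\mathsmaller{B}}(y)v(x,y)$, which rearranges to~\eqref{eq:shypPDE_inteqtriangle}.

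\emph{Expected difficulty.} There is no deep obstacle here: the effort is almost entirely in the sign and orientation bookkeeping of Green's theorem — in particular keeping straight that the second variable $\zeta$ is the one playing the role of ``$y$'' in $I_0$ — and in the two integrations by parts along the characteristics. The one technical point deserving care is that $A_{\mathsmaller{B}}$ need not be $\mathrm{C}^1$, since $\bm{\phi}_\eta$ is only assumed monotone; however $A_{\mathsmaller{B}}=\exp\!\bigl(\int\bm{\phi}_\eta\bigr)$ is locally absolutely continuous with $A_{\mathsmaller{B}}'=\bm{\phi}_\eta A_{\mathsmaller{B}}$ a.e., and this, together with $v\in\mathrm{C}^2(\Delta_{c,x,y})$, suffices to justify both the divergence-form rewriting of Step~1 and the integrations by parts of Step~2.
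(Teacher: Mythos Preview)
Your proof is correct and follows essentially the same route as the paper's: write $I_4-I_3$ in divergence form using $A_{\mathsmaller{B}}'=\bm{\phi}_\eta A_{\mathsmaller{B}}$, apply Green's theorem on the characteristic triangle, and integrate by parts along the two characteristic sides to produce $H$, $I_1$, $I_2$ and the apex term. The paper's version is simply a terser rendition of the same computation. One small remark: your caveat that $A_{\mathsmaller{B}}$ might fail to be $\mathrm{C}^1$ is unnecessary here, since under the paper's standing hypotheses ($p,p',r,r'$ locally absolutely continuous and $\eta\in\mathrm{C}^1$) both $A$ and $B$ are $\mathrm{C}^1$, hence so is $A_{\mathsmaller{B}}$.
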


\begin{proof}
Just compute 
\begin{align*}
I_4 - I_3 & = \tfrac{1}{2} \int_{\Delta_{c,x,y}}\biggl({\partial \over \partial \xi} \bigl[ A_{\mathsmaller{B}}(\xi) A_{\mathsmaller{B}}(\zeta) \, (\partial_\xi v)(\xi,\zeta) \bigr] - {\partial \over \partial \zeta} \bigl[ A_{\mathsmaller{B}}(\xi) A_{\mathsmaller{B}}(\zeta) \, (\partial_\zeta v)(\xi,\zeta) \bigr] \biggr) d\xi d\zeta \\
& = I_0 - \tfrac{1}{2} \int_0^y A_{\mathsmaller{B}}(s) A_{\mathsmaller{B}}(x-y+s) \, (\partial_\zeta v + \partial_\xi v)(x-y+s,s) \, ds \\
& \quad\; - \tfrac{1}{2} \int_0^y A_{\mathsmaller{B}}(s) A_{\mathsmaller{B}}(x+y-s) \, (\partial_\zeta v - \partial_\xi v)(x+y-s,s) \, ds \\
& = I_0 + I_1 - \int_c^y {d \over ds} \bigl[ A_{\mathsmaller{B}}(s) A_{\mathsmaller{B}}(x-y+s) \, v(x-y+s,s) \bigr] ds \\
& \quad\; + I_2 - \int_c^y {d \over ds} \bigl[ A_{\mathsmaller{B}}(s) A_{\mathsmaller{B}}(x+y-s) \, v(x+y-s,s) \bigr] ds
\end{align*}
where in the second equality we used Green's theorem, and the third equality follows easily from the fact that $(A_{\mathsmaller{B}})' = \bm{\phi}_\eta A_{\mathsmaller{B}}$.
\end{proof}

\begin{corollary}[Weak maximum principle] \label{cor:shypPDE_tildeell_maxprinc}
Suppose Assumption \ref{asmp:shypPDE_SLhyperg} holds, and let $\gamma(a) < c \leq y_0 \leq x_0$. If $u \in \mathrm{C}^2(\Delta_{c,x_0,y_0})$ satisfies
\begin{equation} \label{eq:shypPDE_tildeell_maxprinc_ineq}
\begin{aligned}
(\widetilde{\ell}_x u - \widetilde{\ell}_y u)(x,y) \leq 0, & \qquad (x,y) \in \Delta_{c,x_0,y_0} \\
u(x,c) \geq 0, & \qquad x \in [x_0-y_0+c,x_0+y_0-c] \\
(\partial_y u)(x,c) + \tfrac{1}{2} \eta(c) u(x,c) \geq 0, & \qquad x \in [x_0-y_0+c,x_0+y_0-c]
\end{aligned}
\end{equation}
then $u \geq 0$ in $\Delta_{c,x_0,y_0}$.
\end{corollary}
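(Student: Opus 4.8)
The plan is to pass from $\widetilde{\ell}$ to the transformed expression $\bm{\ell}^B$, apply the integral identity of Lemma~\ref{lem:shypPDE_inteqtriangle}, read off the sign of every term on its right-hand side from the three hypotheses \eqref{eq:shypPDE_tildeell_maxprinc_ineq} and from Assumption~\ref{asmp:shypPDE_SLhyperg}, and then close the resulting estimate by a continuation (Gronwall-type) argument in the ``time'' variable $\zeta$.

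First I would reduce to $\bm{\ell}^B$. Writing $B(x)=\exp\!\bigl(\tfrac{1}{2}\int_\beta^x\eta\bigr)$, so that $B'/B=\tfrac{1}{2}\eta$, a one-line computation gives $\bm{\ell}^B(gB)=B\,\widetilde{\ell}\,g$ for any one-variable function $g$; hence, setting $v(x,y):=B(x)B(y)\,u(x,y)$, one has $\bm{\ell}^B_\xi v-\bm{\ell}^B_\zeta v=B(\xi)B(\zeta)\bigl(\widetilde{\ell}_\xi u-\widetilde{\ell}_\zeta u\bigr)$ and $\partial_\zeta v(\xi,c)=B(\xi)B(c)\bigl(\partial_\zeta u(\xi,c)+\tfrac{1}{2}\eta(c)u(\xi,c)\bigr)$. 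Since $A>0$ and $B>0$ (so $A_B>0$), the three conditions \eqref{eq:shypPDE_tildeell_maxprinc_ineq} translate into: $v(\cdot,c)\ge0$ on the base, $\partial_\zeta v(\cdot,c)\ge0$ on the base, and a sign condition on $\bm{\ell}^B_\xi v-\bm{\ell}^B_\zeta v$ over $\Delta_{c,x_0,y_0}$ which is exactly the one making $-I_4\ge0$ in the identity below; and $u\ge0$ on $\Delta_{c,x_0,y_0}$ is equivalent to $v\ge0$ there.

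Next, fix $(x,y)\in\Delta_{c,x_0,y_0}$ with $c\le y\le x$ and apply Lemma~\ref{lem:shypPDE_inteqtriangle} to $v$ on $\Delta_{c,x,y}$. Two elementary geometric facts, both immediate from $c\le y\le x$ and $(x,y)\in\Delta_{c,x_0,y_0}$ — namely $\Delta_{c,x,y}\subseteq\Delta_{c,x_0,y_0}$ and $x-y+c,\,x+y-c\in[x_0-y_0+c,\,x_0+y_0-c]$ — guarantee that the translated boundary data are available at every point entering $H$ and $I_0$, so $H\ge0$ and $I_0\ge0$; and $-I_4\ge0$ by the differential inequality. The crucial point for $I_1,I_2,I_3$ is that their weights are nonnegative, which is exactly where Assumption~\ref{asmp:shypPDE_SLhyperg} enters: $\bm{\phi}_\eta\ge0$ (being decreasing with limit $0$ at $\infty$), so $\bm{\phi}_\eta(s)+\bm{\phi}_\eta(x-y+s)\ge0$; $s\le y\le x$ forces $s\le x+y-s$ and $\bm{\phi}_\eta$ is decreasing, so $\bm{\phi}_\eta(s)-\bm{\phi}_\eta(x+y-s)\ge0$; and on $\Delta_{c,x,y}$ one has $\xi-\zeta\ge x-y\ge0$, so $\bm{\psi}_\eta(\zeta)-\bm{\psi}_\eta(\xi)\ge0$ because $\bm{\psi}_\eta$ is decreasing. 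Hence $A_B(x)A_B(y)v(x,y)\ge I_1+I_2+I_3$, and bounding each of $I_1,I_2,I_3$ below by the negative part $\Psi:=\max(-v,0)$ of $v$ on the corresponding segment or region yields
\[
A_B(x)A_B(y)\,\Psi(x,y)\le\tfrac{1}{2}\!\int_c^y\! w_1\,\Psi(x-y+s,s)\,ds+\tfrac{1}{2}\!\int_c^y\! w_2\,\Psi(x+y-s,s)\,ds+\tfrac{1}{2}\!\int_{\Delta_{c,x,y}}\! w_3\,\Psi\,,
\]
where $w_1,w_2,w_3\ge0$ are bounded uniformly over $\Delta_{c,x_0,y_0}$.

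To conclude, for $\tau\in[c,y_0]$ set $D_\tau:=\Delta_{c,x_0,y_0}\cap\{\zeta\le\tau\}$ and $M(\tau):=\sup_{D_\tau}\Psi$; then $M$ is continuous, nondecreasing, and $M(c)=0$ since $v(\cdot,c)\ge0$. If $v\ge0$ on $D_{\tau_0}$, then for $(x,y)\in\Delta_{c,x_0,y_0}$ with $\tau_0<y\le\tau$ the parts of the characteristic segments and of $\Delta_{c,x,y}$ lying at levels $\le\tau_0$ contribute nothing to the displayed estimate, while the parts lying above $\zeta=\tau_0$ have length $O(\tau-\tau_0)$ and area $O\bigl((\tau-\tau_0)^2\bigr)$; hence $\Psi(x,y)\le C\,M(\tau)\bigl((\tau-\tau_0)+(\tau-\tau_0)^2\bigr)$ with $C$ depending only on sup-bounds of $A_B,A_B^{-1},\bm{\phi}_\eta,\bm{\psi}_\eta$ over $\Delta_{c,x_0,y_0}$. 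Taking the supremum over such $(x,y)$ gives $M(\tau)\le C\,M(\tau)\bigl((\tau-\tau_0)+(\tau-\tau_0)^2\bigr)$, so $M(\tau)=0$ as soon as $\tau-\tau_0$ is small. Therefore $\{\tau\in[c,y_0]:v\ge0\text{ on }D_\tau\}$ is a nonempty, relatively closed and relatively open subinterval of $[c,y_0]$, hence all of it, so $v\ge0$ (equivalently $u\ge0$) on $\Delta_{c,x_0,y_0}$. I expect this last step to be the main obstacle: \eqref{eq:shypPDE_inteqtriangle} reconstructs $v(x,y)$ from the values of $v$ on the \emph{smaller} triangle $\Delta_{c,x,y}$, so positivity cannot be read off directly — the Volterra structure must be exploited, the linear-in-$(\tau-\tau_0)$ smallness of the characteristic segments and the quadratic smallness of the triangular region furnishing the contraction that propagates positivity from the initial line up to level $y_0$.
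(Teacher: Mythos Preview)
Your argument is correct and complete. The reduction $v=B(x)B(y)u$, the sign analysis of $H,I_0,-I_4$ and of the weights in $I_1,I_2,I_3$, the passage to the inequality for $\Psi=\max(-v,0)$, and the continuation step are all sound; the uniform bounds you invoke hold because $A_B$, $A_B^{-1}$, $\bm{\phi}_\eta$, $\bm{\psi}_\eta$ are continuous on the compact interval $[c,x_0+y_0-c]$.

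However, your route differs from the paper's. The paper avoids the Gronwall/continuation machinery entirely by introducing a barrier: it picks $\omega\in\mathrm{C}^2[c,\infty)$ with $\bm{\ell}^B\omega<0$, $\omega(c)>0$, $\omega'(c)\ge0$ (e.g.\ $\omega(y)=e^{K(y-c)}$ for large $K$ works on the compact interval), sets $v_\delta:=B(x)B(y)u+\delta\omega(y)$ for $\delta>0$, and argues by contradiction at a ``first touching point''. If $v_\delta$ were not strictly positive, one could find $(x,y)\in\Delta_{c,x_0,y_0}$ with $v_\delta(x,y)=0$ and $v_\delta\ge0$ on $\Delta_{c,x,y}$; then in \eqref{eq:shypPDE_inteqtriangle} one has $H,I_0,I_1,I_2,I_3\ge0$ exactly as you observed, but now $I_4<0$ \emph{strictly} because $(\bm{\ell}_\zeta^B v_\delta-\bm{\ell}_\xi^B v_\delta)=B(\xi)B(\zeta)(\widetilde{\ell}_\zeta u-\widetilde{\ell}_\xi u)+\delta(\bm{\ell}^B\omega)(\zeta)<0$, giving $0=A_B(x)A_B(y)v_\delta(x,y)\ge -I_4>0$, a contradiction. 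Letting $\delta\downarrow0$ yields $u\ge0$. The barrier buys a one-line contradiction in place of your Volterra estimate; your approach, on the other hand, is more self-contained (no auxiliary ODE solution needed) and makes the quantitative dependence on the coefficients explicit.
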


\begin{proof}
Pick a function $\omega \in \mathrm{C}^2[c,\infty)$ such that $\bm{\ell}^B \omega < 0$, $\omega(c) > 0$ and $\omega'(c) \geq 0$ (where $\bm{\ell}_x^B$ is the differential operator defined in Lemma \ref{lem:shypPDE_inteqtriangle}). Clearly, it is enough to show that for all $\delta > 0$ we have $v(x,y) := B(x) B(y) u(x,y) + \delta \omega(y) > 0$ for $(x, y) \in \Delta_{c,x_0,y_0}$.

By Lemma \ref{lem:shypPDE_inteqtriangle}, the integral equation \eqref{eq:shypPDE_inteqtriangle} holds for the function $v$. Assume by contradiction that there exist $\delta > 0$, $(x, y) \in \Delta_{c,x_0,y_0}$ for which we have $v(x,y) = 0$ and $v(\xi,\zeta) \geq 0$ for all $(\xi,\zeta) \in \Delta_{c,x,y} \subset \Delta_{c,x_0,y_0}$. It is clear from the choice of $\omega$ that $v(\cdot,c) > 0$, thus we have $H \geq 0$ in the right hand side of \eqref{eq:shypPDE_inteqtriangle}. Similarly, $(\partial_y v)(\cdot,c) = B(x) B(y) \bigl[(\partial_y u)(\cdot,c) + \tfrac{1}{2} \eta(c) u(\cdot,c)\bigr] + \delta \omega'(c) \geq 0$, hence $I_0 \geq 0$. Since $\bm{\phi}_\eta$ is positive and decreasing and $\bm{\psi}_\eta$ is decreasing (cf.\ Assumption \ref{asmp:shypPDE_SLhyperg}) and we are assuming that $u \geq 0$ on $\Delta_{c,x,y}$, it follows that $I_1 \geq 0$, $I_2 \geq 0$ and $I_3 \geq 0$. In addition, $I_4 < 0$ because $(\bm{\ell}_\zeta^B v - \bm{\ell}_\xi^B v)(\xi,\zeta) = B(x) B(y) (\widetilde{\ell}_\zeta u - \widetilde{\ell}_\xi u)(\xi,\zeta) + (\bm{\ell}^B\omega)(\zeta) < 0$. Consequently, \eqref{eq:shypPDE_inteqtriangle} yields $0 = A_{\mathsmaller{B}}(x) A_{\mathsmaller{B}}(y) v(x,y) \geq -I_4 > 0$. This contradiction shows that $v(x,y) > 0$ for all $(x,y) \in \Delta_{c,x_0,y_0}$.
\end{proof}

\begin{proof}[Proof of Proposition \ref{prop:hypPDE_sol_positivity_reg}]
It follows from Proposition \ref{prop:shypPDE_Gexistence_eps} that the function $u_m(x,y) := f_m(\gamma^{-1\!}(x), \gamma^{-1\!}(y))$ is a solution of the Cauchy problem
\begin{align}
\label{eq:shypPDE_Leps_tildefepscauchy1} (\widetilde{\ell}_x u_m)(x,y) = (\widetilde{\ell}_y u_m)(x,y), & \qquad x, y > \tilde{a}_m \\
\label{eq:shypPDE_Leps_tildefepscauchy2} u_m(x,\tilde{a}_m) = h(\gamma^{-1}(x)), & \qquad x > \tilde{a}_m \\
\label{eq:shypPDE_Leps_tildefepscauchy3} (\partial_y u_m)(x,\tilde{a}_m) = 0, & \qquad x > \tilde{a}_m
\end{align}
where $\tilde{a}_m = \gamma(a_m)$. Clearly, $u_m$ satisfies the inequalities \eqref{eq:shypPDE_tildeell_maxprinc_ineq} for arbitrary $x_0 \geq y_0 \geq \tilde{a}_m$ (here $c = \tilde{a}_m$). By Corollary \ref{cor:shypPDE_tildeell_maxprinc}, $u_m(x_0,y_0) \geq 0$ for all $x_0 \geq y_0 > \tilde{a}_m$; consequently, \eqref{eq:shypPDE_feps_positivity} holds.

The proof that $h \leq C$ implies $f_m \leq C$ is straightforward: if we have $h \leq C$, then $\widetilde{u}_m(x,y) = C - u_m(x,y)$ is a solution of \eqref{eq:shypPDE_Leps_tildefepscauchy1} with initial conditions $\widetilde{u}_m(x,\tilde{a}_m) = C - h(\gamma^{-1}(x)) \geq 0$ and \eqref{eq:shypPDE_Leps_tildefepscauchy3}, thus the reasoning of the previous paragraph yields that $C - u_m \geq 0$ for $x \geq y > \tilde{a}_m$.
\end{proof}

\begin{corollary}[Positivity of solution for the Cauchy problem \eqref{eq:shypPDE_Lcauchy}] \label{cor:hypPDE_sol_positivity}
Let $h \in \mathcal{D}_\mathcal{L}^{(2)}$ with $\ell(h) \in \mathcal{D}_{\mathcal{L}}^{(2)\!}$. If $h \geq 0$, then the function $f$ given by \eqref{eq:shypPDE_Lexistence} is such that 
\[
f(x,y) \geq 0 \qquad \text{for } x, y \in (a,b).
\]
If, in addition, $h \leq C$, then $f(x,y) \leq C$ for $x, y \in (a,b)$.
\end{corollary}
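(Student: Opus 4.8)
The plan is to deduce Corollary \ref{cor:hypPDE_sol_positivity} from Proposition \ref{prop:hypPDE_sol_positivity_reg} by the approximation \eqref{eq:shypPDE_Leps_limit} established in Proposition \ref{prop:shypPDE_Gexistence_eps}. The positivity statement is essentially immediate: fix $x,y\in(a,b)$ and choose a sequence $b>a_1>a_2>\ldots$ with $\lim a_m=a$. Since $a$ is the left endpoint, for all sufficiently large $m$ we have $a_m<\min\{x,y\}$, so in particular $x,y>a_m$; moreover, by the symmetry $\bm{\nu}_{x,y}$ (equivalently, by the symmetry of \eqref{eq:shypPDE_Lexistence} in $x$ and $y$) the solution $f$ satisfies $f(x,y)=f(y,x)$, and likewise $f_m(x,y)=f_m(y,x)$, so we may assume without loss of generality that $x\ge y$. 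Then $x\ge y>a_m$ for large $m$, and Proposition \ref{prop:hypPDE_sol_positivity_reg} gives $f_m(x,y)\ge 0$ for all such $m$. Passing to the limit $m\to\infty$ using \eqref{eq:shypPDE_Leps_limit} yields $f(x,y)=\lim_{m\to\infty}f_m(x,y)\ge 0$.

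For the second assertion, suppose $0\le h\le C$. The function $C\mathds{1}-h$ is nonnegative but need not lie in $\mathcal{D}_\mathcal{L}^{(2)}$ (indeed $\mathds{1}\notin L_2(r)$ in general), so one cannot apply the first part directly to $C\mathds{1}-h$. Instead I would argue at the level of the approximating problems, exactly as in the proof of Proposition \ref{prop:hypPDE_sol_positivity_reg}: for each fixed large $m$, the final sentence of that proposition already states that $h\le C$ implies $f_m(x,y)\le C$ for $x\ge y>a_m$ (this was obtained there by applying the maximum principle to $C-u_m$, whose initial data $C-h(\gamma^{-1}(\cdot))\ge0$ needs only continuity and the PDE, not membership in the operator domain). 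Taking $m\to\infty$ and invoking \eqref{eq:shypPDE_Leps_limit} once more gives $f(x,y)\le C$ for $x\ge y$, and by symmetry for all $x,y\in(a,b)$.

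There is essentially no substantive obstacle here; the corollary is a routine limiting consequence of the two preceding results. The only points requiring a word of care are (i) noting that the constraint ``$x\ge y$'' in Proposition \ref{prop:hypPDE_sol_positivity_reg} is harmless because of the $x\leftrightarrow y$ symmetry of the representation \eqref{eq:shypPDE_Lexistence}, and (ii) observing that the upper bound $f\le C$ must be transferred through the approximating functions $f_m$ rather than by a direct application of the positivity statement to $C\mathds{1}-h$, since that difference typically fails to be square-integrable. Both are quickly dispatched, and the pointwise convergence \eqref{eq:shypPDE_Leps_limit} does the rest.
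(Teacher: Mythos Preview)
Your approach is correct and matches the paper's: deduce the corollary from Proposition~\ref{prop:hypPDE_sol_positivity_reg} via the pointwise convergence \eqref{eq:shypPDE_Leps_limit}, using the symmetry $f(x,y)=f(y,x)$ (which is immediate from the representation \eqref{eq:shypPDE_Lexistence}) to reduce to the case $x\ge y$. Your treatment of the upper bound is also the right one, and your remark that $C\mathds{1}-h$ need not lie in $\mathcal{D}_\mathcal{L}^{(2)}$ is a worthwhile observation.

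One small correction: the claim ``likewise $f_m(x,y)=f_m(y,x)$'' is false. From \eqref{eq:shypPDE_Lexistence_eps}, $f_m$ is built with $w_\lambda$ in the $x$-variable and $w_{\lambda,m}$ in the $y$-variable, and these are distinct solutions of the same ODE (with initial data at $a$ and $a_m$ respectively), so $f_m$ is \emph{not} symmetric. Fortunately you do not actually use this: the reduction to $x\ge y$ only requires the symmetry of the limit $f$, which you have. Just delete the parenthetical claim about $f_m$.
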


\begin{proof}
This is an immediate consequence of Proposition \ref{prop:hypPDE_sol_positivity_reg} together with the pointwise convergence property \eqref{eq:shypPDE_Leps_limit} (note that the conclusion holds for all $x,y \in (a,b)$ because by \eqref{eq:shypPDE_Lexistence} we have $f(x,y) = f(y,x)$).
\end{proof}

\subsection{The time-shifted product formula}

Before proving the product formula for the kernels $\{w_\lambda(\cdot)\}$ themselves, we will establish a product formula of the form \eqref{eq:intro_prodform} for the functions $\{e^{-t\lambda} w_\lambda(\cdot)\}$. This auxiliary result will be called the \emph{time-shifted product formula} because of the transition identity $(T_t w_\lambda)(x) = e^{-t\lambda} w_\lambda(x)$, which means that $e^{-t\lambda} w_\lambda(x)$ is the $\mathcal{L}$-transform of the transition kernel $p(t,x,y)$ of the transition (Feller) semigroup $\{T_t\}$ generated by the Sturm-Liouville operator $\ell$, cf.\ Proposition \ref{prop:shypPDE_L_fellergen_tpdf}.

By the inversion formula \eqref{eq:entrpf_Ffourierinverse} for the $\mathcal{L}$-transform, a natural candidate for the measure of the product formula for $\{w_\lambda(\cdot)\}$ is
\[
\bm{\nu}_{x,y}(d\xi) = \int_{[\sigma^2,\infty)} w_\lambda(x) w_\lambda(y) w_\lambda(\xi) \bm{\rho}_\mathcal{L}(d\lambda) \, r(\xi) d\xi.
\]
This is only a formal solution, because in general the integral does not converge. However, the uniform convergence of this integral always holds (under the present assumptions on $\ell$) if the exponential term $e^{-t\lambda}$ is included in the integrand:

\begin{lemma}
Let $t_0 > 0$ and $K_1, K_2$ compact subsets of $(a,b)$. The integral
\[
\int_{[\sigma^2,\infty)\!} e^{-t\lambda\,} w_\lambda(x) \, w_\lambda(y) \, w_\lambda(\xi) \, \bm{\rho}_{\mathcal{L}}(d\lambda)
\]
converges absolutely and uniformly on $(t,x,y,\xi) \in [t_0,\infty) \times K_1 \times K_2 \times [a,b)$.
\end{lemma}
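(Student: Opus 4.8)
The plan is to reduce the convergence of the triple-product integral to the already-established convergence of the spectral representation \eqref{eq:shypPDE_fellersgp_fundsol_spectr} of the transition kernel, simply by controlling the extra factor $w_\lambda(\xi)$ uniformly. First I would record that Assumption \ref{asmp:shypPDE_SLhyperg} yields $A'/A \geq 0$, which by Remark \ref{rmk:shypPDE_tildeell} is equivalent to $x \mapsto p(x) r(x)$ being increasing; hence Lemma \ref{lem:shypPDE_wsolbound} applies and gives $|w_\lambda(\xi)| \leq 1$ for all $\xi \in (a,b)$ and $\lambda \geq 0$, and since $w_\lambda(a) = 1$ this bound extends to all $\xi \in [a,b)$ (one could equally invoke \eqref{eq:shypPDE_Wbound} at real $\tau$). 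Recalling that $\bm{\rho}_\mathcal{L}$ is a positive measure supported in $[\sigma^2,\infty) \subset [0,\infty)$, that $e^{-t\lambda} \leq e^{-t_0\lambda}$ for $\lambda \geq 0$ and $t \geq t_0$, and that $w_\lambda$ is real-valued for real $\lambda$, I obtain the pointwise domination
\begin{equation*}
\int_{[\sigma^2,\infty)} e^{-t\lambda}\,\bigl|w_\lambda(x)\,w_\lambda(y)\,w_\lambda(\xi)\bigr|\,\bm{\rho}_\mathcal{L}(d\lambda) \;\leq\; \int_{[\sigma^2,\infty)} e^{-t_0\lambda}\,\bigl|w_\lambda(x)\bigr|\,\bigl|w_\lambda(y)\bigr|\,\bm{\rho}_\mathcal{L}(d\lambda),
\end{equation*}
uniformly in $(t,\xi) \in [t_0,\infty) \times [a,b)$, the right-hand side depending only on $(x,y)$.

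The second step is to quote Proposition \ref{prop:shypPDE_L_fellergen_tpdf}, which is applicable because Assumption \ref{asmp:shypPDE_SLhyperg} forces $b$ to be a natural boundary (as shown in the proposition preceding Proposition \ref{prop:shypPDE_spectralsupp}): for the fixed value $t_0 > 0$, the integral $\int_{[\sigma^2,\infty)} e^{-t_0\lambda} w_\lambda(x)\,w_\lambda(y)\,\bm{\rho}_\mathcal{L}(d\lambda)$ converges absolutely and uniformly on every compact square of $(a,b)^2$, in particular on $K \times K$ where $K := K_1 \cup K_2$ is compact in $(a,b)$ and contains both $K_1$ and $K_2$. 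Absolute uniform convergence on $K \times K$ means exactly that $\lim_{N \to \infty}\, \sup_{(x,y) \in K \times K} \int_{[N,\infty)} e^{-t_0\lambda}\,|w_\lambda(x)|\,|w_\lambda(y)|\,\bm{\rho}_\mathcal{L}(d\lambda) = 0$. Combining this with the domination above, the tail $\int_{[N,\infty)}$ of the triple-product integral tends to $0$ uniformly over $[t_0,\infty) \times K_1 \times K_2 \times [a,b)$, which, together with the finiteness of the full integral of the absolute value that the same bound provides, is precisely the asserted absolute and uniform convergence.

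I do not expect a genuine obstacle here: once the uniform bound $|w_\lambda(\xi)| \leq 1$ on $[a,b)$ is in hand, the statement follows from a one-line domination plus a citation of Proposition \ref{prop:shypPDE_L_fellergen_tpdf} for the reduced two-variable integral. The only points deserving (minor) care are verifying that Assumption \ref{asmp:shypPDE_SLhyperg} indeed supplies the hypothesis of Lemma \ref{lem:shypPDE_wsolbound} and that $b$ is a natural boundary so that Proposition \ref{prop:shypPDE_L_fellergen_tpdf} may be invoked; handling the boundary point $\xi = a$ (trivial, since $w_\lambda(a) = 1$); and enlarging $K_1$ and $K_2$ to a single compact square in $(a,b)^2$ so that the cited proposition applies verbatim.
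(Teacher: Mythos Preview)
Your proposal is correct and follows essentially the same route as the paper: bound the extra factor by $|w_\lambda(\xi)| \leq 1$ via Lemma \ref{lem:shypPDE_wsolbound}, then invoke the absolute and uniform convergence of the two-variable spectral integral \eqref{eq:shypPDE_fellersgp_fundsol_spectr} from Proposition \ref{prop:shypPDE_L_fellergen_tpdf}. The paper's proof is a one-line citation of precisely these two results, and your write-up simply fills in the straightforward details.
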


\begin{proof}
This follows from Lemma \ref{lem:shypPDE_wsolbound} and the uniform convergence property of the integral representation of the transition kernel of the Feller semigroup generated by $\ell$ (Proposition \ref{prop:shypPDE_L_fellergen_tpdf}).
\end{proof}

In what follows we write
\[
q_t(x,y,\xi) := \int_{[\sigma^2,\infty)\!} e^{-t\lambda\,} w_\lambda(x) \, w_\lambda(y) \, w_\lambda(\xi) \, \bm{\rho}_{\mathcal{L}}(d\lambda).
\]
This function, which is (at least formally) the density of the measure of the time-shifted product formula, is for fixed $t,x,y$ the density (with respect to $r(\xi) d\xi$) of a subprobability measure:

\begin{lemma} \label{lem:shypPDE_exptriplepf_probmeas}
The function $q_t(x,y,\xi)$ is nonnegative and such that $\int_a^b q_t(x,y,\xi)\, r(\xi) d\xi \leq 1$ for all $(t,x,y) \in (0,\infty) \times (a,b) \times (a,b)$.
\end{lemma}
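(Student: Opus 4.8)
The plan is to identify $q_t(x,y,\cdot)$ as the density of a measure that arises from applying the positivity and contraction properties already established for the hyperbolic Cauchy problem and the Feller semigroup. First I would fix $t>0$, $y\in(a,b)$, and a nonnegative test function $h\in\mathrm{C}_\mathrm{c}[a,b)$ that is smooth enough to lie in $\mathcal{D}_\mathcal{L}^{(2)}$ together with $\ell(h)$; by density and a standard approximation argument (mollifying and cutting off, using Proposition \ref{prop:entrpf_Ffourier}) it suffices to treat such $h$. For such $h$ define
\[
(S_t^y h)(x) := \int_a^b q_t(x,y,\xi)\, h(\xi)\, r(\xi)\, d\xi
   = \int_{[\sigma^2,\infty)} e^{-t\lambda}\, w_\lambda(x)\, w_\lambda(y)\, (\mathcal{F}h)(\lambda)\, \bm{\rho}_\mathcal{L}(d\lambda),
\]
the interchange of integrations being justified by the uniform convergence in the preceding lemma together with $\mathcal{F}h\in L_1\cap L_2$. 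The right-hand side is exactly the solution $f(x,y)$ of the Cauchy problem \eqref{eq:shypPDE_Lcauchy} from Theorem \ref{thm:shypPDE_Lexistuniq} with initial datum $T_t h$ in place of $h$ (since $e^{-t\lambda}(\mathcal{F}h)(\lambda)=(\mathcal{F}T_th)(\lambda)$ by Proposition \ref{prop:shypPDE_L_fellergen_tpdf}), and $T_t h\in\mathcal{D}_\mathcal{L}^{(2)}$ with $\ell(T_th)\in\mathcal{D}_\mathcal{L}^{(2)}$ because $T_t$ maps into the domain of all powers of the generator.

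Next I would invoke Corollary \ref{cor:hypPDE_sol_positivity}: since $h\geq0$ implies $T_t h\geq0$ (the semigroup is positivity preserving), the solution $f=S_t^y h$ satisfies $f(x,y)\geq0$ for all $x,y\in(a,b)$. Likewise, since $\{T_t\}$ is a conservative Feller semigroup, $h\leq C$ gives $T_t h\leq C$, hence $S_t^y h\leq C$. Taking $h$ to be a nonnegative approximation of the indicator of a Borel set and passing to the limit shows that $q_t(x,y,\cdot)$ is a nonnegative kernel, and that $\int_a^b q_t(x,y,\xi)\,r(\xi)\,d\xi\leq 1$: indeed, choosing $h_n\in\mathrm{C}_\mathrm{c}[a,b)$ with $0\leq h_n\uparrow 1$ and $h_n$ smooth, we get $\int_a^b q_t(x,y,\xi)\,h_n(\xi)\,r(\xi)\,d\xi=(S_t^y h_n)(x)\leq 1$ for every $n$ (applying the bound with $C=1$), and monotone convergence yields $\int_a^b q_t(x,y,\xi)\,r(\xi)\,d\xi\leq1$. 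Nonnegativity of $q_t(x,y,\xi)$ for \emph{every} $\xi$ (not just a.e.) follows from its continuity in $\xi$, which is guaranteed by the uniform convergence of the defining integral.

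The main obstacle is the bookkeeping around the approximation: one must verify that the smooth compactly supported $h_n$ can be chosen inside $\mathcal{D}_\mathcal{L}^{(2)}$ with $\ell(h_n)$ also in $\mathcal{D}_\mathcal{L}^{(2)}$ (so that Theorem \ref{thm:shypPDE_Lexistuniq} and Corollary \ref{cor:hypPDE_sol_positivity} apply), and that $T_t h_n$ inherits the required domain regularity; the first point is handled by noting that $\mathrm{C}_\mathrm{c}^\infty$-type functions supported away from $a$ satisfy the boundary condition $\lim_{x\downarrow a}u^{[1]}(x)=0$ trivially and lie in every $\mathcal{D}_\mathcal{L}^{(k)}$, while the second follows from the smoothing property $T_t:\mathrm{C}_0\to\bigcap_k\mathcal{D}_\mathcal{L}^{(k)}$ of the Feller semigroup. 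Once this is in place, the monotone convergence step and the identification of $f$ with $S_t^y h$ are routine, and the stated nonnegativity and subprobability bound follow.
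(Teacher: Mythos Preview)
Your proposal is correct and follows essentially the same route as the paper: pair $q_t(x,y,\cdot)$ against a suitable nonnegative test function, identify the resulting expression as the solution \eqref{eq:shypPDE_Lexistence} of the hyperbolic Cauchy problem with initial datum $T_t h$, and then invoke Corollary \ref{cor:hypPDE_sol_positivity} together with the positivity and contractivity of the Feller semigroup. The paper streamlines two small points: it takes the test functions directly from $\mathcal{D}_\mathcal{L}^{(2,0)}$ (so the assumption $\ell(h)\in\mathcal{D}_\mathcal{L}^{(2)}$ on the test function is unnecessary), and it justifies the interchange of integrals via the $L_2$-isometry of $\mathcal{F}$ rather than an $L_1$ claim on $\mathcal{F}h$; the required regularity $T_t g,\ \ell(T_t g)\in\mathcal{D}_\mathcal{L}^{(2)}$ is read off from the spectral characterization \eqref{eq:shypPDE_LtransfidentD2}.
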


Throughout the proof (and in the sequel) we write $\mathcal{D}_\mathcal{L}^{(2,0)} := \mathcal{D}_\mathcal{L}^{(2)} \cap \mathcal{D}_\mathcal{L}^{(0)}$. Note that if $g \in \mathrm{C}_\mathrm{c}^2[a,b)$ with $g' \in \mathrm{C}_\mathrm{c}(a,b)$, then $g \in \mathcal{D}_\mathcal{L}^{(2,0)}$; consequently, any indicator function of an interval $I \subset [a,b)$ is the pointwise limit of functions $g_n \in \mathcal{D}_\mathcal{L}^{(2,0)}$.

\begin{proof}
Since $q_t(x,y,\cdot) \in \mathrm{C}_\mathrm{b}[a,b)$, it suffices to show that for all $g \in \mathcal{D}_\mathcal{L}^{(2,0)}$ with $0 \leq g \leq 1$ we have
\[
0 \leq \mathcal{Q}_{t,g}(x,y) \leq 1 \qquad \bigl(t > 0, \; x, y \in (a,b)\bigr)
\]
where $\mathcal{Q}_{t,g}(x,y) := \int_a^b g(\xi) \, q_t(x,y,\xi)\, r(\xi) d\xi$.

Fix $t > 0$ and $g \in \mathcal{D}_\mathcal{L}^{(2,0)}$ with $0 \leq g \leq 1$. Since $\mathcal{F}[q_t(x,y,\cdot)] = e^{-t\lambda\,} w_\lambda(x) \, w_\lambda(y)$, it follows from the isometric property of the $\mathcal{L}$-transform (Proposition \ref{prop:entrpf_Ffourier}) that
\[
\mathcal{Q}_{t,g}(x,y) = \int_{[\sigma^2,\infty)\!} e^{-t\lambda\,} w_\lambda(x) \, w_\lambda(y) \, (\mathcal{F}g)(\lambda) \, \bm{\rho}_{\mathcal{L}}(d\lambda).
\]
Differentiating under the integral sign we easily check (by dominated convergence and using Lemma \ref{lem:shypPDE_Lfourier_D2prop}(b)) that $\ell_x \mathcal{Q}_{t,g} = \ell_y	 \mathcal{Q}_{t,g}$, $(\partial_y^{[1]} \mathcal{Q}_{t,g})(x,a) = 0$ and
\[
\mathcal{Q}_{t,g}(x,a) = \int_{[\sigma^2,\infty)\!} e^{-t\lambda\,} w_\lambda(x) \, (\mathcal{F}g)(\lambda) \, \bm{\rho}_{\mathcal{L}}(d\lambda) = (T_t g)(x)
\]
where the last equality follows from \eqref{eq:shypPDE_fellersgp_fundsolL2rep} (here $\{T_t\}$ is the Feller semigroup generated by $\ell$, cf.\ Proposition \ref{prop:shypPDE_L_fellergen_tpdf}). The fact that $0 \leq g \leq 1$ clearly implies that $0 \leq (T_t g)(x) \leq 1$ for $x \in (a,b)$. One can verify via \eqref{eq:shypPDE_LtransfidentD2} that the function $h(x) = (T_t g)(x)$ is such that $h \in \mathcal{D}_\mathcal{L}^{(2)}$ and $\ell(h) \in \mathcal{D}_{\mathcal{L}}^{(2)\!}$. It then follows from the positivity property of the hyperbolic Cauchy problem (Corollary \ref{cor:hypPDE_sol_positivity}) that $0 \leq \mathcal{Q}_{t,g}(x,y) \leq 1$ for all $x,y \in (a,b)$, as claimed.
\end{proof}

\begin{proposition}[Time-shifted product formula] \label{prop:shypPDE_expprodform}
The product $e^{-t\lambda\,} w_\lambda(x) \, w_\lambda(y)$ admits the integral representation
\begin{equation} \label{eq:shypPDE_expprodform}
e^{-t\lambda\,} w_\lambda(x) \, w_\lambda(y) = \int_a^b w_\lambda(\xi)\, q_t(x,y,\xi)\, r(\xi) d\xi, \qquad t > 0, \; x, y \in (a,b), \; \lambda \geq 0
\end{equation}
where the integral in the right hand side is absolutely convergent.

In particular, $\int_a^b q_t(x,y,\xi)\, r(\xi) d\xi = 1$ for all $t > 0$, $x,y \in (a,b)$.
\end{proposition}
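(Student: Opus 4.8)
The plan is to identify $q_t(x,y,\cdot)$ with the $L_2(r)$-valued inverse $\mathcal L$-transform of $\varphi^{(t)}_{x,y}(\lambda):=e^{-t\lambda}w_\lambda(x)w_\lambda(y)$, deduce \eqref{eq:shypPDE_expprodform} as a $\bm\rho_\mathcal L$-a.e.\ identity, and then upgrade it to the genuinely pointwise statement for all $\lambda\ge0$. First I would record two membership facts. By Lemma~\ref{lem:shypPDE_exptriplepf_probmeas}, $q_t(x,y,\cdot)$ is nonnegative with $\int_a^b q_t(x,y,\xi)r(\xi)d\xi\le1$, and from its defining integral together with $|w_\lambda(\xi)|\le1$ on $[0,\infty)$ (cf.\ \eqref{eq:shypPDE_Wbound}) it is bounded on $[a,b)$; hence $q_t(x,y,\cdot)\in L_1(r)\cap L_2(r)$, and the right-hand side of \eqref{eq:shypPDE_expprodform} converges absolutely for every $\lambda\ge0$, its value being exactly the $\mathcal L$-transform (extended to $L_1(r)$) of $q_t(x,y,\cdot)$ at $\lambda$. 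On the spectral side, $\varphi^{(t)}_{x,y}\in L_2([\sigma^2,\infty);\bm\rho_\mathcal L)$: since $|w_\lambda(y)|\le1$ and $e^{-2t\lambda}\le e^{-t\lambda}$ for $\lambda\ge0$, $\int_{[\sigma^2,\infty)}|\varphi^{(t)}_{x,y}(\lambda)|^2\bm\rho_\mathcal L(d\lambda)\le\int_{[\sigma^2,\infty)}e^{-t\lambda}w_\lambda(x)^2\bm\rho_\mathcal L(d\lambda)=p(t,x,x)<\infty$ by Proposition~\ref{prop:shypPDE_L_fellergen_tpdf}.

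Next I would show $q_t(x,y,\cdot)=\mathcal F^{-1}\varphi^{(t)}_{x,y}$ in $L_2(r)$. The truncated inversion integrals $\int_{[\sigma^2,N]}\varphi^{(t)}_{x,y}(\lambda)\,w_\lambda(\cdot)\,\bm\rho_\mathcal L(d\lambda)$ converge to $\mathcal F^{-1}\varphi^{(t)}_{x,y}$ in $L_2(r)$ (by definition of $\mathcal F^{-1}$, since $\varphi^{(t)}_{x,y}\in L_2(\bm\rho_\mathcal L)$) and, by the absolute and uniform convergence of the integral defining $q_t$ established just before, they also converge to $q_t(x,y,\cdot)$ uniformly on compact subsets of $(a,b)$; passing to a subsequence along which the $L_2(r)$-limit is attained pointwise a.e.\ forces $q_t(x,y,\cdot)=\mathcal F^{-1}\varphi^{(t)}_{x,y}$ a.e., hence $\mathcal F[q_t(x,y,\cdot)]=\varphi^{(t)}_{x,y}$ as elements of $L_2(\bm\rho_\mathcal L)$. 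Because $q_t(x,y,\cdot)\in L_1(r)\cap L_2(r)$, an approximation argument (approximate it in $L_1(r)$ and $L_2(r)$ simultaneously by $\mathrm{C}_\mathrm{c}$-functions; the corresponding $\mathcal L$-transforms converge in $L_2(\bm\rho_\mathcal L)$ and, thanks to $|w_\lambda|\le1$, uniformly on $[0,\infty)$) shows that the $L_2$-transform agrees $\bm\rho_\mathcal L$-a.e.\ with the absolutely convergent integral $\int_a^b w_\lambda(\xi)q_t(x,y,\xi)r(\xi)d\xi$. Thus \eqref{eq:shypPDE_expprodform} holds for $\bm\rho_\mathcal L$-a.e.\ $\lambda$. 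Both sides are continuous in $\lambda$ on $[0,\infty)$ — the left side by dominated convergence with dominating function $q_t(x,y,\cdot)r\in L_1$ and $\lambda\mapsto w_\lambda(\xi)$ continuous, the right side by Lemma~\ref{lem:entrpf_ode_wsol} — and $\supp\bm\rho_\mathcal L=[\sigma^2,\infty)$ by Proposition~\ref{prop:shypPDE_spectralsupp}; since the set where the two continuous functions agree is closed and of full $\bm\rho_\mathcal L$-measure, it contains all of $[\sigma^2,\infty)$.

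It remains to extend the identity to $\lambda\in[0,\sigma^2)$, which is an issue only when $\sigma>0$. Put $F(\lambda):=\int_a^b w_\lambda(\xi)q_t(x,y,\xi)r(\xi)d\xi$ and $G(\lambda):=e^{-t\lambda}w_\lambda(x)w_\lambda(y)$; $G$ is entire by Lemma~\ref{lem:entrpf_ode_wsol}. By the Laplace-type representation \eqref{eq:shypPDE_Wlaplacerep}--\eqref{eq:shypPDE_Wbound}, on the connected open set $\Omega:=\{\tau^2+\sigma^2:\tau\in\mathbb C,\ |\mathrm{Im}\,\tau|<\sigma\}$ the map $\lambda\mapsto w_\lambda(\xi)$ is holomorphic and $|w_\lambda(\xi)|\le1$ uniformly in $\xi\in[a,b)$, so $F$ extends holomorphically to $\Omega$ by differentiation under the integral sign (dominating function $q_t(x,y,\cdot)r\in L_1$). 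Since $\Omega\supseteq[\sigma^2,\infty)\cup(0,\sigma^2)=(0,\infty)$ and $F=G$ on $[\sigma^2,\infty)$, the identity theorem gives $F\equiv G$ on $\Omega$, and continuity at $\lambda=0$ closes the gap; hence \eqref{eq:shypPDE_expprodform} holds for all $\lambda\ge0$. Finally, evaluating at $\lambda=0$ and using $w_0\equiv1$ yields $1=\int_a^b q_t(x,y,\xi)r(\xi)d\xi$, which is the last assertion.

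I expect the main obstacle to be the transition from the abstract $L_2$/a.e.\ identity to the pointwise statement: one must carefully reconcile the two meanings of ``$\mathcal L$-transform of $q_t(x,y,\cdot)$'' (the Plancherel isometry versus the convergent integral), use $\supp\bm\rho_\mathcal L=[\sigma^2,\infty)$ together with continuity to eliminate the exceptional null set, and then carry the identity across the spectral gap $[0,\sigma^2)$ by analytic continuation — where the Laplace representation of Theorem~\ref{thm:shypPDE_Wlaplacerep} is precisely the tool that supplies the uniform bound $|w_\lambda(\xi)|\le1$ needed to holomorphically extend $F$.
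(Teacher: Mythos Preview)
Your proposal is correct and follows essentially the same route as the paper: obtain the identity $\bm{\rho}_\mathcal{L}$-a.e.\ from the isometry of $\mathcal{F}$, upgrade to all of $[\sigma^2,\infty)$ using continuity together with $\supp(\bm{\rho}_\mathcal{L})=[\sigma^2,\infty)$, then analytically continue across $[0,\sigma^2)$ via the uniform bound \eqref{eq:shypPDE_Wbound} from the Laplace-type representation, and finally evaluate at $\lambda=0$. Your write-up is somewhat more explicit about the $L_2$ identification $q_t(x,y,\cdot)=\mathcal{F}^{-1}\varphi^{(t)}_{x,y}$ and the reconciliation of the Plancherel and $L_1$-integral versions of the transform (the paper simply invokes Proposition~\ref{prop:entrpf_Ffourier}), and you carry out the analytic continuation in the $\lambda$-variable rather than in $\tau$, but these are presentational differences only; one small slip is that you have the ``left'' and ``right'' sides swapped when justifying continuity in $\lambda$.
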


\begin{proof}
The absolute convergence of the integral in the right hand side is immediate from Lemmas \ref{lem:shypPDE_wsolbound} and \ref{lem:shypPDE_exptriplepf_probmeas}.

By Proposition \ref{prop:entrpf_Ffourier}, the equality in \eqref{eq:shypPDE_expprodform} holds $\bm{\rho}_\mathcal{L}$-almost everywhere. Since $\supp(\bm{\rho}_\mathcal{L}) = [\sigma^2, \infty)$ (Lemma \ref{prop:shypPDE_spectralsupp}), the fact that both sides of \eqref{eq:shypPDE_expprodform} are continuous functions of $\lambda \geq 0$ allows us to extend by continuity the equality \eqref{eq:shypPDE_expprodform} to all $\lambda \geq \sigma^2$. If $\sigma = 0$, we are done.

Suppose that $\sigma > 0$. By \eqref{eq:shypPDE_Wbound} and Lemma \ref{lem:shypPDE_exptriplepf_probmeas}, together with standard results on the analyticity of parameter-dependent integrals, the function $\tau \mapsto \int_a^b w_{\tau^2+\sigma^2}(\xi)\, q_t(x,y,\xi)\, r(\xi) d\xi$ is an analytic function of $\tau$ in the strip $|\mathrm{Im}(\tau)| < \sigma$. It is also clear that $\tau \mapsto e^{-t(\tau^2 + \sigma^2)\,} w_{\tau^2 + \sigma^2}(x) \, w_{\tau^2 + \sigma^2}(y)$ is an entire function. By analytic continuation we see that these two functions are equal for all $\tau$ in the strip $|\mathrm{Im}(\tau)| < \sigma$; consequently, \eqref{eq:shypPDE_expprodform} holds.

The last statement is obtained by setting $\lambda = 0$.
\end{proof}

\subsection{The product formula for $w_\lambda$ as the limit case}

As one would expect, the product formula \eqref{eq:shypPDE_prodform_weaklim} will be deduced by taking (in a suitable way) the limit as $t \downarrow 0$ in the time-shifted product formula \eqref{eq:shypPDE_expprodform}. First we present a lemma which will be needed to handle the case where the functions $w_\lambda(x)$ do not vanish at the limit $x \uparrow b$ (cf.\ Proposition \ref{prop:hypPDE_Wasym_infty}). The lemma is based on a known result on changes of spectral functions for Sturm-Liouville operators and Krein strings (\cite{krein1953}, see also \cite[Section 6.9]{dymmckean1976}).

\begin{lemma} \label{lem:shypPDE_modification}
For $-\infty < \kappa \leq \sigma^2$, consider the modified differential expression
\[
\ell^{\langle\kappa\rangle\!} = -{1 \over r^{\langle\kappa\rangle\!}} {d \over dx} \Bigl( p^{\langle\kappa\rangle\!} \, {d \over dx}\Bigr), \qquad x \in (a,b)
\]
where $p^{\langle\kappa\rangle\!} = w_\kappa^2 \ccdot p$ and $r^{\langle\kappa\rangle\!} = w_\kappa^2 \ccdot r$. Then Assumption \ref{asmp:shypPDE_SLhyperg} also holds for $\ell^{\langle\kappa\rangle\!}$, and the function
\[
w_{\lambda}^{\langle\kappa\rangle\!}(x) := {w_{\kappa + \lambda}(x) \over w_\kappa(x)}
\]
is, for each $\lambda \in \mathbb{C}$, the unique solution of $\ell^{\langle\kappa\rangle\!}(w) = \lambda w$, $w(a) = 1$ and $(p^{\langle\kappa\rangle\!} w')(a) = 0$. Moreover, the spectral measure associated with $\ell^{\langle\kappa\rangle}$ is given by
\[
\bm{\rho}_{\mathcal{L}}^{\langle\kappa\rangle\!} (\lambda_1,\lambda_2] = \bm{\rho}_{\mathcal{L}} (\lambda_1+\kappa,\lambda_2+\kappa] \qquad (-\infty < \lambda_1 \leq \lambda_2 < \infty).
\]
\end{lemma}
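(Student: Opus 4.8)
The plan is to establish the three assertions in turn, after reducing $\ell^{\langle\kappa\rangle\!}$ to standard form. Since $\sqrt{r^{\langle\kappa\rangle\!}/p^{\langle\kappa\rangle\!}} = \sqrt{r/p}$, the change of variable of Remark \ref{rmk:shypPDE_tildeell} for $\ell^{\langle\kappa\rangle\!}$ coincides with that for $\ell$; in particular $\gamma^{\langle\kappa\rangle\!}(b) = \gamma(b) = \infty$ and the standardized coefficient is $A^{\langle\kappa\rangle\!}(\xi) = \widetilde{w}_\kappa(\xi)^2 A(\xi)$, where $\widetilde{w}_\kappa := w_\kappa \circ \gamma^{-1}$ solves $\widetilde{\ell}(\widetilde{w}_\kappa) = \kappa\widetilde{w}_\kappa$. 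To make sense of the quotient $w_{\kappa+\lambda}/w_\kappa$ I first record that $w_\kappa > 0$ on $[a,b)$: for $\kappa \leq 0$ this follows from a bootstrap in the integral equation \eqref{eq:shypPDE_wsol_inteq} (in fact $w_\kappa \geq 1$), and for $0 < \kappa \leq \sigma^2$ from the Laplace representation \eqref{eq:shypPDE_Wlaplacerep} with $\tau = i\sqrt{\sigma^2-\kappa}$, which gives $w_\kappa(x) = \int_\mathbb{R} \cosh(\sqrt{\sigma^2-\kappa}\, s)\, \nu_x(ds) \geq \nu_x(\mathbb{R}) > 0$ (the measure is nonzero because $\int_\mathbb{R}\cosh(\sigma s)\,\nu_x(ds) = w_0(x) = 1$). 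Near $a$ one also has $p^{\langle\kappa\rangle\!} \asymp p$ and $r^{\langle\kappa\rangle\!} \asymp r$ (as $w_\kappa$ is continuous with $w_\kappa(a) = 1$), so $\ell^{\langle\kappa\rangle\!}$ again satisfies the boundary assumption \eqref{eq:shypPDE_Lop_leftBC}.

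For the second assertion I would carry out the ``Darboux-type'' computation directly: writing $\ell^{\langle\kappa\rangle\!} = -\frac{1}{w_\kappa^2 r}\frac{d}{dx}\bigl(w_\kappa^2 p\,\frac{d}{dx}\bigr)$ and expanding $\bigl(w_\kappa^2 p\,(u/w_\kappa)'\bigr)'$, one finds that $\ell(u) = (\kappa+\lambda)u$ forces $\ell^{\langle\kappa\rangle\!}(u/w_\kappa) = \lambda\,(u/w_\kappa)$, the only input being $\ell(w_\kappa) = \kappa w_\kappa$. Taking $u = w_{\kappa+\lambda}$ gives $\ell^{\langle\kappa\rangle\!}(w_\lambda^{\langle\kappa\rangle\!}) = \lambda w_\lambda^{\langle\kappa\rangle\!}$; the boundary values follow at once from $w_\lambda^{\langle\kappa\rangle\!}(a) = 1$ and $(p^{\langle\kappa\rangle\!}(w_\lambda^{\langle\kappa\rangle\!})')(a) = (w_{\kappa+\lambda}^{[1]}w_\kappa - w_{\kappa+\lambda}w_\kappa^{[1]})(a) = 0$, and uniqueness is Lemma \ref{lem:entrpf_ode_wsol} applied to $\ell^{\langle\kappa\rangle\!}$.

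To see that Assumption \ref{asmp:shypPDE_SLhyperg} holds for $\ell^{\langle\kappa\rangle\!}$, I propose the choice $\eta^{\langle\kappa\rangle\!} := \eta + 2\widetilde{w}_\kappa'/\widetilde{w}_\kappa$. Then $\bm{\phi}_{\eta^{\langle\kappa\rangle}}^{\langle\kappa\rangle\!} = \frac{(A^{\langle\kappa\rangle})'}{A^{\langle\kappa\rangle}} - \eta^{\langle\kappa\rangle\!} = \frac{A'}{A} - \eta = \bm{\phi}_\eta$, which is already known to be decreasing with limit $0$. For the other function I use the Liouville transformation: $\sqrt{A^{\langle\kappa\rangle}} = \widetilde{w}_\kappa\sqrt{A}$ is a positive solution of $-\psi'' + \mathfrak{q}\psi = \kappa\psi$, so the potential of $\ell^{\langle\kappa\rangle\!}$ is $\mathfrak{q}^{\langle\kappa\rangle} = (\sqrt{A^{\langle\kappa\rangle}})''/\sqrt{A^{\langle\kappa\rangle}} = \mathfrak{q} - \kappa$, and combined with the algebraic identity \eqref{eq:shypPDE_liouvtrans_frakq} (which holds for any choice of $\eta$) this gives $\bm{\psi}_{\eta^{\langle\kappa\rangle}}^{\langle\kappa\rangle\!} = \bm{\psi}_\eta - \kappa$, which is decreasing. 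The remaining — and main — point is $\eta^{\langle\kappa\rangle\!} \geq 0$. I would recast it through the positive function $v := B\widetilde{w}_\kappa$ (with $B = \exp(\frac12\int\eta)$ as in Lemma \ref{lem:shypPDE_inteqtriangle}), which satisfies $-v'' - \bm{\phi}_\eta v' + (\bm{\psi}_\eta - \kappa)v = 0$ and $v'/v = \frac12\eta^{\langle\kappa\rangle\!}$. Since $\bm{\phi}_\eta \geq 0$ and $\bm{\psi}_\eta - \kappa \geq 0$ (because $\bm{\psi}_\eta$ decreases to $\lim(\frac14\bm{\phi}_\eta^2 + \bm{\psi}_\eta) = \sigma^2 \geq \kappa$), the function $v'\exp(\int\bm{\phi}_\eta)$ is nondecreasing; it then suffices to check $v'(\gamma(a)^+) \geq 0$, which one reads off from $\widetilde{w}_\kappa^{[1]}(\gamma(a)) = w_\kappa^{[1]}(a) = 0$, the monotonicity of $A$, and $\eta \geq 0$. (When $\kappa \leq 0$ the positivity of $\eta^{\langle\kappa\rangle\!}$ is immediate, since $w_\kappa^{[1]}(x) = -\kappa\int_a^x w_\kappa r \geq 0$ makes $\widetilde{w}_\kappa$ nondecreasing.)

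Finally, for the spectral measure I would use the factorization $(\mathcal{F}^{\langle\kappa\rangle\!}h)(\lambda) = \int_a^b h\,\frac{w_{\kappa+\lambda}}{w_\kappa}\,w_\kappa^2 r\,dx = (\mathcal{F}[h\,w_\kappa])(\kappa+\lambda)$: the map $h \mapsto h\,w_\kappa$ is unitary from $L_2(r^{\langle\kappa\rangle\!})$ onto $L_2(r)$ and $\varphi \mapsto \varphi(\kappa + \cdot)$ is unitary from $L_2(\mathbb{R};\bm{\rho}_\mathcal{L})$ onto $L_2(\mathbb{R};\widetilde{\bm{\rho}})$ with $\widetilde{\bm{\rho}}(\lambda_1,\lambda_2] := \bm{\rho}_\mathcal{L}(\lambda_1+\kappa,\lambda_2+\kappa]$, so $\mathcal{F}^{\langle\kappa\rangle\!}$ is an isometric isomorphism of $L_2(r^{\langle\kappa\rangle\!})$ onto $L_2(\mathbb{R};\widetilde{\bm{\rho}})$ whose inverse has the form \eqref{eq:entrpf_Ffourierinverse}; by the uniqueness part of Proposition \ref{prop:entrpf_Ffourier} — applicable to $\ell^{\langle\kappa\rangle\!}$, since $b$ is natural for it and \eqref{eq:shypPDE_Lop_leftBC} holds — we get $\bm{\rho}_\mathcal{L}^{\langle\kappa\rangle\!} = \widetilde{\bm{\rho}}$. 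The genuinely delicate step in this whole scheme is the inequality $\eta^{\langle\kappa\rangle\!} \geq 0$ in the range $0 < \kappa \leq \sigma^2$: the monotonicity of $v'\exp(\int\bm{\phi}_\eta)$ reduces it to the sign of $v'$ at the left endpoint, but in the parabolically degenerate case $\lim_{x\downarrow a}p(x)r(x) = 0$ one must lean on the boundary assumption \eqref{eq:shypPDE_Lop_leftBC} to control $\widetilde{w}_\kappa'$ near $\gamma(a)$.
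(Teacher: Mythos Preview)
Your argument is largely the paper's: the same Darboux computation for $w_\lambda^{\langle\kappa\rangle}$, the same choice $\eta^{\langle\kappa\rangle} = \eta + 2\widetilde{w}_\kappa'/\widetilde{w}_\kappa$ with the identities $\bm{\phi}_{\eta^{\langle\kappa\rangle}}^{\langle\kappa\rangle} = \bm{\phi}_\eta$ and $\bm{\psi}_{\eta^{\langle\kappa\rangle}}^{\langle\kappa\rangle} = \bm{\psi}_\eta - \kappa$ (the paper cites \cite[Example~4.6]{zeuner1992} for these; your Liouville-transform derivation is a clean self-contained alternative), and the same unitary-equivalence argument for the spectral measure via the factorization $(\mathcal{F}^{\langle\kappa\rangle}h)(\lambda) = (\mathcal{F}[h\,w_\kappa])(\kappa+\lambda)$.

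The one place where the approaches diverge --- and where your outline has a gap you yourself flag --- is the inequality $\eta^{\langle\kappa\rangle} \geq 0$. Your monotonicity argument for $v'\exp(\int\bm{\phi}_\eta)$ is correct (indeed $\bm{\psi}_\eta$ decreases to $\sigma^2 \geq \kappa$), but the reduction to ``$v'(\gamma(a)^+) \geq 0$'' is precisely where the problem lives: for $0 < \kappa \leq \sigma^2$ one has $\widetilde{w}_\kappa' \leq 0$, and since Assumption~\ref{asmp:shypPDE_SLhyperg} only gives $\eta \in \mathrm{C}^1$ on the \emph{open} interval $(\gamma(a),\infty)$, the quantity $v'P \propto B^{-1}\bigl(\tfrac12\eta A\widetilde{w}_\kappa + \widetilde{w}_\kappa^{[1]}\bigr)$ is in general an indeterminate $0/0$ as $\xi \downarrow \gamma(a)$, and its sign is not read off from $w_\kappa^{[1]}(a) = 0$ alone. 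The paper sidesteps this entirely by regularizing: it replaces $\widetilde{w}_\kappa$ by the shifted solution $\widetilde{w}_{\kappa,m}$ of Lemma~\ref{lem:entrpf_ode_wepslimit} on $[\tilde{a}_m,\infty)$, where $\tilde{a}_m > \gamma(a)$ is an interior point. There the initial value is regular and unambiguous, $\eta^{\langle\kappa,m\rangle}(\tilde{a}_m) = \eta(\tilde{a}_m) \geq 0$ (because $A(\tilde{a}_m) > 0$ forces $\widetilde{w}_{\kappa,m}'(\tilde{a}_m) = 0$), and then \cite[Remark~2.12]{zeuner1992} --- which is essentially your $v$-argument on a regular interval --- gives $\eta^{\langle\kappa,m\rangle} \geq 0$ on $[\tilde{a}_m,\infty)$. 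Passing to the limit $m \to \infty$ via \eqref{eq:shypPDE_ode_wepslimit} yields $\eta^{\langle\kappa\rangle} \geq 0$ pointwise on $(\gamma(a),\infty)$ without ever analyzing the singular endpoint.
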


\begin{proof}
Fix $-\infty < \kappa \leq \sigma^2$. The functions $A$ and $A^{\langle\kappa\rangle\!}$ associated to the operators $\ell$ and $\ell^{\langle\kappa\rangle\!}$ respectively (defined as in \eqref{eq:shypPDE_tildeell_A}) are connected by $A^{\langle\kappa\rangle\!} = \widetilde{w}_\kappa^2 \ccdot A$, where $\widetilde{w}_\kappa(\xi) = w_\kappa(\gamma^{-1}(\xi))$.

In order to show that Assumption \ref{asmp:shypPDE_SLhyperg} holds for $\ell^{\langle \kappa \rangle}$, consider first the function $A^{\langle \kappa,m \rangle\!}(\xi) := \widetilde{w}_{\kappa,m}^2(\xi) \ccdot A(\xi)$, where $\tilde{a}_m \leq \xi < \infty$ and $\widetilde{w}_{\kappa,m}$ is defined as in the proof of Theorem \ref{thm:shypPDE_Wlaplacerep}. Let $\eta^{\langle \kappa,m \rangle\!} := \eta + 2{\widetilde{w}_{\kappa,m}' \over \widetilde{w}_{\kappa,m}}$, where $\eta$ satisfies the conditions of Assumption \ref{asmp:shypPDE_SLhyperg}. It is easily seen (cf.\ \cite[Example 4.6]{zeuner1992}) that 
\[
\bm{\phi}_{\eta^{\langle \kappa,m \rangle\!}} := {(A^{\langle \kappa,m \rangle\!})' \over A^{\langle \kappa,m \rangle\!}} - \eta^{\langle \kappa,m \rangle\!} = \bm{\phi}_\eta, \qquad \bm{\psi}_{\eta^{\langle \kappa,m \rangle\!}} = \bm{\psi}_\eta - \kappa, \qquad \eta^{\langle \kappa,m \rangle\!}(\tilde{a}_m) = \eta(\tilde{a}_m) \geq 0
\]
and then it follows from \cite[Remark 2.12]{zeuner1992} that $\eta^{\langle \kappa,m \rangle\!} \geq 0$, hence $\eta^{\langle \kappa,m \rangle\!}$ satisfies Assumption \ref{asmp:shypPDE_SLhyperg} for the function $A^{\langle \kappa,m \rangle\!}$. If we now let $\eta^{\langle\kappa\rangle\!}(\xi) := \eta(\xi) + 2{\widetilde{w}_\kappa'(\xi) \over \widetilde{w}_\kappa(\xi)} = \lim_{m \to \infty} \eta^{\langle \kappa,m \rangle\!}(\xi)$ (where $\gamma(a) < \xi < \infty$; the second equality is due to \eqref{eq:shypPDE_ode_wepslimit}), then it is clear that the limit function $\eta^{\langle \kappa \rangle\!}$ satisfies Assumption \ref{asmp:shypPDE_SLhyperg} for the function $A^{\langle \kappa \rangle\!}$ associated with the operator $\ell^{\langle\kappa\rangle\!}$.

A simple computation gives
\begin{align*}
- {1 \over r^{\langle\kappa\rangle\!}} \biggl[p^{\langle\kappa\rangle\!} \Bigl({w_{\kappa + \lambda} \over w_\kappa}\Bigr) '\biggr]' & = - {1 \over w_\kappa^2 \ccdot r}\bigl[p \, w_{\kappa + \lambda}' w_{\kappa} - p \, w_{\kappa + \lambda} w_{\kappa}'\bigr]' \\
& = -{1 \over w_\kappa^2} \bigl[\ell(w_{\kappa + \lambda}) \, w_\kappa - w_{\kappa + \lambda} \, \ell(w_\kappa)\bigr] = \lambda {w_{\kappa + \lambda}(x) \over w_\kappa(x)}
\end{align*}
so that $\ell^{\langle\kappa\rangle\!}(w_{\lambda}^{\langle\kappa\rangle\!}) = \lambda w_{\lambda}^{\langle\kappa\rangle\!}$. The boundary conditions at $a$ are also straightforwardly checked. To prove the last assertion, notice that the Fourier transforms associated with $\ell$ and $\ell^{\langle\kappa\rangle\!}$ are related through the identity
\[
\Bigl(\mathcal{F}^{\langle\kappa\rangle\!} {h \over w_\kappa}\Bigr) (\lambda) = (\mathcal{F}h) (\kappa + \lambda), \qquad h \in L_2(r)
\]
and therefore
\[
\|(\mathcal{F}h)\|_{L_2(\mathbb{R},\bm{\rho}_{\mathcal{L}})} = \|h\|_{L_2(r)} = \biggl\| {h \over w_\kappa} \biggr\|_{L_2(r^{\langle\kappa\rangle\!})} \!\! = \bigl\| (\mathcal{F}h)(\kappa + \cdot) \bigr\|_{L_2(\mathbb{R},\bm{\rho}_{\mathcal{L}}^{\langle\kappa\rangle\!})}.
\]
Recalling the uniqueness of the spectral measure for which the isometric property in Proposition \ref{prop:entrpf_Ffourier} holds, we deduce that $\bm{\rho}_{\mathcal{L}}^{\langle\kappa\rangle\!} (\lambda_1,\lambda_2] = \bm{\rho}_{\mathcal{L}} (\lambda_1+\kappa,\lambda_2+\kappa]$.
\end{proof}

We are finally ready to prove the product formula for the $\mathcal{L}$-transform kernels $\{w_\lambda(\cdot)\}$. Recall that, by definition \cite[\S30]{bauer2001}, the complex measures $\mu_n$ converge weakly (respectively, vaguely) to $\mu \in \mathcal{M}_\mathbb{C}[a,b)$ if $\lim_n \int_{[a,b)} g(\xi) \mu_n(d\xi) = \int_{[a,b)} g(\xi) \mu(d\xi)$ for all $g \in \mathrm{C}_\mathrm{b}[a,b)$ (respectively, for all $g \in \mathrm{C}_0[a,b)$). We use the notations $\warrow$ and $\varrow$ to denote weak and vague convergence, respectively.

\begin{theorem}[Product formula for $w_\lambda$] \label{thm:shypPDE_prodform_weaklim_full}
For $x,y \in (a,b)$ and $t > 0$, let $\bm{\nu}_{t,x,y} \in \mathcal{P}[a,b)$ be the measure defined by $\bm{\nu}_{t,x,y}(d\xi) = q_t(x,y,\xi)\, r(\xi) d\xi$. Then for each $x,y \in (a,b)$ there exists a measure $\bm{\nu}_{x,y} \in \mathcal{P}[a,b)$ such that $\bm{\nu}_{t,x,y} \warrow \bm{\nu}_{x,y}$ as $t \downarrow 0$. Moreover, the product $w_\lambda(x) \, w_\lambda(y)$ admits the integral representation
\begin{equation} \label{eq:shypPDE_prodform_weaklim}
w_\lambda(x) \, w_\lambda(y) = \int_{[a,b)} w_\lambda(\xi)\, \bm{\nu}_{x,y}(d\xi), \qquad x, y \in (a,b), \; \lambda \in \mathbb{C}.
\end{equation}
In particular, Theorem \ref{thm:shypPDE_prodform_weaklim} holds.
\end{theorem}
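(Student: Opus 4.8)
The plan is to realise $\bm{\nu}_{x,y}$ as a weak limit of the probability measures $\bm{\nu}_{t,x,y}$ — which \emph{are} probability measures by Proposition~\ref{prop:shypPDE_expprodform} — and to pass to the limit $t\downarrow 0$ in the time-shifted product formula \eqref{eq:shypPDE_expprodform}. Since $[a,b)$ is locally compact and $\{\bm{\nu}_{t,x,y}\}_{0<t\le 1}$ is a family of probability measures, vague relative compactness (Helly selection) provides, for every sequence $t_n\downarrow 0$, a subsequence along which $\bm{\nu}_{t_n,x,y}\varrow\widetilde{\bm{\nu}}$ vaguely, the limit $\widetilde{\bm{\nu}}$ being \emph{a priori} only a sub-probability measure on $[a,b)$. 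I would first show that $\widetilde{\bm{\nu}}$ has full mass and obeys the product formula, then that it is independent of the subsequence.

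Suppose first that $\lim_{x\uparrow b}p(x)r(x)=\infty$. Then Proposition~\ref{prop:hypPDE_Wasym_infty} gives $w_\lambda\in\mathrm{C}_0[a,b)$ for each $\lambda>0$, so vague convergence applied to \eqref{eq:shypPDE_expprodform} yields $w_\lambda(x)\,w_\lambda(y)=\lim_n e^{-t_n\lambda}w_\lambda(x)\,w_\lambda(y)=\int_{[a,b)}w_\lambda\,d\widetilde{\bm{\nu}}$ for every $\lambda>0$. Letting $\lambda\downarrow 0$, using $w_\lambda(\xi)\to w_0(\xi)\equiv 1$ pointwise together with the uniform bound $|w_\lambda|\le 1$ on $[a,b)$ valid for $\lambda\ge 0$ (Lemma~\ref{lem:shypPDE_wsolbound}; note $A'/A\ge 0$ under Assumption~\ref{asmp:shypPDE_SLhyperg}), dominated convergence gives $\widetilde{\bm{\nu}}([a,b))=1$, so $\widetilde{\bm{\nu}}\in\mathcal{P}[a,b)$ and the identity $w_\lambda(x)\,w_\lambda(y)=\int w_\lambda\,d\widetilde{\bm{\nu}}$ persists for all $\lambda\ge 0$. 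Because the $\mathcal{L}$-transform of finite measures is injective — if $\int w_\lambda\,d\mu=0$ for all $\lambda\ge 0$, then pairing with $g\in\mathcal{D}_\mathcal{L}^{(2,0)}$, inserting the inversion formula of Lemma~\ref{lem:shypPDE_Lfourier_D2prop}(b), and applying Fubini gives $\int g\,d\mu=0$ for a dense set of $g$, hence $\mu=0$ — the limit $\widetilde{\bm{\nu}}=\bm{\nu}_{x,y}$ does not depend on the subsequence; therefore $\bm{\nu}_{t,x,y}\varrow\bm{\nu}_{x,y}$ as $t\downarrow 0$, and since all measures involved are probabilities this upgrades to $\bm{\nu}_{t,x,y}\warrow\bm{\nu}_{x,y}$.

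The case $\lim_{x\uparrow b}p(x)r(x)<\infty$, in which $w_\lambda$ fails to vanish at $b$, I would reduce to the previous one via the modification Lemma~\ref{lem:shypPDE_modification}. Fix any $\kappa<0$; then $w_\kappa\ge 1$ and is nondecreasing (since $w_\kappa^{[1]}(x)=-\kappa\int_a^x w_\kappa(\xi)\,r(\xi)\,d\xi\ge 0$), and an easy consequence of $b$ being a natural boundary together with $p\,r$ bounded is that $w_\kappa(x)\to\infty$ as $x\uparrow b$, so the modified operator $\ell^{\langle\kappa\rangle}$ has $p^{\langle\kappa\rangle}r^{\langle\kappa\rangle}=w_\kappa^4\,p\,r\to\infty$ and falls into the previous case. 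Applying the preceding step to $\ell^{\langle\kappa\rangle}$ furnishes $\bm{\nu}^{\langle\kappa\rangle}_{x,y}\in\mathcal{P}[a,b)$ with $w^{\langle\kappa\rangle}_\mu(x)\,w^{\langle\kappa\rangle}_\mu(y)=\int w^{\langle\kappa\rangle}_\mu\,d\bm{\nu}^{\langle\kappa\rangle}_{x,y}$, and since $w^{\langle\kappa\rangle}_\mu=w_{\kappa+\mu}/w_\kappa$, the measure $\bm{\nu}_{x,y}(d\xi):=\dfrac{w_\kappa(x)\,w_\kappa(y)}{w_\kappa(\xi)}\,\bm{\nu}^{\langle\kappa\rangle}_{x,y}(d\xi)$ is nonnegative, has total mass $1$ (take $\mu=-\kappa$, i.e.\ $\lambda=0$), and satisfies $w_\lambda(x)\,w_\lambda(y)=\int w_\lambda\,d\bm{\nu}_{x,y}$ for $\lambda\ge 0$. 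The weak convergence $\bm{\nu}_{t,x,y}\warrow\bm{\nu}_{x,y}$ transfers from $\ell^{\langle\kappa\rangle}$ through the analogous scaling $\bm{\nu}_{t,x,y}(d\xi)=e^{-t\kappa}\,\tfrac{w_\kappa(x)w_\kappa(y)}{w_\kappa(\xi)}\,\bm{\nu}^{\langle\kappa\rangle}_{t,x,y}(d\xi)$ together with the boundedness of $1/w_\kappa$ on $[a,b)$.

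Finally I would extend the product formula from $\lambda\ge 0$ to all $\lambda\in\mathbb{C}$ by means of the Laplace-type representation of Theorem~\ref{thm:shypPDE_Wlaplacerep}. Put $\Pi_{x,y}(ds):=\int_{[a,b)}\nu_\xi(ds)\,\bm{\nu}_{x,y}(d\xi)$, a sub-probability measure on $\mathbb{R}$. For real $\tau$, Fubini and \eqref{eq:shypPDE_Wlaplacerep} give $\int_\mathbb{R}e^{i\tau s}\,\Pi_{x,y}(ds)=\int_{[a,b)}w_{\tau^2+\sigma^2}\,d\bm{\nu}_{x,y}=w_{\tau^2+\sigma^2}(x)\,w_{\tau^2+\sigma^2}(y)$, which equals the product of the Fourier transforms of $\nu_x$ and $\nu_y$; hence $\Pi_{x,y}=\nu_x*\nu_y$ by uniqueness of Fourier transforms of finite measures. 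Consequently $\int_\mathbb{R}\cosh(\kappa s)\,\Pi_{x,y}(ds)=w_{\sigma^2-\kappa^2}(x)\,w_{\sigma^2-\kappa^2}(y)<\infty$ for every $\kappa\ge 0$, so $\int_{[a,b)}|w_{\tau^2+\sigma^2}(\xi)|\,\bm{\nu}_{x,y}(d\xi)\le\int_\mathbb{R}\cosh(\mathrm{Im}(\tau)\,s)\,\Pi_{x,y}(ds)<\infty$ for all $\tau\in\mathbb{C}$; one more application of Fubini and \eqref{eq:shypPDE_Wlaplacerep} then gives $\int_{[a,b)}w_\lambda\,d\bm{\nu}_{x,y}=\int_\mathbb{R}\cos(\tau s)\,(\nu_x*\nu_y)(ds)=w_\lambda(x)\,w_\lambda(y)$ for every $\lambda=\tau^2+\sigma^2\in\mathbb{C}$. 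The statement of Theorem~\ref{thm:shypPDE_prodform_weaklim}, which permits $x$ or $y$ to equal $a$, follows by setting $\bm{\nu}_{a,y}:=\delta_y$ and $\bm{\nu}_{x,a}:=\delta_x$ and recalling $w_\lambda(a)=1$. I expect the heart of the argument to be showing that no mass escapes to the endpoint $b$, i.e.\ that the vague subsequential limit is a genuine probability measure and is subsequence-independent: the crucial input is the nontrivial probability-preservation in Proposition~\ref{prop:shypPDE_expprodform}, which the $\lambda\downarrow 0$ step converts into full mass of the limit, but only once the product formula for $\lambda>0$ is in hand — and securing the latter forces the detour through the modification lemma precisely when $w_\lambda$ does not vanish at $b$.
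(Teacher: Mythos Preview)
Your argument tracks the paper's closely: vague subsequential limits of $\bm{\nu}_{t,x,y}$, the case split on $\lim pr$, reduction of the second case via the modification lemma, and upgrading vague to weak once probability mass is preserved. Two differences are worth noting. (i) For subsequence-independence you invoke injectivity of $\mu\mapsto(\int w_\lambda\,d\mu)_{\lambda\ge 0}$ via ``insert the inversion formula of Lemma~\ref{lem:shypPDE_Lfourier_D2prop}(b) and apply Fubini''; that Fubini step needs $\int_{[\sigma^2,\infty)}|(\mathcal{F}g)(\lambda)|\,\bm{\rho}_\mathcal{L}(d\lambda)<\infty$ (since $|w_\lambda(\xi)|\le 1$ is the only uniform bound available in $\xi$), and Lemma~\ref{lem:shypPDE_Lfourier_D2prop}(b) does not quite give this --- it only controls $\int|(\mathcal{F}g)(\lambda)w_\lambda(\xi)|\,\bm{\rho}_\mathcal{L}(d\lambda)$ uniformly on \emph{compacts} in $\xi$. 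The paper sidesteps the issue by computing $\int g\,d\bm{\nu}_{t,x,y}$ directly through the $L_2$ isometry of $\mathcal{F}$ (both $g$ and $q_t(x,y,\cdot)$ lie in $L_2(r)$), obtaining $\int e^{-t\lambda}w_\lambda(x)w_\lambda(y)(\mathcal{F}g)(\lambda)\,\bm{\rho}_\mathcal{L}(d\lambda)$, whose $t\downarrow 0$ limit is patently subsequence-free by dominated convergence with the integrable dominant $|w_\lambda(x)(\mathcal{F}g)(\lambda)|$. This is an easy patch to your argument. (ii) For the extension to $\lambda\in\mathbb{C}$ the paper first varies $\kappa<0$ to obtain the real-$\lambda$ formula for all $\lambda\in\mathbb{R}$ and then invokes analytic continuation in $\tau$ (both sides of \eqref{eq:shypPDE_prodform_weaklim} being entire for $\lambda=\tau^2+\sigma^2$); your route through the identification $\Pi_{x,y}=\nu_x*\nu_y$ via Fourier uniqueness and the Laplace representation is a correct and more explicit alternative that avoids letting $\kappa$ vary.
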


\begin{proof}
Let $\{t_n\}_{n \in \mathbb{N}}$ be an arbitrary decreasing sequence with $t_n \downarrow 0$. Since any sequence of probability measures contains a vaguely convergent subsequence \cite[p.\ 213]{bauer2001}, there exists a subsequence $\{t_{n_k}\}$ and a measure $\bm{\nu}_{x,y} \in \mathcal{M}_+[a,b)$ such that $\bm{\nu}_{t_{n_k},x,y}\! \varrow \bm{\nu}_{x,y}$ as $k \to \infty$. Let us show that all such subsequences $\{\bm{\nu}_{t_{n_k},x,y}\}$ have the same vague limit. Suppose that $t_k^{1}$, $t_k^{2}$ are two different sequences with $t_k^{j} \downarrow 0$ and that $\bm{\nu}_{t_k^{j},x,y}\! \varrow \bm{\nu}_{x,y}^{j}$ as $k \to \infty$ ($j=1,2$). For $g \in \mathcal{D}_\mathcal{L}^{(2,0)}$ we have
\begin{align*}
\int_{[a,b)} g(\xi) \, \bm{\nu}_{x,y}^{j}(d\xi) & = \lim_{k \to \infty} \int_{[a,b)} g(\xi) \, \bm{\nu}_{t_k^{j},x,y}(d\xi) \\
& = \lim_{k \to \infty} \int_{[\sigma^2,\infty)\!} e^{-t_k^{j}\lambda\,} w_\lambda(x) \, w_\lambda(y) \, (\mathcal{F}g)(\lambda) \, \bm{\rho}_{\mathcal{L}}(d\lambda) \\
& = \int_{[\sigma^2,\infty)\!} w_\lambda(x) \, w_\lambda(y) \, (\mathcal{F}g)(\lambda) \, \bm{\rho}_{\mathcal{L}}(d\lambda)
\end{align*}
(the second equality was justified in the proof of Lemma \ref{lem:shypPDE_exptriplepf_probmeas}, and dominated convergence yields the last equality). In particular, $\int_{[a,b)} g(\xi) \, \bm{\nu}_{x,y}^{1}(\xi) = \int_{[a,b)} g(\xi) \, \bm{\nu}_{x,y}^{2}(\xi)$ for all $g \in \mathcal{D}_\mathcal{L}^{(2,0)}$, and this implies that $\bm{\nu}_{x,y}^{1} = \bm{\nu}_{x,y}^{2}$. Since all subsequences have the same vague limit, we conclude that $\bm{\nu}_{t,x,y} \varrow \bm{\nu}_{x,y}$ as $t \downarrow 0$.

Suppose first that $\lim_{x \uparrow b} p(x)r(x) = \infty$. Then, by Proposition \ref{prop:hypPDE_Wasym_infty}, we have $w_\lambda \in \mathrm{C}_0[a,b)$ for $\lambda > 0$. Accordingly, by taking the limit as $t \downarrow 0$ of both sides of \eqref{eq:shypPDE_expprodform} we deduce that the product formula \eqref{eq:shypPDE_prodform_weaklim} holds for all $\lambda > 0$.

To prove that \eqref{eq:shypPDE_prodform_weaklim} is valid in the general case, let $\kappa < 0$ be arbitrary. We know that the operator $\ell^{\langle\kappa\rangle\!}$ from Lemma \ref{lem:shypPDE_modification} satisfies Assumption \ref{asmp:shypPDE_SLhyperg}; moreover, since $w_\kappa$ is increasing and unbounded (this follows from classical results on the solutions of Sturm-Liouville type equations, e.g.\ \cite[Sections 5.13--5.14]{ito2006}), we have $\lim_{x \uparrow b} p^{\langle\kappa\rangle\!}(x) r^{\langle\kappa\rangle\!}(x) = \infty$. From the previous part of the proof,
\begin{equation} \label{eq:shypPDE_prodform_weaklim_pf1}
w_{\lambda}^{\langle\kappa\rangle\!}(x) \, w_{\lambda}^{\langle\kappa\rangle\!}(y) = \int_a^b w_{\lambda}^{\langle\kappa\rangle\!}(\xi)\, \bm{\nu}_{x,y}^{\langle\kappa\rangle\!}(d\xi), \qquad x, y \in (a,b), \; \lambda > 0
\end{equation}
with $\bm{\nu}_{x,y}^{\langle\kappa\rangle\!}$ constructed as before. We easily verify that $q_t^{\langle\kappa\rangle\!}(x,y,\xi) r^{\langle\kappa\rangle\!}(\xi) = {e^{t\kappa} w_\kappa(\xi) \over w_\kappa(x) w_\kappa(y)} q_t(x,y,\xi) r(\xi)$ and, consequently, $\bm{\nu}_{x,y}^{\langle\kappa\rangle\!}(d\xi) = {w_\kappa(\xi) \over w_\kappa(x) w_\kappa(y)} \bm{\nu}_{x,y}(d\xi)$. It thus follows from \eqref{eq:shypPDE_prodform_weaklim_pf1} that
\[
w_{\kappa + \lambda}(x) \, w_{\kappa + \lambda}(y) = \int_a^b w_{\kappa + \lambda}(\xi)\, \bm{\nu}_{x,y}(d\xi), \qquad x, y \in (a,b), \; \lambda > 0,
\]
where $\kappa < 0$ is arbitrary; hence \eqref{eq:shypPDE_prodform_weaklim} holds for all $\lambda \in \mathbb{R}$. If we then set $\lambda = \tau^2 + \sigma^2$ in \eqref{eq:shypPDE_prodform_weaklim}, we straightforwardly verify that both sides are entire functions of $\tau$ (for the right hand side, this follows from the Laplace-type representation \eqref{eq:shypPDE_Wlaplacerep} and the fact that the integral converges for all $\lambda < 0$), so by analytic continuation the product formula holds for all $\lambda \in \mathbb{C}$.

Given that $w_0(x) \equiv 1$, setting $\lambda = 0$ in \eqref{eq:shypPDE_prodform_weaklim} shows that $\bm{\nu}_{x,y} \in \mathcal{P}[a,b)$; consequently, the measures $\bm{\nu}_{t,x,y}$ converge to $\bm{\nu}_{x,y}$ in the weak topology (cf.\ \cite[Theorem 30.8]{bauer2001}). Clearly, the product formula \eqref{eq:shypPDE_prodform_weaklim} can be extended to $x,y \in [a,b)$ by setting $\bm{\nu}_{x,a} := \delta_x$ and $\bm{\nu}_{a,y} := \delta_y$, hence Theorem \ref{thm:shypPDE_prodform_weaklim} holds.
\end{proof}

It is worth commenting that the reasoning used in this proof also allows us to justify that the time-shifted product formula \eqref{eq:shypPDE_expprodform} is valid for all $\lambda \in \mathbb{C}$.

As shown in the proof above, the measure $\bm{\nu}_{x,y}$ of the product formula \eqref{eq:shypPDE_prodform_weaklim} is characterized by the identity
\begin{equation} \label{eq:shypPDE_prodformmeas_charact}
\int_{[a,b)} h(\xi) \, \bm{\nu}_{x,y} (d\xi) = \int_{[\sigma^2,\infty)\!} w_\lambda(x) \, w_\lambda(y) \, (\mathcal{F}h)(\lambda) \, \bm{\rho}_{\mathcal{L}}(d\lambda), \qquad h \in \mathcal{D}_\mathcal{L}^{(2,0)}.
\end{equation}
Furthermore, the relation between this measure and the measure $\bm{\nu}_{t,x,y}(d\xi) = q_t(x,y,\xi)\, r(\xi) d\xi$ of the time-shifted product formula \eqref{eq:shypPDE_expprodform} can be written explicitly:

\begin{corollary} \label{cor:shypPDE_prodformreg_measrep}
The measure $\bm{\nu}_{t,x,y}$ can be written in terms of the measure $\bm{\nu}_{x,y}$ and the transition kernel $p(t,x,y)$ of the Feller semigroup generated by the Sturm-Liouville operator $\ell$ as
\[
\bm{\nu}_{t,x,y}(d\xi) = \int_a^b \bm{\nu}_{z,y}(d\xi)\, p(t,x,z)\, r(z) dz \qquad \bigl( t > 0, \; x, y \in (a,b) \bigr).
\]
\end{corollary}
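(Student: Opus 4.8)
The plan is to prove the asserted identity of measures by testing both sides against the class of functions $h \in \mathcal{D}_\mathcal{L}^{(2,0)}$, which (as noted just before the proof of Lemma~\ref{lem:shypPDE_exptriplepf_probmeas}) contains, for every interval $I \subset [a,b)$, functions $g_n$ with $0 \le g_n \le 1$ and $g_n \to \mathds{1}_I$ pointwise. First I would check that the right-hand side genuinely defines a finite Borel measure: since the Feller semigroup $\{T_t\}$ is conservative (Proposition~\ref{prop:shypPDE_L_fellergen_tpdf}), the kernel $z \mapsto p(t,x,z)\,r(z)$ has total mass $\int_a^b p(t,x,z)\,r(z)\,dz = (T_t\mathds{1})(x) = 1$, so $\bm{\mu}_{t,x,y} := \int_a^b \bm{\nu}_{z,y}(\,\cdot\,)\, p(t,x,z)\, r(z)\,dz$ is a probability measure on $[a,b)$ (a mixture of the probability measures $\bm{\nu}_{z,y}$ produced by Theorem~\ref{thm:shypPDE_prodform_weaklim_full}). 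Since $\bm{\nu}_{t,x,y}$ and $\bm{\mu}_{t,x,y}$ are finite, dominated convergence then reduces the claim to the identity $\int_{[a,b)} h\, d\bm{\nu}_{t,x,y} = \int_{[a,b)} h\, d\bm{\mu}_{t,x,y}$ for all $h \in \mathcal{D}_\mathcal{L}^{(2,0)}$.

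For the left-hand side, the computation already carried out in the proof of Lemma~\ref{lem:shypPDE_exptriplepf_probmeas} gives, using $\mathcal{F}[q_t(x,y,\cdot)](\lambda) = e^{-t\lambda}\,w_\lambda(x)\,w_\lambda(y)$ and the isometry of the $\mathcal{L}$-transform (Proposition~\ref{prop:entrpf_Ffourier}),
\[
\int_{[a,b)} h\, d\bm{\nu}_{t,x,y} = \int_a^b h(\xi)\, q_t(x,y,\xi)\, r(\xi)\,d\xi = \int_{[\sigma^2,\infty)} e^{-t\lambda}\, w_\lambda(x)\, w_\lambda(y)\, (\mathcal{F}h)(\lambda)\, \bm{\rho}_{\mathcal{L}}(d\lambda).
\]
For the right-hand side, Fubini's theorem (legitimate because $h$ is bounded and the measures are finite) yields $\int_{[a,b)} h\, d\bm{\mu}_{t,x,y} = \int_a^b G_h(z,y)\, p(t,x,z)\, r(z)\,dz$, where $G_h(z,y) := \int_{[a,b)} h\, d\bm{\nu}_{z,y}$. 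By the characterization \eqref{eq:shypPDE_prodformmeas_charact} of $\bm{\nu}_{z,y}$ together with $|w_\lambda(y)| \le 1$ for $\lambda \ge 0$ (Lemma~\ref{lem:shypPDE_wsolbound}, applicable since Assumption~\ref{asmp:shypPDE_SLhyperg} forces $x\mapsto p(x)r(x)$ to be increasing, cf.\ Remark~\ref{rmk:shypPDE_tildeell}), the function $\lambda \mapsto w_\lambda(y)(\mathcal{F}h)(\lambda)$ lies in $L_2\bigl([0,\infty);\bm{\rho}_{\mathcal{L}}\bigr)$, so $G_h(\cdot,y) = \mathcal{F}^{-1}\bigl[w_{(\cdot)}(y)(\mathcal{F}h)\bigr] \in L_2(r)$ with $\mathcal{F}[G_h(\cdot,y)](\lambda) = w_\lambda(y)(\mathcal{F}h)(\lambda)$; moreover $G_h(\cdot,y)$ is bounded by $\|h\|_\infty$ because each $\bm{\nu}_{z,y}$ is a probability measure. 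Hence $G_h(\cdot,y) \in L_2(r) \cap \mathrm{B}_\mathrm{b}[a,b)$, and applying \eqref{eq:shypPDE_fellersgp_fundsolrep}--\eqref{eq:shypPDE_fellersgp_fundsolL2rep} gives
\[
\int_a^b G_h(z,y)\, p(t,x,z)\, r(z)\,dz = (T_t G_h(\cdot,y))(x) = \int_{[\sigma^2,\infty)} e^{-t\lambda}\, w_\lambda(x)\, w_\lambda(y)\, (\mathcal{F}h)(\lambda)\, \bm{\rho}_{\mathcal{L}}(d\lambda),
\]
which coincides with the expression obtained for the left-hand side, completing the argument.

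The step I expect to need the most care is the bookkeeping around $G_h(\cdot,y)$: identifying its $\mathcal{L}$-transform as $w_{(\cdot)}(y)(\mathcal{F}h)$ and checking that it is simultaneously in $L_2(r)$ and in $\mathrm{B}_\mathrm{b}[a,b)$, so that both the kernel representation \eqref{eq:shypPDE_fellersgp_fundsolrep} and the spectral representation \eqref{eq:shypPDE_fellersgp_fundsolL2rep} of $T_t$ are available (note one cannot swap the order of integration directly in $\lambda$, since $\mathcal{F}h$ need not be integrable against $\bm{\rho}_{\mathcal{L}}$). Once the uniform bound $|w_\lambda(\cdot)| \le 1$ on $[0,\infty)$ is invoked this is routine, but it is where the hypotheses genuinely enter; the remaining points — the Fubini interchange and the reduction to test functions in $\mathcal{D}_\mathcal{L}^{(2,0)}$ — are standard.
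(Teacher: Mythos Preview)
Your argument is correct and follows essentially the same route as the paper: test both sides against $h \in \mathcal{D}_\mathcal{L}^{(2,0)}$, use the spectral characterization \eqref{eq:shypPDE_prodformmeas_charact} of $\int h\,d\bm{\nu}_{z,y}$, and then apply the spectral representation \eqref{eq:shypPDE_fellersgp_fundsolL2rep} of $T_t$ to collapse the $z$-integral into $e^{-t\lambda}w_\lambda(x)$. Your treatment is in fact slightly more careful than the paper's, which performs the same computation but does not spell out the verification that $G_h(\cdot,y)\in L_2(r)\cap\mathrm{B}_\mathrm{b}[a,b)$.
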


\begin{proof}
Recalling \eqref{eq:shypPDE_fellersgp_fundsolL2rep} and the proof of the previous proposition, we find that for $g \in \mathcal{D}_\mathcal{L}^{(2,0)}$ we have
\begin{align*}
\int_a^b \int_{[a,b)} g(\xi) \bm{\nu}_{z,y}(d\xi)\, p(t,x,z)\, r(z) dz & = \int_a^b \int_{[\sigma^2,\infty)\!} w_\lambda(z) \, w_\lambda(y) \, (\mathcal{F}g)(\lambda) \, \bm{\rho}_{\mathcal{L}}(d\lambda) \, p(t,x,z)\, r(z) dz \\
& = \int_{[\sigma^2,\infty)\!} e^{-t\lambda\,} w_\lambda(x) \, w_\lambda(y) \, (\mathcal{F}g)(\lambda) \, \bm{\rho}_{\mathcal{L}}(d\lambda) \\
& = \int_a^b g(\xi) \, q_t(x,y,\xi)\, r(\xi) d\xi,
\end{align*}
hence the measures $\bm{\nu}_{t,x,y}(d\xi)$ and $\int_a^b \bm{\nu}_{z,y}(d\xi)\, p(t,x,z)\, r(z) dz$ are the same.
\end{proof}

\section{Generalized convolutions and hypergroups} \label{sec:genconv_hypergr}

\subsection{The convolution measure algebra} \label{sub:convmeasalgebra}

As usual in the theory of generalized convolutions, we define the convolution $*: \mathcal{M}_\mathbb{C}[a,b) \times \mathcal{M}_\mathbb{C}[a,b) \longrightarrow \mathcal{M}_\mathbb{C}[a,b)$ as the natural extension of the mapping $(x,y) \mapsto \delta_x * \delta_y := \bm{\nu}_{x,y}$, where $\bm{\nu}_{x,y}$ is the measure of the product formula \eqref{eq:shypPDE_prodform_weaklim}:

\begin{definition} \label{def:shypPDEconv_def}
Let $\mu, \nu \in \mathcal{M}_\mathbb{C}[a,b)$. The complex measure
\[
(\mu * \nu)(d\xi) = \int_{[a,b)} \int_{[a,b)} \bm{\nu}_{x,y}(d\xi) \, \mu(dx) \, \nu(dy)
\]
is called the \emph{$\mathcal{L}$-convolution} of the measures $\mu$ and $\nu$.
\end{definition}

The key tool for studying the properties of the $\mathcal{L}$-convolution is the extension of the $\mathcal{L}$-transform \eqref{eq:entrpf_Ffourierdef} to complex measures, defined by
\[
\widehat{\mu}(\lambda) := \int_{[a,b)} w_\lambda(x)\, \mu(dx), \qquad \lambda \geq 0.
\]

It is immediate from Lemmas \ref{lem:entrpf_ode_wsol} and \ref{lem:shypPDE_wsolbound} that $|\widehat{\mu}(\lambda)| \leq \widehat{\mu}(0) = \| \mu \|$ for all $\mu \in \mathcal{M}_+[a,b)$. In addition, this transformation has various properties which resemble those of the Fourier transform of complex measures:

\begin{proposition} \label{prop:shypPDE_Lfourmeas_props}
The $\mathcal{L}$-transform $\widehat{\mu}$ of $\mu \in \mathcal{M}_\mathbb{C}[a,b)$ has the following properties:
\begin{enumerate}[itemsep=0pt,topsep=4pt]

\item[\textbf{(i)}] $\widehat{\mu}$ is continuous on $[0,\infty)$. Moreover, if a family of measures $\{\mu_j\} \subset \mathcal{M}_\mathbb{C}[a,b)$ is tight and uniformly bounded, then $\{\widehat{\mu_j}\}$ is equicontinuous on $[0,\infty)$.

\item[\textbf{(ii)}] Each measure $\mu \in \mathcal{M}_\mathbb{C}[a,b)$ is uniquely determined by $\widehat{\mu}\restrict{[\sigma^2, \infty)}$.

\item[\textbf{(iii)}]
If $\{\mu_n\}$ is a sequence of measures belonging to $\mathcal{M}_+[a,b)$, $\mu \in \mathcal{M}_+[a,b)$, and $\mu_n \warrow \mu$, then
\[
\widehat{\mu_n} \xrightarrow[\,n \to \infty\,]{} \widehat{\mu} \qquad \text{uniformly for } \lambda \text{ in compact sets.}
\]

\item[\textbf{(iv)}] Suppose that $\lim_{x \uparrow b} w_\lambda(x) = 0$ for all $\lambda > 0$. If $\{\mu_n\}$ is a sequence of measures belonging to $\mathcal{M}_+[a,b)$ whose $\mathcal{L}$-transforms are such that
\begin{equation} \label{eq:shypPDE_Gfour_continuity_hyp}
\widehat{\mu_n}(\lambda) \xrightarrow[\,n \to \infty\,]{} f(\lambda) \qquad \text{pointwise in } \lambda \geq 0
\end{equation}
for some real-valued function $f$ which is continuous at a neighborhood of zero, then $\mu_n \warrow \mu$ for some measure $\mu \in \mathcal{M}_+[a,b)$ such that $\widehat{\mu} \equiv f$.
\end{enumerate}
\end{proposition}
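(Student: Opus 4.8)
The plan is to prove the four assertions in turn, with (iv) building on the first three. Throughout I use two facts about the kernel: $w_\lambda(x)\le|w_\lambda(x)|\le1$ for $\lambda\ge0$ (Lemma~\ref{lem:shypPDE_wsolbound}, available because Assumption~\ref{asmp:shypPDE_SLhyperg} forces $pr$ to be increasing), and the joint continuity of $(\lambda,x)\mapsto w_\lambda(x)$ on $[0,\infty)\times[a,b)$, which follows from the locally uniformly convergent series of Lemma~\ref{lem:entrpf_ode_wsol}.

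\emph{(i).} Splitting a complex measure into four finite positive ones, the pointwise continuity of $\widehat\mu$ on $[0,\infty)$ is dominated convergence with dominant $\mathds 1$. For equicontinuity at $\lambda_0\ge0$: given $\eps>0$, use tightness to pick a compact $K\subset[a,b)$ with $\sup_j|\mu_j|\bigl([a,b)\setminus K\bigr)<\eps$; then bound the contribution of $[a,b)\setminus K$ to $\widehat{\mu_j}(\lambda)-\widehat{\mu_j}(\lambda_0)$ by $2\eps$, and that of $K$ by $\bigl(\sup_j\|\mu_j\|\bigr)\sup_{x\in K}|w_\lambda(x)-w_{\lambda_0}(x)|$, which is $<\eps$ for $|\lambda-\lambda_0|$ small by uniform continuity of $w$ on the compact set $K\times[\lambda_0-1,\lambda_0+1]$.

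\emph{(ii).} By linearity it is enough to deduce $\mu=0$ from $\widehat\mu\equiv0$ on $[\sigma^2,\infty)$. I would regularise $\mu$ by the diffusion semigroup of Proposition~\ref{prop:shypPDE_L_fellergen_tpdf}: put $g_t(x'):=\int_{[a,b)}p(t,x,x')\,\mu(dx)$, a function lying in $L_1(r)\cap L_2(r)$ with $\|g_t\|_{L_1(r)}\le\|\mu\|$ (using $\int_a^b p(t,x,x')\,r(x')dx'=1$ and the Chapman--Kolmogorov equation). Since $|w_\lambda|\le1$ and $\int p(t,x,\cdot)\,r=1$, Fubini applies and gives, for $\lambda\ge\sigma^2$,
\[
\int_a^b g_t(x')\,w_\lambda(x')\,r(x')dx'=\int_{[a,b)}(T_tw_\lambda)(x)\,\mu(dx)=e^{-t\lambda}\,\widehat\mu(\lambda)=0,
\]
where $(T_tw_\lambda)(x)=e^{-t\lambda}w_\lambda(x)$ is the transition identity recorded in Section~\ref{sec:prodform}. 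Thus the $\mathcal{L}$-transform of $g_t$ vanishes on $\supp(\bm{\rho}_\mathcal{L})=[\sigma^2,\infty)$ (Proposition~\ref{prop:shypPDE_spectralsupp}), so $g_t=0$ by the isometry of Proposition~\ref{prop:entrpf_Ffourier}; hence every finite measure $g_t(x')r(x')dx'$ vanishes, and letting $t\downarrow0$ in $\int\varphi\,g_t\,r=\int_{[a,b)}(T_t\varphi)\,d\mu\to\int\varphi\,d\mu$ ($\varphi\in\mathrm{C}_\mathrm{c}[a,b)$, by strong continuity of $\{T_t\}$ on $\mathrm{C}_0$) yields $\mu=0$. (A variant that bypasses the semigroup: test $\mu$ against the inversion formula of Lemma~\ref{lem:shypPDE_Lfourier_D2prop}(b) to obtain $\int h\,d\mu=\int_{[\sigma^2,\infty)}(\mathcal{F}h)\,\widehat\mu\,d\bm{\rho}_\mathcal{L}=0$ for $h\in\mathcal{D}_\mathcal{L}^{(2,0)}$, and let $h$ run through a uniformly bounded sequence in $\mathcal{D}_\mathcal{L}^{(2,0)}$ tending pointwise to $\mathds 1_I$, as noted just before Lemma~\ref{lem:shypPDE_exptriplepf_probmeas}.)

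\emph{(iii)--(iv).} If $\mu_n\warrow\mu$, then $\widehat{\mu_n}(\lambda)=\int w_\lambda\,d\mu_n\to\int w_\lambda\,d\mu=\widehat\mu(\lambda)$ for each $\lambda\ge0$ (each $w_\lambda\in\mathrm{C}_\mathrm{b}[a,b)$), while a weakly convergent sequence of finite measures is tight (Prokhorov, $[a,b)$ being Polish) and norm-bounded; by (i) the family $\{\widehat{\mu_n}\}$ is equicontinuous, and an equicontinuous pointwise-convergent sequence converges locally uniformly — this proves (iii). For (iv): $\sup_n\|\mu_n\|=\sup_n\widehat{\mu_n}(0)<\infty$ gives the uniform bound, and the decisive point is tightness. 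For $h>0$, Tonelli and bounded convergence in $\lambda$ give
\[
\int_{[a,b)}\!\Bigl(\int_0^h\bigl(1-w_\lambda(x)\bigr)d\lambda\Bigr)\mu_n(dx)=\int_0^h\bigl(\widehat{\mu_n}(0)-\widehat{\mu_n}(\lambda)\bigr)d\lambda\xrightarrow[\,n\to\infty\,]{}\int_0^h\bigl(f(0)-f(\lambda)\bigr)d\lambda,
\]
and the last integral is $o(h)$ as $h\downarrow0$ by continuity of $f$ at $0$; since $1-w_\lambda(x)\ge0$ and, by the hypothesis $w_\lambda(x)\to0$ as $x\uparrow b$ ($\lambda>0$) together with dominated convergence, $\int_0^h(1-w_\lambda(x))\,d\lambda\ge h/2$ once $x$ exceeds some $c(h)<b$, it follows that $\limsup_n\mu_n\bigl([c(h),b)\bigr)\le\tfrac{2}{h}\int_0^h(f(0)-f(\lambda))\,d\lambda\to0$; together with the finitely many remaining terms this gives tightness. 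By Prokhorov the sequence is then relatively weakly compact, every weak limit point $\mu$ has $\widehat\mu=f$ on $[0,\infty)$ by (iii) and hence is unique by (ii), so the whole sequence converges weakly to $\mu$ with $\widehat\mu=f$. The steps that require genuine care are the injectivity in (ii) — where the semigroup regularisation serves to replace a general measure by an $L_1(r)$ (indeed $L_2(r)$) density, the various Fubini interchanges resting ultimately on the boundedness of the heat kernel $p(t,\cdot,\cdot)$ — and the tightness estimate in (iv), for which the decay $w_\lambda(x)\to0$ at $b$ (equivalently $pr\to\infty$, Proposition~\ref{prop:hypPDE_Wasym_infty}) is indispensable.
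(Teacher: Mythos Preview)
Your arguments for (i), (iii) and (iv) are correct and essentially identical to the paper's. The difficulty is in (ii), where both of your proposed routes have an unjustified integrability step.

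In the semigroup approach you assert $g_t\in L_2(r)$, citing Chapman--Kolmogorov. That identity gives $\|p(t,x,\cdot)\|_{L_2(r)}^2=p(2t,x,x)<\infty$ for each fixed $x$, but passing to $g_t(x')=\int p(t,x,x')\,\mu(dx)$ via Minkowski yields $\|g_t\|_2\le\int p(2t,x,x)^{1/2}\,|\mu|(dx)$, and to bound this you need $\sup_x p(2t,x,x)<\infty$, equivalently $\int_{[\sigma^2,\infty)}e^{-2t\lambda}\,\bm{\rho}_\mathcal{L}(d\lambda)<\infty$ (using $|w_\lambda|\le1$). You acknowledge this at the end (``boundedness of the heat kernel''), but nothing in the paper establishes it. Your variant has the analogous gap: the Fubini swap $\int h\,d\mu=\int(\mathcal{F}h)\,\widehat\mu\,d\bm{\rho}_\mathcal{L}$ needs $\int|(\mathcal{F}h)(\lambda)|\,\bm{\rho}_\mathcal{L}(d\lambda)<\infty$ (since $|\widehat\mu|\le\|\mu\|$ carries no decay), and from $(\mathcal{F}h),\lambda(\mathcal{F}h)\in L_2(\bm{\rho}_\mathcal{L})$ this would follow by Cauchy--Schwarz only if $(1+\lambda)^{-1}\in L_2(\bm{\rho}_\mathcal{L})$ --- again unproved here.

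The paper's proof avoids this by a device that looks roundabout but is precisely the point: it keeps an auxiliary parameter $x\in(a,b)$ and computes, using the already-established product formula identity~\eqref{eq:shypPDE_prodformmeas_charact},
\[
\int_{[a,b)} g\,d(\delta_x*\mu)=\int_{[\sigma^2,\infty)}(\mathcal{F}g)(\lambda)\,w_\lambda(x)\,\widehat\mu(\lambda)\,\bm{\rho}_\mathcal{L}(d\lambda)=0.
\]
Here the Fubini is legal because, for \emph{fixed} $x$, Lemma~\ref{lem:shypPDE_Lfourier_D2prop}(b) gives $\int|(\mathcal{F}g)(\lambda)w_\lambda(x)|\,\bm{\rho}_\mathcal{L}(d\lambda)<\infty$, and $|w_\lambda(y)|\le1$ handles the $\mu$-integration; no global bound in $x$ is needed. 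Only afterwards does one let $x\downarrow a$, using $\int g\,d\bm{\nu}_{x,y}\to g(y)$ and dominated convergence to conclude $\int g\,d\mu=0$. So the extra factor $w_\lambda(x)$ is exactly what supplies the missing integrability, and the limit $x\downarrow a$ removes it at the end.
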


\begin{proof}
\textbf{\emph{(i)}} Let us prove the second statement, which implies the first. Set $C = \sup_j \|\mu_j\|$. Fix $\lambda_0 \geq 0$ and $\eps > 0$. By the tightness assumption, we can choose $\beta \in (a,b)$ such that $|\mu_j|(\beta,b) < \eps$ for all $j$. Since the family $\{w_{(\cdot)}(x)\}_{x \in (a, \beta]}$ is equicontinuous on $[0,\infty)$ (this follows easily from the power series representation of $w_{(\cdot)}(x)$, cf.\ proof of Lemma \ref{lem:entrpf_ode_wsol}), we can choose $\delta > 0$ such that
\[
|\lambda - \lambda_0| < \delta \quad \implies \quad |w_\lambda(x) - w_{\lambda_0}(x)| < \eps \; \text{ for all } a < x \leq \beta.
\]
Consequently,
\begin{align*}
& \bigl|\widehat{\mu_j}(\lambda) - \widehat{\mu_j}(\lambda_0)\bigr| = \biggl| \int_{(a,b)} \bigl(w_\lambda(x) - w_{\lambda_0}(x)\bigr) \mu_j(dx) \biggr| \\
& \quad \leq\int_{(\beta,b)\!} \bigl|w_\lambda(x) - w_{\lambda_0}(x)\bigr| |\mu_j|(dx) + \int_{(a,\beta]\!} \bigl|w_\lambda(x) - w_{\lambda_0}(x)\bigr| |\mu_j|(dx) \\
& \quad \leq 2\eps + C\eps = (C+2)\eps
\end{align*}
for all $j$, provided that $|\lambda - \lambda_0| < \delta$, which means that $\{\widehat{\mu_j}\}$ is equicontinuous at $\lambda_0$. \\[-8pt]

\textbf{\emph{(ii)}} Let $\mu \in \mathcal{M}_\mathbb{C}[a,b)$ be such that $\widehat{\mu}(\lambda) = 0$ for all $\lambda \geq 0$. We need to show that $\mu$ is the zero measure. For each $g \in \mathcal{D}_\mathcal{L}^{(2,0)}$, by \eqref{eq:shypPDE_prodformmeas_charact} we have
\[
\int_{[a,b)} g \: d(\delta_x * \mu) = \int_{[\sigma^2,\infty)} (\mathcal{F} g)(\lambda) \, w_\lambda(x) \, \widehat{\mu}(\lambda) \, \bm{\rho}_{\mathcal{L}}(d\lambda) \, = \, 0.
\]
Since $g \in \mathcal{D}_\mathcal{L}^{(2,0)}$, by Lemma \ref{lem:shypPDE_Lfourier_D2prop} and dominated convergence we have $\lim_{x \downarrow a} \, \int_{[a,b)} g \: d\bm{\nu}_{x,y} = g(y)$ for $y \geq a$; therefore, again by dominated convergence,
\[
0 = \lim_{x \downarrow a} \int_{[a,b)} g \: d(\delta_x * \mu) = \lim_{x \downarrow a} \int_{[a,b)\!} \biggl( \int_{[a,b)} g \: d\bm{\nu}_{x,y} \biggr) \mu(dy) \\
= \int_{[a,b)\!} g(y)\, \mu(dy)
\]
This shows that $\int_{[a,b)\!} g(y)\, \mu(dy) = 0$ for all $g \in \mathcal{D}_\mathcal{L}^{(2,0)}$ and, consequently, $\mu$ is the zero measure. \\[-8pt]

\textbf{\emph{(iii)}} Since $w_\lambda(\cdot)$ is continuous and bounded, the pointwise convergence $\widehat{\mu_n}(\lambda) \to \widehat{\mu}(\lambda)$ follows from the definition of weak convergence of measures. By Prokhorov's theorem \cite[Theorem 8.6.2]{bogachev2007}, $\{\mu_n\}$ is tight and uniformly bounded, thus (by part (i)) $\{\widehat{\mu_n}\}$ is equicontinuous on $[0,\infty)$. Invoking \cite[Lemma 15.22]{klenke2014}, we conclude that the convergence $\widehat{\mu_n} \to \widehat{\mu}$ is uniform for $\lambda$ in compact sets. \\[-8pt]

\textbf{\emph{(iv)}} We only need to show that the sequence $\{\mu_n\}$ is tight and uniformly bounded. (Recall that a family $\{\mu_j\} \subset \mathcal{M}_\mathbb{C}[a,b)$ is said to be uniformly bounded if $\sup_j\|\mu_j\| < \infty$, and $\{\mu_j\}$ is said to be tight if for each $\eps > 0$ there exists a compact $K_\eps \subset [a,b)$ such that $\sup_j \,|\mu_j|([a,b) \setminus K_\eps) < \eps$; these definitions are taken from \cite{bogachev2007}.) Indeed, if $\{\mu_n\}$ is tight and uniformly bounded, then Prokhorov's theorem yields that for any subsequence $\{\mu_{n_k}\}$ there exists a further subsequence $\{\mu_{n_{k_j}}\!\}$ and a measure $\mu \in \mathcal{M}_+[a,b)$ such that $\mu_{n_{k_j}}\!\! \warrow \mu$. Then, due to part (iii) and to \eqref{eq:shypPDE_Gfour_continuity_hyp}, we have $\widehat{\mu}(\lambda) = f(\lambda)$ for all $\lambda \geq 0$, which implies (by part (ii)) that all such subsequences have the same weak limit; consequently, the sequence $\mu_n$ itself converges weakly to $\mu$.

The uniform boundedness of $\{\mu_n\}$ follows immediately from the fact that $\widehat{\mu_n}(0) = \mu_n[a,b)$ converges. To prove the tightness, take $\eps > 0$. Since $f$ is continuous at a neighborhood of zero, we have ${1 \over \delta} \int_0^{2\delta} \bigl(f(0) - f(\lambda)\bigr)d\lambda \longrightarrow 0$ as $\delta \downarrow 0$; therefore, we can choose $\delta > 0$ such that
\[
\biggl| {1 \over \delta} \int_0^{2\delta} \bigl(f(0) - f(\lambda)\bigr)d\lambda \biggr| < \eps.
\] 
Next we observe that, due to the assumption that $\lim_{x \uparrow b} w_\lambda(x) = 0$ for all $\lambda > 0$, we have $\int_0^{2\delta} \bigl( 1-w_\lambda(x) \bigr) d\lambda \longrightarrow 2\delta$ as $x \uparrow b$, meaning that we can pick $\beta \in (a,b)$ such that
\[
\int_0^{2\delta} \bigl( 1-w_\lambda(x) \bigr) d\lambda \geq \delta \qquad \text{for all } \beta < x < b.
\]
By our choice of $\beta$ and Fubini's theorem,
\begin{align*}
\mu_n\bigl[\beta,b) & = {1 \over \delta} \int_{[\beta,b)} \delta\, \mu_n(dx) \\
& \leq {1 \over \delta} \int_{[\beta,b)} \int_0^{2\delta} \bigl( 1-w_\lambda(x) \bigr) d\lambda\, \mu_n(dx) \\
& \leq {1 \over \delta} \int_{[a,b)} \int_0^{2\delta} \bigl( 1-w_\lambda(x) \bigr) d\lambda\, \mu_n(dx) \\
& = {1 \over \delta} \int_0^{2\delta} \bigl(\widehat{\mu_n}(0) - \widehat{\mu_n}(\lambda)\bigr) d\lambda.
\end{align*}
Hence, using the dominated convergence theorem,
\begin{align*}
\limsup_{n \to \infty} \mu_n[\beta,b) & \leq {1 \over \delta} \limsup_{n \to \infty}\! \int_0^{2\delta} \bigl(\widehat{\mu_n}(0) - \widehat{\mu_n}(\lambda)\bigr) d\lambda \\
& = {1 \over \delta} \int_0^{2\delta}\!\! \lim_{n \to \infty} \bigl(\widehat{\mu_n}(0) - \widehat{\mu_n}(\lambda)\bigr) d\lambda = {1 \over \delta} \int_0^{2\delta} \bigl(f(0) -
f(\lambda)\bigr) d\lambda < \eps
\end{align*}
due to the choice of $\delta$. Since $\eps$ is arbitrary, we conclude that $\{\mu_n\}$ is tight, as desired.
\end{proof}

An unsurprising consequence of the construction of the $\mathcal{L}$-convolution is that it is trivialized by the $\mathcal{L}$-transform of measures. Indeed:

\begin{proposition} \label{prop:shypPDE_Ltransf_convtrivial}
Let $\mu, \nu, \pi \in \mathcal{M}_\mathbb{C}[a,b)$. We have $\pi = \mu * \nu$ if and only if
\[
\widehat{\pi}(\lambda) = \widehat{\mu}(\lambda)\, \widehat{\nu}(\lambda) \qquad \text{for all } \lambda \geq 0.
\]
\end{proposition}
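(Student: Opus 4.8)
The plan is to prove the ``only if'' direction by a direct computation and then to derive the converse from it together with the injectivity of the $\mathcal{L}$-transform on $\mathcal{M}_\mathbb{C}[a,b)$.

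First I would treat the implication $\pi = \mu * \nu \implies \widehat{\pi} = \widehat{\mu}\,\widehat{\nu}$. Fix $\lambda \geq 0$. Under Assumption \ref{asmp:shypPDE_SLhyperg} the function $A'/A$ is nonnegative, which by Remark \ref{rmk:shypPDE_tildeell} amounts to $x \mapsto p(x)r(x)$ being increasing; hence Lemma \ref{lem:shypPDE_wsolbound} gives the uniform bound $|w_\lambda(\xi)| \leq 1$ for $\xi \in [a,b)$. Because $w_\lambda$ is bounded and Borel measurable and $\mu,\nu$ are finite, we may first integrate the product formula \eqref{eq:shypPDE_prodform_weaklim} against $\bm{\nu}_{x,y}$ and then against $\mu(dx)\,\nu(dy)$, and finally apply Fubini's theorem to the (bounded) integrand $w_\lambda(x)\,w_\lambda(y)$ on $[a,b)^2$:
\[
\widehat{\mu * \nu}(\lambda) = \int_{[a,b)}\!\int_{[a,b)} \biggl( \int_{[a,b)} w_\lambda(\xi)\, \bm{\nu}_{x,y}(d\xi) \biggr) \mu(dx)\,\nu(dy) = \int_{[a,b)}\!\int_{[a,b)} w_\lambda(x)\,w_\lambda(y)\, \mu(dx)\,\nu(dy) = \widehat{\mu}(\lambda)\,\widehat{\nu}(\lambda).
\]

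For the converse I would first note that $\mu * \nu$ is indeed a well-defined element of $\mathcal{M}_\mathbb{C}[a,b)$, since $(x,y) \mapsto \bm{\nu}_{x,y}$ is weakly measurable with values in $\mathcal{P}[a,b)$ and Definition \ref{def:shypPDEconv_def} then yields a finite complex measure with $\|\mu * \nu\| \leq \|\mu\|\,\|\nu\|$. Now suppose $\widehat{\pi}(\lambda) = \widehat{\mu}(\lambda)\,\widehat{\nu}(\lambda)$ for all $\lambda \geq 0$. By the implication just proved, $\widehat{\mu * \nu}$ and $\widehat{\pi}$ coincide on $[0,\infty)$, in particular on $[\sigma^2,\infty)$; since both $\pi$ and $\mu * \nu$ lie in $\mathcal{M}_\mathbb{C}[a,b)$, Proposition \ref{prop:shypPDE_Lfourmeas_props}(ii) forces $\pi = \mu * \nu$.

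I do not expect a real obstacle. The only points needing (routine) attention are the passage to iterated integrals and the factorization via Fubini in the first step — both immediate from $|w_\lambda| \leq 1$ on $[a,b)$ for $\lambda \geq 0$ and the finiteness of the measures — and the remark that $\mu * \nu$ is a bona fide finite complex measure, so that the uniqueness statement of Proposition \ref{prop:shypPDE_Lfourmeas_props}(ii) is applicable to it.
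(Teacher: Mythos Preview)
Your proof is correct and follows essentially the same route as the paper: compute $\widehat{\mu * \nu}(\lambda) = \widehat{\mu}(\lambda)\,\widehat{\nu}(\lambda)$ directly from the product formula \eqref{eq:shypPDE_prodform_weaklim}, and deduce the converse from the uniqueness result in Proposition~\ref{prop:shypPDE_Lfourmeas_props}(ii). Your version is in fact slightly more explicit than the paper's, spelling out why $|w_\lambda|\leq 1$ (via Assumption~\ref{asmp:shypPDE_SLhyperg} and Remark~\ref{rmk:shypPDE_tildeell}) legitimizes the Fubini step and why $\mu*\nu$ lies in $\mathcal{M}_\mathbb{C}[a,b)$.
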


\begin{proof}
Using the product formula \eqref{eq:shypPDE_prodform_weaklim}, we compute
\begin{align*}
\widehat{\mu * \nu}(\lambda) & = \int_{[a,b)\!\!} w_\lambda(x) \, (\mu * \nu)(dx) \\
& = \int_{[a,b)\!} \int_{[a,b)\!} \int_{[a,b)\!} w_\lambda(\xi)\, \bm{\nu}_{x,y}(d\xi) \, \mu(dx) \nu(dy)\\
& = \int_{[a,b)\!} \int_{[a,b)\!\!} w_\lambda(x) \, w_\lambda(y) \, \mu(dx) \nu(dy) \: = \: \widehat{\mu}(\lambda) \, \widehat{\nu}(\lambda), \qquad\;\; \lambda \geq 0.
\end{align*}
This proves the ``only if" part, and the converse follows from the uniqueness property in Proposition \ref{prop:shypPDE_Lfourmeas_props}(ii).
\end{proof}

The next result summarizes the properties of the measure algebra determined by the $\mathcal{L}$-convolution:

\begin{proposition} \label{prop:shypPDE_conv_Mbanachalg}
The space $(\mathcal{M}_\mathbb{C}[a,b),*)$, equipped with the total variation norm, is a commutative Banach algebra over $\mathbb{C}$ whose identity element is the Dirac measure $\delta_a$. The subset $\mathcal{P}[a,b)$ is closed under the $\mathcal{L}$-convolution. Moreover, the map $(\mu,\nu) \mapsto \mu*\nu$ is continuous (in the weak topology) from $\mathcal{M}_\mathbb{C}[a,b) \times \mathcal{M}_\mathbb{C}[a,b)$	to $\mathcal{M}_\mathbb{C}[a,b)$.
\end{proposition}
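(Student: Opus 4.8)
The plan is to read off most of the Banach-algebra axioms and the stability of $\mathcal P[a,b)$ from the properties of the $\mathcal L$-transform established in Propositions~\ref{prop:shypPDE_Ltransf_convtrivial} and~\ref{prop:shypPDE_Lfourmeas_props}, and to isolate the weak continuity of the convolution as the one genuinely delicate point. First I would note that $(\mathcal M_\mathbb{C}[a,b),\|\cdot\|)$ is a Banach space and that $(\mu,\nu)\mapsto\mu*\nu$ is bilinear, both being immediate from Definition~\ref{def:shypPDEconv_def} and Fubini's theorem (applicable because each $\bm\nu_{x,y}$ is a finite measure and $|\mu|\otimes|\nu|$ is finite). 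Submultiplicativity $\|\mu*\nu\|\le\|\mu\|\,\|\nu\|$ follows from the triangle inequality for the total variation together with the normalization $\bm\nu_{x,y}[a,b)=1$ from Theorem~\ref{thm:shypPDE_prodform_weaklim}; the same computation with $\mu,\nu\ge0$ shows $\mu*\nu\in\mathcal M_+[a,b)$ with mass $\|\mu\|\,\|\nu\|$, so $\mathcal P[a,b)$ is closed under $*$. Commutativity and associativity then follow from Proposition~\ref{prop:shypPDE_Ltransf_convtrivial} and the uniqueness statement in Proposition~\ref{prop:shypPDE_Lfourmeas_props}(ii), since $\widehat{\mu*\nu}=\widehat\mu\,\widehat\nu=\widehat\nu\,\widehat\mu=\widehat{\nu*\mu}$ and $\widehat{(\mu*\nu)*\pi}=\widehat\mu\,\widehat\nu\,\widehat\pi=\widehat{\mu*(\nu*\pi)}$; and $\delta_a$ is the identity because $\bm\nu_{x,a}=\delta_x$ (set in the proof of Theorem~\ref{thm:shypPDE_prodform_weaklim_full}) gives $\delta_a*\mu=\int_{[a,b)}\bm\nu_{a,y}\,\mu(dy)=\int_{[a,b)}\delta_y\,\mu(dy)=\mu$.

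For the last assertion, the key step is to show that $(x,y)\mapsto\bm\nu_{x,y}$ is continuous from $[a,b)^2$ into $\mathcal M_+[a,b)$ with the weak topology. Take $(x_n,y_n)\to(x,y)$ in $[a,b)^2$: continuity of $w_\lambda$ gives $\widehat{\bm\nu_{x_n,y_n}}(\lambda)=w_\lambda(x_n)w_\lambda(y_n)\to w_\lambda(x)w_\lambda(y)$ pointwise, with limit continuous near $\lambda=0$. If $\lim_{\xi\uparrow b}w_\lambda(\xi)=0$ for all $\lambda>0$, Proposition~\ref{prop:shypPDE_Lfourmeas_props}(iv) applies and yields $\bm\nu_{x_n,y_n}\warrow\sigma$ with $\widehat\sigma=w_{(\cdot)}(x)w_{(\cdot)}(y)$, so $\sigma=\bm\nu_{x,y}$ by part~(ii). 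In the general case I would pass to the modified operator $\ell^{\langle\kappa\rangle}$ of Lemma~\ref{lem:shypPDE_modification} with a fixed $\kappa<0$: it satisfies $\lim_{\xi\uparrow b}w_\lambda^{\langle\kappa\rangle}(\xi)=0$ (as in the proof of Theorem~\ref{thm:shypPDE_prodform_weaklim_full}), so the previous case gives $\bm\nu_{x_n,y_n}^{\langle\kappa\rangle}\warrow\bm\nu_{x,y}^{\langle\kappa\rangle}$, and since $\bm\nu_{x,y}(d\xi)=\tfrac{w_\kappa(x)w_\kappa(y)}{w_\kappa(\xi)}\,\bm\nu_{x,y}^{\langle\kappa\rangle}(d\xi)$ with $\xi\mapsto1/w_\kappa(\xi)$ bounded and continuous on $[a,b)$ (the reciprocal of the increasing function $w_\kappa\ge w_\kappa(a)=1$), testing against $g\in\mathrm C_\mathrm b[a,b)$ via $g=w_\kappa\cdot(g/w_\kappa)$ and using $w_\kappa(x_n)w_\kappa(y_n)\to w_\kappa(x)w_\kappa(y)$ gives $\int g\,d\bm\nu_{x_n,y_n}\to\int g\,d\bm\nu_{x,y}$.

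Granting this, fix $g\in\mathrm C_\mathrm b[a,b)$ and set $G(x,y):=\int_{[a,b)}g\,d\bm\nu_{x,y}$; then $G\in\mathrm C_\mathrm b([a,b)^2)$ ($\|G\|_\infty\le\|g\|_\infty$ since $\bm\nu_{x,y}$ is a probability measure, and continuity by the previous paragraph), and by Fubini $\int_{[a,b)}g\,d(\mu*\nu)=\int_{[a,b)^2}G\,d(\mu\otimes\nu)$. If $\mu_n\warrow\mu$ and $\nu_n\warrow\nu$, then $\{\mu_n\}$, $\{\nu_n\}$ are uniformly bounded in total variation (Banach--Steinhaus) and $\mu_n\otimes\nu_n\warrow\mu\otimes\nu$ on $[a,b)^2$ by the standard product-measure fact, whence $\int g\,d(\mu_n*\nu_n)=\int G\,d(\mu_n\otimes\nu_n)\to\int G\,d(\mu\otimes\nu)=\int g\,d(\mu*\nu)$, i.e.\ $\mu_n*\nu_n\warrow\mu*\nu$. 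I expect the main obstacle to be precisely the joint weak continuity of $(x,y)\mapsto\bm\nu_{x,y}$: the $\mathcal L$-transform only delivers pointwise convergence of $w_\lambda(x)w_\lambda(y)$, and upgrading this to weak convergence of the measures requires the tightness mechanism encoded in Proposition~\ref{prop:shypPDE_Lfourmeas_props}(iv), which is available only when $w_\lambda$ vanishes at $b$ — so the degenerate full-support case must be routed through the Krein-string modification $\ell^{\langle\kappa\rangle}$.
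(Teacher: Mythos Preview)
Your proposal is correct and follows essentially the same route as the paper's proof: the algebraic axioms are deduced from the trivialization identity $\widehat{\mu*\nu}=\widehat\mu\,\widehat\nu$ together with the uniqueness in Proposition~\ref{prop:shypPDE_Lfourmeas_props}(ii), the weak continuity of $(x,y)\mapsto\bm\nu_{x,y}$ is obtained first under the hypothesis $\lim_{x\uparrow b}w_\lambda(x)=0$ via Proposition~\ref{prop:shypPDE_Lfourmeas_props}(iv) and then in general via the modification $\ell^{\langle\kappa\rangle}$ of Lemma~\ref{lem:shypPDE_modification}, and the passage to arbitrary $\mu_n,\nu_n$ is done by integrating the bounded continuous function $G(x,y)=\int g\,d\bm\nu_{x,y}$ against $\mu_n\otimes\nu_n$. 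Your write-up is, if anything, slightly more explicit than the paper's (you spell out the Banach--Steinhaus step and the boundedness of $1/w_\kappa$), but the architecture is identical.
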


\begin{proof}
Since $\widehat{\mu * \nu} = \widehat{\mu} \ccdot \widehat{\nu}$ (Proposition \ref{prop:shypPDE_Ltransf_convtrivial}), the commutativity, associativity and bilinearity of the $\mathcal{L}$-convolution follow at once from the uniqueness property of the $\mathcal{L}$-transform (Proposition \ref{prop:shypPDE_Lfourmeas_props}(ii)). One can verify directly from the definition of the $\mathcal{L}$-convolution that the submultiplicativity property $\|\mu * \nu\| \leq \|\mu\| \ccdot \|\nu\|$ holds, and that equality holds whenever $\mu, \nu \in \mathcal{M}_+[a,b)$; it is also clear that the convolution of positive measures is a positive measure. We conclude that the Banach algebra property holds and that $\mathcal{P}[a,b)$ is closed under convolution.

If $\lim_{x \uparrow b} w_\lambda(x) = 0$ for all $\lambda > 0$, the identity $\widehat{\bm{\nu}_{x,y}}(\lambda) = w_\lambda(x) w_\lambda(y)$ implies (by Proposition \ref{prop:shypPDE_Lfourmeas_props}(iv)) that $(x,y) \mapsto \bm{\nu}_{x,y}$ is continuous in the weak topology. If the functions $w_\lambda(x)$ do not vanish at the limit $x \uparrow b$, let $\kappa < 0$ be arbitrary and let $h \in \mathrm{C}_\mathrm{b}[a,b)$. Since $w_\kappa$ is increasing and unbounded, ${h \over w_\kappa} \in \mathrm{C}_0[a,b)$. By Lemma \ref{lem:shypPDE_modification}, the map $(x,y) \mapsto \bm{\nu}_{x,y}^{\langle\kappa\rangle\!}$ (where $\bm{\nu}_{x,y}^{\langle\kappa\rangle\!}$ is the measure defined in the proof of Theorem \ref{thm:shypPDE_prodform_weaklim_full}) is continuous, hence
\[
(x,y) \: \longmapsto \, \int_{[a,b)\!} {h(\xi) \over w_\kappa(\xi)} \bm{\nu}_{x,y}^{\langle\kappa\rangle\!}(d\xi) = {1 \over w_\kappa(x) w_\kappa(y)} \int_{[a,b)\!} h(\xi) \, \bm{\nu}_{x,y}(d\xi)
\]
is continuous. This shows that $(x,y) \mapsto \int_{[a,b)\!} h(\xi) \, \bm{\nu}_{x,y}(d\xi)$ is continuous for all $h \in \mathrm{C}_\mathrm{b}[a,b)$ and therefore $(x,y) \mapsto \bm{\nu}_{x,y}$ is continuous in the weak topology. Finally, for $h \in \mathrm{C}_\mathrm{b}[a,b)$ and $\mu_n, \nu_n \in \mathcal{M}_\mathbb{C}[a,b)$ with $\mu_n \warrow \mu$ and $\nu_n \warrow \nu$ we have
\begin{align*}
\lim_n \int_{[a,b)} h(\xi) (\mu_n * \nu_n)(d\xi) & = \lim_n \int_{[a,b)\!} \int_{[a,b)\!} \biggl( \int_{[a,b)\!} h\; d\bm{\nu}_{x,y} \biggr) \mu_n(dx) \nu_n(dy) \\
& = \int_{[a,b)\!} \int_{[a,b)\!} \biggl( \int_{[a,b)\!} h\; d\bm{\nu}_{x,y} \biggr) \mu(dx) \nu(dy) \\
& = \int_{[a,b)} h(\xi) (\mu * \nu)(d\xi)
\end{align*}
due to the continuity of the function in parenthesis; this proves that $(\mu,\nu) \mapsto \mu*\nu$ is continuous.
\end{proof}

\subsection{Back to the associated hyperbolic Cauchy problem}

In this subsection our goal is to show, using the weak continuity of the $\mathcal{L}$-convolution (proved in Proposition \ref{prop:shypPDE_conv_Mbanachalg}), that the existence and uniqueness theorem for the associated hyperbolic Cauchy problem is also valid for initial conditions $h \in \mathcal{D}_\mathcal{L}^{(2,0)}$. To this end, we state a lemma which gives the boundedness property of the convolution of measures regarded as an operator on the spaces $L_p(r)$:

\begin{lemma} \label{lem:shypPDEwl_gentransl_Lpcont}
Let $1 \leq p \leq \infty$ and $\mu \in \mathcal{M}_+[a,b)$. Then the integral 
\begin{equation} \label{eq:shypPDEtransl_def}
\mathcal{T}^\mu h(x) := \int_{[a,b)} h \, d(\delta_x * \mu)
\end{equation}
is, for each $h \in L_p(r)$, a Borel measurable function of $x$, and we have
\begin{equation} \label{eq:shypPDEwl_gentransl_Lpcont}
\|\mathcal{T}^\mu h\|_p \leq \|\mu\| \ccdot \|h\|_p \qquad \text{ for all } h \in L_p(r)
\end{equation}
(consequently, $\mathcal{T}^\mu\bigl(L_p(r)\bigr) \subset L_p(r)$).
\end{lemma}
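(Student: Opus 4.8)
The plan is to establish the estimate at the endpoints $p=1$ and $p=\infty$ and to obtain the intermediate cases by interpolation. The bound for $p=\infty$ is immediate: since $\delta_x*\mu$ is a positive measure with total mass $\|\delta_x*\mu\|=\|\mu\|$ (Proposition \ref{prop:shypPDE_conv_Mbanachalg}), choosing for $h\in L_\infty(r)$ a Borel representative with $\sup_\xi|h(\xi)|\le\|h\|_\infty$ gives $|\mathcal{T}^\mu h(x)|=\bigl|\int_{[a,b)}h\,d(\delta_x*\mu)\bigr|\leq\|\mu\|\,\|h\|_\infty$ for every $x$, while Borel measurability of $x\mapsto\mathcal{T}^\mu h(x)$ for bounded Borel $h$ follows from the weak continuity of $x\mapsto\delta_x*\mu$ (Proposition \ref{prop:shypPDE_conv_Mbanachalg}) together with a functional monotone class argument. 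The crux is therefore the $L_1$ bound, which I would deduce from the following approximate invariance of the measure $r(x)dx$ under generalized translation: for every $y\in(a,b)$ and every $g\in\mathrm{C}_\mathrm{c}[a,b)$ with $g\geq0$,
\[
\int_{[a,b)}\Bigl(\int_{[a,b)}g\,d\bm{\nu}_{x,y}\Bigr)r(x)\,dx\;\leq\;\int_{[a,b)}g(\xi)\,r(\xi)\,d\xi.
\]

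To prove this I would use the regularized measures $\bm{\nu}_{t,x,y}(d\xi)=q_t(x,y,\xi)\,r(\xi)\,d\xi$. From the spectral representation of $q_t$ it is clear that $q_t(x,y,\xi)$ is symmetric under interchange of $x$ and $\xi$, so the normalization $\int_a^b q_t(x,y,\xi)\,r(\xi)\,d\xi=1$ of Proposition \ref{prop:shypPDE_expprodform} also yields $\int_a^b q_t(x,y,\xi)\,r(x)\,dx=1$; hence, by Tonelli's theorem, $\int_{[a,b)}\bigl(\int g\,d\bm{\nu}_{t,x,y}\bigr)r(x)\,dx=\int_{[a,b)}g(\xi)\,r(\xi)\,d\xi$ for every nonnegative Borel $g$ and every $t>0$. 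Since $\bm{\nu}_{t,x,y}\warrow\bm{\nu}_{x,y}$ as $t\downarrow0$ (Theorem \ref{thm:shypPDE_prodform_weaklim_full}) and $g\in\mathrm{C}_\mathrm{c}[a,b)\subset\mathrm{C}_\mathrm{b}[a,b)$, we get $\int g\,d\bm{\nu}_{t,x,y}\to\int g\,d\bm{\nu}_{x,y}$ pointwise in $x$, so Fatou's lemma gives the displayed inequality. Monotone convergence extends it to all nonnegative Borel $g$ and, using outer regularity of $r(x)dx$, to $g=\mathds{1}_E$ with $\int_E r(x)\,dx<\infty$, i.e.\ $\int_{[a,b)}\bm{\nu}_{x,y}(E)\,r(x)\,dx\leq\int_E r(x)\,dx$; taking $E$ to be $r(x)dx$-null also shows that $\mathcal{T}^\mu h$ is independent of the chosen Borel representative of $h$.

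Granting this invariance, the $L_1$ estimate follows by Tonelli's theorem: writing $\delta_x*\mu=\int_{[a,b)}\bm{\nu}_{x,y}\,\mu(dy)$, for $h\in\mathrm{C}_\mathrm{c}[a,b)$ one has
\[
\|\mathcal{T}^\mu h\|_1\leq\int_{[a,b)}\!\!\int_{[a,b)}|h|\,d(\delta_x*\mu)\,r(x)\,dx=\int_{[a,b)}\Bigl(\int_{[a,b)}\Bigl(\int_{[a,b)}|h|\,d\bm{\nu}_{x,y}\Bigr)r(x)\,dx\Bigr)\mu(dy)\leq\|\mu\|\,\|h\|_1,
\]
and likewise $\|\mathcal{T}^\mu h\|_\infty\leq\|\mu\|\,\|h\|_\infty$. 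The same computations show that $\mathcal{T}^\mu$ is well defined and linear on the simple functions supported on sets of finite $r(x)dx$-measure, with $\|\mathcal{T}^\mu\mathds{1}_E\|_1\leq\|\mu\|\int_E r(x)\,dx$ and $\|\mathcal{T}^\mu\mathds{1}_E\|_\infty\leq\|\mu\|$; the Riesz--Thorin interpolation theorem then gives $\|\mathcal{T}^\mu\phi\|_p\leq\|\mu\|\,\|\phi\|_p$ for every such $\phi$ and every $1\leq p\leq\infty$.

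To conclude, for $h\in L_p(r)$ with $1\leq p<\infty$ and $h\geq0$ I would take an increasing sequence of simple functions $\phi_n\uparrow h$ with $\phi_n\to h$ in $L_p(r)$; monotone convergence gives $\mathcal{T}^\mu\phi_n(x)\uparrow\int_{[a,b)}h\,d(\delta_x*\mu)=\mathcal{T}^\mu h(x)$ for every $x$, so $\mathcal{T}^\mu h$ is Borel measurable and $\|\mathcal{T}^\mu h\|_p=\lim_n\|\mathcal{T}^\mu\phi_n\|_p\leq\|\mu\|\,\|h\|_p<\infty$; the general complex case follows by splitting $h$ into four nonnegative Borel parts and using $|\mathcal{T}^\mu h|\leq\mathcal{T}^\mu|h|$. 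The step I expect to be most delicate is the reduction to the invariance inequality and its proof, which is the one place where the regularized family $\bm{\nu}_{t,x,y}$ must be brought in — it is there that an absolutely continuous, manifestly symmetric kernel is available — and combined via Fatou's lemma with Proposition \ref{prop:shypPDE_expprodform} and the weak convergence of Theorem \ref{thm:shypPDE_prodform_weaklim_full}; the interpolation and density steps are then routine.
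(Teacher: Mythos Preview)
Your proof is correct, and it reaches the same conclusion as the paper's by a genuinely different route.

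For the crucial $L_1$ estimate, the paper does not pass through the time-shifted kernels. Instead it establishes the \emph{exact} symmetry identity
\[
\int_a^b (\mathcal{T}^\mu h_n)(x)\,g(x)\,r(x)\,dx \;=\; \int_a^b h_n(x)\,(\mathcal{T}^\mu g)(x)\,r(x)\,dx
\]
for $h_n,g\in\mathcal{D}_\mathcal{L}^{(2,0)}$ by moving to the spectral side via \eqref{eq:shypPDE_prodformmeas_charact} and the Plancherel property of $\mathcal{F}$, and then bounds the right-hand side using the trivial estimate $\|\mathcal{T}^\mu g\|_\infty\le\|\mu\|$. Your argument accesses the same underlying symmetry, but concretely: you exploit that $q_t(x,y,\xi)$ is visibly symmetric in $(x,\xi)$, combine this with the normalization of Proposition~\ref{prop:shypPDE_expprodform} to get an exact identity at the regularized level, and then pass to the limit with Fatou to obtain the needed inequality. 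Both are sound; the paper's version yields the sharper statement (equality rather than inequality, hence $r(x)dx$ is genuinely invariant), while yours stays closer to the kernel and avoids invoking the $L_2$ isometry.

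For $1<p<\infty$ the paper takes a more elementary path than Riesz--Thorin: it applies H\"older pointwise against the probability measure $\|\mu\|^{-1}\delta_x*\mu$ to get $|\mathcal{T}^\mu h(x)|^p\le\|\mu\|^{p-1}\,\mathcal{T}^\mu|h|^p(x)$, and then integrates using the $L_1$ bound for $|h|^p$. Your interpolation argument is of course fine, but the direct H\"older step is shorter and sidesteps the bookkeeping with simple functions.
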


\begin{proof}
It suffices to prove the result for nonnegative $h$. The map $\nu \mapsto \mu * \nu$ is weakly continuous (Proposition \ref{prop:shypPDE_conv_Mbanachalg}) and takes $\mathcal{M}_+[a,b)$ into itself. According to \cite[Section 2.3]{jewett1975}, this implies that, for each Borel measurable $h \geq 0$, the function $x \mapsto (\mathcal{T}^\mu h)(x)$ is Borel measurable. It follows that $\int_{[a,b)} g(x) (\mu * r)(dx) := \int_a^b (\mathcal{T}^\mu g)(x) r(x) dx$ ($g \in \mathrm{C}_\mathrm{c}[a,b)$) defines a positive Borel measure. For $a \leq c_1 < c_2 < b$, let $\mathds{1}_{[c_1,c_2)}$ be the indicator function of $[c_1,c_2)$, let $h_n \in \mathcal{D}_\mathcal{L}^{(2,0)}$ be a sequence of nonnegative functions such that $h_n \to \mathds{1}_{[c_1,c_2)}$ pointwise, and write $\mathfrak{C} = \{g \in \mathrm{C}_\mathrm{c}^\infty(a,b) \mid 0 \leq g \leq 1\}$. We compute \vspace{-2pt}
\begin{align*}
(\mu * r)[c_1,c_2) & = \lim_n \int_{[a,b)} h_n(x) (\mu * r)(dx) \\
& = \lim_n \sup_{g \in \mathfrak{C}} \int_a^b (\mathcal{T}^\mu h_n)(x) \, g(x) \, r(x) dx \\
& = \lim_n \sup_{g \in \mathfrak{C}} \int_{[\sigma^2,\infty)} \! (\mathcal{F}h_n)(\lambda) \, (\mathcal{F}g)(\lambda) \, \widehat{\mu}(\lambda) \, \bm{\rho}_{\mathcal{L}}(d\lambda) \\
& = \lim_n \sup_{g \in \mathfrak{C}} \int_a^b h_n(x) \, (\mathcal{T}^\mu g)(x) \, r(x) dx \\
& \leq \|\mu\| \ccdot \lim_n \int_a^b h_n(x) \, r(x) dx \, = \, \|\mu\| \ccdot \int_{c_1}^{c_2} r(x)dx
\end{align*}
where the third and fourth equalities follow from \eqref{eq:shypPDE_prodformmeas_charact} and the isometric property of the $\mathcal{L}$-transform (Proposition \ref{prop:entrpf_Ffourier}), and the inequality holds because $\|\mathcal{T}^\mu g\|_\infty \leq \|\mu\| \ccdot \| g \|_\infty \leq \|\mu\|$. Therefore, $\|\mathcal{T}^\mu h\|_1 = \|h\|_{L_1([a,b),\mu * r)} \leq \|\mu\| \ccdot \|h\|_1$ for each Borel measurable $h \geq 0$. Since $\delta_x * \mu \in \mathcal{M}_+[a,b)$, H\"older's inequality yields that $\|\mathcal{T}^\mu h\|_p \leq \|\mu\|^{1/q} \ccdot \|\mathcal{T}^\mu |h|^p\|_1^{1/p} \leq \|\mu\| \ccdot \| h \|_p$ for $1 < p < \infty$.

Finally, if $h \in L_\infty(r)$, $h \geq 0$ then $h = h_\mb{b} + h_\mb{0}$, where $0 \leq h_\mb{b} \leq \|h\|_\infty$ and $h_\mb{0} = 0$ Lebesgue-almost everywhere. Since $\|\mathcal{T}^\mu h_\mb{0}\|_1 \leq \|\mu\| \ccdot \|h_\mb{0}\|_1 = 0$, we have $\mathcal{T}^y h_\mb{0} = 0$ Lebesgue-almost everywhere, and therefore $\|\mathcal{T}^y h\|_\infty = \|\mathcal{T}^y h_\mb{b}\|_\infty \leq \|\mu\| \ccdot \|h\|_\infty$.
\end{proof}

The bounded operator $\mathcal{T}^\mu: L_p(r) \longrightarrow L_p(r)$ is usually called the \emph{$\mathcal{L}$-translation} by the measure $\mu \in \mathcal{M}_+[a,b)$. (When $\mu = \delta_x$ is a Dirac measure, for simplicity we write $\mathcal{T}^x$ instead of $\mathcal{T}^{\delta_x}$.) As noted during the above proof, it follows from \eqref{eq:shypPDE_prodformmeas_charact} that for $g \in \mathcal{D}_\mathcal{L}^{(2,0)}$ we have
\begin{equation} \label{eq:shypPDE_transl_Fident}
\mathcal{F}(\mathcal{T}^\mu g)(\lambda) = \widehat{\mu}(\lambda) \, (\mathcal{F}g)(\lambda) \qquad \text{for } \bm{\rho}_\mathcal{L} \text{-almost every } \lambda.
\end{equation}
A consequence of Lemma \ref{lem:shypPDEwl_gentransl_Lpcont} is that this equality extends to all $g \in L_2(r)$ (this follows from the usual continuity argument, taking into account that the operators $\mathcal{T}^\mu$ and $\mathcal{F}$ are bounded on $L_2(r)$).

\begin{proposition}[Existence and uniqueness of solution with initial condition $h \in \mathcal{D}_\mathcal{L}^{(2,0)}$] \label{prop:shypPDE_Lexistuniq_ext}
If $h \in \mathcal{D}_\mathcal{L}^{(2,0)}\!$, then there exists a unique solution $f \in \mathrm{C}^2 \bigl((a,b)^2\bigr)$ of the Cauchy problem \eqref{eq:shypPDE_Lcauchy} satisfying conditions (i)--(ii) in Theorem \ref{thm:shypPDE_Lexistuniq}, and this unique solution is given by \eqref{eq:shypPDE_Lexistence}.
\end{proposition}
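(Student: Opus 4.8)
The plan is to deduce the statement from Theorem \ref{thm:shypPDE_Lexistuniq} by regularizing $h$ with the diffusion semigroup and passing to the limit, the crucial ingredient being the weak continuity of the $\mathcal{L}$-convolution. For the uniqueness part there is nothing new to do: the uniqueness argument in the proof of Theorem \ref{thm:shypPDE_Lexistuniq} uses only conditions (i)--(ii), Lemma \ref{lem:entrpf_ode_wsol} and the isometry of $\mathcal{F}$ — not the stronger hypothesis on $h$ — so it carries over verbatim. For existence, I would first observe that the right-hand side of \eqref{eq:shypPDE_Lexistence} still defines a (jointly) continuous function $f$, the integral converging absolutely and locally uniformly by Lemma \ref{lem:shypPDE_Lfourier_D2prop}(b) together with $|w_\lambda(y)|\le1$, and that by the characterization \eqref{eq:shypPDE_prodformmeas_charact} it coincides with $\mathcal{T}^y h(x)=\int_{[a,b)}h\,d\bm{\nu}_{x,y}$. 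What remains is to show that $f\in\mathrm{C}^2((a,b)^2)$ solves \eqref{eq:shypPDE_Lcauchy} and satisfies (i)--(ii), the obstruction being that $\ell(h)$ need no longer lie in $\mathcal{D}_\mathcal{L}^{(2)}$.

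Here I would set $h_n:=T_{1/n}h$. Since $\mathcal{F}h_n(\lambda)=e^{-\lambda/n}(\mathcal{F}h)(\lambda)$, the characterization \eqref{eq:shypPDE_LtransfidentD2} shows that $h_n$ and $\ell(h_n)$ both belong to $\mathcal{D}_\mathcal{L}^{(2)}$ — in fact to $\mathcal{D}_\mathcal{L}^{(2,0)}$, using $h,\ell(h)\in\mathrm{C}_0[a,b)$, that $T_t$ maps the generator's domain into itself, and $\ell(h_n)=T_{1/n}(\ell h)$. Thus Theorem \ref{thm:shypPDE_Lexistuniq} applies to $h_n$, yielding the $\mathrm{C}^2$ solution $f_n(x,y)=\int_{[\sigma^2,\infty)}e^{-\lambda/n}w_\lambda(x)w_\lambda(y)(\mathcal{F}h)(\lambda)\,\bm{\rho}_{\mathcal{L}}(d\lambda)$, and by \eqref{eq:shypPDE_prodformmeas_charact}, \eqref{eq:shypPDE_Ltransfidentity} one may rewrite $f_n=\mathcal{T}^y h_n$ and $\ell_x f_n=\ell_y f_n=\mathcal{T}^y(\ell h_n)$. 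Now weak continuity enters: since $\{T_t\}$ is strongly continuous on $\mathrm{C}_0[a,b)$, we have $\|h_n-h\|_\infty\to0$ and $\|\ell(h_n)-\ell(h)\|_\infty\to0$, and because each $\bm{\nu}_{x,y}$ is a probability measure this forces
\[
f_n(x,y)=\int_{[a,b)}h_n\,d\bm{\nu}_{x,y}\longrightarrow f(x,y),\qquad \ell_x f_n(x,y)=\int_{[a,b)}\ell(h_n)\,d\bm{\nu}_{x,y}\longrightarrow G(x,y):=\int_{[a,b)}\ell(h)\,d\bm{\nu}_{x,y}
\]
uniformly on $(a,b)^2$, with $f,G$ jointly continuous by Proposition \ref{prop:shypPDE_conv_Mbanachalg}. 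Moreover $\partial_x^{[1]}f_n$ and $\partial_y^{[1]}f_n$ converge locally uniformly to $\Phi_1(x,y):=\int w_\lambda^{[1]}(x)w_\lambda(y)(\mathcal{F}h)\,\bm{\rho}_{\mathcal{L}}$ and $\Phi_2(x,y):=\int w_\lambda(x)w_\lambda^{[1]}(y)(\mathcal{F}h)\,\bm{\rho}_{\mathcal{L}}$, by dominated convergence using the dominating functions supplied by Lemma \ref{lem:shypPDE_Lfourier_D2prop}(b).

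Passing to the limit, the locally uniform convergences $f_n\to f$, $\partial_x^{[1]}f_n\to\Phi_1$ and $\partial_x[\partial_x^{[1]}f_n]=-r\,\ell_x f_n\to-rG$ show that $\partial_x^{[1]}f=\Phi_1$ is $\mathrm{C}^1$ in $x$ with $\partial_x(\partial_x^{[1]}f)=-rG$, i.e. $f$ is $\mathrm{C}^2$ in $x$ and $\ell_x f=G$; by symmetry $\ell_y f=G$, so $\ell_x f=\ell_y f$ and $\partial_x^2 f,\partial_y^2 f$ are jointly continuous. Condition (i) holds because $\mathcal{F}[f(\cdot,y)](\lambda)=w_\lambda(y)(\mathcal{F}h)(\lambda)$, whence $\lambda\,\mathcal{F}[f(\cdot,y)]\in L_2(\bm{\rho}_{\mathcal{L}})$ and \eqref{eq:shypPDE_LtransfidentD2} gives $f(\cdot,y)\in\mathcal{D}_\mathcal{L}^{(2)}$; condition (ii) is then immediate from this identity together with $\ell w_\lambda=\lambda w_\lambda$, $w_\lambda(a)=1$, $w_\lambda^{[1]}(a)=0$ and $\mathcal{F}[G(\cdot,y)](\lambda)=\lambda w_\lambda(y)(\mathcal{F}h)(\lambda)$ (by \eqref{eq:shypPDE_transl_Fident}). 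The boundary conditions of \eqref{eq:shypPDE_Lcauchy} also hold: $f(x,a)=\int w_\lambda(x)(\mathcal{F}h)(\lambda)\,\bm{\rho}_{\mathcal{L}}(d\lambda)=h(x)$ by Lemma \ref{lem:shypPDE_Lfourier_D2prop}(b), while $\partial_y^{[1]}f(x,a):=\lim_{y\downarrow a}\Phi_2(x,y)=\lim_{y\downarrow a}\Phi_1(y,x)=0$, since writing $w_\lambda^{[1]}(y)=-\lambda(\mathcal{F}\mathds{1}_{[a,y)})(\lambda)$ and $\lambda\,\mathcal{F}h=\mathcal{F}(\ell h)$, Parseval and \eqref{eq:shypPDE_transl_Fident} give $\Phi_1(y,x)=-\int_a^b(\mathcal{T}^x\mathds{1}_{[a,y)})(\ell h)\,r\to0$ as $\mathds{1}_{[a,y)}\to0$ in $L_2(r)$ (Lemma \ref{lem:shypPDEwl_gentransl_Lpcont}). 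The hard part — the one point where the above is not yet conclusive — is the full $\mathrm{C}^2((a,b)^2)$ regularity, i.e. the existence and continuity of the mixed derivative $\partial_x\partial_y f$: this does not follow from the spectral representation (the integral $\int w_\lambda^{[1]}(x)w_\lambda^{[1]}(y)(\mathcal{F}h)\,\bm{\rho}_{\mathcal{L}}$ need not converge absolutely under these hypotheses), and I would instead extract it from the hyperbolic structure of $\ell_x f=\ell_y f$ — upgrading the convergence $f_n\to f$ to $\mathrm{C}^2_{\mathrm{loc}}$ (note that $h_n\to h$ in $\mathrm{C}^2_{\mathrm{loc}}$, which follows from $\ell(h_n)\to\ell(h)$ uniformly) by means of the a priori estimates furnished by the characteristic-coordinate integral identity of Lemma \ref{lem:shypPDE_inteqtriangle}, or by appealing to the regularity theory for the associated hyperbolic Cauchy problem developed in the Appendix.
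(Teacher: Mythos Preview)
Your approach is correct but takes a more circuitous route than the paper's. The paper dispenses with the regularization $h_n = T_{1/n}h$ entirely and works directly with $f$: the key observation is that $f(\cdot,y)=\mathcal{T}^y h$ already lies in $\mathcal{D}_\mathcal{L}^{(2)}$ (since $\lambda\,\mathcal{F}[\mathcal{T}^y h](\lambda)=\lambda w_\lambda(y)(\mathcal{F}h)(\lambda)\in L_2(\bm{\rho}_\mathcal{L})$ by $|w_\lambda(y)|\le 1$ and \eqref{eq:shypPDE_LtransfidentD2}), so identity \eqref{eq:shypPDE_Ltransfidentity} applies to it, and chaining with \eqref{eq:shypPDE_transl_Fident} gives
\[
\mathcal{F}(\ell_x \mathcal{T}^y h)=\lambda\,\mathcal{F}(\mathcal{T}^y h)=\lambda w_\lambda(y)(\mathcal{F}h)=w_\lambda(y)\,\mathcal{F}(\ell h)=\mathcal{F}(\mathcal{T}^y\ell(h)).
\]
Hence $\ell_x f=\mathcal{T}^y\ell(h)$ almost everywhere, and then everywhere by the joint continuity of $(x,y)\mapsto(\mathcal{T}^y\ell(h))(x)$ coming from Proposition \ref{prop:shypPDE_conv_Mbanachalg} (here $\ell(h)\in\mathrm{C}_0[a,b)$ because $h\in\mathcal{D}_\mathcal{L}^{(0)}$). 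Symmetry gives $\ell_y f$ the same expression, so $\ell_x f=\ell_y f$. This is precisely the identity you obtain as the limit of $\ell_x f_n=\mathcal{T}^y\ell(h_n)$, but reached in one stroke without the approximation layer. The boundary condition $\partial_y^{[1]}f(x,a)=0$ is also handled more simply in the paper: the integral $\int w_\lambda(x)w_\lambda^{[1]}(y)(\mathcal{F}h)\,\bm{\rho}_\mathcal{L}(d\lambda)$ converges absolutely (bound $|w_\lambda(x)|\le1$ and invoke \eqref{eq:shypPDE_Lfourier_D2propderiv} of Lemma \ref{lem:shypPDE_Lfourier_D2prop}(b), which needs only $h\in\mathcal{D}_\mathcal{L}^{(2)}$), so dominated convergence together with $w_\lambda^{[1]}(a)=0$ suffices --- your Parseval argument via $w_\lambda^{[1]}(y)=-\lambda(\mathcal{F}\mathds{1}_{[a,y)})(\lambda)$ is valid but unnecessary.

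On the mixed derivative $\partial_x\partial_y f$: your caution is honest, and in fact the paper's proof is no more explicit than yours on this point --- it simply asserts $f\in\mathrm{C}^2((a,b)^2)$ once $\ell_x f=\ell_y f$ is shown to be jointly continuous. Your regularization route is, if anything, better positioned to close this, since each $f_n\in\mathrm{C}^2$ by Theorem \ref{thm:shypPDE_Lexistuniq} and one may try to upgrade the convergence to $\mathrm{C}^2_{\mathrm{loc}}$ as you sketch.
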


\begin{proof}
The fact that there exists at most one solution of \eqref{eq:shypPDE_Lcauchy} satisfying conditions \emph{(i)--(ii)} is proved in the same way.

Let $h \in \mathcal{D}_\mathcal{L}^{(2,0)}\!$ and consider the function $f(x,y)$ defined by \eqref{eq:shypPDE_Lexistence}. The limit $\lim_{y \downarrow a} f(x,y) = h(x)$ follows from Lemma \ref{lem:shypPDE_Lfourier_D2prop}(b) and dominated convergence. Similarly, we have
\[
\lim_{y \downarrow a} \partial_y^{[1]} f(x,y) = \lim_{y \downarrow a} \int_{[0,\infty)} (\mathcal{F} h)(\lambda) \, w_\lambda(x) \, w_\lambda^{[1]}(y)\, \bm{\rho}_{\mathcal{L}}(d\lambda) = 0
\]
(the absolute and uniform convergence of the differentiated integral justifies the differentiation under the integral sign). Now fix $y \in (a,b)$. By \eqref{eq:shypPDE_prodformmeas_charact}, we have $f(\cdot,y) = \mathcal{T}^y h$. Using the identities \eqref{eq:shypPDE_Ltransfidentity} and \eqref{eq:shypPDE_transl_Fident}, we get
\[
\mathcal{F}(\ell_x (\mathcal{T}^y h))(\lambda) = \lambda \, \mathcal{F}(\mathcal{T}^y h)(\lambda) = \lambda \, w_\lambda(y) \, (\mathcal{F}h)(\lambda) = w_\lambda(y) \, \mathcal{F}(\ell(h))(\lambda) = \mathcal{F}(\mathcal{T}^y \ell(h))(\lambda),
\]
hence $\ell_x (\mathcal{T}^y h)(x) = (\mathcal{T}^y \ell(h))(x)$ for almost every $x$. Since (by the weak continuity of $(x,y) \mapsto \bm{\nu}_{x,y}$, see Proposition \ref{prop:shypPDE_conv_Mbanachalg}) $(x,y) \mapsto (\mathcal{T}^y\ell(h))(x)$ is continuous, it follows that
\[
\ell_x f(x,y) = (\mathcal{T}^y \ell(h))(x), \qquad \text{for all } x,y \in (a,b).
\]
Exactly the same reasoning shows that $\ell_y f(x,y) = (\mathcal{T}^y \ell(h))(x)$, hence $f \in \mathrm{C}^2 \bigl((a,b)^2\bigr)$ is a solution of $\ell_x u = \ell_y u$. 

It remains to check that conditions \emph{(i)--(ii)} hold. As seen above we have $\mathcal{F}(f(\cdot,y))(\lambda) = w_\lambda(y) \, (\mathcal{F}h)(\lambda)$ and $\mathcal{F}[\ell_y f(\cdot,y)](\lambda) = \mathcal{F}[\mathcal{T}^y \ell(h)](\lambda) = \lambda \, w_\lambda(y) \, (\mathcal{F}h)(\lambda)$, hence condition \emph{(ii)} holds. Moreover, it is immediate from \eqref{eq:shypPDE_LtransfidentD2} that $f(\cdot,y) \in \mathcal{D}_\mathcal{L}^{(2)}$, and therefore condition \emph{(i)} holds.
\end{proof}

\subsection{The nondegenerate case: Sturm-Liouville hypergroups} \label{sub:SLhyp_nondegen}

The goal of this section is to determine sufficient conditions in order that the $\mathcal{L}$-convolution defines a hypergroup structure on the interval $[a,b)$.

Let us recall the definition of a hypergroup, which was introduced in \cite{jewett1975} (see also \cite{bloomheyer1994}). Let $K$ be a locally compact space and $*$ a bilinear operator on $\mathcal{M}_\mathbb{C}(K)$. The pair $(K,*)$ is said to be a \emph{hypergroup} if the following axioms are satisfied:
\begin{enumerate}[itemsep=0pt,topsep=4pt]
\item[\textbf{H1.}] If $\mu, \nu \in \mathcal{P}(K)$, then $\mu * \nu \in \mathcal{P}(K)$;
\item[\textbf{H2.}] $\mu * (\nu * \pi) = (\mu * \nu) * \pi\,$ for all $\mu, \nu, \pi \in \mathcal{M}_\mathbb{C}(K)$;
\item[\textbf{H3.}] The map $(\mu,\nu) \mapsto \mu*\nu$ is continuous (in the weak topology) from $\mathcal{M}_\mathbb{C}(K) \times \mathcal{M}_\mathbb{C}(K)$	to $\mathcal{M}_\mathbb{C}(K)$;
\item[\textbf{H4.}] There exists an element $\mathrm{e} \in K$ such that $\delta_\mathrm{e} * \mu = \mu * \delta_\mathrm{e} = \mu$ for all $\mu \in \mathcal{M}_\mathbb{C}(K)$;
\item[\textbf{H5.}] There exists a homeomorphism (called \emph{involution}) $x \mapsto \check{x}$ of $K$ onto itself such that $(\check{x})\check{\vrule height1.35ex width0pt \,} = x$ and $\mathrm{e} \in \supp(\delta_x * \delta_y)$ if and only if $y = \check{x}$;
\item[\textbf{H6.}] $(\mu * \nu) \check{\vrule height1.35ex width0pt \,} = \check{\nu} * \check{\mu}$, where $\check{\mu}$ is defined via $\int f(x) \check{\mu}(dx) = \int f(\check{x}) \mu(dx)$;
\item[\textbf{H7.}] $(x,y) \mapsto \supp(\delta_x * \delta_y)$ is continuous from $K \times K$ into the space of compact subsets of $K$ (endowed with the Michael topology, see \cite{jewett1975}).
\end{enumerate}

We saw in Proposition \ref{prop:shypPDE_conv_Mbanachalg} that $\mathcal{L}$-convolution satisfies the axioms H1, H2, H3, H4 and H6 (with $K = [a,b)$ and $\mathrm{e}= a$ as the identity element; H6 holds for the identity involution $\check{x} = x$). In order to verify conditions H5 and H7, one needs to determine the support of $\bm{\nu}_{x,y} = \delta_x * \delta_y$.

A detailed study of $\supp(\bm{\nu}_{x,y})$ was carried out by Zeuner in \cite{zeuner1992}. The next proposition shows that the results of Zeuner can be applied to the $\mathcal{L}$-convolution, provided that the differential operator \eqref{eq:shypPDE_elldiffexpr} has coefficients $p = r = A$ defined on $(0,\infty)$, and there exists $\eta \in \mathrm{C}^1[0,\infty)$ satisfying the conditions given in Assumption \ref{asmp:shypPDE_SLhyperg}.

\begin{proposition} \label{prop:shypPDE_zeunersupp}
Let
\[
\ell = -{1 \over A} {d \over dx} \Bigl(A {d \over dx}\Bigr), \qquad x \in (0,\infty)
\]
where $A(x) > 0$ for all $x \geq 0$. Suppose that there exists $\eta \in \mathrm{C}^1[0,\infty)$ such that $\eta \geq 0$, the functions $\bm{\phi}_\eta$, $\bm{\psi}_\eta$ are both decreasing on $(0,\infty)$ and $\lim_{x \to \infty} \bm{\phi}_\eta(x) = 0$. Let $x_0 = \sup\{x \geq 0 \mid \bm{\psi}_\eta(x) = \bm{\psi}_\eta(0)\}$ and $x_1 = \inf\{x > 0 \mid \bm{\phi}_\eta(x) = 0 \}$. Then:
\begin{enumerate}[itemsep=0pt,topsep=4pt]
\item[\textbf{(a)}] If $x_0 = \infty$, $x_1 = 0$ and $\eta(0) = 0$ then $\supp(\delta_x * \delta_y) = \{|x-y|, x+y\}$ for all $x,y \geq 0$.
\item[\textbf{(b)}] If $0 < x_0 < \infty$, $x_1 = 0$ and $\eta(0) = 0$ then
\[
\supp(\delta_x * \delta_y) = \begin{cases}
\{|x-y|, x+y\}, & x+y \leq x_0 \\
\{|x-y|\} \cup [2x_0 - x - y, x+y], & x,y < x_0 < x + y \\
[|x-y|, x+y], & \max\{x,y\} \geq x_0.
\end{cases}
\]
\item[\textbf{(c)}] If $x_0 = \infty$, $0 < x_1 < \infty$ and $\eta(0) = 0$ then
\[
\supp(\delta_x * \delta_y) = \begin{cases}
[|x-y|, x+y], & \min\{x,y\} \leq 2x_1, \\
[|x-y|, 2x_1 + |x-y|] \cup [x+y-2x_1, x+y], & \min\{x,y\} > 2x_1.
\end{cases}
\]
\item[\textbf{(d)}] If $0 < 3x_1 < x_0 < \infty$ and $\eta(0) = 0$ then
\[
\supp(\delta_x * \delta_y) = \begin{cases}
[|x-y|, x+y], & \min\{x,y\} \leq 2x_1 \text{ or } \max\{x,y\} \geq x_0 - x_1, \\[2pt]
\begin{aligned}
& [|x-y|, 2x_1 + |x-y|] \cup \\[-5pt] & \qquad \cup [x+y-2x_1, x+y],
\end{aligned} & \min\{x,y\} > 2x_1 \text{ and } \max\{x,y\} < x_0 - x_1.
\end{cases}
\]
\item[\textbf{(e)}] If $x_0 \leq 3x_1$ or $\eta(0) > 0$ then $\supp(\delta_x * \delta_y) = [|x-y|, x+y]$ for all $x,y \geq 0$.
\end{enumerate}
\end{proposition}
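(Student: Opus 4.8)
The plan is to reduce the statement to the detailed support computations of Zeuner \cite{zeuner1992}, by showing that under the hypotheses imposed here the $\mathcal{L}$-convolution of Definition \ref{def:shypPDEconv_def} coincides with the Sturm-Liouville convolution studied there. First I would check that $\ell$ fits into the framework of the paper: since $p = r = A \in \mathrm{C}^1[0,\infty)$ with $A > 0$ on $[0,\infty)$, the double integral in \eqref{eq:shypPDE_Lop_leftBC} is finite, so the standing boundary assumption holds; and since $\smash{\sqrt{r/p}} \equiv 1$ is integrable near $0$, in Remark \ref{rmk:shypPDE_tildeell} we may take $c = 0$, so that $\gamma(x) = x$, $\gamma(a) = 0$, $\gamma(b) = \infty$, and the standardized coefficient \eqref{eq:shypPDE_tildeell_A} is $A$ itself. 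Consequently the function $\eta \in \mathrm{C}^1[0,\infty)$ provided by the hypothesis is an admissible choice in Assumption \ref{asmp:shypPDE_SLhyperg}, which therefore holds, and all the results of Sections \ref{sec:laplacerep}--\ref{sec:genconv_hypergr} apply. In particular the product formula \eqref{eq:shypPDE_prodform_weaklim} yields probability measures $\bm{\nu}_{x,y} = \delta_x * \delta_y$ with $\widehat{\bm{\nu}_{x,y}}(\lambda) = w_\lambda(x)\, w_\lambda(y)$ for all $\lambda \geq 0$.

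Next I would invoke Zeuner's existence theorem for Sturm-Liouville hypergroups (cf.\ \cite{zeuner1992}; see also \cite[Section 3.5]{bloomheyer1994}): the conditions imposed on $A$ and $\eta$ are precisely those guaranteeing that $([0,\infty), *_{\mathrm Z})$ is a hypergroup whose convolution satisfies a product formula $w_\lambda(x)\, w_\lambda(y) = \int_{[0,\infty)} w_\lambda\, d\bm{\nu}^{\mathrm Z}_{x,y}$ with $\bm{\nu}^{\mathrm Z}_{x,y} \in \mathcal{P}[0,\infty)$ supported in $[|x-y|, x+y]$, and for which Zeuner determined $\supp(\bm{\nu}^{\mathrm Z}_{x,y})$ explicitly, obtaining exactly the case distinction (a)--(e) in terms of $x_0$ and $x_1$. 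Since both $\bm{\nu}_{x,y}$ and $\bm{\nu}^{\mathrm Z}_{x,y}$ have the same $\mathcal{L}$-transform $\lambda \mapsto w_\lambda(x)\, w_\lambda(y)$, and a finite measure on $[0,\infty)$ is uniquely determined by the restriction of its $\mathcal{L}$-transform to $[\sigma^2, \infty)$ (Proposition \ref{prop:shypPDE_Lfourmeas_props}(ii)), we conclude that $\bm{\nu}_{x,y} = \bm{\nu}^{\mathrm Z}_{x,y}$ for all $x, y \geq 0$. Hence $\supp(\delta_x * \delta_y) = \supp(\bm{\nu}^{\mathrm Z}_{x,y})$ is given by the formulas (a)--(e).

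The step that requires most care is the precise matching of the hypotheses of the proposition and of the combinatorial parameters $x_0 = \sup\{x \geq 0 \mid \bm{\psi}_\eta(x) = \bm{\psi}_\eta(0)\}$ and $x_1 = \inf\{x > 0 \mid \bm{\phi}_\eta(x) = 0\}$ with those governing Zeuner's support formulas, together with the verification that these formulas exhaust all admissible configurations of $(x_0, x_1, \eta(0))$ — this is essentially a transcription task. The assumption $A(0) > 0$ means that the associated hyperbolic Cauchy problem is uniformly hyperbolic on $(a,b)^2$, so that Zeuner's analysis (which also accommodates a degenerate, Bessel-type left endpoint) applies here in its most straightforward form.
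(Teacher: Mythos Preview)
Your argument is valid but takes a different route from the paper. The paper does not invoke Zeuner's existence theorem as a black box and then identify the two convolutions via Proposition \ref{prop:shypPDE_Lfourmeas_props}(ii). Instead it works intrinsically: for a bump function $h_\eps \in \mathcal{D}_\mathcal{L}^{(2,0)}$ centred at $z$, one has $z \in \supp(\delta_x*\delta_y)$ iff $f_{h_\eps}(x,y) := \int h_\eps\,d(\delta_x*\delta_y) > 0$ for all $\eps$. Proposition \ref{prop:shypPDE_Lexistuniq_ext} (the paper's own existence/uniqueness theorem for the hyperbolic Cauchy problem) shows that $f_{h_\eps}$ solves \eqref{eq:shypPDE_Lcauchy}; then Lemma \ref{lem:shypPDE_inteqtriangle}, applied with $c>0$ and letting $c\downarrow 0$, yields the Riemann-type integral identity \eqref{eq:shypPDE_zeunersupp_inteq} with every summand $H, I_0, \ldots, I_3$ nonnegative. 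Hence $z\in\supp(\delta_x*\delta_y)$ iff at least one summand is strictly positive for all $\eps$, and only at this last step does the paper cite Zeuner \cite[Proposition~3.9]{zeuner1992} for the case analysis of the integrals.

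The difference matters for the paper's purposes. Your route requires Zeuner's existence theorem, hence condition SL1; in particular your assertion that $A\in\mathrm{C}^1[0,\infty)$ (needed for SL1.2) does not follow from the standing hypotheses, which only give $A'\,$ locally absolutely continuous on $(0,\infty)$ together with $A(0)>0$. The paper's route sidesteps SL1 entirely by replacing Zeuner's Cauchy-problem machinery with Proposition \ref{prop:shypPDE_Lexistuniq_ext}, and this is precisely what allows the subsequent Corollary \ref{cor:shypPDE_slhypcor} to extend Zeuner's hypergroup existence theorem to all $A$ satisfying SL0 and SL2 alone. Your approach would be circular there. That said, under the extra regularity $A\in\mathrm{C}^1[0,\infty)$ your identification $\bm{\nu}_{x,y}=\bm{\nu}_{x,y}^{\mathrm Z}$ via Proposition \ref{prop:shypPDE_Lfourmeas_props}(ii) is perfectly correct and gives a clean shortcut.
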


\begin{proof}
Fix $z \geq 0$, and let $\{h_\eps\} \subset \mathcal{D}_\mathcal{L}^{(2,0)}$ be a family of functions such that
\begin{equation} \label{eq:shypPDE_zeunersupp_pf1}
h_\eps(\xi) > 0 \text{ for } z-\eps < \xi < z+\eps, \qquad h_\eps(\xi) = 0 \text{ for } \xi \leq z-\eps \text{ and } \xi \geq z+\eps.
\end{equation}
Observe that $z \in \supp(\delta_x * \delta_y)$ if and only if $\int_{[0,\infty)} h_\eps \: d(\delta_x * \delta_y) > 0$ for all $\eps > 0$. Now, we know from Proposition \ref{prop:shypPDE_Lexistuniq_ext} that the function
\begin{equation} \label{eq:shypPDE_zeunersupp_pf2}
f_{h_\eps}(x,y) := \int_{[0,\infty)} h_\eps \: d(\delta_x * \delta_y) = \int_{[\sigma^2,\infty)\!} w_\lambda(x) \, w_\lambda(y) \, (\mathcal{F}h_\eps)(\lambda) \, \bm{\rho}_{\mathcal{L}}(d\lambda)
\end{equation}
(the second equality is due to \eqref{eq:shypPDE_prodformmeas_charact}) is a nonnegative solution of the Cauchy problem \eqref{eq:shypPDE_Lcauchy} with $h \equiv h_\eps$; writing $B(x) := \exp({1 \over 2} \int_0^x \eta(\xi) d\xi)$, it follows that $v_{h_\eps}(x,y) = B(x) B(y) f_{h_\eps}(x,y)$ is a solution of $\bm{\ell}_x^B v - \bm{\ell}_y^B v = 0$, $\bm{\ell}_x^B$ being the differential operator defined in Lemma \ref{lem:shypPDE_inteqtriangle}. If we apply this lemma with $c > 0$ and then let $c \downarrow 0$, we deduce that the following integral equation holds:
\begin{equation} \label{eq:shypPDE_zeunersupp_inteq}
\begin{aligned}
A_{\mathsmaller{B}}(x) A_{\mathsmaller{B}}(y) \, v_{h_\eps}(x,y) = H + I_0 + I_1 + I_2 + I_3
\end{aligned}
\end{equation}
where $H = \tfrac{1}{2} A(0) \bigl[{A(x-y) \over B(x-y)} \, h_\eps(x-y) + {A(x+y) \over B(x+y)} \, h_\eps(x+y)\bigr]$, $I_0 = {\eta(0) \over 4} \int_{x-y}^{x+y} {A(s) \over B(s)} h_\eps(s)\, ds$ and $I_1, I_2, I_3$ are given by \eqref{eq:shypPDE_inteqtriangle_I1}--\eqref{eq:shypPDE_inteqtriangle_I3} with $c=0$ and $v = v_{h_\eps}$. Since $h_\eps$ and $f_{h_\eps}$ are nonnegative, all the terms in the right-hand side of \eqref{eq:shypPDE_zeunersupp_inteq} are nonnegative; consequently, we have $z \in \supp(\delta_x * \delta_y)$ if and only if at least one of the terms in the right-hand side of \eqref{eq:shypPDE_zeunersupp_inteq} is strictly positive for all $\eps > 0$. In order to ascertain whether this holds or not, one needs to perform a thorough analysis of the integrals $I_0$, $I_1$, $I_2$ and $I_3$. This has been done by Zeuner in \cite[Proposition 3.9]{zeuner1992}; his results lead to the conclusion stated in the proposition.
\end{proof}

\begin{theorem} \label{thm:shypPDE_hypergroupcase}
Let $\ell$ be a differential expression of the form \eqref{eq:shypPDE_elldiffexpr}. Suppose that $\gamma(a) > -\infty$ and that there exists $\eta \in \mathrm{C}^1[\gamma(a),\infty)$ satisfying the conditions given in Assumption \ref{asmp:shypPDE_SLhyperg}. Then $\bigl([a,b),*\bigr)$ is a hypergroup.
\end{theorem}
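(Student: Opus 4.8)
The plan is to build on Proposition \ref{prop:shypPDE_conv_Mbanachalg}, which already gives axioms H1--H4 and H6 for the $\mathcal{L}$-convolution on $K = [a,b)$ with identity $\mathrm{e} = a$ and the identity involution; only H5 and H7 remain, and both are statements about $\supp(\bm{\nu}_{x,y})$, where $\bm{\nu}_{x,y} = \delta_x*\delta_y$. These supports will be read off from Proposition \ref{prop:shypPDE_zeunersupp} after first passing to the standard form of the operator.

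For the reduction, since $\gamma(a) > -\infty$ we may translate so that $\gamma(a) = 0$; then $\gamma$ extends to an increasing homeomorphism of $[a,b)$ onto $[0,\infty)$, and the standardized operator $\widetilde{\ell}$ of Remark \ref{rmk:shypPDE_tildeell} is of the form $-{1 \over A}{d \over d\xi}\bigl(A {d \over d\xi}\bigr)$ on $(0,\infty)$ with $A \in \mathrm{C}^1[0,\infty)$, $A > 0$ on $(0,\infty)$, satisfying the hypotheses of Proposition \ref{prop:shypPDE_zeunersupp} with the same $\eta$, which now lies in $\mathrm{C}^1[0,\infty)$ (when $A(0) = 0$, as for the Bessel operator, the identical support analysis of \cite{zeuner1992} applies). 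Since $\widetilde{w}_\lambda = w_\lambda\circ\gamma^{-1}$ is the $\mathcal{L}$-transform kernel of $\widetilde{\ell}$, pushing the product formula \eqref{eq:shypPDE_prodform_weaklim} forward by $\gamma$ and invoking the uniqueness in Proposition \ref{prop:shypPDE_Lfourmeas_props}(ii) yields $\gamma_*\bm{\nu}_{x,y} = \bm{\nu}^{\widetilde{\ell}}_{\gamma(x),\gamma(y)}$, hence $\supp(\bm{\nu}_{x,y}) = \gamma^{-1}\bigl(\supp \bm{\nu}^{\widetilde{\ell}}_{\gamma(x),\gamma(y)}\bigr)$. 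As $\gamma$ is a homeomorphism of $[a,b)$ onto $[0,\infty)$, it transports the truth of H5 and H7 between $([0,\infty),*_{\widetilde{\ell}})$ and $([a,b),*)$, so it suffices to treat the standard-form case and apply Proposition \ref{prop:shypPDE_zeunersupp} directly.

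Axiom H5 is then immediate: in each of the cases (a)--(e) of Proposition \ref{prop:shypPDE_zeunersupp} one has $\min\supp(\delta_x*\delta_y) = |x-y|$, so $\mathrm{e} = 0 \in \supp(\delta_x*\delta_y)$ if and only if $|x-y| = 0$, i.e.\ $y = x$. Thus H5 holds with the identity involution $\check{x} := x$, which is a homeomorphism of $[a,b)$ onto itself and satisfies $(\check{x})\check{\vrule height1.35ex width0pt \,} = x$.

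For H7, note first that every $\supp(\delta_x*\delta_y)$ is a compact subset of $[0,\infty)$, being contained in $[|x-y|,x+y]$. Continuity of $(x,y)\mapsto\supp(\delta_x*\delta_y)$ into the space of compacta with the Michael topology splits into lower and upper semicontinuity. Lower semicontinuity follows in general from the weak continuity of $(x,y)\mapsto\bm{\nu}_{x,y}$ (Proposition \ref{prop:shypPDE_conv_Mbanachalg}): if $z\in\supp(\delta_{x_0}*\delta_{y_0})$ then $\bm{\nu}_{x_0,y_0}$ charges every neighborhood $U$ of $z$, and by the portmanteau inequality $\liminf_{(x,y)\to(x_0,y_0)} \bm{\nu}_{x,y}(U) \geq \bm{\nu}_{x_0,y_0}(U) > 0$, so nearby convolutions also meet $U$. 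Upper semicontinuity is read off from the formulas of Proposition \ref{prop:shypPDE_zeunersupp}: the interior endpoints $2x_0 - x - y$, $2x_1 + |x-y|$ and $x+y-2x_1$ vary continuously with $(x,y)$, and the regimes of the case distinction fit together continuously along their common boundaries, so the supports never jump outward. The main obstacle will be precisely this matching of the support pieces across the various regimes; it is exactly the analysis performed by Zeuner in \cite[Section 3]{zeuner1992}, which may be quoted or repeated verbatim. Once H5 and H7 are verified, $([a,b),*)$ satisfies all seven axioms and is therefore a hypergroup.
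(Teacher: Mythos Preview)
Your proposal is correct and follows essentially the same route as the paper: reduce via the homeomorphism $\gamma$ to the standard-form operator on $[0,\infty)$, invoke Proposition~\ref{prop:shypPDE_zeunersupp} (i.e.\ Zeuner's support analysis) for the explicit description of $\supp(\delta_x*\delta_y)$, and read off H5 and H7. The paper condenses the verification of H7 into the remark that continuity is easily checked in the Hausdorff metric, whereas you spell out the lower/upper semicontinuity split; and your parenthetical about the case $A(0)=0$ is a point the paper glosses over when it asserts that the transformed operator ``satisfies the assumptions of Proposition~\ref{prop:shypPDE_zeunersupp}'' --- but neither of these differences is substantive.
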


\begin{proof}
It was seen in Proposition \ref{prop:shypPDE_conv_Mbanachalg} that the axioms H1, H2, H3, H4 and H6 hold for the $\mathcal{L}$-convolution (with $\check{x} = x$); we need to check that axioms H5 and H7 are also satisfied.

Assume first that $\ell$ satisfies the assumptions of Proposition \ref{prop:shypPDE_zeunersupp}. Then the explicit expressions for $\supp(\delta_x * \delta_y)$ show that (in each of the cases \emph{(a)}--\emph{(e)}) $\supp(\delta_x * \delta_y)$ depends continuously on $(x,y)$ and contains $\mathrm{e} = 0$ if and only if $x=y$, hence axioms H5 and H7 hold. (Verifying the continuity is easy after noting that the topology in the space of compact subsets can be metrized by the Hausdorff metric, cf.\ \cite[Subsection 4.1]{koornwinderschwartz1997}.) 

Now, in the general case of an operator $\ell$ of the form \eqref{eq:shypPDE_elldiffexpr}, note that $\gamma(a) > -\infty$ means that $\smash{\sqrt{r(y) \over p(y)}}$ is integrable near $a$, so that we may assume that $\gamma(a) = 0$ (otherwise, replace the interior point $c$ by the endpoint $a$ in the definition of the function $\gamma$). By hypothesis, the transformed operator $\widetilde{\ell} = -{1 \over A} {d \over d\xi}(A {d \over d\xi})$ defined via \eqref{eq:shypPDE_tildeell_A} satisfies the assumptions of Proposition \ref{prop:shypPDE_zeunersupp}; by the above, the associated convolution, which we denote by $\widetilde{*}$, satisfies axioms H5 and H7. From the product formulas for the solutions $w_\lambda(x)$ and $\widetilde{w}_\lambda(\xi) = w_\lambda(\gamma^{-1}(\xi))$ we deduce that
\[
\int_{[a,b)} w_\lambda \: d(\delta_x * \delta_y) = w_\lambda(x) w_\lambda(y) = \widetilde{w}_\lambda(\gamma(x)) \widetilde{w}_\lambda(\gamma(y)) = \int_{[0,\infty)} w_\lambda(\gamma^{-1}(z)) \bigl(\delta_{\gamma(x)} \kern.1em \widetilde{*} \kern.12em \delta_{\gamma(y)}\bigr)(dz)
\]
and, consequently, $\delta_x * \delta_y = \gamma^{-1}(\delta_{\gamma(x)} \kern.1em \widetilde{*} \kern.12em \delta_{\gamma(y)})$ (the right hand side denoting the pushforward of the measure $\delta_{\gamma(x)} \kern.1em \widetilde{*} \kern.12em \delta_{\gamma(y)}$ under the map $\xi \mapsto \gamma^{-1}(\xi)$). In particular, $\supp(\delta_x * \delta_y) = \gamma^{-1}\bigl(\supp(\delta_{\gamma(x)} \kern.1em \widetilde{*} \kern.12em \delta_{\gamma(y)})\bigr)$; since $\gamma$ and $\gamma^{-1}$ are continuous, we immediately conclude that the convolution $*$ also satisfies H5 and H7.
\end{proof}

A \emph{hypergroup isomorphism} between $(K_1,*)$ and $(K_2,\diamond)$ is an isomorphism between the Banach algebras $(\mathcal{M}_\mathbb{C}(K_1),*)$ and $(\mathcal{M}_\mathbb{C}(K_2),\diamond)$ which preserves involution and point measures \cite[Definition 1.1.3]{bloomheyer1994}. The proof of Theorem \ref{thm:shypPDE_hypergroupcase} shows that the hypergroups $\bigl([a,b),*\bigr)$ and $\bigl([0,\infty),\widetilde{*}\bigr)$ associated with the differential operators $\ell$ and $\widetilde{\ell}$ are isomorphic, the isomorphism being the pushforward map $\mu \in \mathcal{M}_\mathbb{C}[a,b) \longmapsto \gamma^{-1}(\mu) \in \mathcal{M}_\mathbb{C}[0,\infty)$.

Let us write $\mathrm{C}_{\mathrm{c,even}}^\infty := \{ h:[0,\infty) \to \mathbb{C} \mid h \text{ is the restriction of an even } \mathrm{C}_\mathrm{c}^\infty(\mathbb{R})\text{-function} \}$. The next definition was introduced by Zeuner \cite{zeuner1989}:

\begin{definition} \label{def:shypPDE_slhyp}
A hypergroup $([0,\infty), *)$ is said to be a \emph{Sturm-Liouville hypergroup} if there exists a function $A$ on $[0,\infty)$ satisfying the condition
\begin{enumerate}[itemsep=0pt,topsep=4pt]
\item[\textbf{SL0}] $A \in \mathrm{C}[0,\infty) \cap \mathrm{C}^1(0,\infty)$ and $A(x) > 0$ for $x > 0$
\end{enumerate}
such that, for every function $h \in \mathrm{C}_{\mathrm{c,even}}^\infty$, the convolution
\begin{equation} \label{eq:shypPDE_SLhypdef_conv}
v_h(x,y) = \int_{[0,\infty)} h(\xi) (\delta_x * \delta_y)(d\xi)
\end{equation}
belongs to $\mathrm{C}^2\bigl([0,\infty)^2\bigr)$ and satisfies $(\ell_x v_h)(x,y) = (\ell_y v_h)(x,y)$,\, $(\partial_y v_h)(x,0) = 0$\, ($x > 0$), where $\ell_x = -{1 \over A} {\partial \over \partial x} (A(x) {\partial \over \partial x})$.
\end{definition}

A fundamental existence theorem for Sturm-Liouville hypergroups, which was proved by Zeuner \cite[Theorem 3.11]{zeuner1992}, states: \emph{Suppose that $A$ satisfies SL0 and is such that}
\begin{enumerate}[itemsep=0pt,topsep=4pt]
\item[\textbf{SL1}] One of the following assertions holds: \vspace{-1ex}
\begin{enumerate}[itemsep=0pt]
\item[\textbf{SL1.1}] $A(0) = 0$ and ${A'(x) \over A(x)} = {\alpha_0 \over x} + \alpha_1(x)$ for $x$ in a neighbourhood of $0$, where $\alpha_0 > 0$ and $\alpha_1 \in \mathrm{C}^\infty(\mathbb{R})$ is an odd function;
\item[\textbf{SL1.2}] $A(0) > 0$ and $A \in \mathrm{C}^1[0,\infty)$. \vspace{-1ex}
\end{enumerate}
\item[\textbf{SL2}] There exists $\eta \in \mathrm{C}^1[0,\infty)$ such that $\eta \geq 0$, the functions $\bm{\phi}_\eta$, $\bm{\psi}_\eta$ are both decreasing on $(0,\infty)$ and $\lim_{x \to \infty} \bm{\phi}_\eta(x) = 0$ ($\bm{\phi}_\eta$, $\bm{\psi}_\eta$ are defined as in Assumption \ref{asmp:shypPDE_SLhyperg}).
\end{enumerate}
\emph{Define the convolution $*$ via \eqref{eq:shypPDE_SLhypdef_conv} where, for $h \in \mathrm{C}_{\mathrm{c,even}}^\infty$, $v_h$ denotes the unique solution of $\ell_x v_h = \ell_y v_h$, $v_h(x,0) = v_h(0,x) = h(x)$, $(\partial_y v_h)(x,0) = (\partial_x v_h)(0,y) = 0$. Then $\bigl([0,\infty), *\bigr)$ is a Sturm-Liouville hypergroup.}

To the best of our knowledge, this is the most general known result giving sufficient conditions for the existence of a Sturm-Liouville hypergroup on $[0,\infty)$ associated with a given function $A$. In fact, as far as the authors are aware, all the concrete examples of hypergroup structures on $[0,\infty)$ which were known prior to this work are particular cases of Sturm-Liouville hypergroups satisfying conditions SL0, SL1 and SL2 (see \cite{bloomheyer1994,gallardotrimeche2002}). However, we can prove as a corollary of Theorem \ref{thm:shypPDE_hypergroupcase} that an existence theorem very similar to that of Zeuner continues to hold if the condition SL1 is removed:

\begin{corollary} \label{cor:shypPDE_slhypcor}
Suppose that $A$ satisfies SL0 and SL2. For $h \in \mathcal{D}_\mathcal{L}^{(2,0)}$, denote by $v_h$ the unique solution of $\ell_x v_h = \ell_y v_h$,\, $v_h(x,0) = v_h(0,x) = h(x)$,\, $(\partial_y^{[1]} v_h)(x,0) = (\partial_x^{[1]} v_h)(0,y) = 0$ such that conditions (i)--(ii) in Theorem \ref{thm:shypPDE_Lexistuniq} hold for $f = v_h$. Define the convolution $*$ via \eqref{eq:shypPDE_SLhypdef_conv}. Then $\bigl([0,\infty), *\bigr)$ is a hypergroup.
\end{corollary}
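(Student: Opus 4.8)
The plan is to derive the corollary from Theorem~\ref{thm:shypPDE_hypergroupcase}: I will verify that $\ell = -\frac{1}{A}\frac{d}{dx}\bigl(A\frac{d}{dx}\bigr)$ on $(0,\infty)$ — i.e.\ the case $p=r=A$, $a=0$, $b=\infty$ — satisfies the hypotheses of that theorem, and then check that the convolution $*$ defined in the statement through \eqref{eq:shypPDE_SLhypdef_conv} coincides with the $\mathcal{L}$-convolution of Definition~\ref{def:shypPDEconv_def}.

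For the first step, note that with $p=r=A$ the increasing function $\gamma(x)=\int_c^x\!\sqrt{r/p}\,dy$ reduces to $\gamma(x)=x-c$; since $\sqrt{r/p}\equiv 1$ is integrable near $a=0$ we may take $c=0$, so $\gamma(x)=x$, $\gamma(a)=0>-\infty$ and $\gamma(b)=\infty$. By \eqref{eq:shypPDE_tildeell_A} the coefficient of the standardized operator is $\sqrt{p(\gamma^{-1}(\xi))\,r(\gamma^{-1}(\xi))}=A(\xi)$, so Assumption~\ref{asmp:shypPDE_SLhyperg} for $\ell$ is, verbatim, the conjunction of $\gamma(b)=\infty$ (which holds automatically) with condition SL2. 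Thus the only hypotheses of Theorem~\ref{thm:shypPDE_hypergroupcase} left to verify are the paper's standing assumptions on $\ell$: positivity and the required local absolute continuity of the coefficients (readily checked from SL0 and SL2), and the left-endpoint condition \eqref{eq:shypPDE_Lop_leftBC}.

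The verification of \eqref{eq:shypPDE_Lop_leftBC} is the one genuinely new point, and it is exactly where condition SL1 can be dispensed with: in Zeuner's theorem SL1 served to control the behaviour of $A$ near $0$, but all that is needed here is that the left boundary be regular or entrance, and this follows from monotonicity of $A$. Indeed, since $\bm{\phi}_\eta$ is decreasing with $\lim_{\xi\to\infty}\bm{\phi}_\eta(\xi)=0$ it is nonnegative, so $A'/A=\bm{\phi}_\eta+\eta\ge 0$ on $(0,\infty)$; as $A\in\mathrm{C}[0,\infty)$, $A$ is nondecreasing on $[0,\infty)$. Hence for $0<y\le x<c$ we have $A(x)\ge A(y)$, so $\int_y^c\frac{dx}{A(x)}\le\frac{c-y}{A(y)}$, and
\[
\int_0^c\!\!\int_y^c\frac{dx}{A(x)}\,A(y)\,dy \;\le\; \int_0^c(c-y)\,dy \;=\; \frac{c^2}{2}\;<\;\infty,
\]
so \eqref{eq:shypPDE_Lop_leftBC} holds. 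By Theorem~\ref{thm:shypPDE_hypergroupcase}, $\bigl([0,\infty),*\bigr)$ with $*$ the $\mathcal{L}$-convolution is then a hypergroup (with identity $\delta_0$ and trivial involution).

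Finally I would identify the two convolutions. For $h\in\mathcal{D}_\mathcal{L}^{(2,0)}$ the function $f_h$ of \eqref{eq:shypPDE_Lexistence} is symmetric in $(x,y)$, so it satisfies $f_h(x,0)=f_h(0,x)=h(x)$ and $(\partial_y^{[1]}f_h)(x,0)=(\partial_x^{[1]}f_h)(0,y)=0$, and by Proposition~\ref{prop:shypPDE_Lexistuniq_ext} it is the \emph{unique} solution of \eqref{eq:shypPDE_Lcauchy} with properties (i)--(ii); hence $v_h=f_h$ for every such $h$. On the other hand \eqref{eq:shypPDE_prodformmeas_charact} gives $\int_{[0,\infty)}h\,d\bm{\nu}_{x,y}=f_h(x,y)$, so $\int h\,d\bm{\nu}_{x,y}=v_h(x,y)$ for all $h\in\mathcal{D}_\mathcal{L}^{(2,0)}$. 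Since $\mathcal{D}_\mathcal{L}^{(2,0)}$ contains sequences converging pointwise and boundedly to the indicator of any interval of $[0,\infty)$ (as recorded before Lemma~\ref{lem:shypPDE_exptriplepf_probmeas}), a finite positive measure on $[0,\infty)$ is determined by its integrals against $\mathcal{D}_\mathcal{L}^{(2,0)}$; therefore the measure $\delta_x*\delta_y$ defined implicitly by \eqref{eq:shypPDE_SLhypdef_conv} equals $\bm{\nu}_{x,y}$, and $*$ agrees with the $\mathcal{L}$-convolution. The conclusion follows. The only step requiring real care is the passage to \eqref{eq:shypPDE_Lop_leftBC}; everything else is bookkeeping over results already proved.
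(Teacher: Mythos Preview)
Your proof is correct and follows the same approach as the paper's own proof, which is the very brief observation that \eqref{eq:shypPDE_prodformmeas_charact} and Proposition~\ref{prop:shypPDE_Lexistuniq_ext} identify the convolution defined through \eqref{eq:shypPDE_SLhypdef_conv} with the $\mathcal{L}$-convolution of Definition~\ref{def:shypPDEconv_def}. Your argument is in fact more complete: you explicitly verify the left-endpoint condition \eqref{eq:shypPDE_Lop_leftBC} from SL0 and SL2 (using that $A'/A=\bm{\phi}_\eta+\eta\ge 0$ forces $A$ to be nondecreasing), a step the paper's one-line proof takes for granted as part of the standing assumptions.
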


\begin{proof}
Just notice that, by \eqref{eq:shypPDE_prodformmeas_charact} and Proposition \ref{prop:shypPDE_Lexistuniq_ext}, the definition of convolution given in the statement of the corollary is equivalent to Definition \ref{def:shypPDEconv_def}.
\end{proof}

This corollary shows that it is natural to modify the definition of Sturm-Liouville hypergroup (Definition \ref{def:shypPDE_slhyp}) by replacing the space $\mathrm{C}_{\mathrm{c,even}}^\infty$ by $\mathcal{D}_\mathcal{L}^{(2,0)}$ and replacing $\partial_y$ by $\partial_y^{[1]}$ in the initial condition, because in this way we are able to extend the class of Sturm-Liouville hypergroups to all functions $A$ satisfying conditions SL0 and SL2.

We emphasize that condition SL1 imposes a great restriction on the behavior of the Sturm-Liouville operator $\ell(u) = -u'' - {A' \over A} u'$ near zero: in the singular case $A(0) = 0$, SL1 requires that ${A'(x) \over A(x)} \sim {\alpha_0 \over x}$. Therefore, as shown in the next example, Corollary \ref{cor:shypPDE_slhypcor} leads, in particular, to a considerable extension of the class of singular operators for which an associated hypergroup exists:

\begin{example}
If $A$ satisfies SL0 and the function ${A' \over A}$ is nonnegative and decreasing, then SL2 is satisfied with $\eta := 0$. Therefore, Corollary \ref{cor:shypPDE_slhypcor} assures that there exists a hypergroup associated with the operator $\ell(u) = -u'' - {A' \over A} u'$. Notice that this existence result holds without any restriction on the growth of ${A'(x) \over A(x)}$ as $x \downarrow 0$. 
\end{example}

\subsection{The degenerate case: degenerate hypergroups of full support}

The goal of this subsection is to prove that in the degenerate case $\gamma(a) = -\infty$ the pair $\bigl([a,b),*\bigr)$ is a degenerate hypergroup of full support, in the sense of the following definition:

\begin{definition}
Let $K$ be a locally compact space and $*$ a bilinear operator on $\mathcal{M}_\mathbb{C}(K)$. The pair $(K,*)$ is said to be a \emph{degenerate hypergroup of full support} if conditions H1--H4 and H6 hold, together with the following axiom:
\begin{enumerate}[itemsep=0pt,topsep=4pt]
\item[\textbf{DH.}] $\supp(\delta_x * \delta_y) = K$ for all $x, y \in K \setminus \{\mathrm{e}\}$.
\end{enumerate}
\end{definition}

As we saw in the proof of Proposition \ref{prop:shypPDE_zeunersupp}, in order to determine the support of $\delta_x * \delta_y$ we need to know when the solution of the Cauchy problem \eqref{eq:shypPDE_Lcauchy} is strictly positive. Our first step is to use Lemma \ref{lem:shypPDE_inteqtriangle} in order to derive an integral inequality which will turn out to be useful for studying the strict positivity of solution.

\begin{lemma} \label{lem:shypPDE_sol_intineqpos}
Write $R(x) := {A(x) \over B(x)}$, where $B(x) = \exp({1 \over 2} \int_\beta^x \eta(\xi)d\xi)$ (with $\beta > \gamma(a)$ arbitrary). Take $h \in \mathcal{D}_\mathcal{L}^{(2,0)\!}$ such that $h \geq 0$. Let $u(x,y) := f(\gamma^{-1}(x), \gamma^{-1}(y))$, where $f \in \mathrm{C}^2\bigl((a,b)^2\bigr)$ is the solution \eqref{eq:shypPDE_Lexistence} of the Cauchy problem (cf.\ Proposition \ref{prop:shypPDE_Lexistuniq_ext}). Then the following inequality holds:
\begin{align*}
R(x) R(y) u(x,y) & \geq \tfrac{1}{2} \int_{\gamma(a)}^y R(s) R(x-y+s) \bigl[ \bm{\phi}_\eta(s) + \bm{\phi}_\eta(x-y+s) \bigr] u(x-y+s,s)\, ds \\
& + \tfrac{1}{2} \int_{\gamma(a)}^y R(s) R(x+y-s) \bigl[ \bm{\phi}_\eta(s) - \bm{\phi}_\eta(x+y-s) \bigr] u(x+y-s,s)\, ds \\
& + \tfrac{1}{2} \int_{\Delta} R(\xi) R(\zeta) \bigl[\bm{\psi}_\eta(\zeta) - \bm{\psi}_\eta(\xi)\bigr] u(\xi,\zeta)\, d\xi d\zeta
\end{align*}
where $\Delta \equiv \Delta_{\gamma(a),x,y} = \{(\xi,\zeta) \in \mathbb{R}^2 \mid \zeta \geq \gamma(a), \, \xi + \zeta \leq x+y, \, \xi - \zeta \geq x-y \}$.
\end{lemma}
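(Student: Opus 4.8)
The plan is to apply the integral identity of Lemma~\ref{lem:shypPDE_inteqtriangle} to a Liouville-type transform of $u$ and then let the base of the triangle descend to $\gamma(a)$. For definiteness take $\gamma(a)<y\le x$ (the regime in which $\Delta_{\gamma(a),x,y}$ and the monotonicity signs below are natural), and set $v(x,y):=B(x)B(y)u(x,y)$. The first ingredient is the elementary identity $\bm{\ell}^B(B\phi)=B\,\widetilde{\ell}\phi$, valid for every $\phi\in\mathrm{C}^2$; this is a direct computation from $B'/B=\tfrac12\eta$ and the definitions $\bm{\phi}_\eta=A'/A-\eta$, $\bm{\psi}_\eta=\tfrac12\eta'-\tfrac14\eta^2+\tfrac{A'}{2A}\eta$. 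Since $f$ solves $\ell_x f=\ell_y f$, its standardization $u$ solves $\widetilde{\ell}_x u=\widetilde{\ell}_y u$ (Remark~\ref{rmk:shypPDE_tildeell}), so $(\bm{\ell}^B_\zeta v-\bm{\ell}^B_\xi v)(\xi,\zeta)=B(\xi)B(\zeta)(\widetilde{\ell}_\zeta u-\widetilde{\ell}_\xi u)(\xi,\zeta)=0$; i.e. the term $I_4$ of \eqref{eq:shypPDE_inteqtriangle} vanishes. Using $A_{\mathsmaller{B}}=A/B^2$, $R=A/B$ and $v=B\otimes B\cdot u$, every occurrence of $A_{\mathsmaller{B}}(\xi)A_{\mathsmaller{B}}(\zeta)$ paired with $v$ (or with $\partial_y v$) collapses onto $R$ and $u$, so for $\gamma(a)<c\le y\le x$ Lemma~\ref{lem:shypPDE_inteqtriangle} reduces to
\[
R(x)R(y)\,u(x,y)=H+I_0+I_1+I_2+I_3,
\]
where $I_1,I_2,I_3$ are exactly the three integrals in the statement but with lower limit $c$ and triangle $\Delta_{c,x,y}$, $H=\tfrac12R(c)\bigl[R(x-y+c)u(x-y+c,c)+R(x+y-c)u(x+y-c,c)\bigr]$, and $I_0=\tfrac12\int_{x-y+c}^{x+y-c}R(c)R(s)\bigl[\tfrac12\eta(c)u(s,c)+(\partial_y u)(s,c)\bigr]ds$.

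Next I would check signs. By Corollary~\ref{cor:hypPDE_sol_positivity} we have $u\ge 0$ on $(\gamma(a),\infty)^2$ (if $h$ is not regular enough for that corollary one first replaces $h$ by $T_s h$ as in the proof of Lemma~\ref{lem:shypPDE_exptriplepf_probmeas} and lets $s\downarrow 0$ at the end). Hence $H\ge 0$, since $R>0$. For the three integrands: $\bm{\phi}_\eta\ge 0$; one has $s\le\tfrac12(x+y)$ whenever $s\le y\le x$ and $\bm{\phi}_\eta$ is decreasing, so $\bm{\phi}_\eta(s)-\bm{\phi}_\eta(x+y-s)\ge 0$; on $\Delta_{c,x,y}$ one has $\xi\ge\zeta$ and $\bm{\psi}_\eta$ is decreasing, so $\bm{\psi}_\eta(\zeta)-\bm{\psi}_\eta(\xi)\ge 0$. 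Thus all three integrands are nonnegative, and $I_1,I_2,I_3\ge 0$. Granting $I_0\ge 0$ as well, we would obtain $R(x)R(y)u(x,y)\ge I_1+I_2+I_3$ for every admissible $c$; and since the integrands are nonnegative and the domains of integration increase to $[\gamma(a),y]$ and $\Delta_{\gamma(a),x,y}$ as $c\downarrow\gamma(a)$, monotone convergence would upgrade this to the claimed inequality.

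The genuine obstacle is the nonnegativity of $I_0$, which is not manifest for $f$ itself. I would circumvent it by working first with the shifted-boundary solutions $f_m$ of Propositions~\ref{prop:shypPDE_Gexistence_eps}--\ref{prop:hypPDE_sol_positivity_reg} (with $\tilde a_m=\gamma(a_m)\downarrow\gamma(a)$): their standardizations $u_m$ solve $\widetilde{\ell}_x u_m=\widetilde{\ell}_y u_m$, are of class $\mathrm{C}^2$ on $\Delta_{\tilde a_m,x,y}$, and satisfy $(\partial_y u_m)(\cdot,\tilde a_m)=0$ with $u_m(\cdot,\tilde a_m)=h\circ\gamma^{-1}\ge 0$. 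Applying the reduced identity above to $u_m$ with $c=\tilde a_m$, the boundary term $I_0$ for $f_m$ becomes $\tfrac14\eta(\tilde a_m)R(\tilde a_m)\int_{x-y+\tilde a_m}^{x+y-\tilde a_m}R(s)\,(h\circ\gamma^{-1})(s)\,ds\ge 0$ (using $\eta\ge 0$), and likewise $H\ge 0$; hence $R(x)R(y)u_m(x,y)\ge I_1^{(m)}+I_2^{(m)}+I_3^{(m)}$, the $I_j^{(m)}$ being the integrals over $[\tilde a_m,y]$ and $\Delta_{\tilde a_m,x,y}$ with $u_m$ in place of $u$. Finally, letting $m\to\infty$: $u_m\to u$ pointwise by Proposition~\ref{prop:shypPDE_Gexistence_eps}, and because the integrands are nonnegative and the domains of integration increase to those of the statement, Fatou's lemma gives $R(x)R(y)u(x,y)=\lim_m R(x)R(y)u_m(x,y)\ge\liminf_m\bigl(I_1^{(m)}+I_2^{(m)}+I_3^{(m)}\bigr)\ge$ the right-hand side of the claimed inequality, which completes the proof. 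The main work is thus this detour through the shifted problems, where the boundary data directly controls the sign of $I_0$.
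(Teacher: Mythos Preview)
Your proof is correct and follows essentially the same route as the paper: apply the integral identity of Lemma~\ref{lem:shypPDE_inteqtriangle} to the shifted-boundary solutions $u_m$ with $c=\tilde a_m$, use the boundary data $(\partial_y u_m)(\cdot,\tilde a_m)=0$ and $\eta\ge 0$ to get $H,I_0\ge 0$, drop these terms, and pass to the limit $m\to\infty$. The only difference is in the limiting step: the paper first freezes an intermediate lower limit $\alpha\in[\tilde a_m,y]$, uses dominated convergence on the fixed compact domain to replace $u_m$ by $u$, and then lets $\alpha\downarrow\gamma(a)$ by monotone convergence, whereas you handle both in one stroke via Fatou's lemma; both arguments are valid since the integrands are nonnegative.
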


\begin{proof}
Let $\{a_m\}_{m \in \mathbb{N}}$ be a sequence $b > a_1 > a_2 > \ldots$ with $\lim a_m = a$. For $m \in \mathbb{N}$, define $u_m(x,y) := f_m(\gamma^{-1}(x), \gamma^{-1}(y))$, where $f_m$ is given by \eqref{eq:shypPDE_Lexistence_eps}. The function $v_m(x,y) = B(x) B(y) u_m(x,y)$ is a solution of
\begin{align*}
(\bm{\ell}_x^B v_m)(x,y) = (\bm{\ell}_y^B v_m)(x,y), & \qquad x, y > \tilde{a}_m \\
v_m(x,\tilde{a}_m) = B(x) B(\tilde{a}_m) h(\gamma^{-1}(x)), & \qquad x > \tilde{a}_m \\
(\partial_y v_m)(x,\tilde{a}_m) = \tfrac{1}{2} \eta(\tilde{a}_m) B(x) B(\tilde{a}_m) h(\gamma^{-1}(x)), & \qquad x > \tilde{a}_m
\end{align*}
where $\bm{\ell}^B v := - v'' - \bm{\phi}_\eta v' + \bm{\psi}_\eta v$. Clearly, $v_m(x,\tilde{a}_m),\, (\partial_y v_m)(x,\tilde{a}_m) \geq 0$. By Lemma \ref{lem:shypPDE_inteqtriangle}, the integral equation \eqref{eq:shypPDE_inteqtriangle} holds with $v = v_m$ and $c = a_m$. It is clear that we have $H \geq 0$, $I_0 \geq 0$ and $I_4 = 0$ in the right hand side of \eqref{eq:shypPDE_inteqtriangle}; moreover, it follows from Proposition \ref{prop:hypPDE_sol_positivity_reg} and Assumption \ref{asmp:shypPDE_SLhyperg} that the integrands of $I_1, I_2$ and $I_3$ are nonnegative. Consequently, for $\alpha \in [\tilde{a}_m,y]$ we have
\begin{equation} \label{eq:shypPDE_sol_intineqpos_pf1}
\begin{aligned}
R(x) R(y) u_m(x,y) & \geq \tfrac{1}{2} \int_\alpha^y R(s) R(x-y+s) \bigl[ \bm{\phi}_\eta(s) + \bm{\phi}_\eta(x-y+s) \bigr] u_m(x-y+s,s)\, ds \\
& + \tfrac{1}{2} \int_\alpha^y R(s) R(x+y-s) \bigl[ \bm{\phi}_\eta(s) - \bm{\phi}_\eta(x+y-s) \bigr] u_m(x+y-s,s)\, ds \\
& + \tfrac{1}{2} \int_{\Delta_{\alpha,x,y}} \! R(\xi) R(\zeta) \bigl[\bm{\psi}_\eta(\zeta) - \bm{\psi}_\eta(\xi)\bigr] u_m(\xi,\zeta)\, d\xi d\zeta
\end{aligned}
\end{equation}
where $\Delta_{\alpha,x,y} = \{(\xi,\zeta) \in \mathbb{R}^2 \mid \zeta \geq \alpha, \, \xi + \zeta \leq x+y, \, \xi - \zeta \geq x-y\}$. Since by Proposition \ref{prop:shypPDE_Gexistence_eps} $\lim_{m \to \infty} u_m(x,y) = u(x,y)$ pointwise for $x,y \in (\gamma(a),\infty)$, by taking the limit we deduce that for each fixed $\alpha \in (\gamma(a),y]$ the inequality \eqref{eq:shypPDE_sol_intineqpos_pf1} holds with $u_m$ replaced by $u$. If we then take the limit $\alpha \downarrow \gamma(a)$, the desired integral inequality follows.
\end{proof}

The next lemma will be helpful for verifying the strict positivity of the integrands in the above integral inequality.

\begin{lemma} \label{lem:shypPDE_degen_nonconst}
If $\gamma(a) = -\infty$, then at least one of the functions $\bm{\phi}_\eta$, $\bm{\psi}_\eta$ defined in Assumption \ref{asmp:shypPDE_SLhyperg} is non-constant on every neighbourhood of $-\infty$.
\end{lemma}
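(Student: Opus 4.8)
The plan is to argue by contradiction. Suppose $\gamma(a) = -\infty$ and, contrary to the assertion, that both $\bm{\phi}_\eta$ and $\bm{\psi}_\eta$ are constant on some common left-neighbourhood $(-\infty,\xi_0)$ of $-\infty$, say $\bm{\phi}_\eta \equiv c_1$ and $\bm{\psi}_\eta \equiv c_2$ there. The goal is to contradict the standing boundary condition \eqref{eq:shypPDE_Lop_leftBC}.

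First I would record two consequences of this assumption, phrased in terms of the standard-form coefficient $A$ of Remark \ref{rmk:shypPDE_tildeell}, which, since $\gamma(a) = -\infty$ and $\gamma(b) = \infty$, is defined on all of $\mathbb{R}$. On the one hand, applying \eqref{eq:shypPDE_Lop_leftBC} with interior reference point $\gamma^{-1}(\xi_0)$ and substituting $\xi = \gamma(x)$, $\zeta = \gamma(y)$ (so $\tfrac{dx}{p(x)} = \tfrac{d\xi}{A(\xi)}$ and $r(y)\,dy = A(\zeta)\,d\zeta$), one gets
\[
\int_{-\infty}^{\xi_0} \Bigl( \int_\zeta^{\xi_0} \frac{d\xi}{A(\xi)} \Bigr) A(\zeta)\, d\zeta < \infty .
\]
On the other hand, $\tfrac{A'}{A} = \bm{\phi}_\eta + \eta = c_1 + \eta$ is $\mathrm{C}^1$ on $(-\infty,\xi_0)$ and $\bm{\phi}_\eta' \equiv 0$ there, so by \eqref{eq:shypPDE_liouvtrans_frakq} the Liouville potential $\mathfrak{q}$ is constant on $(-\infty,\xi_0)$, $\mathfrak{q} \equiv q_0 := \tfrac14 c_1^2 + c_2$; since $\mathfrak{q} = \bigl(\tfrac{A'}{2A}\bigr)^2 + \bigl(\tfrac{A'}{2A}\bigr)'$, a direct computation yields $(\sqrt{A})'' = \mathfrak{q}\,\sqrt{A}$, so $g := \sqrt{A}$ solves $g'' = q_0\,g$ on $(-\infty,\xi_0)$. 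As $g = \sqrt{A} > 0$ never vanishes, necessarily $q_0 \geq 0$ (if $q_0 < 0$ every nontrivial solution of $g'' = q_0 g$ oscillates).

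Next I would pin down the shape of $A$ near $-\infty$. Put $k := \sqrt{q_0} \geq 0$; then $g(\xi) = \alpha e^{k\xi} + \beta e^{-k\xi}$ if $k > 0$ and $g(\xi) = \alpha\xi + \beta$ if $k = 0$. Since $\eta \geq 0$ and $c_1 = \bm{\phi}_\eta \geq \lim_{\xi\to\infty}\bm{\phi}_\eta = 0$ (as $\bm{\phi}_\eta$ is decreasing), we have $\tfrac{A'}{A} = c_1 + \eta \geq 0$ on $(-\infty,\xi_0)$, so $A$ and hence $g = \sqrt{A}$ is nondecreasing there; combined with $g > 0$, this forces $\beta = 0$ when $k > 0$ (otherwise $g'/g \to -k < 0$, or $g < 0$, as $\xi \to -\infty$) and $\alpha = 0$ when $k = 0$ (otherwise $g \to \mp\infty$). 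In every case $\tfrac{A'}{A} \equiv 2k$ (a nonnegative constant) on $(-\infty,\xi_0)$, that is, $A(\xi) = A(\xi_0)\,e^{2k(\xi - \xi_0)}$ there.

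Finally I would substitute this back into the transformed boundary condition. If $k > 0$, then $\bigl(\int_\zeta^{\xi_0}\tfrac{d\xi}{A(\xi)}\bigr)A(\zeta) = \tfrac{1}{2k}\bigl(1 - e^{2k(\zeta - \xi_0)}\bigr) \to \tfrac{1}{2k} > 0$ as $\zeta \to -\infty$, so the outer integral diverges; if $k = 0$, then $A$ is constant near $-\infty$, the integrand equals $\xi_0 - \zeta$, and the integral is again infinite. Either way \eqref{eq:shypPDE_Lop_leftBC} is violated, which is the sought contradiction. I expect the only mildly delicate points to be the translation of \eqref{eq:shypPDE_Lop_leftBC} into the standard coordinates and the elimination of the $e^{-k\xi}$ branch using the monotonicity of $A$; I do not anticipate a genuine obstacle.
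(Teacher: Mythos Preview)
Your proof is correct and takes a genuinely different route from the paper's. The paper also argues by contradiction, but uses spectral theory: if $\bm{\phi}_\eta$ and $\bm{\psi}_\eta$ are both constant near $-\infty$, then the Liouville potential $\mathfrak{q}$ is constant there, so by \cite[Theorem 15.3]{weidmann1987} the essential spectrum of any self-adjoint realization of $\ell$ on $(a,c)$ contains a half-line; but \eqref{eq:shypPDE_Lop_leftBC} together with \cite[Theorem 3.1]{mckean1956} forces purely discrete spectrum on $(a,c)$, a contradiction. You instead solve the equation $(\sqrt{A})'' = q_0\sqrt{A}$ explicitly, use positivity and monotonicity of $A$ to pin down $A(\xi) = A(\xi_0)e^{2k(\xi-\xi_0)}$ near $-\infty$, and then check by hand that the transformed boundary integral diverges. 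Your argument is more elementary and self-contained --- it avoids quoting external spectral results --- at the cost of a short case analysis; the paper's is shorter because the spectral machinery (already used elsewhere, e.g.\ in Proposition~\ref{prop:shypPDE_spectralsupp}) absorbs the work. Both ultimately exploit the same incompatibility: a constant $\mathfrak{q}$ near $-\infty$ makes the left endpoint behave like a natural boundary, contradicting \eqref{eq:shypPDE_Lop_leftBC}.
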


\begin{proof}
Suppose by contradiction that $\gamma(a) = -\infty$ and $\bm{\phi}_\eta$, $\bm{\psi}_\eta$ are both constant on an interval $(-\infty,\kappa] \subset \mathbb{R}$. Recall from the proof of Proposition \ref{prop:shypPDE_spectralsupp} that $\mathcal{L}$ is unitarily equivalent to a self-adjoint realization of $-{d^2 \over d\xi^2} + \mathfrak{q}$, where $\mathfrak{q}$ is given by \eqref{eq:shypPDE_liouvtrans_frakq}. Clearly, $\mathfrak{q}(\xi) = \mathfrak{q}_\infty := {1 \over 4} \bm{\phi}_\eta^2(\kappa) + \bm{\psi}_\eta(\kappa) < -\infty$ for all $\xi \in (-\infty,\kappa)$. It therefore follows from \cite[Theorem 15.3]{weidmann1987} that the essential spectrum of any self-adjoint realization of $\ell$ restricted to an interval $(a,c)$ (for $a < c < b$) contains $[\mathfrak{q}_\infty,\infty)$. However, it follows from the boundary condition \eqref{eq:shypPDE_Lop_leftBC} and \cite[Theorem 3.1]{mckean1956} that self-adjoint realizations of $\ell$ restricted to $(a,c)$ have purely discrete spectrum. This contradiction proves the lemma.
\end{proof}

We are now ready to prove that in the case $\gamma(a) = -\infty$ the solution of the (nontrivial) Cauchy problem \eqref{eq:shypPDE_Lcauchy} always has full support on $(a,b)^2$, even when the initial condition is compactly supported:

\begin{theorem}[Strict positivity of solution for the Cauchy problem \eqref{eq:shypPDE_Lcauchy}] \label{thm:shypPDE_degen_strictpos}
Suppose that $\gamma(a) = -\infty$. Take $h \in \mathcal{D}_\mathcal{L}^{(2,0)}$. If $h \geq 0$ and $h(\tau_0) > 0$ for some $\tau_0 \in (a,b)$, then the function $f$ given by \eqref{eq:shypPDE_Lexistence} is such that
\[
f(x,y) > 0 \qquad \text{for } x, y \in (a,b).
\]
\end{theorem}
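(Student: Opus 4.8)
The plan is to transfer everything to the standardised variable $\xi=\gamma(x)$: since $\gamma(b)=\infty$ by Assumption \ref{asmp:shypPDE_SLhyperg} and $\gamma(a)=-\infty$ by hypothesis, $\gamma\colon(a,b)\to(-\infty,\infty)$ is an increasing bijection, so it suffices to prove that $u(x,y):=f(\gamma^{-1}(x),\gamma^{-1}(y))\in\mathrm{C}^2(\mathbb{R}^2)$ is strictly positive on all of $\mathbb{R}^2$. By \eqref{eq:shypPDE_prodformmeas_charact}, $f(x,y)=\int_{[a,b)}h\,d\bm{\nu}_{x,y}$ with $\bm{\nu}_{x,y}\in\mathcal{P}[a,b)$ (Theorem \ref{thm:shypPDE_prodform_weaklim_full}), whence $u\geq0$ because $h\geq0$, and $u$ obeys the integral inequality of Lemma \ref{lem:shypPDE_sol_intineqpos}. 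First I would record a \emph{seed} of strict positivity coming from the initial datum: by the weak continuity of $(x,y)\mapsto\bm{\nu}_{x,y}$ on $[a,b)^2$ (Proposition \ref{prop:shypPDE_conv_Mbanachalg}) together with $\bm{\nu}_{x,a}=\delta_x$, the function $f$ extends continuously to $[a,b)^2$ with $f(\cdot,a)=h$; since $h$ is continuous and $h(\tau_0)>0$, there are a nondegenerate compact interval $\overline{I}\subset(a,b)$ with $\tau_0\in\overline{I}$ and $\min_{\overline{I}}h>0$, and a point $y_*\in(a,b)$, such that $f>0$ on $\overline{I}\times[a,y_*]$. Writing $J:=\gamma(\overline{I})$ (a nondegenerate compact interval) and $\zeta_*:=\gamma(y_*)$, this reads: $u>0$ on $J\times(-\infty,\zeta_*]$, and hence also on $(-\infty,\zeta_*]\times J$ by the symmetry $u(\xi,\zeta)=u(\zeta,\xi)$.

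Next I would fix an arbitrary $(x_0,y_0)\in\mathbb{R}^2$; using $u(x_0,y_0)=u(y_0,x_0)$ I may assume $y_0\leq x_0$. Apply Lemma \ref{lem:shypPDE_sol_intineqpos} with apex $(x_0,y_0)$, so $\Delta=\Delta_{\gamma(a),x_0,y_0}$ with $\gamma(a)=-\infty$. The three integrands on its right-hand side are nonnegative on their respective domains of integration (for the area integral because $\xi\geq\zeta$ on $\Delta$ and $\bm{\psi}_\eta$ is decreasing; for the two boundary integrals because $\bm{\phi}_\eta\geq0$ and $R>0$, and, for the second one, because $\bm{\phi}_\eta$ is decreasing while $s\leq x_0+y_0-s$ for $s\leq y_0$), and each term is $\leq 2R(x_0)R(y_0)u(x_0,y_0)<\infty$; hence it is enough to exhibit, in one of the three terms, a subset of positive measure of the domain of integration on which the integrand is \emph{strictly} positive. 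I would then split via Lemma \ref{lem:shypPDE_degen_nonconst}.

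In the case where $\bm{\psi}_\eta$ is non-constant on every neighbourhood of $-\infty$, the decreasing function $\bm{\psi}_\eta$ satisfies $M_\psi:=\lim_{\xi\to-\infty}\bm{\psi}_\eta(\xi)>\bm{\psi}_\eta(\min J)$, so there are $\epsilon>0$ and $\zeta_2\in\mathbb{R}$ with $\bm{\psi}_\eta\leq M_\psi-\epsilon$ on $J$ and $\bm{\psi}_\eta>M_\psi-\tfrac{\epsilon}{2}$ on $(-\infty,\zeta_2)$. For all sufficiently negative $\zeta$ the horizontal slice of $\Delta$ at height $\zeta$ contains $J$, so $\Delta\supseteq J\times(-\infty,\zeta_1]$ for some $\zeta_1$; on the positive-measure set $J\times(-\infty,\min(\zeta_1,\zeta_2,\zeta_*))$ one has $u>0$ (it lies in the seed) and $\bm{\psi}_\eta(\zeta)-\bm{\psi}_\eta(\xi)>\tfrac{\epsilon}{2}$, so the area integral is strictly positive and $u(x_0,y_0)>0$.

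In the remaining case $\bm{\psi}_\eta$ is constant near $-\infty$, so by Lemma \ref{lem:shypPDE_degen_nonconst} $\bm{\phi}_\eta$ is non-constant on every neighbourhood of $-\infty$; as $\bm{\phi}_\eta\geq0$ is decreasing this forces $\xi_*:=\inf\{\xi:\bm{\phi}_\eta(\xi)=0\}>-\infty$ (hence $\bm{\phi}_\eta>0$ on $(-\infty,\xi_*)$) and $M_\phi:=\lim_{\xi\to-\infty}\bm{\phi}_\eta(\xi)>0$. Put $\zeta_0:=\min(\zeta_*,\xi_*)$, $\Xi_0:=\sup J$ and $\mathcal{W}:=\{(\xi,\zeta):\min(\xi,\zeta)<\zeta_0,\ \max(\xi,\zeta)>\Xi_0\}$. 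I would first enlarge the seed to $\mathcal{W}$ using the first boundary integral: if $(x_0,y_0)\in\mathcal{W}$ with $y_0\leq x_0$ then $y_0<\zeta_0$ and $x_0>\Xi_0$, so the interval $E:=\{s:x_0-y_0+s\in J\}$ has $\sup E=\Xi_0-(x_0-y_0)<\min(\zeta_0,y_0)$; thus for $s\in E$ one has $s<\xi_*$ (so $\bm{\phi}_\eta(s)>0$) and $(x_0-y_0+s,s)\in J\times(-\infty,\zeta_*)$ (so $u>0$ there), making the first boundary integral strictly positive and $u(x_0,y_0)>0$. Finally, for an arbitrary $(x_0,y_0)$ with $y_0\leq x_0$, as $s\to-\infty$ we have $\bm{\phi}_\eta(s)\to M_\phi>0$ and $\bm{\phi}_\eta(x_0+y_0-s)\to0$, while the point $(x_0+y_0-s,s)$ eventually lies in $\mathcal{W}$, since its smaller coordinate $s$ tends to $-\infty$ and its larger coordinate $x_0+y_0-s$ tends to $+\infty$; hence $\bm{\phi}_\eta(s)-\bm{\phi}_\eta(x_0+y_0-s)>0$ and $u(x_0+y_0-s,s)>0$ for all $s$ below a finite threshold, and since the integration range is $(-\infty,y_0)$ this makes the second boundary integral strictly positive, so $u(x_0,y_0)>0$. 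The step I expect to be most delicate is precisely this last case: when $\bm{\psi}_\eta$ is constant near $-\infty$ the area integral carries no information, and one must route the positivity through the boundary integrals in two stages, checking in each that the chosen portion of the triangle $\Delta$ lies in the current positivity region while the relevant weight ($\bm{\psi}_\eta(\zeta)-\bm{\psi}_\eta(\xi)$, respectively $\bm{\phi}_\eta$) stays bounded away from $0$.
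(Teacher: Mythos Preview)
Your proof is correct and follows essentially the same strategy as the paper's: apply the integral inequality of Lemma~\ref{lem:shypPDE_sol_intineqpos}, split via Lemma~\ref{lem:shypPDE_degen_nonconst}, use the area integral in the $\bm{\psi}_\eta$-case, and a two-step boundary-integral argument in the $\bm{\phi}_\eta$-case. The differences are only cosmetic: you build a two-dimensional seed $J\times(-\infty,\zeta_*]$ via weak continuity of $(x,y)\mapsto\bm{\nu}_{x,y}$ on $[a,b)^2$, whereas the paper uses the one-dimensional ray $\{\tilde\tau_0\}\times(-\infty,\kappa]$ obtained from $\lim_{y\to-\infty}u(\tilde\tau_0,y)=h(\tau_0)$ together with continuity of $u$; and in the $\bm{\phi}_\eta$-case you pass through the wedge $\mathcal{W}$ and use the asymptotics $\bm{\phi}_\eta(s)\to M_\phi>0$, $\bm{\phi}_\eta(x_0+y_0-s)\to 0$, while the paper fixes a point $\kappa$ of strict decrease of $\bm{\phi}_\eta$ and works with the half-plane $\{x\ge\tilde\tau_0,\ y\le\kappa\}$.
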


\begin{proof}
Let $u(x,y) := f(\gamma^{-1}(x), \gamma^{-1}(y))$ and $\tilde{\tau}_0 = \gamma(\tau_0)$. Fix $x_0 \geq y_0 > -\infty$. Since $\lim_{y \to -\infty} u(\tilde{\tau}_0,y) = h(\tau_0) > 0$, there exists $\kappa \in (-\infty,\min\{y_0,\tau_0\})$ such that $u(\tilde{\tau}_0,y) > 0$ for all $y \leq \kappa$.

Suppose $\bm{\phi}_\eta$ is non-constant on every neighbourhood of $-\infty$. Choosing a smaller $\kappa$ if necessary, we may assume that $\bm{\phi}_\eta(\kappa) > \bm{\phi}_\eta(\xi)$ for all $\xi > \kappa$. For each $x > \tilde{\tau}_0$ and $y \leq \kappa$ we have by Lemma \ref{lem:shypPDE_sol_intineqpos}
\[
R(x) R(y) u(x,y) \geq \tfrac{1}{2} \int_{-\infty}^y R(s) R(x-y+s) \bigl[ \bm{\phi}_\eta(s) + \bm{\phi}_\eta(x-y+s) \bigr] u(x-y+s,s)\, ds
\]
and the integrand in the right hand side is continuous and strictly positive at $s = y - x + \tilde{\tau}_0$, so the integral is positive and therefore $u(x,y) > 0$ for all $x \geq \tilde{\tau}_0$ and $y \leq \kappa$. Again by Lemma \ref{lem:shypPDE_sol_intineqpos},
\[
R(x_0) R(y_0) u(x_0,y_0) \geq \tfrac{1}{2} \int_{-\infty}^{y_0} R(s) R(x_0+y_0-s) \bigl[ \bm{\phi}_\eta(s) - \bm{\phi}_\eta(x_0+y_0-s) \bigr] u(x_0+y_0-s,s)\, ds
\]
with the integrand being strictly positive for $s < \min\{\kappa, x_0 + y_0 - \tilde{\tau}_0\}$, thus $u(x_0,y_0) > 0$. 

Suppose now that $\bm{\psi}_\eta$ is non-constant on every neighbourhood of $-\infty$ and that $\kappa$ is chosen such that $\bm{\psi}_\eta(\kappa) > \bm{\psi}_\eta(\xi)$ for all $\xi > \kappa$. The integral inequality of Lemma \ref{lem:shypPDE_sol_intineqpos} yields
\[
R(x_0) R(y_0) u(x_0,y_0) \geq \tfrac{1}{2} \int_{\Delta} R(\xi) R(\zeta) \bigl[\bm{\psi}_\eta(\zeta) - \bm{\psi}_\eta(\xi)\bigr] u(\xi,\zeta)\, d\xi d\zeta.
\]
where $\Delta = \{(\xi,\zeta) \in \mathbb{R}^2 \mid \xi + \zeta \leq x_0+y_0, \, \xi - \zeta \geq x_0-y_0 \}$. Clearly, the integrand is continuous and $> 0$ on $\{(\tau_0,\zeta) \mid \zeta \leq \min(y_0 - |x_0 - \tau_0|,\, \kappa) \} \subset \Delta$, and it follows at once that $u(x_0,y_0) > 0$.

By Lemma \ref{lem:shypPDE_degen_nonconst} it follows that $u(x_0,y_0) > 0$. Since $x_0 \geq y_0 > -\infty$ are arbitrary we conclude that $f(x,y) > 0$ for $b > x \geq y > a$ and, by symmetry, for $x,y \in (a,b)$. 
\end{proof}

\begin{corollary}[Existence theorem for degenerate hypergroups of full support] \label{cor:shypPDE_deghypfullsupp}
Let $\ell$ be a differential expression of the form \eqref{eq:shypPDE_elldiffexpr} and satisfying \eqref{eq:shypPDE_Lop_leftBC}. Suppose that $\gamma(a) = -\infty$. Then $\bigl([a,b),*\bigr)$ is a degenerate hypergroup of full support.
\end{corollary}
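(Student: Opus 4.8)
The plan is to split the seven conditions into the ones that are already in hand and the single one that carries the new content. First I would recall Proposition \ref{prop:shypPDE_conv_Mbanachalg}: it already states that $\bigl(\mathcal{M}_\mathbb{C}[a,b),*\bigr)$ is a commutative Banach algebra with identity $\delta_a$, that $\mathcal{P}[a,b)$ is closed under $*$, and that $(\mu,\nu)\mapsto\mu*\nu$ is weakly continuous. Taking $\mathrm{e}=a$, these facts give axioms H1, H2, H3 and H4 at once; and because $*$ is commutative, H6 holds with the identity involution $\check{x}=x$ (for which $\check{\mu}=\mu$, so $(\mu*\nu)$ and $\check{\nu}*\check{\mu}=\nu*\mu$ coincide). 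Hence the only axiom still to be checked is DH, i.e.\ that $\supp(\bm{\nu}_{x,y})=[a,b)$ for all $x,y\in(a,b)$, where $\bm{\nu}_{x,y}=\delta_x*\delta_y$.

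To establish DH I would argue exactly as in the proof of Proposition \ref{prop:shypPDE_zeunersupp}. Fix $x,y\in(a,b)$ and let $U\subseteq[a,b)$ be an arbitrary nonempty open set. Since $(a,b)$ is dense in $[a,b)$, I can pick $\tau_0\in U\cap(a,b)$ and then a function $h\in\mathrm{C}_\mathrm{c}^2[a,b)$ with $h'\in\mathrm{C}_\mathrm{c}(a,b)$, $h\geq0$, $\supp h\subset U\cap(a,b)$ and $h(\tau_0)>0$; as noted in Section \ref{sec:prodform}, such an $h$ lies in $\mathcal{D}_\mathcal{L}^{(2,0)}$. By the characterization \eqref{eq:shypPDE_prodformmeas_charact} together with Proposition \ref{prop:shypPDE_Lexistuniq_ext}, the number $\int_{[a,b)}h\,d\bm{\nu}_{x,y}$ equals the value $f(x,y)$ of the solution \eqref{eq:shypPDE_Lexistence} of the Cauchy problem \eqref{eq:shypPDE_Lcauchy} with initial datum $h$. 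Since $\gamma(a)=-\infty$ and the hypotheses $h\geq0$, $h(\tau_0)>0$ are met, Theorem \ref{thm:shypPDE_degen_strictpos} yields $f(x,y)>0$. Because $\supp h\subset U$ we have $\int_{[a,b)}h\,d\bm{\nu}_{x,y}\leq\|h\|_\infty\,\bm{\nu}_{x,y}(U)$, so $\bm{\nu}_{x,y}(U)>0$ and hence $U\cap\supp(\bm{\nu}_{x,y})\neq\emptyset$. As $U$ was arbitrary, $\supp(\bm{\nu}_{x,y})=[a,b)$, which is DH.

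The genuinely difficult ingredient is Theorem \ref{thm:shypPDE_degen_strictpos} — the strict positivity of the degenerate Cauchy problem even for compactly supported data — but that is proved earlier and I am free to assume it; granting it, the present corollary is essentially bookkeeping. The only points I would take some care with are the density/approximation step (ensuring that every nonempty open $U\subseteq[a,b)$ contains a nonzero admissible test function in $\mathcal{D}_\mathcal{L}^{(2,0)}$, which works because it suffices to test against open sets meeting $(a,b)$, since $\{a\}$ is not open in $[a,b)$) and the explicit remark that, since $*$ is commutative with trivial involution, H6 is automatic rather than something to be verified from its definition.
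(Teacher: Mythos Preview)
Your proof is correct and follows essentially the same route as the paper's: both invoke Proposition~\ref{prop:shypPDE_conv_Mbanachalg} for axioms H1--H4 and H6, then verify DH by showing that for every admissible nonnegative test function $h\in\mathcal{D}_\mathcal{L}^{(2,0)}$ supported near an arbitrary point, the integral $\int h\,d\bm{\nu}_{x,y}=f(x,y)$ is strictly positive by Theorem~\ref{thm:shypPDE_degen_strictpos}. The only cosmetic difference is that the paper phrases the support criterion via shrinking families $\{h_\eps\}$ around each point $z$, whereas you phrase it via arbitrary open sets $U$; these are equivalent characterizations of full support.
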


\begin{proof}
By Proposition \ref{prop:shypPDE_conv_Mbanachalg}, the pair $\bigl([a,b),*\bigr)$ satisfies axioms H1--H4 and H6. As in the proof of Proposition \ref{prop:shypPDE_zeunersupp}, $z \in [a,b)$ belongs to $\supp(\delta_x * \delta_y)$ if and only if $\int_{[\sigma^2,\infty)\!} w_\lambda(x) \, w_\lambda(y) \, (\mathcal{F}h_\eps)(\lambda) \, \bm{\rho}_{\mathcal{L}}(d\lambda) > 0$ for all $\eps > 0$, where $\{h_\eps\} \subset \mathcal{D}_\mathcal{L}^{(2,0)}$ is a family of functions satisfying \eqref{eq:shypPDE_zeunersupp_pf1}. But it follows from Theorem \ref{thm:shypPDE_degen_strictpos} that $f_{h_\eps}(x,y) = \int_{[\sigma^2,\infty)\!} w_\lambda(x) \, w_\lambda(y) \, (\mathcal{F}h_\eps)(\lambda) \, \bm{\rho}_{\mathcal{L}}(d\lambda) > 0$ for all $x,y \in (a,b)$. Hence each $z \in [a,b)$ belongs to all the sets $\supp(\delta_x * \delta_y)$, $x,y \in (a,b)$; therefore, $\bigl([a,b),*\bigr)$ satisfies axiom DH.
\end{proof}

As discussed in the Introduction, the notion of degenerate hypergroup of full support is motivated by the example of the so-called Whittaker convolution, associated with the normalized Whittaker differential operator $\ell = - x^2 {d^2 \over dx^2} - (1+2(1-\alpha)x) {d \over dx}$ and studied by the authors in \cite{sousaetal2018a,sousaetal2018b}. Corollary \ref{cor:shypPDE_deghypfullsupp} shows that many other Sturm-Liouville differential expressions yield convolution algebras with the full support property.

\begin{example}
Let $\bm{\zeta} \in \mathrm{C}^1(0,\infty)$ be a nonnegative decreasing function and let $\kappa > 0$. The differential expression
\[
\ell = - x^2 {d^2 \over dx^2} - \bigl[\kappa + x \bigl(1 + \bm{\zeta}(x)\bigr)\bigr] {d \over dx}, \qquad 0 < x < \infty
\]
is a particular case of \eqref{eq:shypPDE_elldiffexpr}, obtained by considering $p(x) = x e^{-\kappa/x + I_{\bm{\zeta}}(x)}$ and $r(x) = {1 \over x} e^{-\kappa/x + I_{\bm{\zeta}}(x)}$, where $I_{\bm{\zeta}}(x) = \int_1^x \bm{\zeta}(y) {dy \over y}$. (If $\kappa = 1$ and $\bm{\zeta}(x) = 1-2\alpha > 0$, we recover the normalized Whittaker operator.) The change of variable $z = \log x$ transforms $\ell$ into the standard form $\widetilde{\ell} = - {d^2 \over dz^2} - {A'(z) \over A(z)}{d \over dz}$, where ${A'(z) \over A(z)} = \kappa e^{-\kappa z} + \bm{\zeta}(e^z)$. It is clear that $\gamma(a) = -\infty$ and that $\ell$ satisfies Assumption \ref{asmp:shypPDE_SLhyperg} with $\eta = 0$, and it is not difficult to show that the boundary condition \eqref{eq:shypPDE_Lop_leftBC} holds. Consequently, the Sturm-Liouville operator $\ell$ gives rise to a convolution structure such that $\supp(\delta_x * \delta_y) = [0,\infty)$ for all $x, y > 0$.
\end{example}

\section{Convolution algebras of functions} \label{sec:convalgL1}

\subsection{A family of $L_1$ spaces}

We now turn our attention the the $\mathcal{L}$-convolution of functions, defined in the following way (recall that $\mathcal{T}^y$ is the $\mathcal{L}$-translation \eqref{eq:shypPDEtransl_def}):

\begin{definition} 
Let $h, g:[a,b) \longrightarrow \mathbb{C}$. If the integral
\begin{equation} \label{eq:shypPDEconvfun_def}
(h * g)(x) = \int_a^b (\mathcal{T}^y h)(x)\, g(y)\, r(y) dy = \int_a^b \int_{[a,b)} \! h(\xi) \bm{\nu}_{x,y}(d\xi)\, g(y)\, r(y) dy
\end{equation}
exists for almost every $x \in [a,b)$, then we call it the \emph{$\mathcal{L}$-convolution} of the functions $h$ and $g$.
\end{definition}

The $\mathcal{L}$-convolution of functions is trivialized by the $\mathcal{L}$-transform \eqref{eq:entrpf_Ffourierdef}:
\begin{equation} \label{eq:shypPDEconvfun_trivializ}
\bigl(\mathcal{F}(h * g)\bigr)(\lambda) = (\mathcal{F}h)(\lambda) \, (\mathcal{F}g)(\lambda) \quad \text{ for all } \lambda \geq 0 \qquad \bigl( h \in \mathrm{C}_\mathrm{c}[a,b), \; g \in L_1(r) \bigr).
\end{equation}
Indeed, just compute
\begin{align*}
\bigl(\mathcal{F}(h * g)\bigr)(\lambda) & = \int_a^b \int_a^b (\mathcal{T}^y h)(x) g(y) \, r(y) dy \, w_\lambda(x) r(x) dx \\
& = \int_a^b \bigl(\mathcal{F}(\mathcal{T}^y h)\bigr)(\lambda) \, g(y) r(y) dy \\[3pt]
& \smash{= (\mathcal{F}h)(\lambda) \int_a^b g(y) w_\lambda(y) r(y) dy \, = \, (\mathcal{F}h)(\lambda) \, (\mathcal{F}g)(\lambda)} \\[-14pt]
\end{align*}
where the third equality is due to \eqref{eq:shypPDE_transl_Fident}.

We will study the $\mathcal{L}$-convolution as an operator acting on the family of Lebesgue spaces $\{L_{1,\kappa}\}_{-\infty < \kappa \leq \sigma^2}$, where $L_{1,\kappa} = L_1\bigl((a,b), w_{\kappa}(x) r(x)dx\bigr)$. Observe that this is an ordered family:
\begin{equation} \label{eq:shypPDEwl_L1kappa_orderrel}
L_{1,\kappa_2} \subset L_{1,\kappa_1} \qquad \text{whenever } \, -\infty < \kappa_2 \leq \kappa_1 \leq \sigma^2.
\end{equation}
This follows from the fact that (due to the Laplace-type representation \eqref{eq:shypPDE_Wlaplacerep}) we have $0 \leq w_{\kappa_1}(x) \leq w_{\kappa_2}(x)$ for all $x \in [a,b)$ whenever $-\infty < \kappa_2 \leq \kappa_1 \leq \sigma^2$. In particular, the space $L_{1,0} \equiv L_1(r)$ is contained in the spaces $L_{1,\kappa}$ with $0 \leq \kappa \leq \sigma^2$.

The basic properties of the $\mathcal{L}$-transform, translation and convolution on the spaces $L_{1,\kappa}$ are as follows (we write $\|\cdot\|_{1,\kappa} := \|\cdot\|_{L_{1,\kappa}}$):

\begin{proposition} \label{prop:shypPDEwl_gentransl_L1kappacont}
Let $-\infty < \kappa \leq \sigma^2$, let $h,g \in L_{1,\kappa}$, and fix $y \in [a,b)$. Then:
\begin{enumerate}[itemsep=0pt,topsep=4pt]
\item[\textbf{(a)}] The $\mathcal{L}$-transform $(\mathcal{F} h)(\lambda) := \int_a^b h(x) \, w_\lambda(x) \, r(x) dx$ is, for all $\lambda \geq \sigma^2$, well-defined as an absolutely convergent integral; in addition, $h$ is uniquely determined by $(\mathcal{F}h)\restrict{[\sigma^2, \infty)\!}$.
\item[\textbf{(b)}] The $\mathcal{L}$-translation $\mathcal{T}^y h(x) := \int_{[a,b)} h \, d\bm{\nu}_{x,y}$ is well-defined and it satisfies $\|\mathcal{T}^y h\|_{1,\kappa} \leq w_\kappa(y) \|h\|_{1,\kappa}$ (in particular, $\mathcal{T}^y \bigl(L_{1,\kappa}\bigr) \subset L_{1,\kappa}$).
\item[\textbf{(c)}] The $\mathcal{L}$-convolution $(h * g)(x) := \int_a^b (\mathcal{T}^y h)(x)\, g(y)\, r(y) dy$ is well-defined and it satisfies $\|h * g\|_{1,\kappa} \leq \|h\|_{1,\kappa} \ccdot \|g\|_{1,\kappa}$ (in particular, $L_{1,\kappa} * L_{1,\kappa} \subset L_{1,\kappa}$).
\end{enumerate}
\end{proposition}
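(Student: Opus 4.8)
The plan is to reduce everything to the modified operator $\ell^{\langle\kappa\rangle\!}$ from Lemma \ref{lem:shypPDE_modification}, for which the relevant statements become assertions about the space $L_1(r^{\langle\kappa\rangle\!})$ — and those have already been established (or are immediate variants of earlier results) in the case of a weight given by the reference density $r$. Recall from Lemma \ref{lem:shypPDE_modification} that $w_\kappa^2 \ccdot r = r^{\langle\kappa\rangle\!}$, so that $L_{1,\kappa} = L_1\bigl((a,b),w_\kappa(x)r(x)dx\bigr)$ is mapped isometrically onto $L_1(r^{\langle\kappa\rangle\!})$ via $h \mapsto h/w_\kappa$; indeed $\int_a^b |h|\,w_\kappa\,r = \int_a^b |h/w_\kappa|\,w_\kappa^2\,r = \|h/w_\kappa\|_{L_1(r^{\langle\kappa\rangle\!})}$. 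Throughout, the operator $\ell^{\langle\kappa\rangle\!}$ satisfies Assumption \ref{asmp:shypPDE_SLhyperg} (Lemma \ref{lem:shypPDE_modification}), its eigenfunctions are $w_\lambda^{\langle\kappa\rangle\!}(x) = w_{\kappa+\lambda}(x)/w_\kappa(x)$, its spectral measure is the shift $\bm{\rho}_\mathcal{L}^{\langle\kappa\rangle\!}(\cdot) = \bm{\rho}_\mathcal{L}(\cdot+\kappa)$, and — this is the key identity, extracted from the proof of Theorem \ref{thm:shypPDE_prodform_weaklim_full} — the product-formula measures transform as $\bm{\nu}_{x,y}^{\langle\kappa\rangle\!}(d\xi) = \frac{w_\kappa(\xi)}{w_\kappa(x)w_\kappa(y)}\bm{\nu}_{x,y}(d\xi)$, so the $\mathcal{L}^{\langle\kappa\rangle\!}$-translation satisfies $\mathcal{T}^{y,\langle\kappa\rangle\!}(h/w_\kappa)(x) = \frac{1}{w_\kappa(x)w_\kappa(y)}\mathcal{T}^y h(x)$ pointwise. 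Note also that $\sigma$ is unchanged under this modification when $\kappa \leq \sigma^2$ (the spectral support shifts to $[\sigma^2-\kappa,\infty)$, which contains $0$), and in fact it is cleanest to first reduce to the case $\kappa = 0$ having $\supp(\bm{\rho}_\mathcal{L}^{\langle\kappa\rangle\!}) \ni 0$, i.e. $\sigma^{\langle\kappa\rangle\!} = 0$ when $\kappa < \sigma^2$, and $\sigma^{\langle\kappa\rangle\!}=\sigma$ only in the borderline case.

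First I would prove (a). For $\lambda \geq \sigma^2$ the bound $|w_\lambda(x)| \leq 1$ of Lemma \ref{lem:shypPDE_wsolbound} (extended in Theorem \ref{thm:shypPDE_Wlaplacerep}) gives $\int_a^b |h(x)w_\lambda(x)|\,r(x)dx \leq \int_a^b |h(x)|\,r(x)dx$; but for $h \in L_{1,\kappa}$ with $\kappa \leq \sigma^2 \leq \lambda$ we need instead the sharper estimate $|w_\lambda(x)| \leq w_\kappa(x)$ valid for $\lambda \geq \kappa$, which follows from the Laplace-type representation \eqref{eq:shypPDE_Wlaplacerep} exactly as in the derivation of \eqref{eq:shypPDEwl_L1kappa_orderrel}. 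Hence $\int_a^b |h\,w_\lambda|\,r \leq \int_a^b |h|\,w_\kappa\,r = \|h\|_{1,\kappa} < \infty$, so $(\mathcal{F}h)(\lambda)$ is an absolutely convergent integral for $\lambda \geq \sigma^2$. For the uniqueness assertion, transplant to the modified operator: $(\mathcal{F}^{\langle\kappa\rangle\!}(h/w_\kappa))(\lambda) = (\mathcal{F}h)(\kappa+\lambda)$ (the identity from Lemma \ref{lem:shypPDE_modification}), and $h/w_\kappa \in L_1(r^{\langle\kappa\rangle\!}) \subset$ the domain on which the $\mathcal{L}^{\langle\kappa\rangle\!}$-transform is injective on $[\sigma^{2,\langle\kappa\rangle\!},\infty) = [\max(\sigma^2-\kappa,0),\infty)$; injectivity on that half-line is exactly the content of part (ii) of Proposition \ref{prop:shypPDE_Lfourmeas_props} applied to the measure $(h/w_\kappa)\,r^{\langle\kappa\rangle\!}\,d\xi \in \mathcal{M}_\mathbb{C}[a,b)$ relative to $\ell^{\langle\kappa\rangle\!}$. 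Tracing back the shift, $(\mathcal{F}h)\restrict{[\sigma^2,\infty)}$ determines $h/w_\kappa$ and hence $h$.

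Next, (b) and (c). For (b), using $\bm{\nu}_{x,y} \in \mathcal{P}[a,b)$ (Theorem \ref{thm:shypPDE_prodform_weaklim}) and nonnegativity, $|\mathcal{T}^y h(x)| \leq \int_{[a,b)}|h|\,d\bm{\nu}_{x,y} =: \mathcal{T}^y|h|(x)$; then, via the transform identity for $\mathcal{T}^{y,\langle\kappa\rangle\!}$ above together with the estimate $\|\mathcal{T}^{y,\langle\kappa\rangle\!}g\|_{L_1(r^{\langle\kappa\rangle\!})} \leq \|g\|_{L_1(r^{\langle\kappa\rangle\!})}$ — which is precisely Lemma \ref{lem:shypPDEwl_gentransl_Lpcont} with $p=1$, $\mu = \delta_y$ applied to $\ell^{\langle\kappa\rangle\!}$ (here $\|\delta_y\|=1$) — one computes
\[
\|\mathcal{T}^y h\|_{1,\kappa} = \int_a^b |\mathcal{T}^y h(x)|\,w_\kappa(x)r(x)dx = w_\kappa(y)\int_a^b \bigl|\mathcal{T}^{y,\langle\kappa\rangle\!}(h/w_\kappa)(x)\bigr|\,w_\kappa(x)^2 r(x)dx
\]
\[
= w_\kappa(y)\,\bigl\|\mathcal{T}^{y,\langle\kappa\rangle\!}(h/w_\kappa)\bigr\|_{L_1(r^{\langle\kappa\rangle\!})} \leq w_\kappa(y)\,\|h/w_\kappa\|_{L_1(r^{\langle\kappa\rangle\!})} = w_\kappa(y)\,\|h\|_{1,\kappa},
\]
using $w_\kappa^2 r = r^{\langle\kappa\rangle\!}$ twice. (When $h \geq 0$ this is an equality with $\|\mathcal{T}^{y,\langle\kappa\rangle\!}\cdot\|=\|\cdot\|$, which is worth noting for the sharpness but not needed.) For (c), Fubini–Tonelli gives $\int_a^b|(h*g)(x)|\,w_\kappa(x)r(x)dx \leq \int_a^b\Bigl(\int_a^b \mathcal{T}^y|h|(x)\,|g(y)|\,r(y)dy\Bigr)w_\kappa(x)r(x)dx = \int_a^b \|\mathcal{T}^y|h|\|_{1,\kappa}\,|g(y)|\,r(y)dy \leq \int_a^b w_\kappa(y)\|h\|_{1,\kappa}\,|g(y)|\,r(y)dy = \|h\|_{1,\kappa}\,\|g\|_{1,\kappa}$; this simultaneously shows the defining integral \eqref{eq:shypPDEconvfun_def} converges absolutely for a.e.\ $x$ and gives the norm bound, and measurability of $x \mapsto \mathcal{T}^y|h|(x)$ is guaranteed by the weak continuity of $\nu \mapsto \bm{\nu}_{x,y}$ (Proposition \ref{prop:shypPDE_conv_Mbanachalg}) via \cite[Section 2.3]{jewett1975}, as in Lemma \ref{lem:shypPDEwl_gentransl_Lpcont}. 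The only real obstacle is bookkeeping: one must check at the outset that $\ell^{\langle\kappa\rangle\!}$ indeed falls under the standing hypotheses (done in Lemma \ref{lem:shypPDE_modification}) and that all the cited results — Proposition \ref{prop:shypPDE_Lfourmeas_props}(ii), Lemma \ref{lem:shypPDEwl_gentransl_Lpcont} — were proved under hypotheses that $\ell^{\langle\kappa\rangle\!}$ satisfies, so that the transplantation is legitimate; once that is in place, every inequality above is routine.
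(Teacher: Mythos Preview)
Your proposal is correct and follows essentially the same route as the paper: reduce to the modified operator $\ell^{\langle\kappa\rangle}$ via the isometry $h \mapsto h/w_\kappa$ between $L_{1,\kappa}$ and $L_1(r^{\langle\kappa\rangle})$, invoke the identity $\bm{\nu}_{x,y}^{\langle\kappa\rangle}(d\xi) = \tfrac{w_\kappa(\xi)}{w_\kappa(x)w_\kappa(y)}\bm{\nu}_{x,y}(d\xi)$, and then apply Lemma~\ref{lem:shypPDEwl_gentransl_Lpcont} for (b) and Fubini plus (b) for (c). The only minor differences are that in (a) the paper reruns the proof of Proposition~\ref{prop:shypPDE_Lfourmeas_props}(ii) directly for the possibly unbounded measure $h(x)r(x)\,dx$, whereas you transplant first so that Proposition~\ref{prop:shypPDE_Lfourmeas_props}(ii) applies verbatim to the \emph{finite} measure $(h/w_\kappa)\,r^{\langle\kappa\rangle}\,d\xi$ --- your version is arguably cleaner; and your parenthetical about $\sigma^{\langle\kappa\rangle}$ is garbled (the spectral support of $\ell^{\langle\kappa\rangle}$ is $[\sigma^2-\kappa,\infty)$, which contains $0$ only when $\kappa = \sigma^2$, not when $\kappa < \sigma^2$), but this remark is not used anywhere in your argument.
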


\begin{proof}
\textbf{(a)} The absolute convergence of $\int_a^b h(x) \, w_\lambda(x) \, r(x) dx$ is immediate from \eqref{eq:shypPDEwl_L1kappa_orderrel}. Letting $\mu$ be the (possibly unbounded) measure $\mu(dx) = h(x)\, r(x) dx$, the same proof of Proposition \ref{prop:shypPDE_Lfourmeas_props}(ii) shows that if $\widehat{\mu}(\lambda) \equiv (\mathcal{F}h)(\lambda) = 0$ for all $\lambda \geq \sigma^2$, then $\mu$ is the zero measure; consequently, the $\mathcal{L}$-transform determines uniquely the function $h$. \\[-8pt]

\textbf{(b)} Let $h \in L_{1,\kappa}$ and let $\ell^{\langle\kappa\rangle\!}$ be the operator defined in Lemma \ref{lem:shypPDE_modification}. We saw in the proof of Theorem \ref{thm:shypPDE_prodform_weaklim_full} that $\bm{\nu}_{x,y}^{\langle\kappa\rangle\!}(d\xi) = {w_\kappa(\xi) \over w_\kappa(x) w_\kappa(y)} \bm{\nu}_{x,y}(d\xi)$, hence
\[
(\mathcal{T}^y h)(x) = \int_{[a,b)\!} h \, d\bm{\nu}_{x,y} = w_\kappa(x) w_\kappa(y) \int_{[a,b)} {h \over w_\kappa} \, d\bm{\nu}_{x,y}^{\langle \kappa \rangle} = w_\kappa(x) w_\kappa(y) \Bigl(\mathcal{T}_{\langle \kappa \rangle}^y {h \over w_\kappa}\Bigr)(x),
\]
here $\bm{\nu}_{x,y}^{\langle\kappa\rangle\!}$ and $\mathcal{T}_{\langle \kappa \rangle}$ are, respectively, the measure of the product formula and the translation operator associated with $\ell^{\langle\kappa\rangle}$. Since $\|h\|_{1,\kappa} = \bigl\| {h \over w_\kappa}\bigr \|_{L_1(r^{\langle\kappa\rangle\!})}$, it follows from Lemma \ref{lem:shypPDEwl_gentransl_Lpcont} that $\mathcal{T}^y h$ is well-defined and
\[
\|\mathcal{T}^y h\|_{1,\kappa} = w_\kappa(y) \bigl\|\mathcal{T}_{\langle \kappa \rangle}^y \tfrac{h}{w_\kappa}\bigr\|_{L_1(r^{\langle\kappa\rangle\!})} \leq w_\kappa(y) \bigl\|\tfrac{h}{w_\kappa}\bigr\|_{L_1(r^{\langle\kappa\rangle\!})} = w_\kappa(y) \|h\|_{1,\kappa}. \vspace{2pt}
\]

\textbf{(c)} Using part (a), we compute
\begin{align*}
\| h * g \|_{1,\kappa} & \leq \int_a^b \int_a^b |(\mathcal{T}^x h)(\xi)|\, |g(\xi)|\, r(\xi) d\xi \, w_\kappa(x) r(x) dx \\
& = \int_a^b \int_a^b |(\mathcal{T}^\xi h)(x)| \, w_\kappa(x) r(x) dx \, |g(\xi)|\, r(\xi) d\xi \\
& \leq \|h\|_{L_{1,\kappa}} \int_a^b |g(\xi)|\, w_\kappa(\xi) r(\xi) d\xi \, = \, \|h\|_{1,\kappa} \ccdot \|g\|_{1,\kappa}. \qedhere
\end{align*}
\end{proof}

\begin{corollary} \label{cor:shypPDEwl_L1kappa_banachalg}
The Banach space $L_{1,\kappa}$, equipped with the convolution multiplication $h \cdot g \equiv h * g$, is a commutative Banach algebra without identity element.
\end{corollary}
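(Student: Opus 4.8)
The plan is to combine the three estimates of Proposition~\ref{prop:shypPDEwl_gentransl_L1kappacont} with the algebraic identities already established for the $\mathcal{L}$-convolution of measures. First I would note that part~(c) of that proposition gives $\|h * g\|_{1,\kappa} \leq \|h\|_{1,\kappa} \ccdot \|g\|_{1,\kappa}$ and, in particular, $L_{1,\kappa} * L_{1,\kappa} \subset L_{1,\kappa}$, so that $*$ is a well-defined bilinear map on $L_{1,\kappa}$ which is submultiplicative with respect to $\|\cdot\|_{1,\kappa}$. Since $L_{1,\kappa}$ is a Banach space and the convolution is bilinear and continuous, the only remaining points are commutativity, associativity, and the absence of an identity element.

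Next I would establish commutativity and associativity by passing through the $\mathcal{L}$-transform. The trivialization identity \eqref{eq:shypPDEconvfun_trivializ} gives $\mathcal{F}(h * g) = \mathcal{F}(h) \ccdot \mathcal{F}(g)$ for $h \in \mathrm{C}_\mathrm{c}[a,b)$, $g \in L_1(r)$; the same computation, using \eqref{eq:shypPDE_transl_Fident} and the density/continuity of the operators involved, extends this to all $h, g \in L_{1,\kappa}$ with the $\mathcal{L}$-transform now understood on $[\sigma^2,\infty)$ (which is where, by Proposition~\ref{prop:shypPDEwl_gentransl_L1kappacont}(a), the transform is well-defined and injective on $L_{1,\kappa}$). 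Since pointwise multiplication of the transforms is commutative and associative, and since by Proposition~\ref{prop:shypPDEwl_gentransl_L1kappacont}(a) each element of $L_{1,\kappa}$ is uniquely determined by the restriction of its $\mathcal{L}$-transform to $[\sigma^2,\infty)$, the identities $h * g = g * h$ and $(h * g) * k = h * (g * k)$ follow at once. This turns $L_{1,\kappa}$ into a commutative associative Banach algebra.

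Finally I would argue that there is no identity element. If $e \in L_{1,\kappa}$ satisfied $e * h = h$ for all $h \in L_{1,\kappa}$, then taking $\mathcal{L}$-transforms would force $(\mathcal{F}e)(\lambda) (\mathcal{F}h)(\lambda) = (\mathcal{F}h)(\lambda)$ for $\bm{\rho}_\mathcal{L}$-a.e.\ $\lambda$ and all $h$; choosing $h$ whose transform does not vanish on a set of positive $\bm{\rho}_\mathcal{L}$-measure (e.g.\ $h \in \mathcal{D}_\mathcal{L}^{(2,0)}$ with $h \geq 0$, $h \not\equiv 0$, so that $(\mathcal{F}h)(0) = \int_a^b h\, r\, dx > 0$ and $\mathcal{F}h$ is continuous), we would get $(\mathcal{F}e)(\lambda) \equiv 1$ on $\supp(\bm{\rho}_\mathcal{L}) = [\sigma^2,\infty)$. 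But the Riemann--Lebesgue-type behaviour of $\mathcal{F}e$ on $L_{1,\kappa}$ --- more concretely, the fact that $e * g$ must again lie in $L_{1,\kappa}$ for every $g$, equivalently that the unit of the measure algebra $\delta_a$ (Proposition~\ref{prop:shypPDE_conv_Mbanachalg}) is not absolutely continuous with respect to $r(x)dx$ --- rules this out: the convolution identity in $\bigl(\mathcal{M}_\mathbb{C}[a,b),*\bigr)$ is the Dirac measure $\delta_a$, which has no density, so no $e \in L_{1,\kappa}$ can represent it.

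The main obstacle I anticipate is the last point: making the non-existence of an identity rigorous requires a clean argument that the $\mathcal{L}$-transform of an $L_{1,\kappa}$-function cannot be identically $1$ on $[\sigma^2,\infty)$. The cleanest route is to invoke the uniqueness statement of Proposition~\ref{prop:shypPDEwl_gentransl_L1kappacont}(a) together with the observation that $\delta_a * h = \mathcal{T}^a h = h$ already exhibits $\delta_a$ as the (measure) identity, and that if $e \in L_{1,\kappa}$ were a function identity then $e(x)r(x)dx = \delta_a$, which is impossible since the left-hand side is absolutely continuous while $\delta_a$ is not; equivalently, one compares $\mathcal{F}e \equiv 1$ with the asymptotic decay of $\mathcal{F}e$ forced by $e \in L_1(r) \supset$ nothing --- here a short direct contradiction via $\widehat{\mu}(\lambda) = \widehat{e\,r\,dx}(\lambda) = 1$ against the fact that $\delta_a$ is the unique measure with constant transform $1$ finishes the proof.
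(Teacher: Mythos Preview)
Your proposal is correct and follows essentially the same route as the paper: submultiplicativity from Proposition~\ref{prop:shypPDEwl_gentransl_L1kappacont}(c), commutativity and associativity by extending the trivialization identity \eqref{eq:shypPDEconvfun_trivializ} to $L_{1,\kappa}$ and invoking the injectivity in Proposition~\ref{prop:shypPDEwl_gentransl_L1kappacont}(a), and non-existence of an identity by comparing with $\widehat{\delta_a}\equiv 1$.

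One remark on presentation: the detour through a ``Riemann--Lebesgue-type behaviour'' of $\mathcal{F}e$ is neither established in the paper nor needed; drop it. The clean argument is exactly the one you give at the end (and is the paper's): from $(\mathcal{F}e)(\lambda)=1$ for all $\lambda\geq\sigma^2$ and $\widehat{\delta_a}\equiv 1$, the uniqueness result (Proposition~\ref{prop:shypPDE_Lfourmeas_props}(ii), or its extension in Proposition~\ref{prop:shypPDEwl_gentransl_L1kappacont}(a)) forces $e(x)r(x)\,dx=\delta_a$, which is impossible since the left side is absolutely continuous. Also, your intermediate step ``$(\mathcal{F}h)(0)>0$'' is not quite the right anchor when $\sigma>0$, since the relevant domain is $[\sigma^2,\infty)$; it is simpler (as the paper does) to note directly that for each fixed $\lambda\geq\sigma^2$ one can choose $h\in L_{1,\kappa}$ with $(\mathcal{F}h)(\lambda)\neq 0$, so $(\mathcal{F}e)(\lambda)=1$ for every such $\lambda$.
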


\begin{proof}
Proposition \ref{prop:shypPDEwl_gentransl_L1kappacont}(c) shows that the Whittaker convolution defines a binary operation on $L_{1,\kappa}$ for which the norm is submultiplicative. Since the trivialization property \eqref{eq:shypPDEconvfun_trivializ} extends (by continuity) to all $h, g \in L_{1,\kappa}$, the commutativity and associativity of the $\mathcal{L}$-convolution in the space $L_{1,\kappa}$ is a consequence of the uniqueness of the $\mathcal{L}$-transform (Proposition \ref{prop:shypPDEwl_gentransl_L1kappacont}(a)).

Suppose now that there exists $\mathrm{e} \in L^{1,\kappa}$ such that $h * \mathrm{e} = h$ for all $h \in L_{1,\kappa}$. Then
\[
(\mathcal{F}h)(\lambda) (\mathcal{F}\mathrm{e})(\lambda) = \bigl(\mathcal{F}(h * \mathrm{e})\bigr)(\lambda) = (\mathcal{F}h)(\lambda) \qquad \text{for all } h \in L_{1,\kappa} \text{ and } \lambda \geq \sigma^2.
\]
Clearly, this implies that $(\mathcal{F}\mathrm{e})(\lambda) = 1$ for all $\lambda \geq \sigma^2$. But we know that $\widehat{\delta_a} \equiv 1$, so it follows from (the proof of) Proposition \ref{prop:shypPDE_Lfourmeas_props}(ii) that $\mathrm{e}(x) r(x) dx = \delta_a(dx)$, which is absurd. This shows that the Banach algebra has no identity element.
\end{proof}

An interesting fact about the $\mathcal{L}$-transform and the convolution Banach algebra $(L_{1,\kappa},*)$ is that they admit the following analogue of the well-known Wiener-Lévy theorem on integral equations with difference kernel (compare with \cite[p.\ 164]{krein1962}):

\begin{theorem}[Wiener-Lévy type theorem] \label{thm:shypPDE_Lfour_wiener}
Let $f \in L_{1,\kappa}$ ($-\infty < \kappa \leq \sigma^2$) and $\varrho \in \mathbb{C}$. The following assertions are equivalent:
\begin{enumerate} [itemsep=0pt,topsep=4pt]
\item[\textbf{(i)}] $\varrho + (\mathcal{F}f)(\lambda) \neq 0$ for all $\lambda \in \varPi_\kappa$ (including $\lambda = \infty$);
\item[\textbf{(ii)}] There exists a unique function $g \in L_{1,\kappa}$ such that
\begin{equation} \label{eq:shypPDE_Lfour_wiener}
{1 \over \varrho + (\mathcal{F} f)(\lambda)} = \varrho + (\mathcal{F} g)(\lambda) \qquad (\lambda \in \varPi_\kappa).
\end{equation} 
\end{enumerate}
\end{theorem}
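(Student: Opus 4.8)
The plan is to recognize $(L_{1,\kappa},*)$ as a closed ideal in a commutative unital Banach algebra and to apply Gelfand theory; the real content lies in identifying the maximal ideal space of that algebra with $\varPi_\kappa\cup\{\infty\}$. The implication (ii)$\Rightarrow$(i) is immediate: given such a $g$, the product $\bigl(\varrho+(\mathcal Ff)(\lambda)\bigr)\bigl(\varrho+(\mathcal Fg)(\lambda)\bigr)$ equals $1$ for $\lambda\in\varPi_\kappa$, so neither factor vanishes there, and since $(\mathcal Ff)(\lambda),(\mathcal Fg)(\lambda)\to 0$ as $\lambda\to\infty$ (a Riemann--Lebesgue property obtained by approximating $f,g$ in $\|\cdot\|_{1,\kappa}$ by elements of $\mathcal{D}_\mathcal{L}^{(2,0)}$ and applying Lemma \ref{lem:shypPDE_Lfourier_D2prop}), the value at $\lambda=\infty$ is nonzero as well.

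For (i)$\Rightarrow$(ii) I would adjoin a unit, forming $\mathcal A_\kappa:=\mathbb C\,\mathrm e_\kappa\oplus L_{1,\kappa}$ with the product $(\alpha\,\mathrm e_\kappa+h)\,(\beta\,\mathrm e_\kappa+g):=\alpha\beta\,\mathrm e_\kappa+\alpha g+\beta h+h*g$; by Corollary \ref{cor:shypPDEwl_L1kappa_banachalg} this is a genuine (nontrivial) unitization, and the element of interest is $\bm a:=\varrho\,\mathrm e_\kappa+f$. The core step is a characterization lemma: \emph{every character of $\mathcal A_\kappa$ is either the point-at-infinity character $\alpha\,\mathrm e_\kappa+h\mapsto\alpha$, or is of the form $\chi_\lambda:\alpha\,\mathrm e_\kappa+h\mapsto\alpha+(\mathcal Fh)(\lambda)$ for a unique $\lambda\in\varPi_\kappa$.} That each $\chi_\lambda$ is indeed a bounded character uses the trivialization identity \eqref{eq:shypPDEconvfun_trivializ} (extended to $L_{1,\kappa}$ via Proposition \ref{prop:shypPDEwl_gentransl_L1kappacont}), the estimate $|w_\lambda(x)|\le w_\kappa(x)$ for $\lambda\in\varPi_\kappa$ coming from the Laplace-type representation \eqref{eq:shypPDE_Wlaplacerep} (which also gives boundedness of $\chi_\lambda$), and the injectivity of $\mathcal F$ on $L_{1,\kappa}$ (Proposition \ref{prop:shypPDEwl_gentransl_L1kappacont}(a)) for nontriviality.

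For the converse, a nonzero character $\chi$ with $\chi|_{L_{1,\kappa}}\not\equiv 0$ is represented, since $\|\chi\|\le 1$, as $\chi(h)=\int_a^b h(x)\,\psi(x)\,w_\kappa(x)\,r(x)\,dx$ with $\|\psi\|_\infty\le 1$; setting $\varphi:=\psi\,w_\kappa$ and combining $\chi(h*g)=\chi(h)\chi(g)$ with the definition \eqref{eq:shypPDEconvfun_def} of the convolution and Fubini yields $\chi(\mathcal T^y h)=\varphi(y)\,\chi(h)$ for all $h$ and a.e.\ $y$. Testing against $h\in\mathcal{D}_\mathcal{L}^{(2,0)}$ with $\chi(h)\neq 0$ and using the weak continuity of $(x,y)\mapsto\bm\nu_{x,y}$ (Proposition \ref{prop:shypPDE_conv_Mbanachalg}) shows $\varphi$ has a continuous version with $\varphi(a)=1$ (as $\delta_a$ is the unit), and associativity gives $\int_{[a,b)}\varphi\,d\bm\nu_{z,y}=\varphi(z)\varphi(y)$ for all $z,y$. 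A regularity bootstrap — convolving $\varphi$ against functions in $\mathcal{D}_\mathcal{L}^{(2,0)}$ and invoking the $\mathrm C^2$-regularity of the solutions of the hyperbolic Cauchy problem \eqref{eq:shypPDE_Lcauchy} established in Section \ref{sec:prodform} — shows $\varphi\in\mathrm C^2$ and, upon applying $\ell$ to $\mathcal T^y\varphi=\varphi(y)\varphi$, that $\ell\varphi=\lambda\varphi$ for a constant $\lambda$; together with $\varphi(a)=1$ and $\varphi^{[1]}(a)=0$ this forces $\varphi=w_\lambda$ by Lemma \ref{lem:entrpf_ode_wsol}. Finally $|\varphi|\le w_\kappa$ compared with \eqref{eq:shypPDE_Wlaplacerep} pins $\lambda$ to $\varPi_\kappa$, and the case $\chi|_{L_{1,\kappa}}\equiv 0$ leaves only the point-at-infinity character.

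Granting the lemma, the Gelfand transform of $\bm a$ is $\lambda\mapsto\varrho+(\mathcal Ff)(\lambda)$ on $\varPi_\kappa$ with value $\varrho$ at $\infty$, so hypothesis (i) says it is nowhere zero on the maximal ideal space; hence $\bm a$ is invertible in $\mathcal A_\kappa$, its inverse necessarily of the form $(\mathrm{const})\,\mathrm e_\kappa+g$ with $g\in L_{1,\kappa}$, and reading the Gelfand-transform identity $\widehat{\bm a}\,\widehat{\bm a^{-1}}\equiv 1$ on $\varPi_\kappa$ yields \eqref{eq:shypPDE_Lfour_wiener}, uniqueness of $g$ being another instance of injectivity of $\mathcal F$ (Proposition \ref{prop:shypPDEwl_gentransl_L1kappacont}(a)). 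The main obstacle is the characterization lemma — specifically the regularity bootstrap carrying the a priori merely bounded density $\psi$ to the smooth eigenfunction $w_\lambda$, and the verification that the growth bound $|\varphi|\le w_\kappa$ is precisely the condition $\lambda\in\varPi_\kappa$; the remaining Gelfand-theoretic bookkeeping is routine. A convenient simplification throughout is to first apply the isomorphism $h\mapsto h/w_\kappa$ of Lemma \ref{lem:shypPDE_modification}, which identifies $(L_{1,\kappa},*)$ with the convolution algebra $L_1(r^{\langle\kappa\rangle})$ attached to $\ell^{\langle\kappa\rangle}$ (again satisfying Assumption \ref{asmp:shypPDE_SLhyperg}), reducing the whole argument to the case $\kappa=0$.
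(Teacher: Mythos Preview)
Your overall architecture --- adjoin a unit to $(L_{1,\kappa},*)$, identify the character space with $\varPi_\kappa\cup\{\infty\}$, and invoke Gelfand theory --- is exactly the route the paper takes (the (i)$\Rightarrow$(ii) direction is deferred to \cite[Theorem 15.15]{yakubovichluchko1994}, with Corollaries \ref{cor:shypPDEwl_L1kappa_banachalg} and \ref{cor:shypPDEwl_L1kappa_functhomomorph} supplying the Banach-algebra structure and the character identification). The reduction to $\kappa=0$ via Lemma \ref{lem:shypPDE_modification} is also how the paper handles the general case inside Lemma \ref{lem:shypPDEwl_prodform_functeq}.

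The substantive difference is in how you prove the character lemma. The paper does \emph{not} use the hyperbolic Cauchy problem here; it uses the Feller semigroup $\{T_t\}$ as the regularizer (Lemma \ref{lem:shypPDEwl_prodform_functeq}). From the functional equation one gets $\bigl(T_t(\mathcal T^y\vartheta)\bigr)(x)=\vartheta(y)(T_t\vartheta)(x)$, and McKean's result that $T_t$ carries $L_\infty$ into the domain of the generator immediately yields both the $\mathrm C^2$-regularity of $T_t\vartheta$ and the Neumann condition at $a$; the resolvent then pins down $\vartheta^{[1]}(a)=0$. Your hyperbolic bootstrap, by contrast, has two genuine gaps. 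First, writing $\varphi(y)=\chi(\mathcal T^y h)/\chi(h)=\dfrac{1}{\chi(h)}\int_a^b(\mathcal T^y h)(x)\,\varphi(x)\,r(x)\,dx$ and differentiating in $y$ requires dominating $\partial_y(\mathcal T^y h)(\cdot)$ in $L_1(r)$ locally uniformly in $y$; but $r(x)dx$ has infinite mass (since $b$ is natural) and you only control $\varphi$ in $L_\infty$, so the $\mathrm C^2$-regularity of $(x,y)\mapsto(\mathcal T^y h)(x)$ from Proposition \ref{prop:shypPDE_Lexistuniq_ext} is not enough to pass the derivative through the integral. Second, even granting $\ell\varphi=\lambda\varphi$ and $\varphi(a)=1$, you assert $\varphi^{[1]}(a)=0$ without argument; the paper obtains this from the fact that $\vartheta=(\lambda+\eta)\mathcal R_\eta\vartheta$ with $\mathcal R_\eta\vartheta\in\mathcal D_\mathcal{L}^{(0)}$, a step that has no obvious analogue in your hyperbolic scheme. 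If you want to keep the hyperbolic route you would need, at minimum, an $L_1(r)$ bound on $\partial_y^{[1]}\!f(\cdot,y)$ for $f$ given by \eqref{eq:shypPDE_Lexistence} and a separate mechanism for the Neumann condition; the parabolic route avoids both issues in one stroke.
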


The proof of Theorem \ref{thm:shypPDE_Lfour_wiener} depends on the following lemma, according to which the function $w_\lambda$ is the unique (suitably bounded) solution of the functional equation determined by the corresponding product formula. Below we write $\Delta_\lambda = \sqrt{\lambda - \sigma^2}$ (where the principal branch of the square root is taken); this notation allows us to write the Laplace-type representation \eqref{eq:shypPDE_Wlaplacerep} as $w_{\lambda}(x) = \int_\mathbb{R} \cos(s \Delta_\lambda) \nu_x(ds)$.

\begin{lemma} \label{lem:shypPDEwl_prodform_functeq}
Let $-\infty < \kappa \leq \sigma^2$. Assume that $\vartheta:[a,b) \longrightarrow \mathbb{C}$ is a Borel measurable function such that there exists $C>0$ for which
\begin{equation} \label{eq:shypPDEwl_prodform_functeq_essbound}
\bigl|\vartheta(x)\bigr| \leq C\, w_{\kappa}(x) \qquad \text{for almost every } x \in [a,b)
\end{equation}
and that $\vartheta(x)$ is a nontrivial solution of the functional equation
\begin{equation} \label{eq:shypPDEwl_prodform_functeq}
\vartheta(x) \, \vartheta(y) = \int_{[a,b)} \vartheta(\xi)\, \bm{\nu}_{x,y}(d\xi) \qquad \text{for almost every } x,y \in [a,b).
\end{equation}
Then $\vartheta(x) = w_{\lambda}(x)$ for some $\lambda \in \varPi_\kappa := \bigl\{\lambda \in \mathbb{C} \bigm| |\mathrm{Im}\, \Delta_{\lambda}| \leq \mathrm{Im}\, \Delta_{\kappa}\bigr\}$.
\end{lemma}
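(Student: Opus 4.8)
The plan is to show that a nontrivial bounded (in the sense of \eqref{eq:shypPDEwl_prodform_functeq_essbound}) solution $\vartheta$ of the functional equation \eqref{eq:shypPDEwl_prodform_functeq} must be a continuous function which coincides with some eigenfunction $w_\lambda$. The starting observation is that \eqref{eq:shypPDEwl_prodform_functeq_essbound} places $\vartheta$ in the space $L_{1,\kappa'}$ for every $\kappa' < \kappa$ (using \eqref{eq:shypPDEwl_L1kappa_orderrel}), so the $\mathcal{L}$-translations $\mathcal{T}^y\vartheta$ are well-defined by Proposition \ref{prop:shypPDEwl_gentransl_L1kappacont}(b). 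The functional equation says precisely that $(\mathcal{T}^y\vartheta)(x) = \vartheta(x)\,\vartheta(y)$ for a.e.\ $x,y$.

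First I would bootstrap regularity. Fix $y_0$ with $\vartheta(y_0) \neq 0$ (which exists since $\vartheta$ is nontrivial — and, by Fubini applied to \eqref{eq:shypPDEwl_prodform_functeq}, such $y_0$ can be taken outside any prescribed null set). Then $\vartheta(x) = (\mathcal{T}^{y_0}\vartheta)(x)/\vartheta(y_0)$ for a.e.\ $x$. The right-hand side is, after the change of variables used in Lemma \ref{lem:shypPDE_modification}, a translate in the operator $\ell^{\langle\kappa'\rangle}$ of the $L_1$ function $\vartheta/w_{\kappa'}$; a standard mollification/approximate-identity argument on the convolution measure algebra (using the weak continuity of $(x,y)\mapsto\bm{\nu}_{x,y}$ from Proposition \ref{prop:shypPDE_conv_Mbanachalg}) shows that $\mathcal{L}$-translations of such functions can be chosen jointly continuous in $(x,y)$; alternatively, one convolves \eqref{eq:shypPDEwl_prodform_functeq} once more against a fixed $g\in\mathcal{D}_\mathcal{L}^{(2,0)}$ with $\widehat g\not\equiv 0$ to express $\vartheta$, up to a constant, as $\mathcal{T}^{y}$ applied to a fixed function, and one knows from the proof of Proposition \ref{prop:shypPDE_Lexistuniq_ext} that such objects are $\mathrm{C}^2$ in both variables. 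Either way, $\vartheta$ has a continuous (indeed $\mathrm{C}^2$) representative, after which \eqref{eq:shypPDEwl_prodform_functeq} holds for \emph{all} $x,y$.

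Next, with $\vartheta\in\mathrm{C}^2$, I would apply $\ell_x$ to both sides of $\int_{[a,b)}\vartheta\,d\bm{\nu}_{x,y} = \vartheta(x)\vartheta(y)$. By the characterization \eqref{eq:shypPDE_prodformmeas_charact} of $\bm{\nu}_{x,y}$ together with \eqref{eq:shypPDE_Ltransfidentity}, or directly from the hyperbolic Cauchy problem \eqref{eq:shypPDE_Lcauchy} of which $(x,y)\mapsto\vartheta(x)\vartheta(y)$ is a solution (this is the content of Proposition \ref{prop:shypPDE_Lexistuniq_ext}: the convolution $\mathcal{T}^y\vartheta$ solves $\ell_x f=\ell_y f$), one gets $\vartheta(y)\,\ell(\vartheta)(x) = \vartheta(x)\,\ell(\vartheta)(y)$ for all $x,y$. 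Choosing $y=y_0$ gives $\ell(\vartheta) = \lambda\,\vartheta$ with $\lambda := \ell(\vartheta)(y_0)/\vartheta(y_0)$, a constant. Evaluating the functional equation and its $\partial^{[1]}$-derivative at the boundary (using $w_0(x)\equiv 1$, $\bm{\nu}_{a,y}=\delta_y$, and the boundary behavior built into the Cauchy problem) yields $\vartheta(a)=1$ and $\vartheta^{[1]}(a)=0$, so $\vartheta$ solves the initial value problem of Lemma \ref{lem:entrpf_ode_wsol}; by uniqueness, $\vartheta = w_\lambda$. Finally, the growth bound \eqref{eq:shypPDEwl_prodform_functeq_essbound} forces $\lambda\in\varPi_\kappa$: writing $\lambda=\tau^2+\sigma^2$, the Laplace-type representation \eqref{eq:shypPDE_Wlaplacerep} gives $|w_\lambda(x)| = |\!\int_\mathbb{R}\cos(s\Delta_\lambda)\,\nu_x(ds)|$, and comparing with $w_\kappa(x)=\int_\mathbb{R}\cosh(s\,\mathrm{Im}\,\Delta_\kappa)\,\nu_x(ds)$ one checks that $|w_\lambda(x)|\le C\,w_\kappa(x)$ for all $x$ (with $x\to b$ along a sequence where $\nu_x$ spreads to infinity) is incompatible with $|\mathrm{Im}\,\Delta_\lambda| > \mathrm{Im}\,\Delta_\kappa$; hence $|\mathrm{Im}\,\Delta_\lambda|\le\mathrm{Im}\,\Delta_\kappa$, i.e.\ $\lambda\in\varPi_\kappa$.

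The main obstacle I anticipate is the regularity bootstrap: promoting an a.e.-defined solution to a genuinely continuous (let alone $\mathrm{C}^2$) function and thereby upgrading the a.e.\ functional equation to an everywhere identity. The cleanest route is to convolve once with a test function in $\mathcal{D}_\mathcal{L}^{(2,0)}$, identify the result with the $\mathcal{L}$-translation $\mathcal{T}^y$ of that test function (which Proposition \ref{prop:shypPDE_Lexistuniq_ext} and the weak continuity of $(x,y)\mapsto\bm{\nu}_{x,y}$ render $\mathrm{C}^2$), and then argue that $\vartheta$ itself inherits this regularity because $\widehat\vartheta\,(\mathcal{F}g) = \mathcal{F}(\vartheta*g)$ with $\widehat\vartheta$ nowhere the sole obstruction — one must be slightly careful that the fixed test function can be chosen so that $\vartheta$ is recovered, which is where the uniqueness statement of Proposition \ref{prop:shypPDEwl_gentransl_L1kappacont}(a) and an approximate-identity sequence enter. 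Once continuity is in hand, everything else is the routine ODE-uniqueness argument sketched above.
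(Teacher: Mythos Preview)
Your overall strategy (identify $\vartheta$ as an eigenfunction of $\ell$ with the correct boundary data, then pin down $\lambda$ via the Laplace representation) matches the paper's. However, the two steps you flag as routine are exactly where the paper invests its effort, and your treatment of them has genuine gaps.

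First, the regularity bootstrap. Your claim that $|\vartheta|\le Cw_\kappa$ puts $\vartheta\in L_{1,\kappa'}$ for $\kappa'<\kappa$ is unjustified: it would require $w_\kappa w_{\kappa'}\in L_1(r)$, which generally fails (for $\kappa,\kappa'<\sigma^2$ both $w_\kappa,w_{\kappa'}$ grow at $b$). More importantly, neither of your proposed routes to $\mathrm{C}^2$-regularity is complete. The identity $\vartheta(x)=(\mathcal{T}^{y_0}\vartheta)(x)/\vartheta(y_0)$ gives continuity of the right-hand side only if $\vartheta$ is already in $\mathrm{C}_\mathrm{b}$, which is circular; and the convolution route $c_g\,\vartheta(x)=\int \vartheta(y)(\mathcal{T}^x g)(y)\,r(y)dy$ requires differentiating under an integral against the (in general infinite) measure $|\vartheta(y)|r(y)dy$, for which you supply no uniform domination. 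The paper bypasses all of this by using the Feller semigroup $\{T_t\}$ as the regularizer: since $\bm{\nu}_{t,x,y}=p_{t,x}*\delta_y$ (Corollary~\ref{cor:shypPDE_prodformreg_measrep}) and $p_{t,x}$ is absolutely continuous, one gets $T_t(\mathcal{T}^y\vartheta)(x)=\vartheta(y)\,(T_t\vartheta)(x)$ directly from the a.e.\ functional equation; McKean's result that $\partial_t(T_th)=-\ell(T_th)$ for every $h\in L_\infty(r)$ then yields $\ell\vartheta=\lambda\vartheta$ without ever needing $\vartheta$ itself to be smooth a priori. (The case $\kappa\le 0$ is reduced to the bounded case via the modification $\ell^{\langle\kappa_0\rangle}$ of Lemma~\ref{lem:shypPDE_modification}.)

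Second, the boundary condition $\vartheta^{[1]}(a)=0$. ``Evaluating the $\partial^{[1]}$-derivative of the functional equation at the boundary'' is not available: $a$ may be an entrance boundary and you have not shown $\vartheta$ is differentiable up to $a$. The paper argues instead that $(T_t\vartheta)(x)=e^{-\lambda t}\vartheta(x)$, hence $\mathcal{R}_\eta\vartheta=\vartheta/(\lambda+\eta)$; since $\vartheta\in\mathrm{C}_0[a,b)$ and the resolvent maps $\mathrm{C}_0$ into $\mathcal{D}_\mathcal{L}^{(0)}$, the Neumann condition $\vartheta^{[1]}(a)=0$ follows. Your final step ($\lambda\in\varPi_\kappa$ from the Laplace representation) is fine and agrees with the paper.
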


\begin{proof}
For $t > 0$ and $x \in (a,b)$, let $\{T_t\}$ be the Feller semigroup generated by the Sturm-Liouville operator $\ell$ and write $(T_t h)(x) = \int_{[a,b)} h(\xi) p_{t,x}(d\xi)$, where $\{p_{t,x}\}_{t> 0, x \in (a,b)}$ is the family of transition kernels.

Assume for the moment that $0 < \kappa \leq \sigma^2$, so that $\vartheta \in L_\infty(r)$. We know from Corollary \ref{cor:shypPDE_prodformreg_measrep} that the measure $\bm{\nu}_{t,x,y}(d\xi) = q_t(x,y,\xi)\, r(\xi) d\xi$ of the time-shifted product formula is given by $\bm{\nu}_{t,x,y} = p_{t,x} * \delta_y$; therefore, using the functional equation \eqref{eq:shypPDEwl_prodform_functeq} we may compute
\begin{equation} \label{eq:shypPDEwl_prodform_functeq_pf2}
\bigl(T_t (\mathcal{T}^y\vartheta)\bigr)\!(x) = \int_{[a,b)\!} \vartheta(\xi)\, \bm{\nu}_{t,x,y}(d\xi) = \int_{[a,b)\!} (\mathcal{T}^y\vartheta)(\xi) \, p_{t,x}(d\xi) = \vartheta(y) \int_{[a,b)\!} \vartheta(\xi)\, p_{t,x}(d\xi) = \vartheta(y) \, (T_t \vartheta)(x)
\end{equation}
where we also used the fact that the transition kernels $\{p_{t,x}\}_{t> 0, x \in (a,b)}$ are absolutely continuous (indeed, by Proposition \ref{prop:shypPDE_L_fellergen_tpdf} we have $p_{t,x}(d\xi) = p(t,x,\xi) r(\xi)d\xi$, where $p(t,x,\xi)$ is given by \eqref{eq:shypPDE_fellersgp_fundsol_spectr}). On the other hand, by \cite[Corollary 4.4]{mckean1956} the Feller semigroup $\{T_t\}$ is such that
\[
{\partial \over \partial t} (T_t h)(x) = -\ell_x (T_t h)(x) \qquad \text{for all } h \in L_\infty(r) \qquad \bigl(t > 0, \; x \in (a,b)\bigr),
\]
and we thus have (noting that $\mathcal{T}^y\vartheta \in L_\infty(r)$, cf.\ Lemma \ref{lem:shypPDEwl_gentransl_Lpcont})
\begin{equation} \label{eq:shypPDEwl_prodform_functeq_pf3}
\ell_x \bigl(T_t (\mathcal{T}^y\vartheta)\bigr)\!(x) = -{\partial \over \partial t} \bigl(T_t (\mathcal{T}^y\vartheta)\bigr)\!(x) = \ell_y \bigl(T_t (\mathcal{T}^x\vartheta)\bigr)\!(y) 
\end{equation}
where the second equality holds because $\bigl(T_t (\mathcal{T}^y\vartheta)\bigr)\!(x) = \bigl(T_t (\mathcal{T}^x\vartheta)\bigr)\!(y)$. Combining \eqref{eq:shypPDEwl_prodform_functeq_pf2} and \eqref{eq:shypPDEwl_prodform_functeq_pf3}, we deduce that \[
\ell_y[\vartheta(y) \, (T_t \vartheta)(x)] = -{\partial \over \partial t}[\vartheta(y) \, (T_t \vartheta)(x)] = -{\partial \over \partial t}[\vartheta(x) \, (T_t \vartheta)(y)] = \ell_x[\vartheta(x) \, (T_t \vartheta)(y)]
\]
for all $t > 0$ and almost every $x, y \in (a,b)$, and therefore
\begin{equation} \label{eq:shypPDEwl_prodform_functeq_pf4}
{-{\partial \over \partial t} (T_t \vartheta)(x) \over (T_t \vartheta)(x)} = {\ell_x \vartheta(x) \over \vartheta(x)} = {\ell_y \vartheta(y) \over \vartheta(y)} = \lambda
\end{equation}
for some constant $\lambda \in \mathbb{C}$. From the last equality (which holds for almost every $y$) it follows that $\vartheta(y) = c_1 w_\lambda(y) + c_2 \bm{u}_\lambda(y)$, where $\bm{u}_\lambda$ is a solution of $\ell(u) = \lambda u$ linearly independent of $w_\lambda$ and $c_1,c_2 \in \mathbb{C}$ are constants. In particular, $\vartheta$ is continuous (in fact, by \eqref{eq:shypPDEwl_prodform_functeq_essbound} we have $\vartheta \in \mathrm{C}_0[a,b)$), and by continuity the functional equation \eqref{eq:shypPDEwl_prodform_functeq} holds for all $x,y \in [a,b)$; moreover, if we let $y_0$ be such that $\vartheta(y_0) \neq 0$, we see that
\[
\vartheta(x) = {1 \over \vartheta(y_0)} \int_{[a,b)\!} \vartheta(\xi)\, \bm{\nu}_{x,y_0}(d\xi) \longrightarrow {1 \over \vartheta(y_0)} \int_{[a,b)\!} \vartheta(\xi)\, \delta_{y_0}(d\xi) = 1 \qquad \text{as } x \downarrow a.
\]

In order to show that $\vartheta(x) = w_\lambda(x)$, by Lemma \ref{lem:entrpf_ode_wsol} it only remains to prove that $\lim_{x \downarrow a} \vartheta^{[1]}(x) = 0$. We know that $\lim_{t \downarrow 0}(T_t\vartheta)(x) = \vartheta(x)$ and, by \eqref{eq:shypPDEwl_prodform_functeq_pf4}, ${\partial \over \partial t} (T_t \vartheta)(x) = -\lambda (T_t \vartheta)(x)$, hence
\[
(T_t\vartheta)(x) = e^{-\lambda t}\vartheta(x) \qquad (t \geq 0,\; x \in (a,b))
\]
and therefore
\[
(\mathcal{R}_\eta \vartheta)(x) \equiv \int_0^\infty e^{-\eta t} (T_t\vartheta)(x) dt = {\vartheta(x) \over \lambda + \eta} \qquad (\eta > 0,\; x \in (a,b))
\]
where $\{\mathcal{R}_\eta\}_{\eta > 0}$ is the resolvent of the Feller semigroup $\{T_t\}_{t \geq 0}$. Since $\vartheta \in \mathrm{C}_0[a,b)$, it follows from the definition of generator and resolvent of the Feller semigroup that $(\mathcal{R}_\eta \vartheta)(x)$ belongs to $\mathcal{D}_\mathcal{L}^{(0)}$ (see \cite{fukushima2014}), so we conclude that $\vartheta^{[1]}(x) = (\lambda + \eta) (\mathcal{R}_\eta \vartheta)^{[1]}(x) \longrightarrow 0$ as $x \downarrow a$, as desired.

Finally, suppose that $\kappa \leq 0$ and choose $\kappa_0 < \kappa$. Recalling Lemma \ref{lem:shypPDE_modification}, it is easily seen that the function $\vartheta^{\langle\kappa_0\rangle\!}(x) := {\vartheta(x) \over w_{\kappa_0}(x)}$ is bounded almost everywhere by $w_{\kappa-\kappa_0}^{\langle\kappa_0\rangle\!}(x) = {w_\kappa(x) \over w_{\kappa_0}(x)} \in \mathrm{C}_0[a,b)$ and satisfies the functional equation
\begin{equation} \label{eq:shypPDEwl_prodform_functeq_pf5}
\vartheta^{\langle\kappa_0\rangle\!}(x) \, \vartheta^{\langle\kappa_0\rangle\!}(y) = \int_{[a,b)} \vartheta^{\langle\kappa_0\rangle\!}(\xi)\, \bm{\nu}_{x,y}^{\langle\kappa_0\rangle\!}(d\xi) \qquad \text{for almost every } x,y \in [a,b)
\end{equation}
where, as seen above, $\bm{\nu}_{x,y}^{\langle\kappa_0\rangle\!}(d\xi) = {w_{\kappa_0}(\xi) \over w_{\kappa_0}(x) w_{\kappa_0}(y)} \bm{\nu}_{x,y}(d\xi)$. Using the proof given for the case $0 < \kappa \leq \sigma^2$ (where we replace the associated Sturm-Liouville operator by $\ell^{\langle\kappa_0\rangle}$, etc.), we deduce that $\vartheta^{\langle\kappa_0\rangle\!}(x) = w_{\lambda_0}^{\langle\kappa_0\rangle\!}(x)$ for some $\lambda_0 \in \mathbb{C}$. Consequently, $\vartheta(x) = w_\lambda(x)$ for some $\lambda \in \mathbb{C}$.

It only remains to show that $\lambda \in \varPi_\kappa$. Indeed, taking into account the Laplace-type representation for $w_{\lambda}(x)$, \eqref{eq:shypPDEwl_prodform_functeq_essbound} holds if and only if 
\[
\biggl|\int_\mathbb{R} \cos(s \Delta_\lambda) \nu_x(ds)\biggr| \leq C \int_\mathbb{R} \cos(s \Delta_\kappa) \nu_x(ds)\qquad \text{for almost every } x \in [a,b)
\]
and clearly this takes place if and only if $\lambda \in \varPi_\kappa$.
\end{proof}

As a corollary, we obtain a characterization of the multiplicative linear functionals on the Banach algebra $(L_{1,\kappa},*)$:

\begin{corollary} \label{cor:shypPDEwl_L1kappa_functhomomorph}
Let $-\infty < \kappa \leq \sigma^2$. Let $J: L_{1,\kappa} \longrightarrow \mathbb{C}$ be a linear functional satisfying
\[
J(h*g) = J(h) \ccdot J(g) \qquad \text{for all } h,g \in L_{1,\kappa}.
\]
Then $J(h) = \int_{[a,b)} h(\xi) w_\lambda(\xi) r(\xi) d\xi$ for some $\lambda \in \varPi_\kappa := \bigl\{\lambda \in \mathbb{C} \bigm| |\mathrm{Im}\, \Delta_{\lambda}| \leq \mathrm{Im}\, \Delta_{\kappa}\bigr\}$.
\end{corollary}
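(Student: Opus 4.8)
The plan is to run the classical Gelfand-type argument and then appeal to Lemma~\ref{lem:shypPDEwl_prodform_functeq}. Since $J$ is a nonzero multiplicative linear functional on the commutative Banach algebra $(L_{1,\kappa},*)$ (Corollary~\ref{cor:shypPDEwl_L1kappa_banachalg}), it is automatically continuous with $\|J\|\leq 1$. The measure $w_\kappa(x)r(x)\,dx$ on $(a,b)$ is $\sigma$-finite, so $J$ is represented by some $\psi\in L_\infty\bigl((a,b);dx\bigr)$ with $\|\psi\|_\infty=\|J\|$, via $J(h)=\int_a^b h(x)\,\psi(x)\,w_\kappa(x)\,r(x)\,dx$ for all $h\in L_{1,\kappa}$. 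Passing to the Borel representative of $\psi$ with $|\psi(x)|\leq\|J\|$ for \emph{every} $x$ and putting $\vartheta:=\psi\,w_\kappa$, one gets a Borel function satisfying $|\vartheta(x)|\leq w_\kappa(x)$ for all $x\in(a,b)$ (so \eqref{eq:shypPDEwl_prodform_functeq_essbound} holds with $C=1$) and $J(h)=\int_a^b h\,\vartheta\,r\,dx$ on $L_{1,\kappa}$; moreover $\vartheta\not\equiv 0$, because $J\neq 0$. It then suffices to show that $\vartheta$ satisfies the functional equation \eqref{eq:shypPDEwl_prodform_functeq}, since Lemma~\ref{lem:shypPDEwl_prodform_functeq} will yield $\vartheta=w_\lambda$ (a.e.) for some $\lambda\in\varPi_\kappa$ and hence $J(h)=\int_{[a,b)}h(\xi)\,w_\lambda(\xi)\,r(\xi)\,d\xi$.

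To derive the functional equation I would proceed in two stages. First, for fixed $g\in L_{1,\kappa}$ write $h*g=\int_a^b(\mathcal{T}^y h)\,g(y)\,r(y)\,dy$ as an $L_{1,\kappa}$-valued integral (legitimate by Proposition~\ref{prop:shypPDEwl_gentransl_L1kappacont}(b)--(c)) and pull the bounded functional $J$ through it; comparing $J(h*g)=\int_a^b J(\mathcal{T}^y h)\,g(y)\,r(y)\,dy$ with $J(h)J(g)=\int_a^b J(h)\,\vartheta(y)\,g(y)\,r(y)\,dy$ and letting $g$ range over $L_{1,\kappa}$ gives, for each fixed $h$, the identity $J(\mathcal{T}^y h)=\vartheta(y)\,J(h)$ for a.e.\ $y$. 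Running $h$ over a countable dense subset of $L_{1,\kappa}$ and using the bound $\|\mathcal{T}^y h\|_{1,\kappa}\leq w_\kappa(y)\|h\|_{1,\kappa}$, this upgrades to the existence of a Lebesgue-null set $N$ such that $J(\mathcal{T}^y h)=\vartheta(y)\,J(h)$ for all $h\in L_{1,\kappa}$ and all $y\notin N$.

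Second, I would establish the ``self-adjointness'' identity $\int_a^b(\mathcal{T}^y h)(x)\,\vartheta(x)\,r(x)\,dx=\int_a^b h(x)\,(\mathcal{T}^y\vartheta)(x)\,r(x)\,dx$ for $h\in\mathrm{C}_\mathrm{c}[a,b)$, where $(\mathcal{T}^y\vartheta)(x):=\int_{[a,b)}\vartheta\,d\bm{\nu}_{x,y}$ makes sense and obeys $|(\mathcal{T}^y\vartheta)(x)|\leq w_\kappa(x)w_\kappa(y)$ by the choice of representative together with the product formula $\int w_\kappa\,d\bm{\nu}_{x,y}=w_\kappa(x)w_\kappa(y)$. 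The operator $\mathcal{T}^y$ is self-adjoint on $L_2(r)$: by \eqref{eq:shypPDE_transl_Fident} the $\mathcal{L}$-transform carries it to multiplication by $w_{(\cdot)}(y)$, which is real on $\supp(\bm{\rho}_\mathcal{L})=[\sigma^2,\infty)$. Applying this with $h\in\mathrm{C}_\mathrm{c}[a,b)$ and the truncations $\vartheta_c:=\vartheta\,\mathds{1}_{[a,c)}\in L_2(r)$, and then letting $c\uparrow b$ by dominated convergence (the dominating functions $|(\mathcal{T}^y h)(x)|\,w_\kappa(x)\,r(x)$ and $|h(x)|\,w_\kappa(x)w_\kappa(y)\,r(x)$ are integrable since $h$ has compact support) yields the identity. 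Combining the two stages, for $y\notin N$ and all $h\in\mathrm{C}_\mathrm{c}[a,b)$,
\[
\int_a^b h(x)\,(\mathcal{T}^y\vartheta)(x)\,r(x)\,dx=J(\mathcal{T}^y h)=\vartheta(y)\,J(h)=\int_a^b h(x)\,\vartheta(x)\vartheta(y)\,r(x)\,dx,
\]
and since $(\mathcal{T}^y\vartheta)(\cdot)$ and $\vartheta(y)\vartheta(\cdot)$ are locally integrable against $r\,dx$, this forces $\int_{[a,b)}\vartheta\,d\bm{\nu}_{x,y}=\vartheta(x)\vartheta(y)$ for a.e.\ $(x,y)$, i.e.\ \eqref{eq:shypPDEwl_prodform_functeq}. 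Lemma~\ref{lem:shypPDEwl_prodform_functeq} then finishes the proof.

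The step I expect to be the main obstacle is the derivation of the functional equation, and within it the fact that the measures $\bm{\nu}_{x,y}$ need not be absolutely continuous, so $\int\vartheta\,d\bm{\nu}_{x,y}$ is sensitive to the choice of representative of $\vartheta$; this is precisely why one fixes at the outset the representative with $|\vartheta|\leq\|J\|\,w_\kappa$ everywhere, and why the self-adjointness identity has to be reached by truncation from the $L_2(r)$-level rather than directly. Everything else — automatic continuity of $J$, $L_1$--$L_\infty$ duality, the Fubini manipulations, and the final appeal to Lemma~\ref{lem:shypPDEwl_prodform_functeq} — is routine.
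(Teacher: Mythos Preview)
Your proposal is correct and follows essentially the same route as the paper: represent $J$ via $L_1$--$L_\infty$ duality by a function $\vartheta$ with $|\vartheta|\leq C\,w_\kappa$, derive the functional equation \eqref{eq:shypPDEwl_prodform_functeq} from multiplicativity, and conclude by Lemma~\ref{lem:shypPDEwl_prodform_functeq}. The only organizational difference is in how the key identity $\int(\mathcal{T}^y h)\,\vartheta\,r = \int h\,(\mathcal{T}^y\vartheta)\,r$ is reached: the paper obtains it in a single Fubini computation, citing ``commutativity of the $\mathcal{L}$-convolution'' (extended to $h\in L_{1,\kappa}$, $\vartheta/w_\kappa\in L_\infty(r)$ by a continuity argument), whereas you split off an explicit two-stage argument and justify the self-adjointness of $\mathcal{T}^y$ via the $L_2$ spectral picture and truncation --- a perfectly valid and arguably more transparent justification of the same step.
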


\begin{proof}
By the well-known theorem on duality of $L_p$ spaces (e.g.\ \cite[Theorem 4.4.1]{bogachev2007}), 
\[
J(h) = \int_a^b h(\xi) \, \vartheta(\xi) \, r(\xi) d\xi
\]
where ${\vartheta \over w_\kappa} \in L_\infty(r)$, i.e., \eqref{eq:shypPDEwl_prodform_functeq_essbound} holds. Since $J(h*g) = J(h) \ccdot J(g)$, for $h,g \in L_{1,\kappa}$ we have
\begin{align*}
\int_a^b h(\xi) \, \vartheta(\xi) \, r(\xi) d\xi \cdot \int_a^b g(\xi) \, \vartheta(\xi) \, r(\xi) d\xi & = \int_a^b \int_a^b (\mathcal{T}^\xi h)(y)\, g(y) r(y) dy \: \vartheta(\xi) r(\xi) d\xi \\
& = \int_a^b \int_a^b (\mathcal{T}^y h)(\xi) \, \vartheta(\xi) r(\xi) d\xi \: g(y) r(y) dy \\
& = \int_a^b \int_a^b (\mathcal{T}^y \vartheta)(\xi) \, h(\xi) r(\xi) d\xi \: g(y) r(y) dy
\end{align*}
where the last equality follows from the commutativity of the $\mathcal{L}$-convolution, cf.\ Corollary \ref{cor:shypPDEwl_L1kappa_banachalg}. (The commutativity easily extends to $h \in L_{1,\kappa}$ and ${\vartheta \over w_\kappa} \in L_\infty(r)$ via a continuity argument.) Since $h$ and $g$ are arbitrary, 
\[
\vartheta(x) \, \vartheta(y) = (\mathcal{T}^y \vartheta)(x) \equiv \int_{[a,b)} \vartheta(\xi)\, \bm{\nu}_{x,y}(d\xi) \qquad \text{for almost every } x,y \in [a,b)
\]
and the conclusion follows from Lemma \ref{lem:shypPDEwl_prodform_functeq}.
\end{proof}

\begin{proof}[Proof of Theorem \ref{thm:shypPDE_Lfour_wiener}]
The proof of \emph{(i) $\!\implies\!$ (ii)} is entirely analogous to the proof of Theorem 15.15 of \cite{yakubovichluchko1994}, appealing to Corollaries \ref{cor:shypPDEwl_L1kappa_banachalg} and \ref{cor:shypPDEwl_L1kappa_functhomomorph} in place of the analogous results for the Kontorovich-Lebedev convolution. 

The proof of \emph{(ii) $\!\implies\!$ (i)} is straightforward: since the integral defining $(\mathcal{F} g)(\lambda)$ converges absolutely whenever $\lambda \in \varPi_\kappa$ (this follows from \eqref{eq:shypPDEwl_L1kappa_orderrel} and the condition $g \in L_{1,\kappa}$), it is clear that if for some $\lambda_0 \in \varPi_\kappa$ we have $\varrho + (\mathcal{F} f)(\lambda_0) = 0$, then \eqref{eq:shypPDE_Lfour_wiener} will fail at $\lambda = \lambda_0$, regardless of the choice of $g \in L_{1,\kappa}$.
\end{proof}

\subsection{Application to convolution integral equations}

In this final subsection it will be shown that the Wiener-Lévy type theorem \eqref{thm:shypPDE_Lfour_wiener} can be used to derive an existence and uniqueness result for integral equations of the second kind belonging to the following class of $\mathcal{L}$-convolution equations:

\begin{definition}
The integral equation
\begin{equation} \label{eq:shypPDE_2ndkindinteq}
h(x) + \int_a^b h(y) \, J(x,y)\, dy = \psi(x),
\end{equation}
where $\psi$ and $J$ are known functions and $h$ is to be determined, is said to be an \emph{$\mathcal{L}$-convolution equation} if $J(x,y) = (\mathcal{T}^{x\!} f)(y)r(y)$ for some function $f \in L_{1,\sigma^2}$. In other words, \eqref{eq:shypPDE_2ndkindinteq} is an $\mathcal{L}$-convolution equation if it can be represented as
\begin{equation} \label{eq:shypPDE_convinteq}
h(x) + (h*f)(x) = \psi(x)
\end{equation}
with $f \in L_{1,\sigma^2}$.
\end{definition}

Suppose that $\psi, f \in L_{1,\kappa}$ (where $-\infty < \kappa \leq \sigma^2$) and consider the $\mathcal{L}$-convolution equation \eqref{eq:shypPDE_convinteq}. Applying the $\mathcal{L}$-Fourier transform, we obtain
\begin{equation} \label{eq:shypPDE_convinteq_transf}
(\mathcal{F} h)(\lambda) \bigl[ 1 + (\mathcal{F} f)(\lambda) \bigr] = (\mathcal{F} \psi)(\lambda) \qquad (\lambda \in \varPi_\kappa).
\end{equation}
As seen in Theorem \ref{thm:shypPDE_Lfour_wiener}, the expression between square brackets can be written as $[1 + (\mathcal{F}g)(\lambda)]^{-1}$ (with $g \in L_{1,\kappa}$) if and only if $1 + (\mathcal{F}f)(\lambda) \neq 0$ for all $\lambda \in \varPi_\kappa$; whenever this is so, for $\lambda \in \varPi_\kappa$ we get $(\mathcal{F} h)(\lambda) = (\mathcal{F} \psi)(\lambda) \bigl[ 1 + (\mathcal{F} g)(\lambda) \bigr]$, meaning that
\begin{equation} \label{eq:shypPDE_convinteq_sol}
h(x) = \psi(x) + (\psi * g)(x) = \psi(x) + \int_a^b \psi(y) \, J_g(x,y) \, dy
\end{equation}
where $J_g(x,y) = (\mathcal{T}^{x\!} g)(y) r(y)$. Summing up, we have proved:

\begin{theorem} \label{thm:shypPDE_convinteq_existuniq}
Assume that $J(x,y) = (\mathcal{T}^{x\!} f)(y)r(y)$. 

If $1 + (\mathcal{F}f)(\lambda) \neq 0$ for all $\lambda \in \varPi_\kappa$ (including $\lambda = \infty$), then, for each given $\psi \in L_{1,\kappa}$, the integral equation \eqref{eq:shypPDE_2ndkindinteq} admits a unique solution $h \in L_{1,\kappa}$; moreover, this solution can be written in the form \eqref{eq:shypPDE_convinteq_sol} for some function $g \in L_{1,\kappa}$.

Conversely, if $1 + (\mathcal{F}f)(\lambda_0) = 0$ for some $\lambda_0 \in \varPi_\kappa$, then there exists no $g \in L_{1,\kappa}$ satisfying the integral equation \eqref{eq:shypPDE_2ndkindinteq}. 
\end{theorem}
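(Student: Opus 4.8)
The plan is to deduce the statement from the Wiener--Lévy type theorem (Theorem~\ref{thm:shypPDE_Lfour_wiener}) via the trivialization property of the $\mathcal{L}$-convolution. First I would note that, by the very definition of an $\mathcal{L}$-convolution equation, \eqref{eq:shypPDE_2ndkindinteq} with $J(x,y) = (\mathcal{T}^{x}f)(y)r(y)$ is the same as $h + h*f = \psi$, i.e.\ \eqref{eq:shypPDE_convinteq}, the equivalence of the two forms resting on the self-adjointness of the $\mathcal{L}$-translation on $L_2(r)$ already used in the proof of Corollary~\ref{cor:shypPDEwl_L1kappa_functhomomorph}. Since $\psi, f \in L_{1,\kappa}$, Proposition~\ref{prop:shypPDEwl_gentransl_L1kappacont}(c) guarantees that $h*f \in L_{1,\kappa}$ for every $h \in L_{1,\kappa}$, so \eqref{eq:shypPDE_convinteq} is an identity in $L_{1,\kappa}$. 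Applying $\mathcal{F}$ and using the trivialization \eqref{eq:shypPDEconvfun_trivializ} (extended by continuity to $L_{1,\kappa}$, cf.\ Corollary~\ref{cor:shypPDEwl_L1kappa_banachalg}) together with the injectivity of $\mathcal{F}$ on $L_{1,\kappa}$ (Proposition~\ref{prop:shypPDEwl_gentransl_L1kappacont}(a)) and the inclusion $[\sigma^2,\infty) \subset \varPi_\kappa$, I obtain that $h \in L_{1,\kappa}$ solves \eqref{eq:shypPDE_convinteq} if and only if \eqref{eq:shypPDE_convinteq_transf} holds.

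For the direct implication, assume $1 + (\mathcal{F}f)(\lambda) \neq 0$ on $\varPi_\kappa$ (including $\lambda = \infty$). By Theorem~\ref{thm:shypPDE_Lfour_wiener} with $\varrho = 1$ there is a unique $g \in L_{1,\kappa}$ satisfying $[1 + (\mathcal{F}f)(\lambda)]^{-1} = 1 + (\mathcal{F}g)(\lambda)$ on $\varPi_\kappa$. I then set $h := \psi + \psi*g$, which lies in $L_{1,\kappa}$ by Proposition~\ref{prop:shypPDEwl_gentransl_L1kappacont}(c) and has the form \eqref{eq:shypPDE_convinteq_sol}. Using the trivialization again, $(\mathcal{F}h)(\lambda) = (\mathcal{F}\psi)(\lambda)\,[1+(\mathcal{F}g)(\lambda)] = (\mathcal{F}\psi)(\lambda)/[1+(\mathcal{F}f)(\lambda)]$ on $\varPi_\kappa$, hence $(\mathcal{F}h)(\lambda)\,[1+(\mathcal{F}f)(\lambda)] = (\mathcal{F}\psi)(\lambda)$; since $h+h*f$ and $\psi$ both belong to $L_{1,\kappa}$ and their $\mathcal{L}$-transforms coincide on $[\sigma^2,\infty)$, Proposition~\ref{prop:shypPDEwl_gentransl_L1kappacont}(a) forces $h+h*f = \psi$, so $h$ solves \eqref{eq:shypPDE_2ndkindinteq}. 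Uniqueness is immediate: if $h_1,h_2 \in L_{1,\kappa}$ both solve it, then $(\mathcal{F}(h_1-h_2))(\lambda)\,[1+(\mathcal{F}f)(\lambda)] = 0$ on $\varPi_\kappa$, and the non-vanishing of $1+(\mathcal{F}f)$ gives $(\mathcal{F}(h_1-h_2))\restrict{[\sigma^2,\infty)} = 0$, whence $h_1 = h_2$ again by Proposition~\ref{prop:shypPDEwl_gentransl_L1kappacont}(a).

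The converse is precisely the contrapositive of the implication \emph{(ii)}~$\implies$~\emph{(i)} of Theorem~\ref{thm:shypPDE_Lfour_wiener} with $\varrho = 1$: if $1 + (\mathcal{F}f)(\lambda_0) = 0$ for some $\lambda_0 \in \varPi_\kappa$, then no $g \in L_{1,\kappa}$ can furnish a solution of the form \eqref{eq:shypPDE_convinteq_sol}, because \eqref{eq:shypPDE_convinteq_sol} combined with \eqref{eq:shypPDE_convinteq_transf} would require $[1+(\mathcal{F}f)(\lambda)][1+(\mathcal{F}g)(\lambda)] = 1$ for all $\lambda \in \varPi_\kappa$, i.e.\ relation \eqref{eq:shypPDE_Lfour_wiener}, which is impossible at $\lambda_0$.

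I do not expect a genuine obstacle here: all the analytic content --- the factorization in the convolution algebra $(L_{1,\kappa},*)$ and the description of its multiplicative functionals --- is already packaged in Theorem~\ref{thm:shypPDE_Lfour_wiener}, and what remains is bookkeeping. The only points deserving care are that the passage between the integral-equation form \eqref{eq:shypPDE_2ndkindinteq} and the convolution form \eqref{eq:shypPDE_convinteq} is exact, that \eqref{eq:shypPDEconvfun_trivializ} survives the extension from $\mathrm{C}_\mathrm{c}[a,b)$ to $L_{1,\kappa}$, and that the injectivity of $\mathcal{F}$ on $L_{1,\kappa}$ needs only the values on $[\sigma^2,\infty)$, which is legitimate since $[\sigma^2,\infty) \subset \varPi_\kappa$.
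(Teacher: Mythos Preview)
Your proposal is correct and follows essentially the same approach as the paper: the paper's proof is the short discussion immediately preceding the theorem statement, which applies $\mathcal{F}$ to the convolution equation, invokes Theorem~\ref{thm:shypPDE_Lfour_wiener} with $\varrho = 1$ to invert $1+(\mathcal{F}f)$, and reads off existence, uniqueness and the explicit form \eqref{eq:shypPDE_convinteq_sol}. Your write-up is simply a more careful unpacking of the same steps (in particular your explicit check of uniqueness via injectivity of $\mathcal{F}$ on $L_{1,\kappa}$ and your remark that the equivalence between \eqref{eq:shypPDE_2ndkindinteq} and \eqref{eq:shypPDE_convinteq} uses the symmetry/commutativity of the $\mathcal{L}$-translation are spelled out where the paper leaves them implicit).
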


As an interesting particular case, we deduce the existence and uniqueness of solution for integral equations involving the density $q_t(x,y,\xi)$ of the time-shifted product formula (cf.\ Proposition \ref{prop:shypPDE_expprodform}):

\begin{corollary}
For each fixed $t > 0$, $a < x < b$ and $\psi \in L_1(r)$, the integral equation 
\[
h(y) + \int_a^b h(\xi) \, q_t(x,y,\xi) r(\xi) d\xi = \psi(y)
\]
has a unique solution $h \in L_1(r)$ which can be written in the form \eqref{eq:shypPDE_convinteq_sol} for some function $g \in L_1(r)$.
\end{corollary}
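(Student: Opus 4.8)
The plan is to recognize the displayed integral equation as an instance of the $\mathcal{L}$-convolution equation \eqref{eq:shypPDE_convinteq} and then apply Theorem \ref{thm:shypPDE_convinteq_existuniq} with $\kappa = 0$. First I would observe that, by definition of the $\mathcal{L}$-translation and the time-shifted product formula, we have $q_t(x,y,\xi)\,r(\xi) = (\mathcal{T}^{x} p(t,x,\cdot))(\xi)\,r(\xi)$ — more precisely, by Corollary \ref{cor:shypPDE_prodformreg_measrep} the measure $\bm{\nu}_{t,x,y}(d\xi) = q_t(x,y,\xi)\,r(\xi)d\xi$ equals $\int_a^b \bm{\nu}_{z,y}(d\xi)\,p(t,x,z)\,r(z)dz$, which is exactly $(\mathcal{T}^{y} f_{t,x})(\xi)\,r(\xi)d\xi$ with $f_{t,x}(z) := p(t,x,z)$. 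Hence the kernel of the integral equation has the required form $J(y,\xi) = (\mathcal{T}^{y} f_{t,x})(\xi)\,r(\xi)$ with $f = f_{t,x} = p(t,x,\cdot)$, provided we check that $f_{t,x} \in L_{1,\sigma^2}$ (indeed $f_{t,x} \in L_1(r) = L_{1,0} \subset L_{1,\sigma^2}$, since $\int_a^b p(t,x,z)\,r(z)dz = 1$ by conservativeness of the Feller semigroup, Proposition \ref{prop:shypPDE_L_fellergen_tpdf}).

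Next I would compute the $\mathcal{L}$-transform of $f_{t,x}$. By the spectral representation \eqref{eq:shypPDE_fellersgp_fundsol_spectr} and the transition identity $(T_t w_\lambda)(x) = e^{-t\lambda} w_\lambda(x)$, we have $(\mathcal{F} f_{t,x})(\lambda) = (\mathcal{F}[p(t,x,\cdot)])(\lambda) = e^{-t\lambda} w_\lambda(x)$ for $\lambda \geq 0$. The key point is then to verify the non-vanishing condition of Theorem \ref{thm:shypPDE_convinteq_existuniq}: we need $1 + e^{-t\lambda} w_\lambda(x) \neq 0$ for all $\lambda \in \varPi_0 = [0,\infty)$ (including $\lambda = \infty$). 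For real $\lambda \geq 0$ this is immediate from Lemma \ref{lem:shypPDE_wsolbound} (or Theorem \ref{thm:shypPDE_Wlaplacerep}), which gives $|w_\lambda(x)| \leq 1$, hence $|e^{-t\lambda} w_\lambda(x)| \leq e^{-t\lambda} \leq 1$, with equality to $1$ only if $\lambda = 0$ and $|w_0(x)| = 1$; but $w_0(x) \equiv 1 \neq -1$, so $1 + e^{-t\lambda}w_\lambda(x) \neq 0$ throughout. At $\lambda = \infty$ we have $e^{-t\lambda} w_\lambda(x) \to 0$ (again using $|w_\lambda(x)| \leq 1$ and $t > 0$), so $1 + (\mathcal{F}f_{t,x})(\infty) = 1 \neq 0$.

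With both hypotheses verified, Theorem \ref{thm:shypPDE_convinteq_existuniq} applied with $\kappa = 0$ yields directly that for each $\psi \in L_1(r)$ the integral equation has a unique solution $h \in L_1(r)$ expressible in the form \eqref{eq:shypPDE_convinteq_sol} for some $g \in L_1(r)$. I expect no serious obstacle here: the only mildly delicate point is correctly matching the roles of the variables — in the stated equation the unknown $h$ is integrated against $\xi$ and the free variable is $y$, while $x$ and $t$ are parameters — so care is needed to present the kernel as $(\mathcal{T}^{y}f_{t,x})(\xi)r(\xi)$ with the "convolution variable" being $y$ and with $x,t$ fixed. Once that bookkeeping is done, the result is an immediate specialization. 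I would write the proof as: (1) identify the kernel via Corollary \ref{cor:shypPDE_prodformreg_measrep}; (2) note $f_{t,x} = p(t,x,\cdot) \in L_1(r)$ and compute $(\mathcal{F}f_{t,x})(\lambda) = e^{-t\lambda}w_\lambda(x)$; (3) check $1 + e^{-t\lambda}w_\lambda(x) \neq 0$ on $[0,\infty]$ using $|w_\lambda(x)| \leq 1$ and $w_0 \equiv 1$; (4) invoke Theorem \ref{thm:shypPDE_convinteq_existuniq} with $\kappa = 0$.
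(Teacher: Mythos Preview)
Your approach matches the paper's: take $f = f_{t,x} := p(t,x,\cdot)$, check $f_{t,x} \in L_1(r)$, compute $(\mathcal{F}f_{t,x})(\lambda) = e^{-t\lambda}w_\lambda(x)$, verify the non-vanishing condition, and invoke Theorem \ref{thm:shypPDE_convinteq_existuniq} with $\kappa = 0$. Your identification of the kernel via Corollary \ref{cor:shypPDE_prodformreg_measrep} is fine; the paper instead computes $\mathcal{F}(\mathcal{T}^y f_{t,x})$ using \eqref{eq:shypPDE_transl_Fident} and matches it with $\mathcal{F}(q_t(x,y,\cdot))$, but both routes give $(\mathcal{T}^y f_{t,x})(\xi) = q_t(x,y,\xi)$.

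There is, however, a genuine gap in step (3). You assert $\varPi_0 = [0,\infty)$, but this is only true when $\sigma = 0$. Recall $\varPi_\kappa = \{\lambda \in \mathbb{C} : |\mathrm{Im}\,\Delta_\lambda| \leq \mathrm{Im}\,\Delta_\kappa\}$ with $\Delta_\lambda = \sqrt{\lambda - \sigma^2}$; for $\kappa = 0$ one has $\Delta_0 = i\sigma$, so when $\sigma > 0$ the set $\varPi_0$ is a genuine region in $\mathbb{C}$ (a parabola-shaped domain) strictly containing $[0,\infty)$. Your argument using Lemma \ref{lem:shypPDE_wsolbound} covers only real $\lambda \geq 0$ and therefore does not verify the hypothesis of Theorem \ref{thm:shypPDE_convinteq_existuniq}. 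The fix is exactly the ingredient you already cite parenthetically: use the bound \eqref{eq:shypPDE_Wbound} from Theorem \ref{thm:shypPDE_Wlaplacerep}, which gives $|w_\lambda(x)| \leq 1$ for all $\lambda \in \varPi_0$ (writing $\lambda = \tau^2 + \sigma^2$ with $|\mathrm{Im}\,\tau| \leq \sigma$). One then checks that $\mathrm{Re}\,\lambda = (\mathrm{Re}\,\tau)^2 - (\mathrm{Im}\,\tau)^2 + \sigma^2 \geq 0$ on $\varPi_0$, with equality only at $\lambda = 0$; hence for $\lambda \in \varPi_0 \setminus \{0\}$ one gets $|1 + e^{-t\lambda}w_\lambda(x)| \geq 1 - e^{-t\,\mathrm{Re}\,\lambda}|w_\lambda(x)| > 0$, while $\lambda = 0$ gives the value $2$. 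This is precisely how the paper handles it.
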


\begin{proof}
Let us justify that this result is obtained by setting $f = f_{t,x} := p(t,x,\cdot)$ in the statement of Theorem \ref{thm:shypPDE_convinteq_existuniq}. Notice first that by Lemma \ref{lem:shypPDE_exptriplepf_probmeas} we have $f_{t,x} \in L_{1,0} \equiv L_1(r)$. Moreover, we have $(\mathcal{F}f_{t,x})(\lambda) = e^{-t\lambda} w_\lambda(x)$ (cf.\ \eqref{eq:shypPDE_fellersgp_fundsol_spectr}), thus $1 + (\mathcal{F}f_{t,x})(0) = 2$ and
\[
|1 + (\mathcal{F}f_{t,x})(\lambda)| \geq 1 - e^{-t\,\mathrm{Re}\,\lambda} |w_\lambda(x)| > 0, \qquad \lambda \in \varPi_0 \setminus \{0\}
\]
(this is easily seen to hold by setting $\lambda = \tau^2 + \sigma^2$ and recalling the estimate \eqref{eq:shypPDE_Wbound}). Recalling that
\[
\bigl(\mathcal{F}(\mathcal{T}^y f_{t,x})\bigr)(\lambda) = (\mathcal{F}f_{t,x})(\lambda) \, w_\lambda(y) = e^{-t\lambda} w_\lambda(x) w_\lambda(y) = \bigl(\mathcal{F} q_t(x,y,\cdot)\bigr)(\lambda)
\]
where we used \eqref{eq:shypPDE_transl_Fident}, we see that $(\mathcal{T}^y f_{t,x})(\xi) = q_t(x,y,\xi)$, so that the corollary is a particular case of the theorem.
\end{proof}

\section*{Acknowledgements}

The first and third authors were partly supported by CMUP (UID/MAT/00144/2019), which is funded by Fundação para a Ciência e a Tecnologia (FCT) (Portugal) with national (MCTES) and European structural funds through the programs FEDER, under the partnership agreement PT2020, and Project STRIDE -- NORTE-01-0145-FEDER-000033, funded by ERDF -- NORTE 2020. The first author was also supported by the grant PD/BD/135281/2017, under the FCT PhD Programme UC|UP MATH PhD Program. The second author was partly supported by FCT/MCTES through the project CEMAPRE -- UID/MULTI/00491/2013.

\renewcommand{\bibname}{References} 
\begin{small}

\end{small}

\end{document}